\newcommand\cyr{%
\renewcommand\rmdefault{wncyr}%
\renewcommand\sfdefault{wncyss}%
\renewcommand\encodingdefault{OT2}%
\normalfont
\selectfont}
\DeclareTextFontCommand{\textcyr}{\cyr} 
\DeclareFontFamily{OT1}{rsfs}{}
\DeclareFontShape{OT1}{rsfs}{n}{it}{<-> rsfs10}{}
\DeclareMathAlphabet{\mathscr}{OT1}{rsfs}{n}{it}
\numberwithin{equation}{section}
\newtheorem{theorem}{Theorem}[section]
\newtheorem{lemma}[theorem]{Lemma}
\newtheorem{proposition}[theorem]{Proposition}
\newtheorem{corollary}[theorem]{Corollary}
\newtheorem{maintheorem}{Main Theorem}
\theoremstyle{definition}
\newtheorem{definition}[theorem]{Definition}
\newtheorem{remark}[theorem]{Remark}
\theoremstyle{remark}
\newtheorem{example}[theorem]{Example}
\newtheorem{acknowledgement}{Acknowledgement}
\newcommand{\im}{\operatorname{Im}}
\renewcommand{\ker}{\operatorname{Ker}}
\newcommand{\Spec}{\operatorname{Spec}}
\newcommand{\Tor}{\operatorname{Tor}}
\newcommand{\Hom}{\operatorname{Hom}}
\newcommand{\coker}{\operatorname{Coker}}
\newcommand{\fm}{\frak{m}}
\begin{document}
\title[Finite \'etale extensions of Tate rings and decompletion of perfectoid algebras]
{Finite \'etale extensions of Tate rings and decompletion of perfectoid algebras}

\author[K.Nakazato]{Kei Nakazato}
\address{Graduate School of Mathematics, Nagoya University, 
Nagoya 464-8602, Japan}
\email{m11047c@math.nagoya-u.ac.jp}

\author[K.Shimomoto]{Kazuma Shimomoto}
\address{Department of Mathematics, College of Humanities and Sciences, Nihon University, Setagaya-ku, Tokyo 156-8550, Japan}
\email{shimomotokazuma@gmail.com}

\thanks{2020 {\em Mathematics Subject Classification\/}: 11S15, 13A18, 13B22, 13B40, 13F35, 13J10, 14G45}

\keywords{Almost purity, completion, \'etale extension, non-archimedean Banach ring, perfectoid algebra, Witt-perfect algebra}

%\subjclass{13}
%\subjclass[2000]{Primary 13-XX}
%\subjclass[2000]{Primary ; Secondary}
%\date{\today \, (\printtime)}
%\date{\today}

\begin{abstract}
In this paper, we examine the behavior of ideal-adic separatedness and completeness under certain ring extensions using trace map. Then we prove that adic completeness of a base ring is hereditary to its ring extension under reasonable conditions. We aim to give many results on ascent and descent of certain ring theoretic properties under completion. As an application, we give conceptual details to the proof of the almost purity theorem for Witt-perfect rings by Davis and Kedlaya. Witt-perfect rings have the advantage that one does not need to assume that the rings are complete and separated.
\end{abstract}

\maketitle

\tableofcontents

\section{Introduction}

In the basic part of perfectoid geometry, one of the most fundamental tools that is frequently used is the \textit{Almost purity theorem}, which was first proved by Faltings for certain big algebras constructed from smooth algebras over a discrete valuation ring, and then by Scholze for perfectoid algebras over a perfectoid field. One drawback of perfectoid rings is that one needs to work with $p$-adically complete rings, which prevents us from taking infinite integral extensions of \textit{$p$-adically complete} rings directly. Roughly speaking, a \textit{Witt-perfect} condition on a $p$-torsion free ring is defined in the same way as for perfectoid algebras, except that it need not be $p$-adically complete. This class of rings had been introduced by Davis and Kedlaya in \cite{DK14} and \cite{DK15}. However, a difficulty lurks in dealing with Witt-perfect rings, due to the lack of tilting correspondence for those rings. 

The present note stems from authors' endeavor to reaching deeper understanding of the papers \cite{DK14} and \cite{DK15}. Our aim is to prove some basic results and explain their consequences which are of great importance in the situation where one wants to avoid taking $p$-adic completion. We also make extensive studies on the comparisons between (almost) Witt-perfect algebra and (almost) perfectoid algebras. It is clear from Andr\'e's work \cite{An1} that almost perfectoid algebras naturally show up in certain applications.

$\bf{Notation}$: All rings are assumed to be commutative with a unity. For a finite projective ring extension $A \hookrightarrow B$, denote by $\textnormal{Tr}_{B/A}:B \to A$ the trace map; see the book \cite{Fo17} for the construction. For a ring $A$ and an $A$-algebra $B$, we denote by $A^+_B$ (resp.\ $A^*_B$) the $A$-subalgebra of $B$ consisting of all elements that are integral (resp.\ almost integral) over $A$, which are found in Definition \ref{Defintcompleteint}. For the definition of a Tate ring $A$ with the subset of powerbounded elements $A^\circ$, see Definition \ref{TateDefinition}.

Let us state the main results; see Proposition \ref{prop513}, Proposition \ref{prop1224}, Theorem \ref{maincor1} and Theorem \ref{lemalprjct} below.

\begin{maintheorem}
Let $A$ be a ring and let $B$ be a finite \'etale $A$-algebra. Let $A_{0}\subset A$ be a subring with an ideal $I_{0}\subset A_{0}$. Let $B_{0}$ be an $A_{0}$-subalgebra of $B$ such that $B=\bigcup_{n\geq 1}(B_{0}:_B I_{0}^{n})$ (see $(\ref{colonideal})$ for this notation below). Assume that there exists an integer $c>0$ such that $\textnormal{Tr}_{B/A}(tm)\in A_{0}$ for every $t\in I_0^c$ and every $m\in B_{0}$. 

\begin{enumerate}
\item
If $A_{0}$ is $I_{0}$-adically separated, then so is $B_{0}$. 

\item
If $A_{0}$ is $I_{0}$-adically complete, then so is $B_{0}$. 
\end{enumerate}
\end{maintheorem}

\begin{maintheorem}
Let $(R, I)$ be a basic setup and let $f_0: A_0\to B_0$ be an $R$-algebra homomorphism with an element $t\in A_0$. Denote by $\widehat{A_0}$ and $\widehat{B_0}$ the $t$-adic completions of $A_0$ and $B_0$, respectively. Let $\widehat{f_0}: \widehat{A_0}\to \widehat{B_0}$ be the $R$-algebra homomorphism induced by $f_0$. Assume that the morphism of pairs $f_0: (A_0, (t))\to (B_0, (t))$ satisfies condition $(*)$ (cf.\ Definition \ref{CondQuad*}). Then the following assertions hold. 
\begin{enumerate}
\item
The natural $\widehat{A_0}[\frac{1}{t}]$-algebra homomorphism $B_0[\frac{1}{t}]\otimes_{A_0[\frac{1}{t}]}\widehat{A_0}[\frac{1}{t}] \to \widehat{B_0}[\frac{1}{t}]$ is an isomorphism. 

\item
$\widehat{f_0}: (\widehat{A_0}, (t))\to (\widehat{B_0}, (t))$ also satisfies the condition $(*)$ (cf.\ Definition \ref{CondQuad*}).

\item
The following conditions are equivalent. 
\begin{itemize}
\item[$(a)$]
$B_0$ is $I$-almost finitely generated and $I$-almost projective over $A_0$. 
\item[$(b)$]
$\widehat{B_0}$ is $I$-almost finitely generated and $I$-almost projective over $\widehat{A_0}$. 
\end{itemize}
\item
The following conditions are equivalent. 

\begin{itemize}
\item[$(a)$]
$f_0: A_0\to B_0$ is $I$-almost finite \'etale. 
\item[$(b)$]
$\widehat{f_0}: \widehat{A_0}\to \widehat{B_0}$ is $I$-almost finite \'etale. 
\end{itemize}
\end{enumerate}
\end{maintheorem}

An important consequence that follows from the above results is the almost purity theorem for Witt-perfect rings; see Theorem \ref{almostpurity}. This statement was originally established by Davis and Kedlaya which ultimately relies on the almost purity theorem by Kedlaya-Liu; see papers \cite{DK14} and \cite{DK15} and \cite{KL15}.

\begin{corollary}[Almost purity]
Let $A_0$ be a $p$-torsion free Witt-perfect ring and let $f_0: A_0\to B_0$ be a ring map. Put $I:=\sqrt{pA_0}$. Assume that the morphism $f_0: (A_0, (p))\to (B_0, (p))$ satisfies condition $(*)$ (cf.\ Definition \ref{CondQuad*}), $A_0$ is integrally closed in $A_0[\frac{1}{p}]$, $(A_0)^+_{B_0[\frac{1}{p}]}\subset B_0$, and $(A_0, I)$ is a basic setup. Then the following assertions hold.\begin{enumerate}
\item
$B_0$ is also Witt-perfect.

\item
$f_0: A_0\to B_0$ is $I$-almost finite \'etale.
\end{enumerate}
\end{corollary}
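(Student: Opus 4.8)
The strategy is to reduce the statement to the almost purity theorem for integral perfectoid rings by passing to $p$-adic completions, and then to transfer the conclusions back to $A_0$ and $B_0$ via the decompletion results above. We apply the second Main Theorem with $t=p$ and with the basic setup $(A_0,I)$ playing the role of $(R,I)$, so that $\widehat{A_0}$ and $\widehat{B_0}$ denote $p$-adic completions and $\widehat{f_0}\colon\widehat{A_0}\to\widehat{B_0}$ is the induced map. The first input needed is that $\widehat{A_0}$ is an integral perfectoid ring: this is precisely where the hypotheses that $A_0$ is Witt-perfect and integrally closed in $A_0[\frac{1}{p}]$ enter, through the comparison between Witt-perfect and perfectoid rings of Davis--Kedlaya (\cite{DK14}, \cite{DK15}), which in addition tells us that the Witt-perfect property of a $p$-torsion-free ring can be detected after $p$-adic completion.

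Since $f_0\colon(A_0,(p))\to(B_0,(p))$ satisfies $(*)$, the second Main Theorem applies to $f_0$. Its part (1) gives a natural isomorphism $B_0[\frac{1}{p}]\otimes_{A_0[\frac{1}{p}]}\widehat{A_0}[\frac{1}{p}]\xrightarrow{\,\sim\,}\widehat{B_0}[\frac{1}{p}]$, so that $\widehat{A_0}[\frac{1}{p}]\to\widehat{B_0}[\frac{1}{p}]$ is finite \'etale, being a base change of the finite \'etale morphism $A_0[\frac{1}{p}]\to B_0[\frac{1}{p}]$ provided by $(*)$; and its part (2) gives that $\widehat{f_0}\colon(\widehat{A_0},(p))\to(\widehat{B_0},(p))$ again satisfies $(*)$. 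The step I expect to be the main obstacle is to then identify $\widehat{B_0}$ with the integral closure $(\widehat{A_0})^+_{\widehat{B_0}[\frac{1}{p}]}$ of $\widehat{A_0}$ in $\widehat{B_0}[\frac{1}{p}]$, i.e.\ to verify that in this situation the formation of integral closure commutes with $p$-adic completion; for this one combines the hypotheses that $A_0$ is integrally closed in $A_0[\frac{1}{p}]$ and $(A_0)^+_{B_0[\frac{1}{p}]}\subset B_0$ with the structural consequences of $(*)$ for $\widehat{f_0}$ just obtained.

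Granting this, the classical almost purity theorem for perfectoid rings (Scholze; Kedlaya--Liu \cite{KL15}; see also the formulations used in \cite{DK14}, \cite{DK15}) applies to the perfectoid ring $\widehat{A_0}$, the finite \'etale extension $\widehat{A_0}[\frac{1}{p}]\to\widehat{B_0}[\frac{1}{p}]$, and its integral closure $\widehat{B_0}$: it yields that $\widehat{B_0}$ is perfectoid and that $\widehat{A_0}\to\widehat{B_0}$ is almost finite \'etale in the almost setting attached to $\widehat{A_0}$ (equivalently, to $I\widehat{A_0}$). One then descends using parts (3) and (4) of the second Main Theorem, which apply since $\widehat{f_0}$ satisfies $(*)$: the facts that $\widehat{B_0}$ is $I$-almost finitely generated and $I$-almost projective over $\widehat{A_0}$ and that $\widehat{f_0}$ is $I$-almost finite \'etale descend to the corresponding statements for $B_0$ over $A_0$, which is assertion (2). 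Finally, assertion (1) follows from $\widehat{B_0}$ being perfectoid together with the Witt-perfect/perfectoid comparison of the first paragraph: the hypotheses exhibit $B_0$ as a subring of $B_0[\frac{1}{p}]$, so $B_0$ is $p$-torsion-free and $\widehat{B_0}$ is genuinely its $p$-adic completion with $B_0/pB_0\cong\widehat{B_0}/p\widehat{B_0}$, while a distinguished element $\pi\in A_0$ with $\pi^p=pu$, $u\in A_0^\times$, maps to an element of $B_0$ with the same property; combined with the surjectivity of the Frobenius on $B_0/pB_0\cong\widehat{B_0}/p\widehat{B_0}$, this is exactly the Witt-perfect condition for $B_0$.
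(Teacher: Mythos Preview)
Your overall strategy matches the paper's: complete $p$-adically, invoke the perfectoid almost purity theorem of Kedlaya--Liu, and descend via Theorem~\ref{lemalprjct}. Two points deserve sharpening.

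First, you frame the main obstacle as an exact identification $\widehat{B_0}=(\widehat{A_0})^+_{\widehat{B_0}[1/p]}$, but this need not hold. Writing $\mathcal{B}$ for the Tate ring $\widehat{B_0}[\frac{1}{p}]$, what the paper establishes instead is the chain $p\mathcal{B}^\circ\subset\widehat{B_0}\subset\mathcal{B}^\circ$: condition $(*)$ gives $B_0\subset(A_0)^*_B=(B_0)^*_B$ by Proposition~\ref{cor1416}(3), while Lemma~\ref{AlmIntMod-Fin} together with the hypothesis $(A_0)^+_B\subset B_0$ gives $p(B_0)^*_B\subset B_0$, and this passes to the completion via Proposition~\ref{CICprop}. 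Then $\widehat{B_0}\hookrightarrow\mathcal{B}^\circ$ is an $I$-almost isomorphism by Corollary~\ref{cor128}; the Kedlaya--Liu theorem is applied to $\mathcal{A}^\circ\to\mathcal{B}^\circ$ and transported to $\widehat{A_0}\to\widehat{B_0}$ through that almost isomorphism before one descends.

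Second, for the Witt-perfectness of $B_0$: an element $\pi\in A_0$ with $\pi^p=pu$ for a \emph{unit} $u\in A_0^\times$ need not exist, since $A_0$ is not assumed $p$-adically Zariskian; the Witt-perfect hypothesis only provides $\pi^p\equiv p\pmod{p^2A_0}$ (which, as you can check, still suffices for the congruence argument you sketch). More substantively, perfectoidness of $\mathcal{B}$ says that $\mathcal{B}^\circ/(\varpi^p)$ is semiperfect, not directly that $\widehat{B_0}/(p)$ is; one needs Lemma~\ref{lem2231930} to transfer semiperfectness between $\mathcal{B}^\circ$ and an open integrally closed subring, after which $B_0/(p)\cong\widehat{B_0}/(p)$ finishes the argument.
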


The structure of this paper goes as follows.

In \S 2, we give a review on Tate rings, a reasonable class of topological rings that are introduced by Huber \cite[p.\ 456]{Hu93} as a generalization of classical Tate algebras. This class is useful for describing the structure as a topological ring of a Banach ring, and developing an algebraic theory approximate to non-archimedean analysis. As a preliminary to the study of uniform Banach rings, we introduce the notion of \textit{preuniformity} for both Tate rings and pairs of rings $(A,I)$, where $A$ is a ring and $I$ is an ideal of $A$; see Definition \ref{def204} and Definition \ref{defpair}. This notion is important when one needs to distinguish uniform Banach rings and uniform non-Banach rings and moreover, our definition is of algebraic nature that is appealing to algebraists. Thus, their properties are studied with connections to Banach rings. 

In \S 3, we will use Fontaine's version of perfectoid rings that was introduced in \cite{Fo13} as a generalization of Scholze's perfectoid algebras; see Definition \ref{Fontainedef}. We study its relation to Witt-perfect rings, which are decompletions of perfectoid algebras.

In \S 4, we study finite \'etale extensions over a Tate ring. In particular, the reader will find that a \textit{trace map} becomes an essential tool for testing certain topological structure such as complete or separated, on module-finite algebras over Tate rings. The main results in this section are Proposition \ref{prop513}, Proposition \ref{prop1224} and Theorem \ref{maincor1}. We also prove some basic results on (complete) integral closure and their behavior under completion. We believe that some of these results are known to experts. However, as they do not seem to be documented in existing literatures, we try to give detailed proofs with maximal generality.

In \S 5, by specializing the results obtained in \S 3, we complete the proof of the almost purity theorem for Witt-perfect rings by reducing to the case of perfectoid rings by Kedlaya-Liu \cite{KL15}; see Theorem \ref{lemalprjct}
and Theorem \ref{almostpurity}. A review on some basic definitions of almost ring theory is also given, borrowing from Gabber-Ramero's monograph \cite{GR03}.

In \S 6, we give some historical remarks on the almost purity theorems. The reason for the inclusion of this appendix is to help commutative algebraists to understand core ideas. No new results are proved here.

Let us point out that the almost purity theorem without completion has also been established by Gabber-Ramero in \cite{GR18}, under the name of \textit{formal perfectoid rings}. Although their treatment is quite general, we think that our approach is short and concise. We plan to apply the results of this paper to the construction of almost Cohen-Macaulay algebras after establishing a variant of Andr\'e's perfectoid Abhyankar's lemma in the forthcoming paper \cite{NS19}.

\section{Notation and preliminaries}

For the definition of perfectoid algebras, we follow the original version by Scholze \cite{Sch12} and its essential extension by Fontaine \cite{Fo13}. There is, however, a more general version as introduced in the paper \cite{BMS17}. A \emph{pair} is meant to be a pair $(A, I)$ consisting of a ring $A$ and an ideal $I\subset A$. 
A \emph{morphism of pairs} $f: (A, I)\to (B, J)$ is a ring map $f: A\to B$ such that $I^n\subset f^{-1}(J)$ for some $n>0$.  
Let $(A, I)$ be a pair, $M$ be an $A$-module, and $N$ be an $A$-submodule of $M$. Then the \emph{$I$-saturation of $N$ in $M$} is defined to be the $A$-submodule 
$$
N^{I\textnormal{-sat}}=\{m\in M\ |\ \textnormal{for any }x\in I,\ \textnormal{there exists some } n>0\ \textnormal{such that }x^nm\in N \}. 
$$
In particular, for the ideal $(0)\subset A$, the $I$-saturation $(0)^{I\textnormal{-sat}}$ denotes the ideal of $A$ consisting of all $I$-torsion elements.

\subsection{Integrality and almost integrality}
Here we discuss the notion of integrality and almost integrality.\footnote{The notion of almost integrality is much older than $I$-almost integrality, which is relevant to the conclusion of Lemma \ref{lemma117}. This nomenclature is somehow a mathematical incident in a positive sense. Indeed, one of our purposes is to relate almost integrality to various notions used in almost ring theory.} A general reference is \cite{Bour98}.

\begin{definition}
\label{Defintcompleteint}
Let $A \subset B$ be a ring extension. 
\begin{enumerate}
\item
An element $b \in B$ is \textit{integral} over $A$, if $\sum_{n=0}^\infty A \cdot b^n$ is a finitely generated $A$-submodule of $B$. The set of all elements, denoted as $C$, of $B$ that are integral over $A$ forms an $A$-subalgebra of $B$. If $A=C$, then $A$ is called \textit{integrally closed} in $B$. 

\item
An element $b \in B$ is \textit{almost integral} over $A$, if $\sum_{n=0}^\infty A \cdot b^n$ is contained in a finitely generated $A$-submodule of $B$. The set of all elements, denoted as $C$, of $B$ that are almost integral over $A$ forms an $A$-subalgebra of $B$, which is called the \textit{complete integral closure} of $A$ in $B$. If $A=C$, then $A$ is called \textit{completely integrally closed} in $B$. 
\end{enumerate}
\end{definition}

This definition can be extended to any ring homomorphism $A \to B$ in a natural way: Let $A$ be a ring, let $B$ be an $A$-algebra and let $b\in B$ be an element. Then  we say that $b$ is \textit{integral} (resp.\ \textit{almost integral}) over $A$, if $b$ is integral (resp.\ almost integral) over the image of $A$ in $B$. Unlike integral closure, the complete integral closure of an integral domain in its field of fractions is not necessarily completely integrally closed in the same field of fractions; see \cite{He69} for such examples.

In the case when $A$ is Noetherian, we have $A^{+}_{B}=A^{*}_B$. However, these two rings may be quite different in general. 

\begin{example}\label{int.neq.aint}
Let $G$ be the abelian group $\mathbb{Z}\oplus \mathbb{Z}$ equipped with the lexicographic order (i.e.\ $(a_{1}, a_{2})\leq (b_{1}, b_{2})\iff$ $a_{1}<b_{1}$, or $a_{1}=b_{1}$ and $a_{2}\leq b_{2}$). Let $V$ be a valuation ring with value group $G$, $K=\textnormal{Frac}(V)$, and let $v: K^{\times}\to G$ be a valuation corresponding to $V$. Let $t\in V$ be an element such that $v(t)=(1,0)$. Then $K=V[\frac{1}{t}]$. Hence $a\in K$ is almost integral over $V$ if and only if there exists some $l>0$ such that $t^{l}a^{n}\in V$ for every $n>0$.  For $a\in K\setminus\{0\}$, the latter condition means that the first entry $a_{1}$ of $v(a)$ satisfies $a_{1}\geq 0$. Hence $V^*_{K}$ is a valuation ring of rank $1$. On the other hand, since $V^{+}_{K}=V$, $V^{+}_{K}$ is a valuation ring of rank $2$. Thus the Krull dimension of $V^{+}_{K}$ and that of $V^{*}_{K}$ are different. 
\end{example}

We often use the following results.

\begin{lemma}\label{AlmIntMod-Fin}
Let $A_0$ be a ring with a nonzero divisor $t$. Let $B_0$ be a $t$-torsion free $A_0$-algebra such that the induced $A_0[\frac{1}{t}]$-algebra $ B_0[\frac{1}{t}]$ is module-finite. Then one has $t(A_0)^*_B\subset (A_0)^+_B$. 
\end{lemma}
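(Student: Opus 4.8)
The plan is to reduce immediately to the statement: if $b$ is almost integral over $A_0$ inside $B$, then $tb$ is integral over $A_0$; applying this to every $b\in (A_0)^*_B$ then yields $t\,(A_0)^*_B\subseteq (A_0)^+_B$. So fix $b\in (A_0)^*_B$ and choose a finitely generated $A_0$-submodule $M\subseteq B$ with $A_0[b]\subseteq M$; after replacing $M$ by $M+A_0$ we may assume $1\in M$, so that $M$ is a faithful $A_0$-module.

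Two observations get the argument off the ground. First, since $b^n\in M$ and $t\in A_0$, we have $(tb)^n=t^nb^n\in M$ for every $n\ge 0$, so $A_0[tb]=\sum_{n\ge 0}A_0(tb)^n\subseteq M$; thus $A_0[tb]$ is an $A_0$-subalgebra of $B$ contained in a finitely generated $A_0$-module. Second, $B_0$ is $t$-torsion free, so it embeds in $B_0[\frac{1}{t}]$, which is module-finite --- hence integral --- over $A_0[\frac{1}{t}]$; therefore $b$, and with it $tb$, satisfies a monic equation over $A_0[\frac{1}{t}]$, so $A_0[tb][\frac{1}{t}]=A_0[\frac{1}{t}][b]$ is a finitely generated $A_0[\frac{1}{t}]$-module, say generated by $1,b,\dots,b^{d-1}$.

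The core of the proof is to produce a finitely generated, faithful $A_0$-module $L\subseteq B$ with $(tb)L\subseteq L$: then the determinant trick (Cayley--Hamilton applied to multiplication by $tb$ on $L$, using $1\in L$) gives a monic polynomial over $A_0$ with $tb$ as a root, i.e.\ $tb\in (A_0)^+_B$. I would construct $L$ as follows. Fix elements $\omega_1=1,\omega_2,\dots,\omega_r\in B_0$ generating $B_0[\frac{1}{t}]$ over $A_0[\frac{1}{t}]$ and set $W=\sum_j A_0\omega_j\subseteq B_0$; then $W$ is finitely generated, $1\in W$, $W[\frac{1}{t}]=B_0[\frac{1}{t}]$, and clearing denominators in the relations $b\omega_j=\sum_l(\,\cdot\,)\omega_l$ produces $c\ge 0$ with $(t^cb)W\subseteq W$. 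Put $L:=\sum_{n\ge 0}b^nW\subseteq B_0$. One checks at once that $(tb)L\subseteq L$ (each generator $t\cdot b^{n+1}\omega_j$ of $(tb)L$ is already among the generators of $L$, since $t\in A_0$), that $1\in L$, and that $L\subseteq M\cdot W$, which is a finitely generated $A_0$-module (using $b^n\in M$). The remaining point is that $L$ is itself finitely generated over $A_0$; here I would use that $L[\frac{1}{t}]=B_0[\frac{1}{t}]=L_{d-1}[\frac{1}{t}]$, where $L_{d-1}:=W+bW+\dots+b^{d-1}W$ is finitely generated, so that $L/L_{d-1}$ is $t$-power torsion inside the finitely generated module $(M\cdot W)/L_{d-1}$, together with the denominator-cleared monic relation $t^eb^d\in A_0+A_0b+\dots+A_0b^{d-1}$ (which, multiplied by $b^k$, bounds $b^{d+k}W$ in terms of the lower $b^iW$), to extract a finite generating set of $L$.

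The step I expect to be the genuine obstacle is exactly this finite generation of $L$ (equivalently, of $A_0[tb]$): one must convert ``finitely generated after inverting $t$'' together with ``contained in a fixed finitely generated $A_0$-module'' into honest finite generation over $A_0$ --- equivalently, uniformly bound the $t$-denominators of the powers $b^n$ relative to $W$. This is precisely where both hypotheses must be used in tandem: almost integrality of $b$ keeps all the powers $b^n$ inside one finitely generated module, while module-finiteness of $B_0[\frac{1}{t}]$ over $A_0[\frac{1}{t}]$ makes $b$ integral over $A_0[\frac{1}{t}]$ and thereby bounds the degree of the relevant relations; neither input alone suffices, and it is this interaction that needs to be carried out carefully.
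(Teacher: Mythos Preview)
Your strategy---build a finitely generated faithful $A_0$-module stable under multiplication by $tb$ and invoke Cayley--Hamilton---can be made to work, but the step you flag as the obstacle is a real gap as written. Over a non-Noetherian $A_0$, the fact that $L/L_{d-1}$ sits as a $t$-power-torsion submodule inside the finitely generated module $(M\cdot W)/L_{d-1}$ does \emph{not} make it finitely generated: submodules of finitely generated modules need not be. Your denominator-cleared relation $t^eb^d\in\sum_{i<d}A_0b^i$, multiplied by $b^k$, only yields $t^{e}b^{d+k}W\subseteq\sum_{i<d}b^{i+k}W$, and iterating gives $t^{ke}b^{d+k-1}W\subseteq L_{d-1}$ with exponent growing in $k$; no uniform bound, hence no finite generating set, falls out this way. (A fix: since $M\cdot W$ is finitely generated and $L_{d-1}[\tfrac{1}{t}]=B$, one uniform $N$ gives $t^N(M\cdot W)\subseteq L_{d-1}$, whence $(tb)^NL_{d-1}\subseteq t^NL\subseteq L_{d-1}$; then $L':=\sum_{k=0}^{N-1}(tb)^kL_{d-1}$ is finitely generated, contains $1$, and is $(tb)$-stable.)

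The paper sidesteps all of this via a much shorter argument. Set $B'_0:=(A_0)^+_B$. Since $B$ is module-finite (hence integral) over $A_0[\tfrac{1}{t}]$, one has $B=B'_0[\tfrac{1}{t}]$; thus the finitely generated module $N_0\supseteq\{b^n:n\ge 0\}$ satisfies $t^lN_0\subseteq B'_0$ for some $l$. Then $(tb)^l=t^lb^l\in t^lN_0\subseteq B'_0$, and because $B'_0$ is by definition integrally closed in $B$, this forces $tb\in B'_0$. The key simplification you are missing is that it suffices to land some power $(tb)^l$ in the integral closure; integrality of $tb$ itself is then automatic, and no determinant trick or auxiliary stable module is needed.
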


\begin{proof}
Put $A:=A_0[\frac{1}{t}]$, $B:=B_0[\frac{1}{t}]$ and $B'_0:=(A_0)^{+}_{B}$. 
Pick $b\in (A_0)^*_B$. Then there exists a finitely generated $A_0$-submodule $N_0\subset B$ such that $b^n$ belongs to $N_0$ for every $n>0$. On the other hand, since $B$ is module-finite over $A$, we have $B=B'_0[\frac{1}{t}]$. Thus, $t^lN_0$ is contained in $B'_0$ for some $l>0$. In particular, $(tb)^l\ (=t^lb^l)$ lies in $B'_0$ and therefore, so does $tb$ by the definition of $B'_0$. Hence $tb$ is integral over $A_0$, as desired. 
\end{proof}

\begin{proposition}
\label{CICprop}
Let $A_0$ be a ring with a nonzero divisor $t$. Denote by $\widehat{A_0}$ the $t$-adic completion of $A_0$. Put $A:=A_0[\frac{1}{t}]$,  $A':=\widehat{A_0}[\frac{1}{t}]$, $A^+:=(A_0)^+_A$, and $A^\circ:=(A_0)^*_A$. 
\begin{enumerate}
\item
Suppose that there exists some $c\geq 0$ for which $t^cA^+\subset A_0$ (resp. $t^cA^\circ\subset A_0$). Then the following assertions hold. 
\begin{enumerate}
\item
One has 
$t^c(\widehat{A_0})^+_{A'}\subset \widehat{A_0}$ (resp. $t^c(\widehat{A_0})^*_{A'}\subset \widehat{A_0}$).

\item
Denote by $\widehat{A^+}$ and $\widehat{A^\circ}$ the $t$-adic completions. Then 
the inclusion map $A_0\hookrightarrow A^+$ (resp.\ $A_0\hookrightarrow A^\circ$) induces an isomorphism $A'\xrightarrow{\cong} \widehat{A^+}[\frac{1}{t}]$ 
(resp.\ $A'\xrightarrow{\cong} \widehat{A^\circ}[\frac{1}{t}]$) whose restriction to $(\widehat{A_0})^+_{A'}$ (resp.\ $(\widehat{A_0})^*_{A'}$) yields an isomorphism $(\widehat{A_0})^+_{A'}\xrightarrow{\cong}\widehat{A^+}$ (resp.\ $(\widehat{A_0})^*_A\xrightarrow{\cong} \widehat{A^\circ}$). 

\end{enumerate}
 
\item
Conversely, if there exists some $c\geq 0$ for which $t^c(\widehat{A_0})^+_{A'}\subset \widehat{A_0}$ (resp. $t^c(\widehat{A_0})^*_{A'}\subset \widehat{A_0}$), then one has $t^cA^+\subset A_0$ (resp. $t^cA^\circ\subset A_0$). 
\end{enumerate}
\end{proposition}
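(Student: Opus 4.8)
The plan is to exploit that the hypotheses of~(1) say precisely that $A^+/A_0$ (resp.\ $A^\circ/A_0$) is annihilated by $t^c$, hence is bounded $t$-power torsion. First I would record the elementary fact that $t$-adic completion sends a short exact sequence $0\to M'\to M\to M''\to 0$ of $A_0$-modules with $M''$ annihilated by a power of $t$ to an exact sequence $0\to\widehat{M'}\to\widehat{M}\to M''\to 0$: one has $\widehat{M''}=M''$, the filtrations $\{t^nM\cap M'\}_n$ and $\{t^nM'\}_n$ of $M'$ are mutually cofinal (since $t^{n+c}M\subseteq t^nM'$), so the relevant $\varprojlim^1$ vanishes by Mittag--Leffler. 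Applying this to $0\to A_0\to A^+\to A^+/A_0\to 0$ gives an injection $\widehat{A_0}\hookrightarrow\widehat{A^+}$ with cokernel annihilated by $t^c$, whence $t^c\widehat{A^+}\subseteq\widehat{A_0}$, and inverting $t$ kills the cokernel, identifying $A'=\widehat{A_0}[\tfrac1t]$ with $\widehat{A^+}[\tfrac1t]$ --- the first isomorphism asserted in~(1)(b). (Here $\widehat{A^+}$ is $t$-torsion free, being the completion of a $t$-torsion free module, so it does embed into $A'$.) The same discussion applies verbatim with $A^\circ$ in place of $A^+$.

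The crux is then to identify $(\widehat{A_0})^+_{A'}$ with $\widehat{A^+}$ as subrings of $A'$. For the inclusion $\widehat{A^+}\subseteq(\widehat{A_0})^+_{A'}$ I would write $A^+=\bigcup_i C_i$ as a filtered union of module-finite $A_0$-subalgebras; each $A^+/C_i$ is again annihilated by $t^c$, so the completion lemma gives $\widehat{A^+}=\bigcup_i\widehat{C_i}$, and each $\widehat{C_i}$ is module-finite (completion being right exact on finitely generated modules), hence integral, over $\widehat{A_0}$. For the reverse inclusion I would argue by approximation: given $x\in A'$ integral over $\widehat{A_0}$, write $x=w/t^m$ with $w\in\widehat{A_0}$, fix a monic relation $x^k+b_{k-1}x^{k-1}+\dots+b_0=0$ over $\widehat{A_0}$, and choose $w_N,b_{i,N}\in A_0$ congruent to $w,b_i$ modulo $t^N$. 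Then $x_N:=w_N/t^m$ lies in $A=A_0[\tfrac1t]$, and, setting $P_N(T):=T^k+b_{k-1,N}T^{k-1}+\dots+b_{0,N}$, a direct estimate shows that $P_N(x_N)$ has image in $t^{N-mk}\widehat{A_0}$; since $P_N(x_N)$ also lies in $A$ and $\widehat{A_0}/t^j\widehat{A_0}\cong A_0/t^jA_0$ (because $t$ is a nonzerodivisor on $A_0$), this forces $P_N(x_N)=t^{N-mk}u'$ for some $u'\in A_0$, so $x_N$ satisfies the monic equation $P_N(T)-t^{N-mk}u'=0$ over $A_0$ and hence $x_N\in A^+$. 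Since $x-x_N$ has image in $t^{N-m}\widehat{A_0}\subseteq t^{N-m}\widehat{A^+}$, letting $N$ grow forces $x\in\widehat{A^+}$. Combining the two inclusions yields $(\widehat{A_0})^+_{A'}=\widehat{A^+}$; together with $t^c\widehat{A^+}\subseteq\widehat{A_0}$ this establishes~(1)(a) and the restriction-isomorphism of~(1)(b) in the integral-closure case.

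For the complete-integral-closure statements the naive approximation breaks down --- an arbitrary $w_N/t^m\in A$ need not be almost integral over $A_0$ --- so I would bootstrap from the integral case via Lemma~\ref{AlmIntMod-Fin}. That lemma applied to $\widehat{A_0}\subseteq A'$ gives $t(\widehat{A_0})^*_{A'}\subseteq(\widehat{A_0})^+_{A'}=\widehat{A^+}$, and applied to $A_0\subseteq A$ gives $tA^\circ\subseteq A^+$; hence $A^\circ/A^+$ is annihilated by $t$, and the completion lemma yields $\widehat{A^+}\subseteq\widehat{A^\circ}\subseteq t^{-1}\widehat{A^+}$ with $\widehat{A^\circ}/\widehat{A^+}=A^\circ/A^+$. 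Given $x\in(\widehat{A_0})^*_{A'}$ one has $tx\in\widehat{A^+}$; lifting the class of $tx$ in $\widehat{A^+}/t\widehat{A^+}\cong A^+/tA^+$ to an element $z_0\in A^+$, the element $z_0/t\in A$ differs from $x$ by an element of $\widehat{A^+}$, hence is almost integral over $\widehat{A_0}$, and $\widehat{A_0}/t^j\widehat{A_0}\cong A_0/t^jA_0$ upgrades this to almost integrality over $A_0$, i.e.\ $z_0/t\in A^\circ$; tracing $z_0/t$ back through the exact sequences shows $x\in\widehat{A^\circ}$. Since $\widehat{A^\circ}\subseteq(\widehat{A_0})^*_{A'}$ is immediate from $t^c\widehat{A^\circ}\subseteq\widehat{A_0}$, we conclude $(\widehat{A_0})^*_{A'}=\widehat{A^\circ}$, and~(1)(a),(b) follow as before.

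Part~(2) will go by descent in the opposite direction: for $y\in A^+$ the image of $y$ in $A'$ is integral over $\widehat{A_0}$, so by hypothesis $t^c$ times it lies in $\widehat{A_0}$; writing $t^cy=b/t^l$ with $b\in A_0$ and using $\widehat{A_0}/t^l\widehat{A_0}\cong A_0/t^lA_0$ once more forces $b\in t^lA_0$, i.e.\ $t^cy\in A_0$, and the $A^\circ$-case is identical. The main obstacle is the identification $(\widehat{A_0})^+_{A'}=\widehat{A^+}$ --- specifically the approximation step above and its failure in the almost-integral setting, which is exactly why one is forced to route through Lemma~\ref{AlmIntMod-Fin}. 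A pervasive secondary difficulty, coming from the absence of any Noetherian or separatedness hypothesis, is that $A_0$ and $A^+$ need not be $t$-adically separated, so one must consistently argue with images inside $A'$ and use $\widehat{A_0}/t^j\widehat{A_0}\cong A_0/t^jA_0$ rather than treating $A_0\to\widehat{A_0}$ or $A\to A'$ as injective.
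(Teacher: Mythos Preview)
Your proof is correct, but the route differs substantially from the paper's. Both begin the same way: the short-exact-sequence completion lemma (Lemma~\ref{CompCokerKilled}) gives $\widehat{A_0}\hookrightarrow\widehat{A^+}$ with cokernel killed by $t^c$, hence $A'\cong\widehat{A^+}[\tfrac1t]$. From there you diverge. To obtain $(\widehat{A_0})^+_{A'}\subseteq\widehat{A^+}$ you run an explicit approximation argument, lifting a monic relation to produce elements $x_N\in A^+$ converging to $x$; the paper instead invokes \cite[Lemma~5.1.2]{Bh17} to say that $\widehat{A^+}$ is already integrally closed in $A'$, whence the containment is immediate. For the reverse inclusion $\widehat{A^+}\subseteq(\widehat{A_0})^+_{A'}$ you write $A^+$ as a filtered union of module-finite $A_0$-subalgebras and complete; the paper observes that $(\widehat{A_0})^+_{A'}$ is $t$-adically complete and separated (being sandwiched between $\widehat{A_0}$ and $\widehat{A^+}$), so the universal property of completion produces a retraction $\widehat{A^+}\to(\widehat{A_0})^+_{A'}$ of the inclusion, forcing equality. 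The paper treats the complete-integral-closure case in parallel by citing \cite[Lemma~5.1.3]{Bh17}; you instead bootstrap from the integral case via Lemma~\ref{AlmIntMod-Fin}, which is a legitimate workaround for the failure of naive approximation. For part~(2), your direct use of $\widehat{A_0}/t^j\widehat{A_0}\cong A_0/t^jA_0$ is exactly the content of the Beauville--Laszlo fibre-product (Lemma~\ref{Beauville-Laszlo}) that the paper quotes. In short, the paper's argument is shorter and more uniform across the two cases but leans on the black boxes from \cite{Bh17}; yours is longer and forces a case split, but is entirely self-contained.

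One small remark: your justification ``completion being right exact on finitely generated modules'' is not valid without a Noetherian hypothesis, but in your situation it is unnecessary --- since $t^cC_i\subseteq A_0\subseteq C_i$, the completion lemma already gives $\widehat{C_i}/\widehat{A_0}\cong C_i/A_0$, so $\widehat{C_i}=\widehat{A_0}+C_i$ is visibly finitely generated over $\widehat{A_0}$.
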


To prove this, let us verify a fundamental lemma.

\begin{lemma}
\label{CompCokerKilled}
Let $A_0$ be a ring and let $I_0\subset A_0$ be a finitely generated ideal. Let $f_0: N_0 \hookrightarrow M_0$ be an injective homomorphism between $A_0$-modules. 
Denote by $\widehat{M_0}$ and $\widehat{N_0}$ the $I_0$-adic completions of $M_0$ and $N_0$, respectively. Let $\widehat{f_0}: \widehat{N_0}\to\widehat{M_0}$ be the $\widehat{A_0}$-linear map induced by $f_0$. 
Assume that the cokernel of $f_0$ is annihilated by $I_0^m$ for some $m\geq 0$. 
Then, $\widehat{f_0}$ is also injective and the cokernel of $\widehat{f_0}$ is annihilated by $I_0^m\widehat{A_0}$. 
\end{lemma}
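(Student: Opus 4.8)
The plan is to reduce the statement to the left-exactness of inverse limits, after replacing the $I_0$-adic filtration on $N_0$ by the filtration induced from $M_0$. Identify $N_0$ with its image $f_0(N_0)\subset M_0$ and set $C_0:=M_0/N_0$, so that the hypothesis reads $I_0^mM_0\subset N_0$.

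The first step is to observe that the $I_0$-adic filtration $\{I_0^nN_0\}_n$ and the induced filtration $\{N_0\cap I_0^nM_0\}_n$ on $N_0$ are mutually cofinal: the inclusion $I_0^nN_0\subset N_0\cap I_0^nM_0$ is automatic, and for $n\ge m$ one has $N_0\cap I_0^nM_0\subset I_0^nM_0=I_0^{\,n-m}(I_0^mM_0)\subset I_0^{\,n-m}N_0$. Hence these two filtrations define the same completion, so the canonical map $\widehat{N_0}=\varprojlim_n N_0/I_0^nN_0\to\varprojlim_n N_0/(N_0\cap I_0^nM_0)$ is an isomorphism; moreover, since for each $n$ the map $N_0/I_0^nN_0\to M_0/I_0^nM_0$ induced by $f_0$ factors through the inclusion $N_0/(N_0\cap I_0^nM_0)\hookrightarrow M_0/I_0^nM_0$, the resulting monomorphism $\widehat{N_0}\to\widehat{M_0}$ is exactly $\widehat{f_0}$.

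Next, for each $n$ there is a natural short exact sequence $0\to N_0/(N_0\cap I_0^nM_0)\to M_0/I_0^nM_0\to C_0/I_0^nC_0\to 0$, and these assemble into an inverse system of short exact sequences. Applying $\varprojlim_n$ (which is left exact) and using the previous paragraph yields an exact sequence
\[
0\longrightarrow \widehat{N_0}\ \xrightarrow{\ \widehat{f_0}\ }\ \widehat{M_0}\longrightarrow \varprojlim_n C_0/I_0^nC_0 ,
\]
so $\widehat{f_0}$ is injective and $\coker(\widehat{f_0})$ is identified with an $\widehat{A_0}$-submodule of $\varprojlim_n C_0/I_0^nC_0$. (One can further note that the transition maps of $\{N_0/(N_0\cap I_0^nM_0)\}_n$ are surjective, so the system is Mittag--Leffler and the last arrow is onto, whence $\coker(\widehat{f_0})\cong\varprojlim_n C_0/I_0^nC_0$; but this refinement is not needed.) Finally, since $I_0^mC_0=0$ we have $C_0/I_0^nC_0=C_0$ for all $n\ge m$, so $\varprojlim_n C_0/I_0^nC_0=C_0$, which is annihilated by $I_0^m$ and whose $\widehat{A_0}$-module structure factors through the surjection $\widehat{A_0}\twoheadrightarrow A_0/I_0^m$, which sends $I_0^m\widehat{A_0}$ to $0$. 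Thus $C_0$, hence its submodule $\coker(\widehat{f_0})$, is annihilated by $I_0^m\widehat{A_0}$, as required.

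The one genuinely delicate point is that $I_0$-adic completion need not be an exact functor since $A_0$ is not assumed Noetherian, so one cannot simply complete the sequence $0\to N_0\to M_0\to C_0\to 0$ directly; the filtration comparison of the second paragraph is the device that circumvents this, and it is precisely there that the hypothesis $I_0^mM_0\subset N_0$ enters in an essential way. All the remaining steps are formal manipulations with inverse limits.
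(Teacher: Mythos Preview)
Your proof is correct and follows essentially the same route as the paper's: both pass to the short exact sequences $0\to N_0/(N_0\cap I_0^nM_0)\to M_0/I_0^nM_0\to C_0/I_0^nC_0\to 0$, use the cofinality $I_0^{n+m}M_0\cap N_0\subset I_0^nN_0$ to identify $\varprojlim_n N_0/(N_0\cap I_0^nM_0)$ with $\widehat{N_0}$, and then take inverse limits. The paper invokes $\varprojlim{}^1=0$ (equivalently, your Mittag--Leffler remark) to get the full short exact sequence $0\to\widehat{N_0}\to\widehat{M_0}\to C_0\to 0$, whereas you rely only on left-exactness and conclude via the submodule inclusion; either version suffices.
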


\begin{proof}[Proof of Lemma \ref{CompCokerKilled}]
Consider the exact sequence: $0 \to N_0 \xrightarrow{f_0} M_0 \to L_0 \to 0$ with $L_0$ being the cokernel of $f_0$. 
By assumption, the $A_0$-module $L_0$ is killed by $I_0^m$. For an arbitrary $n>0$, we have the induced exact sequence:
\begin{equation}
\label{sequence1}
0 \to N_0/(I_0^n M_0 \cap N_0) \to M_0/I_0^nM_0 \to L_0/I_0^nL_0 \to 0.
\end{equation}
Since $\varprojlim^1_{n} {N_0}/(I_0^n M_0 \cap N_0) \cong 0$ and $I_0^{m+n}M_0 \cap N_0 \subset I_0^n N_0$, the following sequence induced by $(\ref{sequence1})$ is exact:
$$
0 \to \widehat{N_0} \to \widehat{M_0} \to \widehat{L_0} \cong {L_0} \to 0,
$$
where $\widehat{L_0}$ denotes the $I_0$-adic completion of $L_0$. In particular, the cokernel of $\widehat{f_0}$ is killed by $I_0^m\widehat{A_0}$. This yields the assertion. 
\end{proof}

Moreover, we need the following result from \cite{BL95}; see also \cite[Tag 0BNR]{Stacks}.

\begin{lemma}[Beauville-Laszlo]
\label{Beauville-Laszlo}
Let $A_0$ be a ring with a nonzero divisor $t\in A_0$ and let $\widehat{A_0}$ be the $t$-adic completion. Then $t$ is a nonzero divisor of $\widehat{A_0}$ and one has the commutative diagram:
\begin{equation}
\begin{CD}\label
{BLdiagramBig}
A_0@>\psi>> \widehat{A_0} \\
@V\iota VV @VV\iota'V \\
A_0[\frac{1}{t}] @>>\psi_t>\ \widehat{A_0}[\frac{1}{t}]
\end{CD}
\end{equation}that is cartesian. In other words, we have $A_0 \cong A_0[\frac{1}{t}] \times_{\widehat{A_0}[\frac{1}{t}]} \widehat{A_0}$.
\end{lemma}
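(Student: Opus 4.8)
The plan is to establish the two assertions in turn. First I would show that $t$ is a nonzero divisor on $\widehat{A_0}$; then, using this, I would show that the canonical map $\varphi\colon A_0\to A_0[\frac{1}{t}]\times_{\widehat{A_0}[\frac{1}{t}]}\widehat{A_0}$ is bijective, which is precisely the assertion that the square $(\ref{BLdiagramBig})$ is cartesian.

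For the nonzero-divisor claim I would work with the presentation $\widehat{A_0}=\varprojlim_m A_0/t^mA_0$. Given $x=(x_m)_m$ with $tx=0$, choose lifts $\tilde x_m\in A_0$ of the $x_m$; then $t\tilde x_m\in t^mA_0$, and since $t$ is a nonzero divisor on $A_0$ this forces $\tilde x_m\in t^{m-1}A_0$. Reading this one index higher, $\tilde x_{m+1}\in t^mA_0$, so the image of $x_{m+1}$ in $A_0/t^mA_0$ — which is $x_m$ by the compatibility of the sequence — vanishes; hence $x=0$. An immediate corollary, used repeatedly below, is that the two localization maps $A_0\hookrightarrow A_0[\frac{1}{t}]$ and $\widehat{A_0}\hookrightarrow\widehat{A_0}[\frac{1}{t}]$ are injective.

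For the cartesian property, injectivity of $\varphi$ is immediate from the injectivity of $A_0\hookrightarrow A_0[\frac{1}{t}]$. For surjectivity I would take $(a,b)$ in the fiber product, write $a=a'/t^n$ with $a'\in A_0$ and $n\geq 0$, and unwind the gluing condition $\psi_t(a)=\iota'(b)$ into the identity $\psi(a')=t^nb$, which holds a priori in $\widehat{A_0}[\frac{1}{t}]$ but, by the injectivity of $\widehat{A_0}\hookrightarrow\widehat{A_0}[\frac{1}{t}]$, already holds in $\widehat{A_0}$. Applying the canonical ring projection $\widehat{A_0}\to A_0/t^nA_0$ annihilates the right-hand side, so $a'\in t^nA_0$, say $a'=t^nc$ with $c\in A_0$; then $c$ maps to $a$ in $A_0[\frac{1}{t}]$, while $t^n\psi(c)=\psi(a')=t^nb$ together with $t^n$ being a nonzero divisor on $\widehat{A_0}$ gives $\psi(c)=b$. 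Hence $\varphi(c)=(a,b)$, and $\varphi$ is an isomorphism.

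The only step carrying real content — and thus the main, quite modest, obstacle — is the nonzero-divisor claim; the remainder is a formal diagram chase once the injectivity of the localization maps is in hand. I note that for the cartesian statement one needs only the trivial inclusion $t^n\widehat{A_0}\subseteq\ker(\widehat{A_0}\to A_0/t^nA_0)$, not the finer fact that this is an equality (equivalently, that $A_0/t^nA_0\xrightarrow{\ \sim\ }\widehat{A_0}/t^n\widehat{A_0}$). Since the statement is the theorem of Beauville--Laszlo, one could of course simply cite \cite{BL95} or \cite[Tag 0BNR]{Stacks}; the sketch above merely records why it is elementary in the principal-ideal situation at hand.
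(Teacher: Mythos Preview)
Your proof is correct. The paper itself does not supply a proof of this lemma: it merely cites \cite{BL95} and \cite[Tag 0BNR]{Stacks} and moves on, exactly as you anticipate in your final paragraph. So there is nothing to compare against on the paper's side beyond the bare citation.

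What you have written is the standard elementary verification in the principal-ideal case, and every step checks out: the nonzero-divisor argument via coherent sequences is clean, and the surjectivity argument correctly uses only the trivial inclusion $t^n\widehat{A_0}\subseteq\ker(\widehat{A_0}\to A_0/t^nA_0)$ rather than the (also true, but unnecessary here) equality. Your observation that the full Beauville--Laszlo theorem treats a considerably more general situation (arbitrary modules, gluing along a nonzero divisor without finiteness hypotheses) while the present lemma needs only this short direct argument is apt and worth keeping.
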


\begin{corollary}\label{newcorollary2.7}
Keep the notation as in Lemma \ref{Beauville-Laszlo}. 
Suppose that $A_0$ is completely integrally closed in $A_0[\frac{1}{t}]$. Then $\widehat{A_0}$ is completely integrally closed in $\widehat{A_0}[\frac{1}{t}]$. 
\end{corollary}

\begin{proof}
Put $A':=\widehat{A_0}[\frac{1}{t}]$. Pick an arbitrary element $x\in (\widehat{A_0})^*_{A'}$. 
Then $t^dx\in \widehat{A_0}$ for some $d>0$. Hence there exists some $a\in A_0$ such that $t^dx-\psi(a)\in t^d\widehat{A_0}$ or equivalently, $x-\psi_t(\frac{a}{t^d})\in \widehat{A_0}$. 
Then, since $x$ and any element in $\widehat{A_0}$ are contained in $(\widehat{A_0})^*_{A'}$, we find that $\psi_t(\frac{a}{t^d}) \in (\widehat{A_0})^*_{A'}$. Hence, there exists some $c>0$ such that $\psi_t(t^c(\frac{a}{t^d})^n)\in \widehat{A_0}$ for every $n>0$. Thus by Lemma \ref{Beauville-Laszlo}, we have $\psi_t(t^c(\frac{a}{t^d})^n)\in \psi_t(A_0)$ for every $n>0$. Now notice that $\ker(\psi_t)\subset A_0$. This is because $\ker(\psi)=\cap_{n=0}^\infty t^nA_0$ and $\ker(\psi)$ is $t$-divisible. Consequently, we find that $\frac{a}{t^d}\in A_0[\frac{1}{t}]$ is almost integral over $A_0$ and therefore, it is contained in $A_0$ by assumption. Hence $x$ lies in $\widehat{A_0}$, as desired. 
\end{proof}

Now let us start to prove Proposition \ref{CICprop}.

\begin{proof}[Proof of Proposition \ref{CICprop}]
Notice that $\widehat{A_0}$, $\widehat{A^+}$ and $\widehat{A^\circ}$ are $t$-torsion free (cf.\ Lemma \ref{Beauville-Laszlo}). 

We first prove the assertion $(1)$. By assumption, $t^cA^+\subset A_0$ (resp.\ $t^cA^\circ\subset A_0$) for some $c\geq 0$. Hence by applying Lemma \ref{CompCokerKilled} to the inclusion map $A_0\hookrightarrow A^+$ (resp.\ $A_0\hookrightarrow A^\circ$), we find that the induced map $\widehat{A_0}\to \widehat{A^+}$ (resp.\ $\widehat{A_0} \to  \widehat{A^\circ}$) is injective and its cokernel is killed by $t^c$. 
Therefore, we have a canonical $\widehat{A_0}$-isomorphism $A'\xrightarrow{\cong}\widehat{A^+}[\frac{1}{t}]$ (resp.\ $A'\xrightarrow{\cong}\widehat{A^\circ}[\frac{1}{t}]$). Moreover, $\widehat{A^+}$ (resp.\ $\widehat{A^\circ}$) is integrally closed (resp.\ completely integrally closed) in $\widehat{A^+}[\frac{1}{t}]$ (resp.\ $\widehat{A^\circ}[\frac{1}{t}]$) by \cite[Lemma 5.1.2]{Bh17} (resp.\ Corollary \ref{newcorollary2.7}).\footnote{The complete integral closedness of $A^\circ$ in $A$ is a not trivial issue. However, this is easily checked from the hypothesis $t^c(A_0)_A^* \subset A_0$. We will discuss this condition as \textit{preuniformity} in the following section.} Hence we have inclusions $\widehat{A_0}\subset (\widehat{A_0})^+_{A'}\subset \widehat{A^+}$ (resp.\ $\widehat{A_0}\subset (\widehat{A_0})^*_{A'}\subset \widehat{A^\circ}$). Thus, we also have inclusions 
$t^c(\widehat{A_0})^+_{A'}\subset t^c\widehat{A^+}\subset \widehat{A_0}$ (resp.\ $t^c(\widehat{A_0})^*_{A'}\subset t^c\widehat{A^\circ}\subset \widehat{A_0}$), which yields the assertion $(a)$. In particular, $\{t^n\widehat{A_0}\}_{n\geq 1}$ gives a fundamental system of open neighborhoods of $0\in (\widehat{A_0})^+_{A'}$ (resp.\ $0\in (\widehat{A_0})^*_{A'}$). Hence $(\widehat{A_0})^+_{A'}$ (resp.\ $(\widehat{A_0})^*_{A'}$) is $t$-adically complete and separated. 
Thus, by the universal property of completion (cf.\ \cite[Proposition 7.1.9 in Chapter 0]{FK18}), we obtain the $A^+$-linear map (resp.\ $A^\circ$-linear map) $\widehat{A^+}\to (\widehat{A_0})^+_{A'}$ (resp.\ $\widehat{A^\circ}\to (\widehat{A_0})^*_{A'}$) and the composite $\widehat{A^+}\to (\widehat{A_0})^+_{A'}\hookrightarrow{\widehat{A^+}}$ (resp.\ $\widehat{A^\circ}\to (\widehat{A_0})^*_{A'}\hookrightarrow\widehat{A^\circ}$) is the identity. Therefore $(\widehat{A_0})^+_{A'}\hookrightarrow{\widehat{A^+}}$ (resp.\ $(\widehat{A_0})^*_{A'}\hookrightarrow{\widehat{A^\circ}}$) is an isomorphism, which yields the assertion $(b)$. 

Next we show the assertion $(2)$. We consider the commutative diagram (\ref{BLdiagramBig}). Keeping the notation as above, assume that 
$t^c(\widehat{A_0})^+_{A'}\subset \widehat{A_0}$ (resp.\ $t^c(\widehat{A_0})^*_{A'}\subset \widehat{A_0}$) for some $c\geq 0$. Pick an element $x\in A^+$ (resp.\ $y\in A^\circ$). Then one can check that $\psi_t(x)\in \widehat{A_0}[\frac{1}{t}]$ (resp.\ $\psi_t(y)\in \widehat{A_0}[\frac{1}{t}]$) is integral (resp.\ almost integral) over $\widehat{A_0}$, because the diagram  (\ref{BLdiagramBig}) commutes. Hence by assumption, $\psi_t(t^cx)$ (resp.\ $\psi_t(t^cy)$) comes from $\widehat{A_0}$. Thus, letting $B_0$ (resp.\ $C_0$) be the $A_0$-subalgebra of $A_0[\frac{1}{t}]$ generated by all elements of $t^cA^+$ (resp.\ $t^cA^\circ$), we find that the composite map $B_0\hookrightarrow A_0[\frac{1}{t}]\xrightarrow{\psi_t}\widehat{A_0}[\frac{1}{t}]$ (resp.\ $C_0\hookrightarrow A_0[\frac{1}{t}]\xrightarrow{\psi_t}\widehat{A_0}[\frac{1}{t}]$) factors through $\widehat{A_0}$. Therefore, Lemma \ref{Beauville-Laszlo} implies that $B_0\subset A_0$ (resp.\ $C_0\subset A_0$). Consequently, we have $t^cA^+\subset A_0$ (resp.\ $t^cA^\circ\subset A$), as wanted. 
\end{proof}

\begin{corollary}
\label{Bhlem}
Keep the notation as in Proposition \ref{CICprop}. Then $(A_0)^+_A=A_0$ (resp. $(A_0)^*_A=A_0$) 
if and only if 
$(\widehat{A_0})^+_{A'}=\widehat{A_0}$ (resp.\ $(\widehat{A_0})^*_{A'}=\widehat{A_0}$).
\end{corollary}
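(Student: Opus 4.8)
The plan is to deduce the statement directly from Proposition~\ref{CICprop} by taking $c=0$. Write $A^+:=(A_0)^+_A$ and $A^\circ:=(A_0)^*_A$ as in that proposition. Since $A_0$ is always contained in both $A^+$ and $A^\circ$ (every element of $A_0$ is trivially integral, hence almost integral, over $A_0$), the equality $(A_0)^+_A=A_0$ is equivalent to the inclusion $A^+\subset A_0$, that is, to the case $c=0$ of the condition $t^cA^+\subset A_0$; likewise $(A_0)^*_A=A_0$ is equivalent to the case $c=0$ of $t^cA^\circ\subset A_0$. The same elementary remark applied over $\widehat{A_0}$ shows that $(\widehat{A_0})^+_{A'}=\widehat{A_0}$ is equivalent to the case $c=0$ of $t^c(\widehat{A_0})^+_{A'}\subset \widehat{A_0}$, and $(\widehat{A_0})^*_{A'}=\widehat{A_0}$ to the case $c=0$ of $t^c(\widehat{A_0})^*_{A'}\subset\widehat{A_0}$.

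With these reformulations in hand, the ``only if'' direction is exactly Proposition~\ref{CICprop}(1)(a) specialized to $c=0$: the hypothesis $A^+\subset A_0$ (resp.\ $A^\circ\subset A_0$) yields $(\widehat{A_0})^+_{A'}\subset\widehat{A_0}$ (resp.\ $(\widehat{A_0})^*_{A'}\subset\widehat{A_0}$), whence $(\widehat{A_0})^+_{A'}=\widehat{A_0}$ (resp.\ $(\widehat{A_0})^*_{A'}=\widehat{A_0}$). Conversely, the ``if'' direction is Proposition~\ref{CICprop}(2) with $c=0$: the hypothesis $(\widehat{A_0})^+_{A'}\subset\widehat{A_0}$ (resp.\ $(\widehat{A_0})^*_{A'}\subset\widehat{A_0}$) gives $A^+\subset A_0$ (resp.\ $A^\circ\subset A_0$), that is, $(A_0)^+_A=A_0$ (resp.\ $(A_0)^*_A=A_0$).

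There is essentially no obstacle here; the only thing to record is the trivial observation that a ring is always contained in its integral closure and in its complete integral closure inside a prescribed overring, which turns the four equalities into the inclusions to which Proposition~\ref{CICprop} applies verbatim. One could alternatively argue through part~(b) of Proposition~\ref{CICprop}, observing that for $c=0$ the $t$-adic completions $\widehat{A^+}$ and $\widehat{A^\circ}$ coincide with $\widehat{A_0}$ itself, but the route through the inclusion statements in (1)(a) and (2) is the most economical.
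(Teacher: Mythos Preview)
Your proof is correct and is exactly the intended argument: the paper states the corollary without proof, as it is the immediate special case $c=0$ of Proposition~\ref{CICprop}, and your write-up simply makes this explicit.
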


\subsection{Tate rings}
\label{nt21}

We first recall basic terms on Tate rings.

\begin{definition}[Boundedness]
\label{defbounded}
Let $A$ be a topological ring. We say that a subset $S\subset A$ is \emph{bounded}, if for every open neighborhood $U$ of $0\in A$ there exists some open neighborhood $V$ of $0\in A$ such that $V\cdot S\subset U$, and we consider the empty set as being bounded.
\end{definition}

\begin{definition}[Tate ring]
\label{TateDefinition}
A topological ring $A$ is called \textit{Tate}, if there is an open subring $A_0 \subset A$ together with an element $t \in A_0$ such that the topology on $A_0$ induced from $A$ is $t$-adic and $t$ becomes a unit in $A$. $A_0$ is called a \textit{ring of definition} and $t$ is called a \textit{pseudouniformizer} and the pair $(A_0, (t))$ is called a \textit{pair of definition}. Denote by $A^\circ$ the set of powerbounded elements of $A$ and by $A^{\circ\circ}$ the set of all topologically nilpotent elements of $A$. Then $A^{\circ\circ}$ is an ideal of $A^\circ$ and $A^\circ$ is a subring of $A$.
\end{definition}

Any Tate ring comes from a pair of a ring and a nonzero divisor in it as follows.

\begin{lemma}
\label{lem1282}
The following assertions hold:
\begin{enumerate}

\item
Let $A_0$ be a ring with a nonzero divisor $t$, and put $A:=A_0[\frac{1}{t}]$. Equip $A$ with the linear topology defined by $\{t^nA_0\}_{n\geq 1}$. Then $A$ is equipped with the structure as a Tate ring with a ring of definition $A_0$ and a pseudouniformizer $t \in A_0$. 

\item
Conversely, let $A$ be a Tate ring with a ring of definition $A_0$ and a pseudouniformizer $t\in A_0$. Then one has $A=A_0[\frac{1}{t}]$. 

\end{enumerate}
\end{lemma}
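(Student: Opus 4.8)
The plan is to derive both assertions directly from the definitions of \emph{Tate ring}, \emph{ring of definition} and \emph{pseudouniformizer}; no substantial input is needed.

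For $(1)$, I would first note that since $t$ is a nonzero divisor of $A_0$, the localization map $A_0\to A_0[\frac{1}{t}]=A$ is injective, so $A_0$ may be viewed as a subring of $A$. Next I would check that the family $\{t^nA_0\}_{n\geq 1}$ of additive subgroups of $A$ is a fundamental system of open neighborhoods of $0$ for a ring topology on $A$: the subgroups are nested; one has $t^nA_0\cdot t^nA_0=t^{2n}A_0\subseteq t^nA_0$, giving continuity of multiplication at $0$; and for each $a\in A$ and each $n\geq 1$ there is an $m$ with $a\cdot t^mA_0\subseteq t^nA_0$ — writing $a=bt^{-k}$ with $b\in A_0$ and $k\geq 0$, one takes $m=n+k$, so that $a\,t^mA_0=bt^{m-k}A_0\subseteq t^{m-k}A_0\subseteq t^nA_0$. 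Hence this family defines a ring topology on $A$. Since $A_0\supseteq tA_0$ contains a member of this system and is a subgroup, $A_0$ is open; the subspace topology it inherits from $A$ has $\{t^nA_0\cap A_0\}_{n\geq 1}=\{t^nA_0\}_{n\geq 1}$ as a neighborhood basis of $0$, i.e.\ it is exactly the $t$-adic topology; and $t$ is manifestly a unit in $A=A_0[\frac{1}{t}]$. Therefore $A$ is Tate with pair of definition $(A_0,(t))$.

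For $(2)$, let $A$ be Tate with ring of definition $A_0$ and pseudouniformizer $t\in A_0$. Since $t$ is a unit of $A$ and $A_0$ is a subring of $A$, the subring $A_0[\frac{1}{t}]=A_0[t^{-1}]$ of $A$ is well defined, and it suffices to show that every $a\in A$ lies in it, i.e.\ that $t^na\in A_0$ for some $n\geq 0$. The multiplication-by-$a$ map $A\to A$ is continuous and sends $0$ to $0\in A_0$, and $A_0$ is open; hence there is an open neighborhood $U$ of $0$ in $A$ with $aU\subseteq A_0$. Because the topology induced on $A_0$ from $A$ is the $t$-adic one, $U\cap A_0$ contains $t^mA_0$ for some $m$, and in particular $t^m\in U$; therefore $at^m\in A_0$, so $a\in A_0[\frac{1}{t}]$. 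This proves $A=A_0[\frac{1}{t}]$.

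The only mildly delicate point — and the step I would be most careful about — is the compatibility of topologies in $(1)$: one must verify that the prescribed neighborhood basis genuinely gives a \emph{ring} topology on the localization $A$ (the nontrivial case being continuity of multiplication by an arbitrary element of $A$, handled above by clearing denominators) and that its restriction to $A_0$ is the given $t$-adic topology rather than something strictly coarser. Both reduce to the elementary bookkeeping with powers of $t$ indicated above; everything else is immediate from the definitions.
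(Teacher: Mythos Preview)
Your argument is correct and supplies precisely the details the paper omits: the paper's proof simply declares $(1)$ ``easy to check'' and attributes $(2)$ to \cite[Lemma~1.5]{Hu93}, and what you have written is exactly the standard verification in $(1)$ and the standard continuity-of-multiplication argument underlying Huber's lemma in $(2)$.
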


\begin{proof}
$(1)$ is easy to check. Let us prove $(2)$. Pick an element $a\in A$. It suffices to find some integer $c>0$ such that $t^{c}a\in A_{0}$. Now by definition, the multiplication map 
$$
m: A\times A\to A,\ (a_{1}, a_{2})\mapsto a_{1}a_{2}
$$
is continuous. In particular, for every open neighborhood $V$ of $0\in A$, there exists some open neighborhood $U$ of $(a,0)\in A\times A$  such that $m(U)\subset V$. Thus, since $\{t^{n}A_{0}\}_{n\geq 1}$ forms a fundamental system of open neighborhoods of $0\in A$ by assumption, we have $m(\{a\}\times t^{c}A_{0})\subset A_{0}$ for some $c>0$. Hence the assertion follows.  
\end{proof}

\begin{definition}
\label{def211914}
Let $(A_0, I_0)$ be a pair. If  $I_0$ is generated by a nonzero divisor $t\in A_0$, then we call the Tate ring $A_0[\frac{1}{t}]$ in Lemma \ref{lem1282}(1) the \emph{Tate ring associated to $(A_0, I_0)$. }
\end{definition}

The notion of integrality is useful for describing important subrings of a Tate ring. The following lemma should be well-known, but we insert its proof.

\begin{lemma}
\label{lemma10092}
Let $A$ be a Tate ring with a ring of definition $A_0$ and a pseudouniformizer $t\in A_0$. 
\begin{enumerate}
\item
The complete integral closure of $A_0$ in $A$ coincides with $A^{\circ}$.  
In particular, if $A^{\circ}$ is bounded, then $A^{\circ}$ is completely integrally closed in $A$. 

\item
One has $tA^\circ\subset (A_0)^+_A\subset A^\circ$. 
\end{enumerate}
\end{lemma}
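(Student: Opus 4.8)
The plan is to reduce everything to the description of bounded sets and powerbounded elements in terms of the ring of definition $A_0$ and the pseudouniformizer $t$, using repeatedly that $\{t^nA_0\}_{n\geq 1}$ is a fundamental system of open neighborhoods of $0$ in $A$. For part $(1)$, I would first show that $A^\circ$ is contained in the complete integral closure of $A_0$ in $A$. Take $b\in A^\circ$; then $\{b^n\}_{n\geq 0}$ is bounded, so there is $m\geq 1$ with $t^m b^n\in A_0$ for all $n\geq 0$, i.e. $\sum_{n\geq 0}A_0\cdot b^n\subset t^{-m}A_0$, a finitely generated $A_0$-submodule of $A$ (it is generated by $t^{-m}$, since $A_0$ is Noetherian-free of that concern — actually $t^{-m}A_0$ is visibly generated by the single element $t^{-m}$). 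Hence $b$ is almost integral over $A_0$. Conversely, if $b$ is almost integral over $A_0$, then $\sum_{n\geq 0}A_0\cdot b^n$ sits inside a finitely generated $A_0$-submodule $N$ of $A=A_0[\frac1t]$; clearing denominators, $t^{m}N\subset A_0$ for some $m$, so $t^m b^n\in A_0$ for all $n$, which says precisely that $\{b^n\}_{n\geq 0}$ is bounded, i.e. $b$ is powerbounded. This gives the equality $A^\circ=(A_0)^*_A$. The ``in particular'' clause is then immediate from Lemma \ref{lemma10092}$(1)$ read the other way: if $A^\circ$ is bounded, one checks directly that $A^\circ$ contains the complete integral closure of itself in $A$ — indeed any element almost integral over $A^\circ$ has bounded powers (its powers lie in a finitely generated $A^\circ$-module, hence in a bounded set since $A^\circ$ is bounded and a finitely generated module over a bounded subring is bounded), so it lies in $A^\circ$.

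For part $(2)$, the inclusion $(A_0)^+_A\subset A^\circ$ is clear because an integral element is a fortiori almost integral, and $(A_0)^*_A=A^\circ$ by part $(1)$. For the inclusion $tA^\circ\subset (A_0)^+_A$, take $b\in A^\circ$; by the argument above there is $m\geq 1$ with $t^m b^n\in A_0$ for all $n\geq 0$. Then $(tb)^n=t^n b^n$ lies in $A_0$ for every $n\geq m$, hence $\sum_{n\geq 0}A_0\cdot(tb)^n\subset A_0+A_0(tb)+\cdots+A_0(tb)^{m-1}$, a finitely generated $A_0$-module. Therefore $tb$ is integral over $A_0$, i.e. $tb\in(A_0)^+_A$.

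The only mildly delicate point — and the one I would be most careful about — is the ``in particular'' statement in $(1)$: one has to verify that when $A^\circ$ is bounded, a finitely generated $A^\circ$-submodule of $A$ is again bounded, so that almost integrality over $A^\circ$ forces membership in $A^\circ$. This follows since a finitely generated $A^\circ$-module is a quotient of $(A^\circ)^{\oplus k}$, and boundedness is preserved under finite sums and under multiplication by the fixed finite generating set; but it does use the hypothesis that $A^\circ$ itself is bounded in an essential way (in general $A^\circ$ need not be bounded, which is exactly the subtlety the paper is flagging with its notion of preuniformity). Everything else is a routine ``clear the denominator $t$'' manipulation.
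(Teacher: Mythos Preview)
Your proof is correct and follows essentially the same approach as the paper's: both reduce to the characterization ``$a$ is almost integral over $A_0$ iff $t^c a^n\in A_0$ for all $n$ and some $c$'' and identify this with powerboundedness. The paper handles the ``in particular'' clause a bit more slickly by observing that if $A^\circ$ is bounded then it is itself a ring of definition, so one may reapply the first assertion with $A_0$ replaced by $A^\circ$; and for part $(2)$ it notes directly that $(ta)^c\in A_0$ gives a monic equation $X^c-(ta)^c=0$ over $A_0$, rather than checking finite generation of $\sum_n A_0(ta)^n$ --- but these are cosmetic differences, not substantive ones.
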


\begin{proof}
Let us prove $(1)$. For an element $a\in A$, $a$ is almost integral over $A_{0}$ if and only if there exists some $c>0$ such that $t^{c}a^{m}\in A_{0}$ for every $m>0$. Here the latter condition is equivalent to the condition that $a$ belongs to $A^{\circ}$. Hence $A^{\circ}$ is the complete integral closure of $A_{0}$ in $A$. The second statement is clear, because any open and bounded subring of $A$ forms a ring of definition. Next we prove $(2)$. Pick $a\in A^\circ$. Then one has $(ta)^c=t^ca^c\in A_0$ for some $c>0$. Hence $ta\in (A_0)^+_A$, as desired.
\end{proof}

\subsection{Preuniform rings and pairs}
We will discuss \emph{(pre)uniformity} of Tate rings in many contexts later. Here we give the definition.

\begin{definition}[Preuniform rings]
\label{def204}
Let $A$ be a Tate ring. We say that $A$ is \emph{preuniform} if $A^\circ$ is bounded. We say that $A$ is \emph{uniform} if it is preuniform and complete and separated. 
\end{definition}

Let us recall the following fact.

\begin{lemma}
\label{SepPUniRed}
A separated preuniform Tate ring is reduced. In particular, a uniform Tate ring is reduced.
\end{lemma}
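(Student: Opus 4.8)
The plan is to show that a separated preuniform Tate ring $A$ has no nonzero nilpotents, and then deduce the statement for uniform Tate rings since uniform implies separated. So suppose $a \in A$ satisfies $a^k = 0$ for some $k \geq 1$; I want to conclude $a = 0$. Fix a ring of definition $A_0 \subset A$ with pseudouniformizer $t$, so that the topology on $A$ is defined by $\{t^n A_0\}_{n \geq 1}$ (Lemma \ref{lem1282}). The key observation is that a nilpotent element is automatically powerbounded: since $a^k = 0$, the set $\{1, a, a^2, \ldots\}$ consists of only finitely many nonzero elements $\{1, a, \ldots, a^{k-1}\}$, hence is a bounded subset of $A$ (a finite set is bounded in any topological ring). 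Therefore $a \in A^\circ$.

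Next I would exploit preuniformity: since $A$ is preuniform, $A^\circ$ is bounded, so $A_0 + A_0 a + A_0 a^2 + \cdots$, which is contained in the $A_0$-span of the bounded set $\{1, a, \dots, a^{k-1}\}$ together with $A_0$ itself, is a bounded $A_0$-submodule of $A$; in particular there exists $c > 0$ with $t^c a^m \in A_0$ for all $m \geq 0$ (this is exactly the characterization used in the proof of Lemma \ref{lemma10092}(1), that $a \in A^\circ$ iff $t^c a^m \in A_0$ for all $m$). Now consider $b := t^c a \in A_0$. Then $b$ is a nilpotent element of $A_0$ (indeed $b^k = t^{ck} a^k = 0$). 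I claim $b = 0$: for every $n \geq 1$, $b = t^c a = t^c a \cdot 1$, and since $t^{c}a^{m}\in A_0$ for all $m$, one gets $t^{n} \mid b$ in... — more carefully, $b^{n} = t^{cn} a^{n}$, and $b^n = 0$ for $n \geq k$, but for the separatedness argument I instead write $b = t^{n}\cdot(t^{c-n} a)$ once $n \le c$, which doesn't immediately loop; the cleaner route is: $t^{j} b = t^{j+c} a \in A_0$ for all $j$, and multiplying the relation $b^k=0$ does not directly help. Let me instead argue that $a \in \bigcap_n t^n A_0$. Indeed, for any $n$, we have $t^{cn} a = t^{cn} a$; since $a^{n}$ is (for $n\ge k$) zero while for smaller $n$ this is awkward — so the honest argument is: because $b=t^ca\in A_0$ is nilpotent, and we may replace $a$ by $b$ and $A$ by... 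Actually the slick finish: $a^{k}=0$ implies $(t^{c}a)^{k}=0$ in $A_0$; but more usefully, for every $n\ge 1$ we can write $a \cdot t^{c(n-1)} a^{n-1}\cdot t^{c}= $ hmm.

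Let me restate the finish cleanly. Having $t^{c}a^{m}\in A_{0}$ for all $m\ge 0$, and $a^{k}=0$: I claim $a\in t^{n}A_{0}$ for every $n$, whence $a\in\bigcap_{n}t^{n}A_{0}=(0)$ by separatedness. To see $a\in t^nA_0$, note $t^{c}a^{m}\in A_{0}$, so $a^{m}\in t^{-c}A_{0}$, i.e. the $A_0$-module generated by the powers of $a$ lies in $t^{-c}A_{0}$; multiplying by $a^{N}$ for large $N$ kills it, which says nothing. The correct elementary fact is: if $b\in A_{0}$ is nilpotent and $A_0$ is $t$-adically \emph{separated}, then $b=0$ only if $t$ is a nonzerodivisor on $A_0/\text{(nilpotents)}$ — not automatic. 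So the argument must instead go through the \emph{separatedness of $A$}, not $A_0$: the set $\{t^{n} a : n\ge 0\}$; since $t\in A^{\circ\circ}$ is topologically nilpotent and $a\in A^{\circ}$ is powerbounded, the product $t^{n}a\to 0$, and in fact $t^{n}a \in A^{\circ\circ}$; but I want to bound $a$ itself. The honest and standard argument: $a\in A^{\circ}$ and $a$ is nilpotent, so $a$ generates a bounded ideal consisting of nilpotents; the Jacobson-radical-type argument shows a bounded nil ideal in a Tate ring is zero when $A$ is separated, because for such $a$, $\sum A_0 a^i$ bounded plus $a$ nilpotent forces $a\in t^nA_0\cdot(\text{bounded})$ for all $n$. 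I expect the main obstacle is precisely pinning down this last step rigorously — showing that a bounded nil ideal in a separated Tate ring vanishes — which presumably the authors handle by noting $a\cdot A^{\circ}$ is a bounded $A_0$-submodule annihilated by no power of $t$ unless it is zero, combined with $t$-adic separatedness transported from $A_0$ to the bounded module $A^{\circ}$. I would therefore structure the proof as: (i) reduce to killing a bounded nilpotent $a$; (ii) observe $a\in A^\circ$ and $A^\circ$ bounded; (iii) argue $a\in\bigcap_n t^n A^\circ = (0)$ using that the induced topology on the bounded module $A^\circ$ is separated and $t$-adic, together with nilpotence of $a$ forcing divisibility by arbitrarily high powers of $t$ via the relation $0 = a^k$ rewritten using a Noetherian-free Artin–Rees-style estimate on $\sum_i A_0 a^i \subseteq A^\circ$.
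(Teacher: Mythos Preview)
Your setup is correct: nilpotent elements are powerbounded, so $a\in A^\circ$; and since $A^\circ$ is bounded (preuniformity), it is a ring of definition, so separatedness of $A$ gives $\bigcap_n t^n A^\circ = 0$. The gap is the step you struggle with---showing $a\in t^n A^\circ$ for every $n$---and your proposed fixes (nilpotents in $A_0$ being zero by separatedness, or an Artin--Rees estimate on $\sum A_0 a^i$) do not work: a $t$-adically separated ring can certainly have nilpotents, and there is no Artin--Rees mechanism available here.

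The missing idea is to apply your own observation ``nilpotent $\Rightarrow$ powerbounded'' not only to $a$ but to $t^{-n}a$ for each $n$. Since $t$ is a unit in $A$, the element $t^{-n}a$ satisfies $(t^{-n}a)^k = t^{-nk}a^k = 0$, so $t^{-n}a$ is nilpotent, hence $t^{-n}a\in A^\circ$, hence $a\in t^n A^\circ$. As $n$ is arbitrary, $a\in\bigcap_n t^n A^\circ = 0$. This is exactly the paper's argument, and it replaces all of your attempted detours with a single line.
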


\begin{proof}
Let $A$ be a separated preuniform Tate ring. Then, since $A^\circ$ is bounded, we can take $A^\circ$ as a ring of definition of $A$. 
Pick a pseudouniformizer $t\in A^\circ$ of $A$. Then $A^\circ$ is $t$-adically separated because $A$ is separated. 
Let $f \in A$ be such that $f^k=0$ for some $k>0$. Then for an integer $n>0$, $t^{-n}f$ is nilpotent, and therefore we get $t^{-n}f \in A^\circ$. Hence $f \in t^nA^\circ$. Since $A^\circ$ is $t$-adically separated and $n$ is arbitrary, it follows that $f=0$.
\end{proof}

We define preuniformity also for pairs that induce Tate rings.

\begin{definition}[Preuniform pairs]
\label{defpair}
Let $(A_0, I_0)$ be a pair of a ring $A_0$ and an ideal $I_0\subset A_0$. 
\begin{itemize}
\item[(1)]We say that $(A_0, I_0)$ is \emph{preuniform}, if $A_0$ has a nonzero divisor $t$ with the following property: 
\begin{itemize}
\item[$\bullet$]$I_0=tA_0$, and there exists some $c>0$ for which $t^c(A_0)^+_{A_0[\frac{1}{t}]}\subset A_0$. 
\end{itemize}
\item[(2)]We say that $(A_0, I_0)$ is \emph{uniform}, if it is preuniform and $A_0$ is $I_0$-adically complete and separated.  
\end{itemize}
\end{definition}

Recall that a morphism of pairs $f_0:(A_0,I_0) \to (B_0,J_0)$ is required to be \textit{continuous}. In other words, there exists $n>0$ such that $I_0^n \subset f_0^{-1}(J_0)$. The above definition is derived from the following fact.

\begin{lemma}
\label{lem1301423}
Let $A_0$ be a ring with a nonzero divisor $t\in A_0$ and let $A$ be the Tate ring associated to $(A_0, (t))$. 
Then $A$ is preuniform (resp.\ uniform) if and only if the pair $(A_0, (t))$ is preuniform (resp.\ uniform). 
\end{lemma}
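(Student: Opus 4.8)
The plan is to unwind all the definitions and reduce the equivalence to the core fact encoded in Proposition~\ref{CICprop}, which relates the condition $t^c(A_0)^+_A\subset A_0$ to its completed counterpart. Recall that by Lemma~\ref{lem1282}(1) the Tate ring $A$ associated to $(A_0,(t))$ has $A_0$ as a ring of definition and $t$ as a pseudouniformizer, so that $A=A_0[\frac{1}{t}]$ and the topology on $A_0$ is the $t$-adic one. The point of the proof is that ``$A$ preuniform'' (i.e.\ $A^\circ$ bounded) translates, via Lemma~\ref{lemma10092}, into a statement about $(A_0)^+_A$, which is precisely the defining condition for ``$(A_0,(t))$ preuniform''; and then the ``uniform'' case is obtained by additionally matching up topological completeness and separatedness on the two sides.

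First I would treat the preuniform case. By Lemma~\ref{lemma10092}(1), $A^\circ$ is the complete integral closure of $A_0$ in $A$, i.e.\ $A^\circ=(A_0)^*_A$; and by Lemma~\ref{lemma10092}(2) we have $tA^\circ\subset (A_0)^+_A\subset A^\circ$, so $(A_0)^+_A$ and $A^\circ$ differ only by a bounded factor. I claim $A^\circ$ is bounded if and only if there is some $c>0$ with $t^c(A_0)^+_A\subset A_0$. For the forward direction: if $A^\circ$ is bounded, then for the open neighborhood $A_0$ of $0$ there is $N$ with $t^N A^\circ\subset A_0$, hence $t^{N}(A_0)^+_A\subset t^N A^\circ\subset A_0$, so $c:=N$ works. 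For the converse: if $t^c(A_0)^+_A\subset A_0$, then using $tA^\circ\subset (A_0)^+_A$ we get $t^{c+1}A^\circ\subset t^c(A_0)^+_A\subset A_0$, so $\{t^nA_0\}_{n\ge1}$, which is a fundamental system of neighborhoods of $0$, absorbs $A^\circ$ after the shift by $t^{c+1}$; concretely, for any $n$, $t^{n+c+1}A^\circ\subset t^nA_0$, which says exactly that $A^\circ$ is bounded. This establishes the equivalence ``$A$ preuniform $\iff$ $(A_0,(t))$ preuniform'', since the latter is by Definition~\ref{defpair}(1) the existence of such a $c$ (the nonzero divisor $t$ being already given).

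For the uniform case, by Definition~\ref{def204} $A$ is uniform iff it is preuniform and complete and separated, and by Definition~\ref{defpair}(2) $(A_0,(t))$ is uniform iff it is preuniform and $A_0$ is $t$-adically complete and separated. Given the preuniform equivalence just proved, it remains to check that, under preuniformity, $A$ is complete and separated as a topological ring iff $A_0$ is $t$-adically complete and separated. Since $A_0$ carries the $t$-adic topology as a ring of definition, separatedness of $A$ is equivalent to separatedness of $A_0$ (an open subgroup is closed, and $\bigcap_n t^nA_0=\bigcap_n t^nA$ intersected with $A_0$), and similarly a Cauchy sequence in $A$ can be rescaled by a power of $t$ to lie in $A_0$, so completeness of $A$ is equivalent to $t$-adic completeness of $A_0$; this is the standard comparison between a Tate ring and its ring of definition. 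Combining the two equivalences gives the uniform case.

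I expect the main obstacle to be the bookkeeping in the preuniform equivalence—specifically making sure the ``bounded'' condition (a quantifier over all open neighborhoods $U$ of $0$, with a $V$ depending on $U$) is correctly matched with the single uniform constant $c$ in Definition~\ref{defpair}(1); the key observation that makes this work is that $\{t^nA_0\}_{n\ge1}$ is cofinal among open neighborhoods of $0$, so boundedness of $A^\circ$ is equivalent to the existence of one index $N$ with $t^NA^\circ\subset A_0$, which is then a clean single-inequality statement that lines up with the displayed condition in the definition of a preuniform pair. The completeness/separatedness comparison in the uniform case is routine topology of Tate rings and should not present any real difficulty.
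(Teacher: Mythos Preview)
Your proof is correct and follows essentially the same approach as the paper, which simply writes ``It follows immediately from Lemma~\ref{lemma10092}(2)''; you have spelled out in detail exactly how that inclusion $tA^\circ\subset (A_0)^+_A\subset A^\circ$ converts boundedness of $A^\circ$ into the condition $t^c(A_0)^+_A\subset A_0$ and vice versa, and your treatment of the completeness/separatedness comparison for the uniform case is the standard routine argument. One small expository remark: your opening sentence points to Proposition~\ref{CICprop} as the ``core fact'', but that proposition concerns the behavior of these conditions under $t$-adic completion and is not actually used anywhere in your argument---the real engine is Lemma~\ref{lemma10092}(2), as you in fact demonstrate; also the parenthetical ``$\bigcap_n t^nA_0=\bigcap_n t^nA$ intersected with $A_0$'' is garbled (since $t$ is a unit in $A$, $\bigcap_n t^nA=A$), though your intended statement that the closure of $\{0\}$ in $A$ is $\bigcap_n t^nA_0\subset A_0$ is correct.
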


\begin{proof}
It follows immediately from Lemma \ref{lemma10092}(2). 
\end{proof}

Let us give some guiding examples to understand the notion of preuniformity.

\begin{example}\label{EgPreUni}
\begin{enumerate}
\item
Let $V$ be a valuation ring, and $t\in V$ a nonzero element. Then, since $V$ is integrally closed in the  field of fractions $K:=\textnormal{Frac}(V)$, we have $V^{+}_{V[\frac{1}{t}]}=V$. 
Hence the pair $(V, (t))$ and the associated Tate ring $V[\frac{1}{t}]$ are preuniform. 
If $V$ is $t$-adically separated, then the associated Tate ring $V[\frac{1}{t}]$ coincides with 
$K$ as a ring (when $V\neq K$, the converse also holds; cf.\ \cite[Proposition 6.7.2 in Chapter 0]{FK18}). Notice that any valuation ring of finite rank that is not a field has such a nonzero element $t$. Indeed, if $V$ is so, then it contains a prime ideal $\mathfrak{p}$ of height $1$. Since the localization $V_{\mathfrak{p}}$ is a valuation ring of rank $1$ with maximal ideal $\mathfrak{p}V_{\mathfrak{p}}$, any nonzero element $t\in\mathfrak{p}$ satisfies $V_{\mathfrak{p}}[\frac{1}{t}]=K$, which implies  $V[\frac{1}{t}]=K$ because 
$tV_{\mathfrak{p}}\subset\mathfrak{p}V_{\mathfrak{p}}\subset V$. 
 
\item
Let $A_{0}$ be the ring of dual numbers $\mathbb{Z}_{p}[T]/(T^{2})$ over $\mathbb{Z}_{p}$. 
Then $p\in A_{0}$ is a nonzero divisor. Moreover, for every $n>0$, $\frac{T}{p^{n}}\in A_{0}[\frac{1}{p}]$ is integral over $A_{0}$ because $(\frac{T}{p^{n}})^{2}=0$. However, $p^{n-1}(\frac{T}{p^{n}})\notin A_{0}$. Hence the pair $(A_{0}, (p))$ is \emph{not} preuniform. Let $A$ be the Tate ring associated to $(A_{0}, (p))$. Then $A$ is not preuniform but separated, and it would give a counter-example to Lemma \ref{SepPUniRed} if the assumption of preuniformity is dropped. 
\end{enumerate}
\end{example}

\subsubsection{Finitely generated modules over a Tate ring}
One can define a canonical topology on a finitely generated module over a Tate ring. We introduce a notation. Let $A$ be a ring with an ideal $I \subset$ and let $f:N \to M$ be a homomorphism of $A$-modules. Set
\begin{equation}
\label{colonideal}
(N:_{M} J):=\{m \in M~|~Jm \subset f(N)\},
\end{equation}
which is a $A$-submodule of $M$.

\begin{lemma}
\label{topfinmod}
Let $A$ be a Tate ring and let $M$ be a finitely generated $A$-module. Take a ring of definition $A_0\subset A$, a pseudouniformizer $t\in A_0$ and a finite generating set $S$ of $M$ over $A$. Let $M_0\subset M$ be the $A_0$-submodule generated by $S$. Equip $M$ with the linear topology defined by $\{t^nM_0\}_{n>0}$. 
\begin{enumerate}
\item
The topology on $M$ is independent of the choices of $A_0$, $t$ and $S$. 

\item
For every finitely generated $A_0$-submodule $N_0$ of $M$ such that $M=N_0[\frac{1}{t}]$, the induced topology on $N_0$ coincides with the $t$-adic topology. 

\item
Let $f: M\to N$ be a homomorphism of $A$-modules, where $N$ is finitely generated. Equip $N$ with the topology defined above. Then $f$ is continuous. 
\end{enumerate}
\end{lemma}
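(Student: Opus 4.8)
The plan is to prove part (1) by directly comparing the filtrations that define the two candidate topologies, and then to read off parts (2) and (3) with little extra work. Throughout I use Lemma \ref{lem1282}(2), which gives $A=A_0[\frac{1}{t}]$; since $S$ generates $M$ over $A$ and $M_0$ is the $A_0$-submodule it generates, localizing yields $M=M_0[\frac{1}{t}]=\bigcup_{n\geq 0}t^{-n}M_0$. For part (1), suppose we are also given a ring of definition $A_0'$, a pseudouniformizer $t'\in A_0'$, and a finite generating set $S'$, with associated $A_0'$-submodule $M_0'$. Since $\{t^nM_0\}_{n>0}$ and $\{(t')^nM_0'\}_{n>0}$ are fundamental systems of neighborhoods of $0$ for the two topologies, it suffices to show these filtrations are mutually cofinal. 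I would do this in two steps: first fix $(A_0,t)$ and vary the generating set, then fix a common generating set and vary the pair of definition.

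In the first step, with $A_0,t$ fixed and $M_0,M_0'$ generated by finite sets $S,S'$, each element of $S'$ lies in $M=\bigcup_n t^{-n}M_0$, so $t^NS'\subset M_0$ for some $N$, hence $t^NM_0'\subset M_0$ because $M_0$ is an $A_0$-module; symmetrically $t^{N'}M_0\subset M_0'$, and cofinality of $\{t^nM_0\}$ and $\{t^nM_0'\}$ follows. In the second step we may therefore assume $S=S'$, so $M_0=\sum_{s\in S}A_0s$ and $M_0'=\sum_{s\in S}A_0's$. Since $A$ is Tate with respect to $(A_0',(t'))$, the sets $\{(t')^mA_0'\}_{m>0}$ form a fundamental system of neighborhoods of $0$ in $A$, and as $A_0$ is open we get $(t')^aA_0'\subset A_0$ for some $a$, hence $(t')^aM_0'\subset M_0$; symmetrically $t^bM_0\subset M_0'$ for some $b$. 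Finally $t'$ is a unit in $A$, so multiplication by $(t')^n$ is a self-homeomorphism of $A$; thus $(t')^nA_0$ is open and contains $t^{j}A_0$ for some $j=j(n)$, whence $t^{j}M_0\subset (t')^nM_0$ and therefore $t^{b+j}M_0\subset t^b(t')^nM_0=(t')^nt^bM_0\subset (t')^nM_0'$. The symmetric argument gives, for each $n$, an $m$ with $(t')^mM_0'\subset t^nM_0$. Hence the two filtrations are mutually cofinal and part (1) follows.

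Part (2) is then immediate: if $T$ is a finite generating set of $N_0$ over $A_0$, then $T$ generates $M$ over $A$ since $M=N_0[\frac{1}{t}]=A\cdot N_0$, and taking $S=T$ in the construction gives $M_0=N_0$; by part (1) the canonical topology on $M$ is the one defined by $\{t^nN_0\}_{n>0}$, so $N_0$ is open and its subspace topology is the $t$-adic topology. For part (3), fix $A_0$, $t$, finite generating sets $S$ of $M$ and $S'$ of $N$, and the associated submodules $M_0,N_0$. For each $s\in S$ we have $f(s)\in N=N_0[\frac{1}{t}]$, so $t^kf(S)\subset N_0$ for some $k$, and then $f(t^{n+k}M_0)=t^n\big(t^kf(M_0)\big)\subset t^nN_0$ for all $n$; hence $f$ is continuous at $0$, and being additive it is continuous everywhere.

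The only real obstacle is the second step of part (1): one must compare two rings of definition carrying two a priori unrelated pseudouniformizers. What makes it go through is that each ring of definition is open in $A$ and that the powers of either pseudouniformizer form a fundamental system of neighborhoods of $0$ in $A$ (since each pseudouniformizer is a unit of $A$, so multiplication by its powers is a self-homeomorphism of $A$ sending open sets to open sets); these two facts let one interleave the two filtrations.
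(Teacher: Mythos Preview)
Your proof is correct and follows essentially the same approach as the paper's. The only organizational difference is that you split the comparison in part~(1) into two steps (first vary $S$ with $(A_0,t)$ fixed, then vary $(A_0,t)$ with $S$ fixed), whereas the paper handles both at once by introducing the intermediate module $N_0'=\sum_{s\in S}A_0's$ and observing directly that $(t')^{c_1}M_0'\subset N_0'$ and $(t')^{c_2}N_0'\subset t^mM_0$; your argument and the paper's are straightforward rearrangements of one another, and your treatment of parts~(2) and~(3) matches the paper's as well.
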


\begin{proof}
Since $M=\bigcup_{n>0}(M_0:_{M}(t^n))$, there exists some $c>0$ such that  $t^cN_0\subset M_0$ for every finitely generated $A_0$-submodule $N_0$ of $M$. Hence $(2)$ and $(3)$ are easy to see. To show $(1)$, let us consider another data: $(A'_0, t', S', M'_0)$. Pick an integer $m>0$. Then it suffices to check that there exists some $m'>0$ for which $t'^{m'}M'_0\subset t^mM_0$ holds. Let $N'_0\subset M$ be the $A'_0$-submodule generated by $S$. Then $t'^{c_1}M'_0\subset N'_0$ for some $c_1>0$, as $M'_0$ is finitely generated. Meanwhile, since $t^mA_0\subset A$ is open, there exists some $c_2>0$ such that $t'^{c_2}A'_0\subset t^mA_0$ and so $t'^{c_2}N'_0\subset t^mM_0$. Hence by putting $m':=c_1+c_2$, we obtain $t'^{m'}M'_0\subset t^mM_0$, as wanted. 
\end{proof}

Notice that one can set $M=A$ in Proposition \ref{topfinmod}, and the resulting topology on $A$ coincides with the original one. Now we can give a canonical Tate ring structure to any module-finite algebra extension of a Tate ring.

\begin{lemma}
\label{tatefinex}
Let $A$ be a Tate ring and let $B$ be a module-finite $A$-algebra. Equip $B$ with the topology as in Lemma \ref{topfinmod}. Then $B$ is equipped with the structure as a Tate ring with the following property:  
\begin{itemize}
\item{for every ring of definition $A_0$ and every pseudouniformizer $t\in A_0$ of $A$, there exists a ring of definition $B_0$ of $B$  that is an integral $A_0$-subalgebra of $B$ with finitely many generators and $t\in B_0$ is a pseudouniformizer of $B$. }
\end{itemize}
\end{lemma}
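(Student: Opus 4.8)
The plan is to fix once and for all a ring of definition $A_0$ of $A$ together with a pseudouniformizer $t\in A_0$, to build the required $B_0$ by clearing denominators in integral dependence relations, and then to recognize $B_0$ as a ring of definition directly from Lemma \ref{topfinmod}.

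First I would choose a finite generating set $\beta_1,\dots,\beta_n$ of $B$ as an $A$-module, say with $\beta_1=1$. Since $B$ is module-finite, hence integral, over $A=A_0[\frac1t]$, each $\beta_i$ satisfies a monic relation $\beta_i^{m_i}+a_{i,m_i-1}\beta_i^{m_i-1}+\cdots+a_{i,0}=0$ with $a_{i,j}\in A$. Choosing $c>0$ so large that $t^c a_{i,j}\in A_0$ for all $i,j$, one multiplies the $i$-th relation by $t^{cm_i}$ and rewrites it, using $t\in A_0$, as a monic relation for $b_i:=t^c\beta_i$ with coefficients in $A_0$; thus each $b_i$ is integral over $A_0$. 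Because $t$ is a unit in $A$, the elements $b_1,\dots,b_n$ still generate $B$ as an $A$-module. Set $B_0:=A_0[b_1,\dots,b_n]$. Being a finitely generated $A_0$-algebra generated by elements integral over $A_0$, $B_0$ is a module-finite $A_0$-subalgebra of $B$, and $t\in A_0\subseteq B_0$.

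It remains to see that $B_0$ is a ring of definition of $B$ with pseudouniformizer $t$. Since $1/t\in A\subseteq B$, the element $t$ is a unit in $B$. From $\sum_i A_0 b_i\subseteq B_0\subseteq B=\sum_i A b_i$ we get $B_0[\frac1t]=B$, so $B_0$ is a finitely generated $A_0$-submodule of $B$ with $B_0[\frac1t]=B$, and Lemma \ref{topfinmod}(2) tells us that the topology on $B_0$ induced from the canonical topology on $B$ is the $t$-adic one. Moreover, by Lemma \ref{topfinmod}(1) the canonical topology on $B$ may be computed with the generating set $\{b_1,\dots,b_n\}$, i.e.\ it is defined by $\{t^nM_0\}_{n>0}$ with $M_0:=\sum_i A_0 b_i$; since $M_0\subseteq B_0$ and, $B_0$ being module-finite over $A_0$ with $B_0[\frac1t]=B=M_0[\frac1t]$, also $t^NB_0\subseteq M_0$ for some $N>0$, the filtrations $\{t^nB_0\}_{n>0}$ and $\{t^nM_0\}_{n>0}$ are mutually cofinal and define the same topology on $B$. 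In particular $B_0\supseteq M_0$ is open. Hence $B$, with its canonical topology, is a Tate ring having $B_0$ as a ring of definition and $t$ as a pseudouniformizer, and $B_0$ is by construction an integral (even module-finite) $A_0$-subalgebra of $B$ with finitely many generators.

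I do not expect a genuine obstacle. The one point that needs care is the rescaling step: the naive candidate $A_0[\beta_1,\dots,\beta_n]$ need not be integral over $A_0$ (it could already equal $B$, as when $A_0=\mathbf Z_p$, $A=B=\mathbf Q_p$, $\beta_1=1/p$), so one must replace the $\beta_i$ by suitable multiples $t^c\beta_i$ before taking the $A_0$-subalgebra. The only other thing to check is that the $t$-adic topology of this $B_0$ agrees with the canonical topology of Lemma \ref{topfinmod}, which is exactly the cofinality argument above combined with parts (1) and (2) of that lemma.
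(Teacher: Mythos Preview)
Your proof is correct and follows essentially the same approach as the paper: choose $A$-module generators of $B$, multiply by a power of $t$ to make them integral over $A_0$, let $B_0$ be the $A_0$-subalgebra they generate, and invoke Lemma~\ref{topfinmod} to identify the resulting $t$-adic topology with the canonical one. The paper's argument is just a terser version of yours, omitting the explicit clearing of denominators and the cofinality check (which it absorbs into a direct appeal to Lemma~\ref{topfinmod}).
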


\begin{proof}
Take a system of generators $x_1,\ldots, x_r$ of the $A$-module $B$. Multiplying each $x_i$ by a power of $t$ if necessary, we may assume that they are integral over $A_0$. Let $B_0\subset B$ be an $A_0$-subalgebra generated by $x_1,\ldots, x_r$. As $B=B_0[\frac{1}{t}]$, we can introduce a Tate ring structure into $B$ with a ring of definition $B_0$ and a pseudouniformizer $t\in B_0$ as in Lemma \ref{lem1282}(1). Meanwhile, since each $x_i$ is integral over $A_0$, $B_0$ is a module-finite $A_0$-algebra. Hence the topology on $B$ coincides with the one defined by setting $M=B$ in Lemma \ref{topfinmod}.
\end{proof}

If a finitely generated module $M$ over a Tate ring admits a structure of a finitely generated module over another Tate ring, then one can consider two canonical topologies on $M$. In the following situation, these topologies coincide.

\begin{lemma}
Let $A$ be a Tate ring and let $B$ be a module-finite $A$-algebra. Equip $B$ with the canonical structure as a Tate ring as in Lemma \ref{tatefinex}. 
Let $M$ be a finitely generated $B$-module. Then the following two topologies: 
\begin{itemize}
\item
the canonical topology on $M$ as a finitely generated $A$-module;
\item
the canonical topology on $M$ as a finitely generated $B$-module;
\end{itemize}
coincide. 
\end{lemma}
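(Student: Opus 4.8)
The plan is to exhibit a single $A_0$-submodule of $M$ that simultaneously computes both canonical topologies, so that they are not merely equivalent but literally defined by the same filtration. First I would fix a ring of definition $A_0\subset A$ and a pseudouniformizer $t\in A_0$. By Lemma \ref{tatefinex}, there is a ring of definition $B_0$ of $B$ which is a module-finite $A_0$-subalgebra of $B$ and for which $t\in B_0$ is a pseudouniformizer of $B$; fix a finite system of generators $y_1,\dots,y_r$ of $B_0$ as an $A_0$-module. Then choose a finite generating set $S=\{s_1,\dots,s_k\}$ of $M$ over $B$ and set $M_0:=\sum_j B_0 s_j\subset M$.

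The key bookkeeping point is that $M_0=\sum_{i,j}A_0\,y_i s_j$, so $M_0$ is a finitely generated $A_0$-module, and moreover $\{y_i s_j\}_{i,j}$ generates $M$ over $A$: since $B=B_0[\tfrac{1}{t}]$ (Lemma \ref{lem1282}(2)) and $A=A_0[\tfrac{1}{t}]$, any $m\in M$ can be written as $\sum_j b_j s_j$ with $b_j\in B$, hence, after clearing a power of $t$, as an $A$-linear combination of the $y_i s_j$. With this setup, Lemma \ref{topfinmod} applied to the Tate ring $B$ (with ring of definition $B_0$, pseudouniformizer $t$, generating set $S$) shows that the canonical topology on $M$ as a finitely generated $B$-module is the linear topology defined by $\{t^n M_0\}_{n>0}$; and Lemma \ref{topfinmod} applied to the Tate ring $A$ (with ring of definition $A_0$, pseudouniformizer $t$, generating set $\{y_i s_j\}_{i,j}$) shows that the canonical topology on $M$ as a finitely generated $A$-module is \emph{also} the linear topology defined by $\{t^n M_0\}_{n>0}$. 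Hence the two topologies coincide. The independence statement in Lemma \ref{topfinmod}(1) is invoked only to guarantee that these are the canonical topologies regardless of the auxiliary choices.

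I do not expect a serious obstacle: the only thing requiring care is the identity $M_0=\sum_{i,j}A_0\,y_i s_j$ together with the verification that $\{y_i s_j\}$ generates $M$ over $A$, which is exactly where module-finiteness of $B_0$ over $A_0$ (guaranteed by Lemma \ref{tatefinex}) and the relation $B=B_0[\tfrac{1}{t}]$ are used. No sandwiching of neighborhood bases and no separate continuity argument is needed, since the two filtrations turn out to be identical.
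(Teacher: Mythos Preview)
Your proof is correct and follows essentially the same approach as the paper: both arguments produce a single submodule $M_0\subset M$ that is simultaneously a finitely generated $A_0$-module and a finitely generated $B_0$-module with $M=M_0[\tfrac{1}{t}]$, and then invoke Lemma \ref{topfinmod} to conclude that both canonical topologies are given by the filtration $\{t^nM_0\}_{n>0}$. Your version is merely more explicit about the generators $y_is_j$, whereas the paper states the same conclusion in two lines.
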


\begin{proof}
Let $A_0$ be a ring of definition of $A$ and let $t\in A_0$ be a pseudouniformizer of $A$. Then we can take a ring of definition $B_0$ of $B$ that is finitely generated over $A_0$ and satisfies $B=B_0[\frac{1}{t}]$. 
Let $M_0$ be a finitely generated $B_0$-submodule of $M$ such that $M=M_0[\frac{1}{t}]$. 
Then, also as an $A_0$-module, $M_0$ is finitely generated and satisfies $M=M_0[\frac{1}{t}]$. Hence the assertion follows.  
\end{proof}

\begin{remark}
\label{rmk1226}
Let $A_0$ be a ring with a nonzero divisor $t\in A_0$, and put $A:=A_0[\frac{1}{t}]$. 
Let $M$ be an $A$-module and let $M_{0}\subset M$ be an $A_{0}$-submodule such that $M=M_0[\frac{1}{t}]$. Equip $M$ with the linear topology defined by $\{t^{n}M_{0}\}_{n\geq 1}$ (this situation already occurred in the above two lemmas). 
Now let us consider the completion $M'$ of $M$ (i.e.\ $M'=\varprojlim_{n}M/t^{n}{M_{0}}$). One applies the same operations to both $A_0$ and $A$. That is, $\widehat{A_0}:=\varprojlim_{n} A_0/t^n A_0$ and $A':=\widehat{A}=\varprojlim_n A/t^n A_0$. By \cite[Chaptires III, Paragraph 6.5, Proposition 6, and Chapitres II, Paragraph 3.9, Corollaire 1]{Bour71}, one checks that $\widehat{A_0}$ and $\widehat{A}$ are rings. Now since $M_{0}$ is $t$-torsion free, so is the $t$-adic completion $\widehat{M_{0}}$ (cf.\ \cite[Lemma 4.2]{Sh15}). We equip $(\widehat{M_{0}})[\frac{1}{t}]$ with the linear topology defined by $\{t^{n}\widehat{M_{0}}\}_{n\geq 1}$. Then $(\widehat{M_{0}})[\frac{1}{t}]$ is complete and separated. Since $M_{0}$ and $\widehat{M_{0}}$ are $t$-torsion free, one can check that the map
\begin{eqnarray}
\label{mtmhat}
(\widehat{M_{0}})[\frac{1}{t}] \to M',\ \frac{(m_{n}\ \textnormal{mod}\ t^{n}M_{0})_{n\geq 1}}{t^{h}}\mapsto \bigl(\frac{m_{n+h}}{t^{h}}\ \textnormal{mod}\ t^{n}M_{0}\bigr)_{n\geq 1}
\end{eqnarray}
is well-defined. Indeed, consider the exact sequence $0 \to M_0 \to M \to M/M_0 \to 0$. Then it induces another exact sequence $0 \to M_0/t^nM_0 \to M/t^n M_0 \to M/M_0 \to 0$. Taking inverse limits, with respect to $n \in \mathbb{N}$, we obtain an exact sequence
\begin{eqnarray}
\label{exactseq}
0 \to \widehat{M_0} \to M' \to M/M_0 \to 0.
\end{eqnarray}
Since we are assuming the condition $M=\bigcup_{n\geq 1}(M_{0}:_{M} (t^{n}))$, tensoring $A_0[\frac{1}{t}]$ with the sequence (\ref{exactseq}), it follows that (\ref{mtmhat}) is an isomorphism of $A$-modules. Next we observe that (\ref{mtmhat}) gives an isomorphism of topological $A_{0}$-modules. The map (\ref{mtmhat}) is a continuous $A_{0}$-homomorphism, and the map $M_{0}\to\widehat{M_{0}}$ induces the continuous $A_{0}$-homomorphism $M\to (\widehat{M_{0}})[\frac{1}{t}]$. Moreover, the resulting diagram
\[
  \xymatrix{
(\widehat{M_{0}})[\frac{1}{t}]\ar[rr]&&M'\\
&\ar[ul]\ar[ur]M&
}
\]
commutes. Hence it follows from the universality of completion (cf.\ \cite[Proposition 7.1.9 in Chapter 0]{FK18}) that the map (\ref{mtmhat}) is an $A_{0}$-isomorphism that is also a homeomorphism. 
\end{remark}

Next let us consider a base extension of a module-finite algebra over a Tate ring. Then one may define two types of canonical topologies 
on it, but they are the same.

\begin{lemma}
Let $A$ be a Tate ring, let $(A_0, (t))$ be a pair of definition of $A$ and let $A \to A'$ be a continuous ring map between Tate rings. 
Let $B$ be a module-finite $A$-algebra, and set $B':=B\otimes_AA'$.  Equip $B$ (resp.\ $B'$) with the canonical topology by regarding it as a module-finite $A$-algebra (resp.\ $A'$-algebra). Let $B_0$ be a ring of definition of $B$ that is an $A_0$-subalgebra of $B$, and let $A'_0$ be a ring of definition of $A'$ that is an $A_0$-subalgebra of $A'$. Let $B'_0$ be the image of the natural map $B_0\otimes_{A_0}A'_0\to B'$. Then $(B'_0, (t))$ is a pair of definition of $B'$. 
\end{lemma}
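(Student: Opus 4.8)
The plan is to reduce everything to one explicit module-finite ring of definition of $B$ and to exploit that the given $B_0$ is squeezed between two $t$-power multiples of it; throughout I would use Lemma~\ref{topfinmod} and Lemma~\ref{tatefinex}.

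First I would record an elementary fact: if $C$ is a Tate ring, $C_0\subset C$ a ring of definition and $s\in C_0$ a topologically nilpotent unit of $C$, then $(C_0,(s))$ is again a pair of definition of $C$. Indeed, each $s^nC_0$ is open, since multiplication by the unit $s$ is a self-homeomorphism of $C$ carrying the open subgroup $C_0$ onto $s^nC_0$; conversely, given an open $U\ni 0$, boundedness of $C_0$ yields an open $V\ni 0$ with $V\cdot C_0\subset U$, and topological nilpotence puts $s^n\in V$ for $n\gg 0$, whence $s^nC_0\subset U$. Applying this with $C=A'$, $C_0=A'_0$ and $s=t$ — note that $t$ is a unit in $A$, hence in $A'$, and topologically nilpotent in $A$, hence in $A'$ by continuity of $A\to A'$ — shows that $(A'_0,(t))$ is a pair of definition of $A'$; in particular $A'=A'_0[\tfrac1t]$ by Lemma~\ref{lem1282}.

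Next I would pick generators $x_1,\dots,x_r$ of the $A$-module $B$ and, after multiplying each by a power of $t$, assume they are integral over $A_0$. Put $C_0:=A_0[x_1,\dots,x_r]\subset B$, a module-finite $A_0$-subalgebra with $B=C_0[\tfrac1t]$; by Lemma~\ref{topfinmod}(2) the subspace topology on $C_0$ is $t$-adic, and $C_0$ is open, so $\{t^nC_0\}_n$ is a neighborhood basis of $0$ in $B$. Since $B_0$ is open and bounded, there are integers $N,M\ge 0$ with $t^NC_0\subset B_0\subset t^{-M}C_0$ inside $B$ (where $t$ is invertible). Now apply $-\otimes_{A_0}A'_0$ and the natural map to $B'=B\otimes_AA'$, and write $C'_0$ for the image of $C_0\otimes_{A_0}A'_0\to B'$. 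One checks $C'_0=A'_0[x_1,\dots,x_r]$ (images of the $x_i$), a module-finite $A'_0$-subalgebra of $B'$ with $B'=C'_0[\tfrac1t]$, so by Lemma~\ref{topfinmod}(2) — now with the pair of definition $(A'_0,(t))$ of $A'$ from the previous paragraph — the subspace topology on $C'_0$ is $t$-adic, $C'_0$ is open, and $\{t^nC'_0\}_n$ is a neighborhood basis of $0$ in $B'$. Since the map $t^NC_0\otimes_{A_0}A'_0\to B'$ (whose image is $t^NC'_0$) factors through $B_0\otimes_{A_0}A'_0$, and dually $B_0\otimes_{A_0}A'_0\to B'$ factors through $t^{-M}C_0\otimes_{A_0}A'_0$, the squeeze descends to
\[
t^NC'_0\ \subset\ B'_0\ \subset\ t^{-M}C'_0\qquad\text{inside }B'.
\]
No exactness of tensor product is invoked here — only the monotonicity of images under composition.

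It remains to read off the three conditions defining a pair of definition. The element $t$ is a unit in $B'$, being already a unit in $A$. From $t^NC'_0\subset B'_0$ the subring $B'_0$ is open; from $B'_0\subset t^{-M}C'_0$ (and $C'_0$ a subring) one gets $(t^{n+M}C'_0)\cdot B'_0\subset t^nC'_0$ for all $n$, with $t^{n+M}C'_0$ open, so $B'_0$ is bounded, hence a ring of definition of $B'$. Finally, the subspace topology on $B'_0$ has neighborhood basis $\{t^nC'_0\cap B'_0\}_n$, and the squeeze gives $t^{k+N}C'_0\cap B'_0\subset t^kB'_0$ and, using $t\in B'_0$, $t^{k+M}B'_0\subset t^kC'_0\cap B'_0$; thus $\{t^kB'_0\}_k$ and $\{t^kC'_0\cap B'_0\}_k$ are mutually cofinal, so the subspace topology on $B'_0$ is the $t$-adic one. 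Therefore $(B'_0,(t))$ is a pair of definition of $B'$. I expect the only genuinely delicate point to be the first paragraph: recognizing that $A'_0$ (and then $C'_0$) really carries the $t$-adic topology even though $t$ need not be the originally chosen pseudouniformizer; everything after that is bookkeeping with the squeeze.
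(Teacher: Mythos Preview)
Your proof is correct and follows essentially the same route as the paper's. The paper reduces, via a terse ``we may assume,'' to the case where $B_0$ is module-finite over $A_0$ and then concludes directly from Lemma~\ref{topfinmod}; your squeeze argument with the auxiliary $C_0$ is precisely the justification of that reduction, and your first paragraph (that $(A'_0,(t))$ is a pair of definition of $A'$) makes explicit the identification $A'=A'_0[\tfrac1t]$ that the paper asserts without comment.
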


\begin{proof}
Since $A=A_0[\frac{1}{t}]$, $A'=A'_0[\frac{1}{t}]$, and $B=B_0[\frac{1}{t}]$, we have $B'=B'_0[\frac{1}{t}]$. 
Thus by Lemma \ref{lem1282}(1), it suffices to check that $\{t^nB'_0\}_{n\geq 1}$ forms a fundamental system of neighborhoods of $0\in B'$. 
Now we may assume that $B_0$ is generated by finitely many elements $x_1,\ldots, x_s\in B_0$ as an $A_0$-submodule of $B$ (such a ring of definition exists by the proof of Lemma \ref{tatefinex}). 
Put $x'_i:=x_i\otimes_A 1_{A'}$ for $i=1,\ldots, s$. Then $B'_0$ is generated by $x'_1,\ldots, x'_s$ as an $A'_0$-submodule of $B'$. 
Thus, since $B'=B'_0[\frac{1}{t}]$, the assertion follows. 
\end{proof}

\subsection{Non-archimedean seminorms}
Here we give a brief review and some new notation on non-archimedean seminorms. Our basic references are \cite[Chapter 1]{BGR84}, \cite[Chapter 2, Appendix C]{FK18} and \cite[Chapter 2]{KL15}. 
\begin{definition}
\normalfont
Let $A$ be a commutative ring. 
\begin{enumerate}
\item
A function $||\cdot||: A\to \mathbb{R}_{\geq 0}$ is called a (\emph{non-archimedean}) \emph{seminorm} on $A$, if it satisfies the following conditions.
\begin{itemize}
\item[(a)]{$||0||=0$. }
\item[(b)]{$||f-g||\leq \max\{||f||, ||g||\}$ for every $f, g\in A$. }
\item[(c)]{$||1||\leq 1$. }
\item[(d)]{$||fg||\leq ||f||||g||$ for every $f, g\in A$. }
\end{itemize}
A seminorm $||\cdot||$ on $A$ is called a \emph{norm}, if it satisfies the following condition. 
\begin{itemize}
\item[(a')]{For $f\in A$, one has $||f||=0$ if and only if $f=0$. }
\end{itemize}
\item
Let $||\cdot||$ and $||\cdot||'$ be seminorms on $A$. We say that $||\cdot||$ and $||\cdot||'$ are \emph{equivalent} (or $||\cdot||$ is \emph{equivalent} to $||\cdot||'$), if there exist real numbers $C, C'>0$ such that 
one has 
$$
||f||'\leq C||f|| \leq C'||f||' 
$$
for every $f\in A$. We mean by $||\cdot||\sim||\cdot||'$ that $||\cdot||$ is equivalent to $||\cdot||'$. 
\item
Let $||\cdot||$ be a seminorm on $A$. We say that $f\in A$ is \emph{powermultiplicative} with respect to $||\cdot||$, if $||f^n||=||f||^n$ holds for every $n>0$. We say that $f\in A$ is \emph{multiplicative} with respect to $||\cdot||$, if $||fg||=||f||||g||$ holds for every $g\in A$. We say that $||\cdot||$ is \emph{powermultiplicative} (resp.\ \emph{multiplicative}), if any $f\in A$ is powermultiplicative (resp.\ multiplicative) with respect to $||\cdot||$. 
\end{enumerate}
\end{definition}

Here we list some basic facts on seminorms. 

\begin{lemma}\label{SeminormBasic}
Let $A$ be a ring and let $||\cdot||$, $||\cdot||'$, and $||\cdot||''$ be seminorms on $A$. 
\begin{enumerate}
\item
If $||\cdot||\sim||\cdot||'$ and $||\cdot||'\sim||\cdot||''$, then $||\cdot||\sim||\cdot||''$. 
\item
If $||\cdot||$ and $||\cdot||'$ are powermultiplicative and $||\cdot||\sim||\cdot||'$, then $||\cdot||=||\cdot||'$. 
\item
Let $u$ be a unit in $A$. 
Suppose that $||\cdot||$ is not identically zero. Then $||u||\neq 0$. Moreover, $u$ is multiplicative with respect to $||\cdot||$ if and only if $||u^{-1}||=||u||^{-1}$.  
\end{enumerate}
\end{lemma}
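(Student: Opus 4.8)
The plan is to dispatch the three assertions one at a time; each is an elementary manipulation of the defining inequalities and the seminorm axioms, and none uses the topological material above. For $(1)$, I would just chase constants. Unwinding the definition, $||\cdot||\sim||\cdot||'$ supplies $C_1,C_1'>0$ with $||f||'\le C_1||f||$ and $C_1||f||\le C_1'||f||'$ for all $f\in A$, and $||\cdot||'\sim||\cdot||''$ supplies $C_2,C_2'>0$ with $||f||''\le C_2||f||'$ and $C_2||f||'\le C_2'||f||''$. Composing, $||f||''\le C_2||f||'\le C_2C_1||f||$ and, reversing, $||f||\le (C_1'/C_1)||f||'\le (C_1'C_2'/(C_1C_2))||f||''$; taking $C:=C_2C_1$ and $C':=C_1'C_2'$ then gives $||f||''\le C||f||\le C'||f||''$, i.e.\ $||\cdot||\sim||\cdot||''$.

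For $(2)$, the device is to pass to powers. Equivalence gives in particular some $C>0$ with $||f||'\le C||f||$ for every $f\in A$. Applying this to $f^n$ and invoking powermultiplicativity of both seminorms yields $(||f||')^{n}=||f^n||'\le C||f^n||=C||f||^{n}$, hence $||f||'\le C^{1/n}||f||$; letting $n\to\infty$ and using $C^{1/n}\to 1$ gives $||f||'\le||f||$. The symmetric argument applied to the reverse inequality gives $||f||\le||f||'$, so $||\cdot||=||\cdot||'$.

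For $(3)$, I would first record that under the nondegeneracy hypothesis $||1||=1$: from $||1||=||1\cdot 1||\le ||1||^2$ and axiom $(c)$ one gets $||1||\in\{0,1\}$, and $||1||=0$ would force $||f||=||f\cdot 1||\le||f||\,||1||=0$ for all $f$. Then, if $||u||=0$ for a unit $u$, the same trick gives $||f||=||fu^{-1}u||\le||fu^{-1}||\,||u||=0$ for all $f$, a contradiction; so $||u||\ne0$. For the equivalence: if $u$ is multiplicative, taking $g=u^{-1}$ in $||ug||=||u||\,||g||$ gives $1=||1||=||u||\,||u^{-1}||$, so $||u^{-1}||=||u||^{-1}$. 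Conversely, if $||u^{-1}||=||u||^{-1}$, then for any $g\in A$ we have $||ug||\le||u||\,||g||$ and also $||g||=||u^{-1}(ug)||\le||u^{-1}||\,||ug||=||u||^{-1}||ug||$, whence $||u||\,||g||\le||ug||$; the two bounds force $||ug||=||u||\,||g||$, so $u$ is multiplicative. None of this is a genuine obstacle — the lemma is bookkeeping with the axioms; the only points calling for attention are keeping straight which constant bounds which seminorm in $(1)$, ensuring $C>0$ so that $C^{1/n}\to 1$ in the limiting step of $(2)$, and remembering that $||1||$ need not a priori be $1$, which is precisely why the hypothesis that $||\cdot||$ is not identically zero is invoked at the start of $(3)$.
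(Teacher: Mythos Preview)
Your proof is correct and provides the natural elementary details; the paper itself simply declares (1) and (2) ``easy to check'' and cites \cite[\S 1.2.2, Proposition 4]{BGR84} for (3), so your argument is exactly the standard unpacking of those claims.
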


\begin{proof}
The assertions $(1)$ and $(2)$ are easy to check. 
$(3)$ follows from {\cite[$\S$1.2.2, Proposition 4]{BGR84}}. 
\end{proof}

Let $A$ be a ring, and let $||\cdot||$ be a seminorm on $A$. We put $F_r:=\{f\in A\ |\ ||f|| \le r\}$ for every positive real number $r$. Then $F_1$ forms a subring of $A$, and each $F_r$ forms an $F_1$-submodule of $A$. Hence one can define the linear topology on $A$ such that the filtration $\{F_r\}_{r>0}$ forms a fundamental system of open neighborhoods of $0\in A$. Moreover, the ring $A$ equipped with this topology is a topological ring.  For this topological ring $A$, a subset $S\subset A$ is bounded (Definition \ref{defbounded}) if and only if $S\subset F_r$ for some $r>0$. If two seminorms $||\cdot||$ and $||\cdot||'$ on $A$ are equivalent, then they define the same topology on $A$ (but the converse does not necessarily hold). 
The following class of seminorms is useful for characterization of Tate rings.

\begin{definition}
\label{def32114}
Let $A_0$ be a ring with a nonzero divisor $t\in A_0$. We say that a seminorm $||\cdot||$ on $A_0[\frac{1}{t}]$ is \emph{associated to $(A_0, (t))$}, if there exists a real number $c>1$ such that $||\cdot||$ is equivalent to the seminorm $||\cdot||_{A_0, (t),c}$ defined by
$$
||f||_{A_0, (t), c}:=
\begin{cases}
\hfill c^{\textnormal{min}\{m\in \mathbb{Z}~|~t^{m}f\in A_0\}} \hfill &(f \notin \bigcap_{n=0}^\infty t^nA_0)
\\
\hfill 0 \hfill&(f \in \bigcap_{n=0}^\infty t^nA_0).
\end{cases}
$$
\end{definition}

\begin{lemma}
\label{Tate-vs-Normed}
Let $A$ be a topological ring, let $t\in A$ be an element and let $A_0$ be a subring of $A$. Then the following conditions are equivalent. 
\begin{itemize}
\item[$(a)$]
$A$ is a Tate ring with a ring of definition $A_0$ and a pseudouniformizer $t\in A_0$. 
\item[$(b)$]
$t\in A$ is a unit, and there exists some seminorm $||\cdot||$ on $A$ with the following properties: 
\begin{itemize}
\item[$\bullet$]
the topology on $A$ is induced by $||\cdot||$; 
\item[$\bullet$]
$||t||<1$, and $t$ is multiplicative with respect to $||\cdot||$; 
\item[$\bullet$]
$A_0=\{f\in A\ |\ ||f||\leq 1\}$. 
\end{itemize}
\end{itemize}
\end{lemma}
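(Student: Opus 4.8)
The plan is to prove the two implications $(a)\Rightarrow(b)$ and $(b)\Rightarrow(a)$ separately, using the explicit seminorm $||\cdot||_{A_0,(t),c}$ of Definition \ref{def32114} (for an arbitrarily fixed $c>1$, say $c=2$) as the bridge in the first direction.

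For $(a)\Rightarrow(b)$, put $N:=\bigcap_{n\geq 0}t^nA_0$ and, for $f\in A\setminus N$, introduce the bookkeeping integer $v(f):=\min\{m\in\BZ\mid t^mf\in A_0\}$. This minimum exists: $t^mf\in A_0$ for $m\gg 0$ since $A=A_0[\tfrac1t]$ by Lemma \ref{lem1282}(2), while $f\notin N$ forces $\{m\mid t^mf\in A_0\}$ to be bounded below, and this set is an up-set in $\BZ$ because $A_0$ is a subring. One first checks that $N$ is an $A$-submodule of $A$ and that $||f||:=||f||_{A_0,(t),c}$ (which equals $c^{v(f)}$ off $N$ and $0$ on $N$) satisfies the seminorm axioms: the ultrametric inequality reduces to $v(f+g)\leq\max\{v(f),v(g)\}$, submultiplicativity to $v(fg)\leq v(f)+v(g)$, and $||1||\leq 1$ to $v(1)\leq 0$, all immediate from $A_0$ being a subring (the cases with one argument in $N$ being handled since $N$ is a submodule). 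Next, identify the unit balls: $F_r=\{f\mid ||f||\leq r\}=t^{-\lfloor\log_c r\rfloor}A_0$, so that $F_1=A_0$ and, as $r$ ranges over $(0,1]$, the $F_r$ are exactly the $t^nA_0$ with $n\geq 0$; since $A_0$ is open in $A$ and carries the $t$-adic topology, $\{t^nA_0\}_{n\geq0}$ is a fundamental system of neighbourhoods of $0\in A$, so the topology defined by $||\cdot||$ is the given one. Finally, $||t||=c^{v(t)}\leq c^{-1}<1$ because $t^{-1}\cdot t=1\in A_0$, and from $t^m(tg)\in A_0\iff t^{m+1}g\in A_0$ one gets $v(tg)=v(g)-1$ and $v(t)=v(1)-1$; if $v(1)<0$ then $t^{-1}\in A_0$, i.e.\ $t$ is a unit of $A_0$, so $A=A_0[\tfrac1t]=A_0$ carries the indiscrete topology, $N=A_0=A$, $||\cdot||\equiv 0$, and all three bullets of $(b)$ hold trivially; otherwise $v(1)=0$, hence $v(tg)=v(g)-1=v(t)+v(g)$, so $t$ is multiplicative and $(b)$ holds.

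For $(b)\Rightarrow(a)$: if $||\cdot||\equiv 0$ the topology on $A$ is indiscrete, which is the $t$-adic topology on $A=A_0$ since $t$ is a unit, so $A$ is trivially a Tate ring; assume $||\cdot||\not\equiv 0$. Then $A_0=F_1$ is a subring (as recalled before Definition \ref{def32114}) that is open (being one of the basic neighbourhoods), and $t\in A_0$ because $||t||<1$. Put $\rho:=||t||$; by Lemma \ref{SeminormBasic}(3) applied to the unit $t$ we get $\rho\neq 0$, so $\rho\in(0,1)$, and $||t^{-1}||=\rho^{-1}$, whence $t^{-1}$ is multiplicative as well and $||t^nf||=\rho^n||f||$ for all $n\in\BZ$, $f\in A$. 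It follows that $t^nA_0=F_{\rho^n}$ for every $n\geq 0$: the inclusion $\subseteq$ is immediate, and if $||g||\leq\rho^n$ then $||t^{-n}g||=\rho^{-n}||g||\leq 1$, so $g=t^n(t^{-n}g)\in t^nA_0$. Since $\rho^n\to 0$, the family $\{t^nA_0\}_{n\geq 0}=\{F_{\rho^n}\}_{n\geq0}$ is cofinal in $\{F_r\}_{r>0}$, hence a fundamental system of neighbourhoods of $0$; and since each $t^nA_0\subseteq F_1=A_0$, the subspace topology on $A_0$ is precisely the $t$-adic one. Together with the hypothesis that $t$ is a unit in $A$, this shows $A$ is a Tate ring with ring of definition $A_0$ and pseudouniformizer $t$.

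The whole argument is essentially bookkeeping once the function $v$ and the identity $t^nA_0=F_{\rho^n}$ are in place. The points I expect to require the most care are isolating the degenerate cases ($||\cdot||$ identically zero, equivalently $t$ a unit in $A_0$), so that the computations $v(1)=0$ and the multiplicativity of $t$ and $t^{-1}$ are legitimate, and being precise about the mutual cofinality of the filtrations $\{F_r\}_{r>0}$ and $\{t^nA_0\}_{n\geq 0}$, which is exactly what the phrase ``the topology on $A$ is induced by $||\cdot||$'' unpacks to.
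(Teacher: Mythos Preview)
Your proof is correct and follows essentially the same route as the paper: for $(a)\Rightarrow(b)$ you use the seminorm $||\cdot||_{A_0,(t),c}$ of Definition~\ref{def32114} (the paper takes $c=2$), and for $(b)\Rightarrow(a)$ you establish $t^nA_0=\{f:||f||\le||t||^n\}$ via Lemma~\ref{SeminormBasic}(3), exactly as the paper does. Your version is considerably more detailed than the paper's two-line sketch, in particular in verifying that $||\cdot||_{A_0,(t),c}$ really is a seminorm and in treating the degenerate case where $t$ is already a unit in $A_0$ (equivalently $||\cdot||\equiv 0$), which the paper leaves implicit.
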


\begin{proof}
To see $(a)\Rightarrow (b)$, it is enough to consider the seminorm $||\cdot||_{A_0, (t), 2}$. Conversely, if $(b)$ is satisfied, then $t^nA_0=\{f\in A\ |\ ||f||\leq ||t||^n\}$ for every $n>0$ by Lemma \ref{SeminormBasic}(3). Hence one can easily deduce $(a)$ from $(b)$. 
\end{proof}

For a ring $A$ and a seminorm $||\cdot||$ on $A$, we define a function 
$||\cdot||_\textnormal{sp}: A\to \mathbb{R}_{\geq 0}$ by 
$$
||f||_\textnormal{sp}:= \inf_{n\geq 1}||f^n||^{\frac{1}{n}}\ \ (f\in A).
$$
$||\cdot||_\textnormal{sp}$ has the following properties.

\begin{lemma}[{\cite[$\S$1.3.2]{BGR84}}]
\label{LemSpectral}
Let $A$ be a ring and let $||\cdot||$ be a seminorm on $A$. 
\begin{enumerate}
\item 
One has $||f||_\textnormal{sp}=\lim_{n\to\infty}||f^n||^\frac{1}{n}$ and $||f||_{\textnormal{sp}} \le ||f||$ for every $f\in A$ and $||\cdot||_\textnormal{sp}$ is a powermultiplicative seminorm on $A$.
\item
If $f\in A$ is multiplicative with respect to $||\cdot||$, then $f$ is also multiplicative with respect to $||\cdot||_\textnormal{sp}$. 
\end{enumerate}
\end{lemma}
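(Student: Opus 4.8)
The plan is to reduce every assertion to Fekete's subadditivity lemma together with the non-archimedean binomial estimate; since the statement is classical (it is precisely \cite[$\S$1.3.2]{BGR84}) only a sketch is needed. First I would establish the limit formula in $(1)$. Submultiplicativity of $||\cdot||$ gives $||f^{m+n}||\leq ||f^{m}||\cdot||f^{n}||$, so the sequence $a_{n}:=\log||f^{n}||$ (allowing the value $-\infty$) is subadditive; by Fekete's lemma $a_{n}/n$ converges to $\inf_{n\geq 1}a_{n}/n$, and exponentiating yields $\lim_{n\to\infty}||f^{n}||^{1/n}=\inf_{n\geq 1}||f^{n}||^{1/n}=||f||_{\textnormal{sp}}$. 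The degenerate case, where $||f^{n}||=0$ for some $n$, is handled directly: then $||f^{m}||=0$ for all $m\geq n$, so both the limit and the infimum are $0$. Taking $n=1$ in the infimum gives $||f||_{\textnormal{sp}}\leq ||f||$, and powermultiplicativity follows by passing to a subsequence:
\[
||f^{k}||_{\textnormal{sp}}=\lim_{n\to\infty}||f^{kn}||^{1/n}=\Bigl(\lim_{n\to\infty}||f^{kn}||^{1/(kn)}\Bigr)^{k}=||f||_{\textnormal{sp}}^{k}.
\]

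Next I would check that $||\cdot||_{\textnormal{sp}}$ is a seminorm. The conditions $||0||_{\textnormal{sp}}=0$, $||1||_{\textnormal{sp}}\leq 1$ and $||fg||_{\textnormal{sp}}\leq ||f||_{\textnormal{sp}}||g||_{\textnormal{sp}}$ are immediate from $(fg)^{n}=f^{n}g^{n}$ and submultiplicativity of $||\cdot||$, on taking $n$-th roots and letting $n\to\infty$. The one nontrivial point — and the step I expect to be the main obstacle — is the strong triangle inequality $||f-g||_{\textnormal{sp}}\leq\max\{||f||_{\textnormal{sp}},||g||_{\textnormal{sp}}\}$. Here I would expand $(f-g)^{n}=\sum_{i=0}^{n}\binom{n}{i}f^{i}(-g)^{n-i}$ and use that an integer multiple never increases a non-archimedean seminorm, whence $||(f-g)^{n}||\leq\max_{0\leq i\leq n}||f^{i}||\cdot||g^{n-i}||$. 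Writing $M:=\max\{||f||_{\textnormal{sp}},||g||_{\textnormal{sp}}\}$ and fixing $\varepsilon>0$, one produces a single constant $K\geq 1$ with $||f^{i}||\leq K(M+\varepsilon)^{i}$ and $||g^{i}||\leq K(M+\varepsilon)^{i}$ for all $i\geq 0$: for $i$ large this is just $||f^{i}||^{1/i}\to ||f||_{\textnormal{sp}}\leq M<M+\varepsilon$, while for the finitely many small $i$ one absorbs the crude bound $||f^{i}||\leq\max(1,||f||,||g||)^{i}$ into $K$. Then $||(f-g)^{n}||\leq K^{2}(M+\varepsilon)^{n}$, so $||f-g||_{\textnormal{sp}}\leq K^{2/n}(M+\varepsilon)\to M+\varepsilon$ as $n\to\infty$, and letting $\varepsilon\to 0$ gives the claim.

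Finally, for part $(2)$: if $f$ is multiplicative with respect to $||\cdot||$, a one-line induction shows every power $f^{n}$ is again multiplicative and $||f^{n}||=||f||^{n}$, so $||f||_{\textnormal{sp}}=||f||$. Then for any $g\in A$ one computes
\[
||(fg)^{n}||^{1/n}=||f^{n}g^{n}||^{1/n}=||f||\cdot||g^{n}||^{1/n}\longrightarrow ||f||\cdot||g||_{\textnormal{sp}}=||f||_{\textnormal{sp}}||g||_{\textnormal{sp}}\qquad(n\to\infty),
\]
which is exactly the assertion that $f$ is multiplicative with respect to $||\cdot||_{\textnormal{sp}}$. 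Apart from the uniform constant $K$ in the strong triangle inequality, every step is a routine manipulation of limits, and in any case the result is standard.
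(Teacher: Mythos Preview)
The paper does not give its own proof of this lemma; it simply records the statement and defers entirely to \cite[\S1.3.2]{BGR84}. Your sketch is correct and is exactly the standard argument found there: Fekete's lemma for the limit formula, the non-archimedean binomial expansion for the strong triangle inequality, and the direct computation for multiplicativity. There is nothing to compare against, and your write-up would serve perfectly well as an inlined proof if one were wanted.
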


We call $||\cdot||_\textnormal{sp}$ the \emph{spectral seminorm} associated to $||\cdot||$. Using spectral seminorms, we obtain the following characterization of preuniformity.

\begin{lemma}
\label{preunipm}
Let $A_0$ be a ring with a nonzero divisor $t\in A_0$. 

\begin{enumerate}
\item
$t\in A_0$ is multiplicative with respect to the seminorm $||\cdot||_{A_0, (t), c}$ for every $c>1$. 
\item
The following conditions are equivalent. 
\begin{itemize}
\item[$(a)$]
$(A_0, (t))$ is preuniform. 
\item[$(b)$]
For any seminorm $||\cdot||$ associated to $(A_0, (t))$, one has $||\cdot||\sim||\cdot||_\textnormal{sp}$ (or equivalently, $||\cdot||$ is equivalent to a powermultiplicative seminorm). 
\end{itemize}
\end{enumerate}
\end{lemma}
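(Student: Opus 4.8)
The plan is to prove the two parts separately, working throughout with the seminorm $||\cdot||_{A_0,(t),c}$ of Definition~\ref{def32114} and, for $f\in A:=A_0[\frac{1}{t}]$ not lying in $\bigcap_{k\ge0}t^kA_0$, the integer $v(f):=\min\{m\in\mathbb{Z}\mid t^mf\in A_0\}$; thus $||f||_{A_0,(t),c}=c^{v(f)}$ on such $f$ and $=0$ otherwise, $t^mf\in A_0$ holds exactly when $m\ge v(f)$, and $\bigcap_k t^kA_0$ is stable under multiplication by $t^{\pm1}$. For part~(1) I would first dispose of the degenerate case $t\in A_0^\times$ (there $\bigcap_k t^kA_0=A_0$, the seminorm is identically $0$, and the assertion is vacuous); assuming $t\notin A_0^\times$, one has $t\notin t^2A_0$ (else $t(1-tb)=0$ forces $tb=1$), so $v(t)=-1$ and $||t||_{A_0,(t),c}=c^{-1}$, while the nonzerodivisor property of $t$ gives both $tf\in\bigcap_k t^kA_0\iff f\in\bigcap_k t^kA_0$ and $v(tf)=v(f)-1$ for $f\notin\bigcap_k t^kA_0$; in either case $||tf||=||t||\,||f||$, which is part~(1).

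For $(a)\Rightarrow(b)$ in part~(2): since the spectral seminorm is unchanged when $||\cdot||$ is replaced by an equivalent seminorm and equivalence is transitive (Lemma~\ref{SeminormBasic}(1)), it suffices to prove $||\cdot||\sim||\cdot||_\textnormal{sp}$ for $||\cdot||=||\cdot||_{A_0,(t),c}$ with $c>1$ arbitrary; as $||\cdot||_\textnormal{sp}\le||\cdot||$ always (Lemma~\ref{LemSpectral}(1)), the real task is a uniform reverse estimate. Fix $N>0$ with $t^N(A_0)^+_{A}\subset A_0$ (preuniformity, Definition~\ref{defpair}). The heart of the argument is the claim that for every $g\in A_0\setminus tA_0$ and every $n\ge1$ one has $g^n\notin t^{n(N+1)}A_0$: if instead $g^n\in t^{n(N+1)}A_0$, then $h:=t^{-(N+1)}g\in A$ satisfies $h^n=t^{-n(N+1)}g^n\in A_0$, hence $\sum_{m\ge0}A_0h^m=\sum_{r=0}^{n-1}A_0h^r$ is finitely generated, so $h$ is integral over $A_0$, so $h\in(A_0)^+_{A}$, so $t^Nh=t^{-1}g\in A_0$, contradicting $g\notin tA_0$. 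Granting this, I would take any $f\in A$ with $f\notin\bigcap_k t^kA_0$, put $g:=t^{v(f)}f\in A_0\setminus tA_0$, and apply the claim to $g^n=t^{nv(f)}f^n$ to obtain $f^n\notin t^{n(N+1-v(f))}A_0$, hence $f^n\notin\bigcap_k t^kA_0$ and $v(f^n)>n(v(f)-N-1)$; therefore $||f^n||^{1/n}=c^{v(f^n)/n}>c^{-(N+1)}||f||$ for every $n$, so $||f||_\textnormal{sp}\ge c^{-(N+1)}||f||$, giving $||f||\le c^{N+1}||f||_\textnormal{sp}$ (trivially also when $||f||=0$), i.e.\ $||\cdot||\sim||\cdot||_\textnormal{sp}$.

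For $(b)\Rightarrow(a)$: apply the hypothesis to the associated seminorm $||\cdot||:=||\cdot||_{A_0,(t),2}$ to obtain $C>0$ with $||f||\le C||f||_\textnormal{sp}$ for all $f$, and fix an integer $N\ge1$ with $2^N\ge C$. For $a\in(A_0)^+_{A}$ the $A_0$-module $\sum_{m\ge0}A_0a^m$ is finitely generated, hence contained in $t^{-k}A_0$ for some $k\ge0$, so $||a^m||\le2^k$ for all $m$ and thus $||a||_\textnormal{sp}=\lim_m||a^m||^{1/m}\le1$; therefore $||a||\le C\le2^N$, i.e.\ $t^Na\in A_0$. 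As $a$ was arbitrary, $t^N(A_0)^+_{A}\subset A_0$, so $(A_0,(t))$ is preuniform. Finally, the parenthetical form of~(b) is immediate: $||\cdot||_\textnormal{sp}$ is powermultiplicative (Lemma~\ref{LemSpectral}(1)), and if $||\cdot||\sim||\cdot||'$ with $||\cdot||'$ powermultiplicative then $||\cdot||_\textnormal{sp}=||\cdot||'_\textnormal{sp}=||\cdot||'$, so ``$||\cdot||\sim||\cdot||_\textnormal{sp}$'' and ``$||\cdot||$ is equivalent to a powermultiplicative seminorm'' are the same condition.

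I expect the main obstacle to be the claim used in $(a)\Rightarrow(b)$: the key insight is that a single divisibility $g^n\in t^{n(N+1)}A_0$ already manufactures an element $h=t^{-(N+1)}g$ that is \emph{integral} (not merely almost integral) over $A_0$ — because $h^n\in A_0$ alone forces $\sum_mA_0h^m$ to collapse to a finitely generated module — which can then be played off against preuniformity to contradict $g\notin tA_0$. Pinning down the exponent $n(N+1)$ and transporting the valuation estimate from $g$ back to a general $f$ via $f\mapsto t^{v(f)}f$ is the delicate part; everything else reduces to routine manipulation of $||\cdot||_{A_0,(t),c}$ and the defining properties of $||\cdot||_\textnormal{sp}$.
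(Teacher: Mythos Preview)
Your proof is correct and follows the same overall strategy as the paper: the heart of $(a)\Rightarrow(b)$ is the observation that a single divisibility $g^n\in t^{nM}A_0$ (for suitable $M$) forces $t^{-M}g$ to be \emph{integral} over the ring of definition, which preuniformity then converts into $g\in tA_0$. The difference is purely in the choice of ring of definition. The paper first replaces $A_0$ by $A^\circ$ (legitimate since preuniformity makes $A^\circ$ bounded) and works with $||\cdot||_{A^\circ,(t),2}$; because $A^\circ$ is already integrally closed in $A$, the argument there needs only $M=1$, and one gets the clean statement $a^n\notin t^nA^\circ$ for $a\in A^\circ\setminus tA^\circ$, yielding the sharp bound $||t||\le||a||_{\textnormal{sp}}\le 1$. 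You instead stay with the given $A_0$ and carry the preuniformity constant $N$ through the estimate, arriving at $||f||_{\textnormal{sp}}\ge c^{-(N+1)}||f||$. Your route avoids the preliminary change of ring of definition and makes the dependence on $N$ explicit; the paper's route buys a shorter computation and a constant independent of $A_0$. For $(b)\Rightarrow(a)$ you spell out what the paper leaves as a one-line remark (``consider the topology induced by a powermultiplicative seminorm''): your argument that $||a||_{\textnormal{sp}}\le 1$ for $a$ integral and hence $t^Na\in A_0$ is exactly the content of that remark, just made explicit. Your treatment of part~(1) and of the parenthetical equivalence in~(b) is also correct and more detailed than the paper's.
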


\begin{proof}
The assertion $(1)$ is clear. Let us prove $(2)$. To see $(b)\Rightarrow (a)$, it is enough to consider the topology induced by a powermultiplicative seminorm. Here we show $(a)\Rightarrow (b)$. Since $A^\circ$ is bounded with respect to (the topology induced by) $||\cdot||$, we may assume that  
$||\cdot||=||\cdot||_{A^\circ, (t), 2}$. Then $t$ is multiplicative with respect to $||\cdot||$ and $||\cdot||_\textnormal{sp}$. 
Thus, it suffices to show the existence of some constants $C, C'>0$ such that $C||a||\leq||a||_\textnormal{sp}\leq C'||a||$ for an arbitrary $a\in A^\circ\setminus tA^\circ$. Now $a^n\notin t^nA^\circ$ holds for an arbitrary $n>0$; otherwise, $(t^{-1}a)^l$ would belong to $A^\circ$ for some $l>0$, which implies that $a\in tA^\circ$ as $A^\circ$ is integrally closed in $A_0[\frac{1}{t}]$. Hence $||t||<||a^n||^\frac{1}{n}$ and therefore, $||t||\leq ||a||_\textnormal{sp}\leq 1$. Thus we have $2^{-1}||a||\leq||a||_\textnormal{sp}\leq ||a||$, as wanted. 
\end{proof}

Moreover, if $A_0$ is a valuation ring $V$ and $V[\frac{1}{t}]$ is a field, then $||\cdot||_\textnormal{sp}$ is a multiplicative norm. 

\begin{lemma}
\label{valuation-norm}
Let $V$ be a valuation ring and assume that there is a nonzero element $t\in V$ for which $V$ is $t$-adically separated. Let $||\cdot||: V[\frac{1}{t}]\to \mathbb{R}_{\geq 0}$ be a seminorm associated to $(V, (t))$. Then the spectral seminorm $||\cdot||_\textnormal{sp}$ is multiplicative. 
\end{lemma}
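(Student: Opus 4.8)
The goal is to show that the spectral seminorm $||\cdot||_{\textnormal{sp}}$ attached to a seminorm $||\cdot||$ associated to $(V,(t))$ is multiplicative, where $V$ is a valuation ring that is $t$-adically separated for some nonzero $t\in V$. The plan is to reduce everything to a single concrete, valuation-theoretic model of the seminorm and then compute. First I would observe that $V$ is automatically a domain (being a valuation ring) and $t$-torsion free, so the construction of Definition \ref{def32114} makes sense; moreover $V[\frac{1}{t}] = \Frac(V)$ is a field, since inverting any nonzero element of a valuation ring with separated $t$-adic topology already produces the fraction field (every nonzero $v\in V$ divides some power of $t$ up to a unit, by comparability in the value group together with $t$-adic separatedness). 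By Lemma \ref{SeminormBasic}(1),(2) and Lemma \ref{LemSpectral}(1), equivalent seminorms have equal spectral seminorms, and $||\cdot||_{\textnormal{sp}}$ depends only on the equivalence class of $||\cdot||$; hence I may assume outright that $||\cdot|| = ||\cdot||_{V,(t),c}$ for some real $c>1$.

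Next I would identify this seminorm with the valuation norm coming from the value group. Write $\Gamma$ for the value group of $V$, $v\colon \Frac(V)^\times\to\Gamma$ for the valuation, and $\gamma_0 := v(t) > 0$. For $f\in\Frac(V)^\times$ one has $t^m f\in V\iff v(f)\geq -m\gamma_0$, so $\min\{m\in\mathbb Z\mid t^m f\in V\} = \lceil -v(f)/\gamma_0\rceil$, and therefore
\[
||f||_{V,(t),c} = c^{\lceil -v(f)/\gamma_0\rceil}.
\]
This is within a bounded multiplicative factor (between $1$ and $c$) of the genuinely multiplicative function $|f| := c^{-v(f)/\gamma_0}$ — note $v(f)/\gamma_0$ need not be an integer, which is exactly why $||\cdot||_{V,(t),c}$ itself is generally only submultiplicative. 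Now compute the spectral seminorm directly: $||f^n||_{V,(t),c} = c^{\lceil -n v(f)/\gamma_0\rceil}$, so $||f^n||^{1/n} = c^{\lceil -n v(f)/\gamma_0\rceil / n}$, and letting $n\to\infty$ the ceiling contributes an error $O(1/n)$, giving
\[
||f||_{\textnormal{sp}} = \lim_{n\to\infty} c^{\lceil -n v(f)/\gamma_0\rceil/n} = c^{-v(f)/\gamma_0} = |f|
\]
for $f\neq 0$ (and $||0||_{\textnormal{sp}}=0$). Since $v$ is a homomorphism to $\Gamma$, the function $|f| = c^{-v(f)/\gamma_0}$ is multiplicative: $|fg| = c^{-(v(f)+v(g))/\gamma_0} = |f||g|$, and it is a norm because $V$ is a domain so $v(f)$ is finite for all $f\neq 0$. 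This is the claim.

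The one step that needs genuine care — the main obstacle — is the passage to the limit in the computation of $||f||_{\textnormal{sp}}$: one must check that $\lceil -nv(f)/\gamma_0\rceil/n \to -v(f)/\gamma_0$, which is immediate from $x\leq\lceil x\rceil < x+1$, and, more subtly, that the infimum/limit is actually attained at the value $|f|$ rather than dropping lower, i.e. that $||f^n||\geq |f|^n$ for all $n$; this is exactly the inequality $\lceil -nv(f)/\gamma_0\rceil \geq -nv(f)/\gamma_0$, so again trivial. The only real subtlety is bookkeeping with signs and with the case $v(f)=0$ (where $||f^n||$ is identically $1$ for all $n$, consistent with $|f|=1$) and the case $f=0$. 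I would also remark, to connect with the surrounding development, that this lemma is the archetype showing $(V,(t))$ is preuniform in the sense of Definition \ref{defpair}: indeed $||\cdot||_{\textnormal{sp}}$ being multiplicative forces $V^\circ = \{|f|\leq 1\} = V$ to be bounded, which via Lemma \ref{preunipm} recovers preuniformity — but since preuniformity is not asserted in the statement, this remark would be stated only in passing and not used in the proof.
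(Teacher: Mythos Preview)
Your reduction to the standard seminorm $||\cdot|| = ||\cdot||_{V,(t),c}$ and your verification that $V[\frac{1}{t}] = \Frac(V)$ are both correct. However, the central computation has a genuine gap: you write $\lambda(f) = \lceil -v(f)/\gamma_0\rceil$ and then $||f||_{\textnormal{sp}} = c^{-v(f)/\gamma_0}$, treating $v(f)/\gamma_0$ as a real number. This presupposes that the value group $\Gamma$ embeds order-preservingly in $\mathbb{R}$, i.e.\ that $V$ has rank at most $1$. But the lemma is stated for an arbitrary valuation ring that is $t$-adically separated, and $t$-adic separatedness does not force rank $1$: for instance, take $\Gamma = \mathbb{Z}^2$ with the lexicographic order and $\gamma_0 = v(t) = (1,0)$; then $\bigcap_n t^n V = 0$, yet the expression $v(f)/\gamma_0$ is meaningless for, say, $v(f) = (0,1)$. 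In that example one computes $\lambda(f^n)=0$ for all $n$, so $||f||_{\textnormal{sp}}=1$, which cannot be read off from any real-valued ratio.

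The paper's proof avoids this by a different route that works in arbitrary rank: rather than computing $||f||_{\textnormal{sp}}$ explicitly, it shows $||f^{-1}||_{\textnormal{sp}} = ||f||_{\textnormal{sp}}^{-1}$ for every $f\in K^\times$, which suffices for multiplicativity by Lemma \ref{SeminormBasic}(3). The key estimate uses only that principal fractional ideals of $V$ are totally ordered: from $t^{\lambda(g)-1}g\notin V$ one gets $t^{-(\lambda(g)-1)}g^{-1}\in V$, hence $t^{-(\lambda(g)-1)}V \subset gV \subset t^{-\lambda(g)}V$, and therefore $||g||^{-1} \leq ||g^{-1}|| \leq c\,||g||^{-1}$. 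Applying this with $g = f^n$, taking $n$-th roots, and letting $n\to\infty$ gives $||f^{-1}||_{\textnormal{sp}}=||f||_{\textnormal{sp}}^{-1}$. Your argument can be repaired along the same lines: Fekete's lemma still gives that $L(f):=\lim_n\lambda(f^n)/n$ exists, and the estimate $|\lambda(g)+\lambda(g^{-1})|\leq 1$ just derived yields $L(f^{-1})=-L(f)$, from which multiplicativity follows; but the closed-form identification $L(f) = -v(f)/\gamma_0$ is only available in rank $1$.
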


\begin{proof}
Since $V$ is $t$-adically separated, it follows that $K:=V[\frac{1}{t}]$ is the field of fractions of $V$ by \cite[Proposition 6.7.2 in Chapter 0]{FK18}. In view of Lemma \ref{SeminormBasic}(1), Lemma \ref{SeminormBasic}(2) and Lemma \ref{preunipm}(2), 
we may assume that $||\cdot||=||\cdot||_{V, (t), c}$ for a real number $c>1$. It suffices to prove that any $f\in K$ is multiplicative with respect to $||\cdot||_\textnormal{sp}$. As clearly $0\in K$ is  multiplicative, we assume that $f\neq 0$. By Lemma \ref{SeminormBasic}(3), we are reduced to showing that $||f^{-1}||_\textnormal{sp}=||f||_\textnormal{sp}^{-1}$. Pick an arbitrary $g\in K^\times$ and put $\lambda(g):=\min\{m\in \mathbb{Z}\ |\ t^mg\in V\}$. 
Since $V$ is a valuation ring, we have $t^{-(\lambda(g)-1)}V\subset gV\subset t^{-\lambda(g)}V$. It implies that $t^{\lambda(g)}V\subset g^{-1}V\subset t^{\lambda(g)-1}V$. 
Therefore, we find that
$$
||g||^{-1}=c^{-\lambda(g)}\leq||g^{-1}||\leq c^{-(\lambda(g)-1)}=c||g||^{-1}\ .
$$
Thus we have $(||f^n||^{\frac{1}{n}})^{-1}\leq ||f^{-n}||^\frac{1}{n}\leq c^\frac{1}{n}(||f^n||^\frac{1}{n})^{-1}$ for every $n>0$. Taking the limits, we obtain $||f||_\textnormal{sp}^{-1}=||f^{-1}||_\textnormal{sp}$, as wanted. 
\end{proof}

Now let us recall \emph{Banach rings}.

\begin{definition}[Banach rings]
\label{defbanach}\ 
\begin{enumerate}
\item
A \emph{Banach ring} is a ring $R$ equipped with a norm $||\cdot||$ such that $R$ is complete with respect to the topology defined by $||\cdot||$. 
\item
Let $R$ be a Banach ring equipped with a norm $||\cdot||$. We say that $R$ is \emph{uniform}, if $R$ contains a unit $t$ with $||t||<1$, and $||\cdot||$ is equivalent to a powermultiplicative norm. 
\item
Let $R$ and $S$ be Banach rings and denote by $||\cdot||_R$ and $||\cdot||_S$ the norms on $R$ and $S$, respectively. We say that a ring homomorphism $\varphi: R\to S$ is \emph{bounded}, if one has $||\varphi(f)||_S\leq||f||_R$ for every $f\in R$ (notice that this property is stable under composition). 

\item
Let $R$ be a Banach ring. A \emph{Banach $R$-algebra} is a Banach ring $S$ equipped with a bounded homomorphism $\varphi: R\to S$. 

\item
Let $A_0$ be a ring with a nonzero divisor $t\in A_0$ that is $t$-adically complete and separated. We say that a Banach ring $R$ is \emph{associated to $(A_0, (t))$}, if the underlying ring $R$ is equal to $A_0[\frac{1}{t}]$ and the norm on $R$ is associated to $(A_0, (t))$.  
\end{enumerate}
\end{definition}

From now on, we view a Banach ring also as a (complete and separated) topological ring by considering the topology defined by the norm. Then we can compare Banach rings with Tate rings as follows. 
\begin{lemma}
Let $R$ be a Banach ring equipped with a norm $||\cdot||$. Then the following assertions hold. 
\begin{enumerate}
\item
If $R$ contains a unit $t$ with $||t||<1$, then $R$ is a complete and separated Tate ring that has a ring of definition 
$$
R_{0}=\{f\in R\ |\ ||f||\leq 1\}
$$ 
and a pseudouniformizer $t$. 
\item
If $R$ is a Banach ring associated to $(A_0, (t))$ as in Definition \ref{defbanach}(5), then $R$ is the Tate ring associated to $(A_0, (t))$.  
\item
If $R$ is uniform (in the sense of Definition \ref{defbanach}(2)), then $R$ is a uniform Tate ring (in the sense of Definition \ref{def204}). 
\end{enumerate}
\end{lemma}
\begin{proof}
$(1)$: By definition, $t$ lies in $R_{0}$. Hence it suffices to show that $\{t^{n}R_{0}\}_{n\geq 1}$ forms a fundamental system of open neighborhoods of $0\in R$. Recall that $R$ admits a fundamental system of open neighborhoods $\{F_r\}_{r>0}$ (where $F_r=\{f\in R\ |\ ||f|| \le r\}$) at $0\in R$. Since $t^{n}R_{0}\subset F_{||t||^{n}}$ for every $n\geq 1$, we are reduced to showing that there exists some $C_{n}>0$ with $F_{C_{n}}\subset t^{n}R_{0}$.

First we consider the case when $||t^{-1}||=0$. Then for every $f\in R$, we have 
$||f||=0$ by Lemma \ref{SeminormBasic}(3), and hence $f=0$. Therefore, $R$ is the zero ring.\footnote{The zero ring $\{0\}$ equipped with the unique topology is a Tate ring whose pair of definition is $(\{0\}, (0))$. Notice that any ring is $0$-adically complete and separated. } Thus $F_{1}=t^{n}R_{0}$ for every $n\geq 1$, which yields the assertion. 

Next we assume that $||t^{-1}||\neq 0$. Set $C:=||t^{-1}||$. Let us show that $F_{C^{-n}}\subset t^{n}R_{0}$ for every $n\geq 1$. Pick $f\in R$ with $||f||\leq C^{-n}$. 
Then we have 
$$
||t^{-n}f||\leq ||t^{-1}||^{n}||f||= C^{n}||f||\leq 1. 
$$
Hence $t^{-n}f\in R_{0}$, which means that $f\in t^{n}R_{0}$. Thus the assertion follows. 

$(2)$: It follows immediately from Lemma \ref{Tate-vs-Normed}. 

$(3)$: Let $||\cdot||'$ be a powermultiplicative norm on $R$ that is equivalent to $||\cdot||$. Then $||t^{l}||'<1$ for some $l\geq 1$, and hence $||t||'<1$ because $||\cdot||'$ is powermultiplicative. Thus, since $||\cdot||$ and $||\cdot||'$ define the same topology on $R$, $R$ is a Tate ring that has a ring of definition 
$$
R'_{0}:=\{f\in R\ |\ ||f||'\leq 1\}
$$
and a pseudouniformizer $t$ by the assertion (1). 
Moreover, since $||\cdot||'$ is powermultiplicative, $R'_{0}$ is completely integrally closed in $R=R'_{0}[\frac{1}{t}]$. Hence the assertion follows from Lemma \ref{lemma10092}.
\end{proof}

An \emph{almost perfectoid algebra} (cf.\ Definition \ref{defalmostperf}) is an important example of a Banach algebra. We use the following lemma to describe the relationship between almost perfectoid algebras and almost Witt-perfect algebras (cf.\ Proposition \ref{perfalgdecomp}).

\begin{lemma}
\label{BanachAlg}
Let $A_0$ be a ring with a nonzero divisor $t \in A_0$ and let $B_0$ be a $t$-torsion free $A_0$-algebra. Assume that $A_0$ and $B_0$ are $t$-adically complete and separated. 
Then there exist Banach rings $R$ and $S$ with the following properties: 
\begin{itemize}
\item
$R$ is associated to $(A_0, (t))$ and $S$ is associated to $(B_0, (t))$;
\item
the ring homomorphism $R\to S$ induced by the homomorphism $A_0\to B_0$ is bounded. 
\end{itemize}
If further $(A_0, (t))$ and $(B_0, (t))$ are uniform, then one can take $R$ so that the norm on $R$ is powermultiplicative. 
\end{lemma}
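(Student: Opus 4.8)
The plan is to produce the two Banach rings by taking the seminorms attached to the given pairs of definition and then, in the uniform case, replacing the norm on $R$ by its associated spectral seminorm. First I would set $R := A_0[\frac{1}{t}]$ with the norm $||\cdot||_{A_0,(t),2}$ and $S := B_0[\frac{1}{t}]$ with the norm $||\cdot||_{B_0,(t),2}$; these are genuine norms (not merely seminorms) precisely because $A_0$ and $B_0$ are $t$-adically separated, and by Lemma \ref{Tate-vs-Normed} they induce the Tate ring structures associated to $(A_0,(t))$ and $(B_0,(t))$. Since $A_0$ and $B_0$ are $t$-adically complete, the associated Tate rings are complete and separated (this is the content of Remark \ref{rmk1226} together with $A_0 = \widehat{A_0}$, $B_0 = \widehat{B_0}$), so $R$ and $S$ are Banach rings associated to $(A_0,(t))$ and $(B_0,(t))$ respectively. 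For boundedness of $R \to S$: if $f \in A_0[\frac{1}{t}]$ satisfies $||f||_{A_0,(t),2} = 2^m$, then $t^m f \in A_0$, so its image lies in $B_0$, whence $||{\rm image}(f)||_{B_0,(t),2} \le 2^m$; the case $f \in \bigcap_n t^n A_0$ is trivial. This handles the first assertion.

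For the second assertion, assume $(A_0,(t))$ and $(B_0,(t))$ are uniform. By Lemma \ref{preunipm}(2), the norm $||\cdot||$ on $R$ (being associated to $(A_0,(t))$, which is preuniform) is equivalent to its spectral seminorm $||\cdot||_{\textnormal{sp}}$; by Lemma \ref{LemSpectral}(1) this $||\cdot||_{\textnormal{sp}}$ is powermultiplicative, and since it is equivalent to a norm it is itself a norm. Equivalent norms define the same topology, so $(R, ||\cdot||_{\textnormal{sp}})$ is still complete, hence a Banach ring, and still associated to $(A_0,(t))$ in the sense of Definition \ref{def32114}. It remains to check that the induced map $(R,||\cdot||_{\textnormal{sp}}) \to (S, ||\cdot||_{B_0,(t),2})$ is still bounded. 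Here I would use that $t$ is multiplicative with respect to both norms (Lemma \ref{preunipm}(1) and Lemma \ref{LemSpectral}(2)), so that $||f||_{\textnormal{sp}}$ can be computed by taking powers and extracting roots while tracking the minimal $m$ with $t^m f^n \in A_0$; replacing $S$'s norm by its spectral seminorm $||\cdot||_{B_0,(t),2,\textnormal{sp}}$ does not increase values, and a map of powermultiplicative-normed rings that sends the $A_0$-unit ball into the $B_0$-unit ball is automatically bounded for the spectral norms because $||{\rm image}(f)||_{\textnormal{sp}} = \lim ||{\rm image}(f^n)||^{1/n} \le \lim ||f^n||^{1/n} = ||f||_{\textnormal{sp}}$, the inequality coming from the already-established boundedness $||{\rm image}(g)||_{B_0,(t),2} \le ||g||_{A_0,(t),2}$ applied to $g = f^n$.

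The main obstacle I anticipate is the last point: after replacing the norm on $R$ by the powermultiplicative one, one must be sure boundedness of $R \to S$ survives. The subtlety is that $S$'s norm is not assumed powermultiplicative, so one cannot directly pass to limits on the target side; the clean fix is to observe that $||{\rm image}(f)||_{B_0,(t),2} \le ||f||_{A_0,(t),2}$ for every $f$ (proved in the first paragraph), apply it to $f^n$, take $n$-th roots, and pass to the limit, using $||{\rm image}(f)||_{B_0,(t),2}^{1/?}$ only through the inequality $\inf_n ||{\rm image}(f^n)||_{B_0,(t),2}^{1/n} \le \inf_n ||f^n||_{A_0,(t),2}^{1/n} = ||f||_{\textnormal{sp}}$, and finally $||{\rm image}(f)||_{B_0,(t),2} \le$ something $\le$ this quantity? — in fact the cleanest route is to note $||{\rm image}(f)||_{B_0,(t),2} \le ||{\rm image}(f^n)||_{B_0,(t),2}$ fails in general, so instead one should equip $S$ with $||\cdot||_{B_0,(t),2}$ but verify boundedness from $R$ with $||\cdot||_{\textnormal{sp}}$ by the chain $||{\rm image}(f)||_{B_0,(t),2} \le ||f||_{A_0,(t),2}$, which is $\ge ||f||_{\textnormal{sp}}$ — the wrong direction. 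The correct resolution, and the real content of the lemma, is that in the uniform case one may simply keep $||\cdot||_{B_0,(t),2}$ on $S$ and observe that for $f$ in the unit ball of $(R,||\cdot||_{\textnormal{sp}})$, i.e.\ $||f^n||_{\textnormal{sp}} \le 1$ for all $n$ hence $f \in A^\circ = A_0$ by preuniformity, the image lies in $B_0$, so the unit ball maps into the unit ball; for a map of normed rings with $t$ multiplicative on both sides and $||t||$ matching, mapping unit ball into unit ball is equivalent to boundedness. I would carry out this last equivalence carefully as the crux of the argument.
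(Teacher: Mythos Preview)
Your first paragraph is correct and is exactly the paper's argument: equip both rings with $||\cdot||_{A_0,(t),c}$ and $||\cdot||_{B_0,(t),c}$ for a common $c>1$.

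For the uniform case, you actually wrote down the paper's argument in the middle of your second paragraph and then talked yourself out of it. The paper simply replaces \emph{both} norms by their spectral seminorms. This is legitimate: since $(B_0,(t))$ is also assumed uniform, Lemma~\ref{preunipm}(2) gives $||\cdot||_{B_0,(t),c}\sim||\cdot||_{B_0,(t),c,\textnormal{sp}}$, so $(S,||\cdot||_{\textnormal{sp}})$ is still a Banach ring associated to $(B_0,(t))$. Boundedness is then the one-line computation you gave: $||\varphi(f)||_{S,\textnormal{sp}}=\lim_n||\varphi(f^n)||_S^{1/n}\le\lim_n||f^n||_R^{1/n}=||f||_{R,\textnormal{sp}}$. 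Nothing in the statement of the lemma forces you to keep the original norm on $S$.

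Your attempted ``correct resolution'' in the last paragraph contains a genuine error: preuniformity of $(A_0,(t))$ does \emph{not} give $A^\circ=A_0$, only $t^cA^\circ\subset A_0$ for some $c>0$. So from $||f||_{\textnormal{sp}}\le 1$ you get $f\in A^\circ$, but you cannot conclude $f\in A_0$, and hence cannot conclude that the image of $f$ lies in $B_0$. The unit-ball-to-unit-ball argument therefore does not go through with the original norm on $S$.
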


\begin{proof}
To see the first assertion, it suffices to take a real number $c>1$ and equip $A_0[\frac{1}{t}]$ and $B_0[\frac{1}{t}]$ with the norms $||\cdot||_{A_0, (t), c}$ and $||\cdot||_{B_0, (t), c}$, respectively. To check the last assertion, it is enough to replace $||\cdot||_{A_0, (t), c}$ and $||\cdot||_{B_0, (t), c}$ with their respective spectral norms.
\end{proof}

Hereafter, we use the following notation: for a Banach ring $R$ with a norm $||\cdot||$, we equip the ring $R[T^\frac{1}{p^\infty}]$ with the norm $||\cdot||_\textnormal{Gauss}$ defined by $||\sum_{h\in \mathbb{Z}[\frac{1}{p}]}r_hT^h||_\textnormal{Gauss}:=\sup_{h\in \mathbb{Z}[\frac{1}{p}]}\{||r_h||\}$ (where $r_h\in R$), and denote by $R\langle T^\frac{1}{p^\infty}\rangle$ the resulting Banach ring obtained by completion.

\section{Almost ring theory and almost surjectivity of the Frobenius map}

\subsection{Basic setup and semiperfect rings}
We start with basic part of the theory of almost rings and modules. We say that a pair $(R,I)$ is a \textit{basic setup}, if $I$ is an ideal of a ring $R$, $I=I^2$ and $I$ is a flat $R$-module.\footnote{The flatness of $I$ is important for developing a solid theory of almost rings and modules.} An $R$-module map $f:M \to N$ is said to be \textit{$I$-almost injective} (resp. \textit{$I$-almost surjective}), if the kernel (resp. cokernel) of $f$ is annihilated by $I$. Moreover, $f$ is said to be an \textit{$I$-almost isomorphism}, if it is both $I$-almost injective and $I$-almost surjective. A general reference is the book \cite{GR03}. We will be mainly concerned with the following cases:
\begin{enumerate}
\item[$\bullet$]
$(R,I)=(\mathbb{Z}[T^{\frac{1}{p^\infty}}], (T)^{\frac{1}{p^\infty}})$.

\item[$\bullet$]
$(R,I)=(K^\circ\langle T^\frac{1}{p^\infty}\rangle,(T)^\frac{1}{p^\infty}K^{\circ\circ}K^\circ\langle T^\frac{1}{p^\infty}\rangle)$, where $K$ is a perfectoid field and $K^{\circ\circ}$ is the set of topologically nilpotent elements of $K$ (see Definition \ref{TateDefinition}).

\item[$\bullet$]
$(R,I)=(A_0,I)$, where $A_0$ is a ring of definition of a Tate ring $A$ (see Example \ref{BasicSetupEx} below).
\end{enumerate}

\begin{example}
\label{BasicSetupEx}
Fix a prime number $p>0$. 
Let $A_0$ be a ring with a sequence of nonzero divisors $\{t_n\}_{n\geq 0}$ such that $A_0$ is integrally closed in $A:=A_0[\frac{1}{t_0}]$ and assume that for every $n\geq 0$ we have $t_{n+1}^p=t_n u_n$ for some unit $u_n \in A_0^\times$. 
Denote by $I$ the ideal $\sqrt{(t_0)}\subset A_0$. Then the pair $(A_0, I)$ is a basic setup. Let us observe it. 
Pick $x\in I$. Then we have $x^{p^m}=t_0a$ for some $m>0$ and $a\in A_0$. Thus, since $t_m^{p^m}=t_0u$ for some unit $u \in A_0$, an equality $(\frac{x}{t_m})^{p^m}=au^{-1}$ holds in $A$. Hence $x$ lies in $t_mA_0$, because $A_0$ is integrally closed in $A$. Consequently, we have $I=\varinjlim_nt_nA_0$. Therefore, we find that $I=I^p$ and $I$ is flat over $A_0$. 
\end{example}

We give definitions of (almost) semiperfect rings that include both classes of Witt-perfect and perfectoid algebras.

\begin{definition}[Almost semiperfect ring]
Let $A_0$ be a ring and fix a prime number $p>0$.
\begin{enumerate}
\item
We say that $A_0$ is \textit{perfect} (resp. \textit{semiperfect}), if the Frobenius endomorphism on $A_0/(p)$ is bijective (resp. surjective).

\item
Assume that $A_0$ is a $\mathbb{Z}[T^{\frac{1}{p^\infty}}]$-algebra with a basic setup $(\mathbb{Z}[T^{\frac{1}{p^\infty}}], (T)^{\frac{1}{p^\infty}})$. We say that $A_0$ is \emph{$(T)^{\frac{1}{p^\infty}}$-almost semiperfect}, if the Frobenius endomorphism $F:A_0/(p) \to A_0/(p)$ is $(T)^{\frac{1}{p^\infty}}$-almost surjective, where the target ring of $F$ is regarded as an $A_0$-module via the Frobenius map.
\end{enumerate}
\end{definition}

First we give several lemmas on (almost) semiperfect rings for later use. For an element $t$ in a ring $A_0$, we say that $A_0$ is \textit{$t$-adically Zariskian}, if $t$ is contained in the Jacobson radical of $A_0$; see \cite{FK18} and \cite{T18} for details on Zariskian geometry. Notice that for any ring, one can define a trivial $\mathbb{Z}[T^\frac{1}{p^\infty}]$-algebra structure by assigning the unity to each $T^\frac{1}{p^n}$. Over such a $\mathbb{Z}[T^\frac{1}{p^\infty}]$-algebra, $(T)^{\frac{1}{p^\infty}}$-almost semiperfectness is equivalent to semiperfectness.

\begin{lemma}
\label{zarsemiperf}
Let $A_0$ be a ring with a nonzero divisor $\varpi$ such that $p\in \varpi^pA_0$. Assume that $A_0$ is $\varpi$-adically Zariskian and $A_0/(\varpi^p)$ is semiperfect. Then there exists a sequence $\{\varpi_n\}_{n\geq 0}$ in $A_0$ such that $\varpi_0=\varpi$ and for every $n \ge 0$, we have $\varpi_{n+1}^p=\varpi_n u_n$ for some unit $u_n \in A_0^\times$. 
\end{lemma}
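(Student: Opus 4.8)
The plan is a straightforward recursion: build $t_{n+1}$ from $t_n$ by a single application of the surjectivity of Frobenius, while carrying along just enough auxiliary data to keep the recursion going. Concretely, I will prove by induction on $n$ that one can choose $t_0,\dots,t_n\in A_0$ and units $u_0,\dots,u_{n-1}\in A_0^{\times}$ with $t_0=t$ and $t_{i+1}^p=t_iu_i$ for $0\le i<n$, in such a way that in addition $t_n\in\rad(A_0)$, $p\in t_n^pA_0$, and $A_0/(t_n^p)$ is semiperfect. For $n=0$ this is exactly the hypothesis (note $t_0=t\in\rad(A_0)$ because $A_0$ is $t$-adically Zariskian).

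For the step $n\to n+1$, write $s:=t_n$. Since $p\in s^pA_0$, the image of $p$ in $A_0/(s^p)$ is zero, so the semiperfectness assumption says precisely that the Frobenius of $A_0/(s^p)$ is surjective. Hence there is $w\in A_0$ with $w^p-s\in s^pA_0$, say $w^p=s(1+s^{p-1}c)$ with $c\in A_0$. Because $s\in\rad(A_0)$ and $p-1\ge 1$, the element $s^{p-1}c$ lies in $\rad(A_0)$, so $u_n:=1+s^{p-1}c$ is a unit; setting $t_{n+1}:=w$ gives $t_{n+1}^p=t_nu_n$.

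It remains to verify the three auxiliary properties for $t_{n+1}$, which is what keeps the induction running. From $t_{n+1}^p=t_nu_n$ with $u_n$ a unit we get $t_{n+1}^pA_0=t_nA_0$; since the Jacobson radical is an intersection of (maximal, hence prime) ideals, it is a radical ideal, and from $t_{n+1}^p\in t_nA_0\subseteq\rad(A_0)$ we deduce $t_{n+1}\in\rad(A_0)$. Also $p\in t_n^pA_0\subseteq t_nA_0=t_{n+1}^pA_0$. Finally $A_0/(t_{n+1}^p)=A_0/(t_n)$ is a Frobenius-compatible quotient of $A_0/(t_n^p)$, whose Frobenius is surjective (as used above), so the Frobenius of $A_0/(t_{n+1}^p)$ is surjective as well, i.e.\ $A_0/(t_{n+1}^p)$ is semiperfect. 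This closes the induction and yields the desired sequence $\{t_n\}_{n\ge 0}$.

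No step here is genuinely hard; the only thing to watch is the bookkeeping just carried out — that each of the conditions $t_n\in\rad(A_0)$, $p\in t_n^pA_0$, and semiperfectness of $A_0/(t_n^p)$ is inherited by $t_{n+1}$ — together with the two elementary facts it rests on: that $\rad(A_0)$ is a radical ideal, and that $1+\rad(A_0)\subseteq A_0^{\times}$, the latter being exactly the role played by the "$t$-adically Zariskian" hypothesis (without it one would only obtain $w^p=s\cdot(\text{a non-unit})$ and the construction would stall).
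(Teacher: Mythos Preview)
Your proof is correct and follows essentially the same approach as the paper: use the surjectivity of Frobenius to produce an approximate $p$-th root $w$ with $w^p=t_n(1+\text{something in }\rad(A_0))$, and invoke the Zariskian hypothesis to make the parenthesized factor a unit. The only organizational difference is that the paper always applies semiperfectness of $A_0/(t^p)$ (for the original $t$) and then, using $(t_m^{p^m})=(t)$, rewrites $t^p$ as a multiple of $t_m^p$ to extract the unit, whereas you package this into a strengthened induction hypothesis (carrying $t_n\in\rad(A_0)$, $p\in t_n^pA_0$, and semiperfectness of $A_0/(t_n^p)$) so that each step uses $A_0/(t_n^p)$ directly; both arrive at the same computation.
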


\begin{proof}
We carry out the proof by induction. Put $\varpi_0:=\varpi$. Then, since $A_0/(\varpi_0^p)$ is semiperfect, we find $a_0, b_0\in A_0$ for which $a_0^p=\varpi_0+\varpi_0^pb_0=\varpi_0(1+\varpi_0^{p-1}b_0)$. Here $1+\varpi^{p-1}_0b_0$ is a unit in $A_0$, because $A_0$ is $\varpi$-adically Zariskian. Hence we can take $\varpi_1=a_0$. Next pick an integer $m>0$ and assume that the assertion holds true for every $n\leq m-1$. Take $a_m, b_m\in A_0$ for which $\varpi_m=a_m^p+\varpi^pb_m$. Now $(\varpi_m^{p^m})=(\varpi)$ as ideals by assumption and therefore, we have $c_m\in A$ for which $\varpi=\varpi_mc_m$. Then it holds that $\varpi_m^p=\varpi_m(1+\varpi_m^{p-1}c_m^pb_m)$ and $1+\varpi_m^{p-1}\varpi_m^pb_m \in A_0^\times$. Hence we can take $\varpi_{m+1}=a_m$, which completes the proof. 
\end{proof}

\begin{lemma}
Let $A_0$ be a $\mathbb{Z}[T^{\frac{1}{p^\infty}}]$-algebra with a basic setup $(\mathbb{Z}[T^{\frac{1}{p^\infty}}], (T)^{\frac{1}{p^\infty}})$. Then the following conditions are equivalent. 

\begin{enumerate}
\item
$A_0$ is $(T)^{\frac{1}{p^\infty}}$-almost semiperfect.
 
\item
There is a semiperfect $\mathbb{Z}[T^{\frac{1}{p^\infty}}]$-algebra $B_0$ with the following property: $A_0/(p)$ is $(T)^{\frac{1}{p^\infty}}$-almost isomorphic to $B_0/(p)$ as $\mathbb{Z}[T^{\frac{1}{p^\infty}}]$-algebras.
\end{enumerate}
\end{lemma}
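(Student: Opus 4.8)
The whole statement should fall out of one elementary remark: the ideal $(T)^{\frac{1}{p^{\infty}}}$ is generated by $p$-th powers, as $T^{\frac{1}{p^{n}}}=(T^{\frac{1}{p^{n+1}}})^{p}$ for every $n\geq 0$. Hence in the $\mathbb{F}_{p}[T^{\frac{1}{p^{\infty}}}]$-algebra $A_{0}/(p)$ every generator $T^{\frac{1}{p^{n}}}$ of $(T)^{\frac{1}{p^{\infty}}}A_{0}/(p)$ is a $p$-th power, and more generally $T^{\delta}$ is a $p^{r}$-th power there for every $r\geq 1$ and every nonnegative rational $\delta$ with $p$-power denominator. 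First I would record that, by definition, $(1)$ means exactly that $T^{\frac{1}{p^{n}}}c\in\operatorname{Im}\Phi$ for all $c\in A_{0}/(p)$ and all $n\geq 0$, where $\Phi$ is the Frobenius of $A_{0}/(p)$; and then, by a short induction on $r$ (using $(T^{\frac{1}{p^{j}}}d)^{p}=T^{\frac{1}{p^{j-1}}}d^{p}$ and absorbing the spurious power of $T$ into a higher $p$-power once the indices are large), derive the sharpening
\[
(*)\qquad T^{\tfrac{1}{p^{m}}}c\ \text{is a }p^{r}\text{-th power in }A_{0}/(p)\ \text{ for all } c\in A_{0}/(p),\ m\geq 0,\ r\geq 1.
\]

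\smallskip
\emph{The implication $(2)\Rightarrow(1)$ is a direct chase.} Say $\phi\colon A_{0}/(p)\to B_{0}/(p)$ is a $\mathbb{Z}[T^{\frac{1}{p^{\infty}}}]$-algebra map with $\ker\phi,\coker\phi$ killed by $I:=(T)^{\frac{1}{p^{\infty}}}$ and $B_{0}/(p)$ semiperfect. Fix $c$ and $N$, pick $z$ with $z^{p}=\phi(c)$; for each $k\geq 1$ choose $w$ with $\phi(w)=T^{\frac{1}{p^{k}}}z$. Then $\phi(w^{p})=T^{\frac{1}{p^{k-1}}}\phi(c)=\phi(T^{\frac{1}{p^{k-1}}}c)$, so $T^{\frac{1}{p^{m}}}(w^{p}-T^{\frac{1}{p^{k-1}}}c)=0$ for all $m$, i.e. $(T^{\frac{1}{p^{m+1}}}w)^{p}=T^{\frac{1}{p^{m}}+\frac{1}{p^{k-1}}}c$; thus $T^{\frac{1}{p^{m}}+\frac{1}{p^{k-1}}}c$ is a $p$-th power for all $m,k$. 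Choosing $m,k$ with $\frac{1}{p^{m}}+\frac{1}{p^{k-1}}\leq\frac{1}{p^{N}}$ and multiplying by the $p$-th power $T^{\frac{1}{p^{N}}-\frac{1}{p^{m}}-\frac{1}{p^{k-1}}}$ shows $T^{\frac{1}{p^{N}}}c\in\operatorname{Im}\Phi$, so $\Phi$ is $I$-almost surjective.

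\smallskip
\emph{For $(1)\Rightarrow(2)$ I would build $B_{0}$ inside almost ring theory.} Put $C:=A_{0}/(p)$ and let $B_{0}:=(C^{a})_{*}=\Hom_{\mathbb{Z}[T^{\frac{1}{p^{\infty}}}]}(\widetilde{I},C)$ be the $*$-extension of the almost algebra $C^{a}$ over the basic setup $(\mathbb{Z}[T^{\frac{1}{p^{\infty}}}],I)$ (here $\widetilde{I}=I\otimes I\cong I$ because $I$ is flat and idempotent), regarded as a $\mathbb{Z}[T^{\frac{1}{p^{\infty}}}]$-algebra; since $p$ dies in $C$ it dies in $B_{0}$, so $B_{0}/(p)=B_{0}$. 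The canonical map $C\to B_{0}$ has kernel the $I$-torsion of $C$ and cokernel killed by $I$, so $A_{0}/(p)$ is $(T)^{\frac{1}{p^{\infty}}}$-almost isomorphic to $B_{0}/(p)$; it remains to prove $B_{0}$ semiperfect. Applying $(-)^{a}$ to the Frobenius of $C^{a}$, which has zero cokernel by $(1)$, shows $\Phi_{B_{0}}$ is $I$-almost surjective, and I would upgrade this to genuine surjectivity by hand. Since $I=\bigcup_{n}T^{\frac{1}{p^{n}}}\mathbb{Z}[T^{\frac{1}{p^{\infty}}}]$ with each $T^{\frac{1}{p^{n}}}$ a nonzerodivisor, $B_{0}$ is identified with the inverse limit $\varprojlim_{n}C$ along the maps ``multiplication by $T^{(p-1)/p^{n+1}}$'': an element of $B_{0}$ is a system $(z_{n})_{n}$ with $z_{n}=T^{(p-1)/p^{n+1}}z_{n+1}$, and using $T^{\frac{1}{p^{n}}}=(T^{\frac{1}{p^{n+1}}})^{p}$ one computes that $\Phi_{B_{0}}$ carries $(z_{n})_{n}$ to $(z_{n+1}^{\,p})_{n}$. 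For $(z_{n})_{n}\in B_{0}$ the factorization $z_{n-1}=T^{(p-1)/p^{n}}z_{n}=T^{(p-2)/p^{n}}\cdot(T^{\frac{1}{p^{n}}}z_{n})$ exhibits $z_{n-1}$ as a product of two $p$-th powers of $C$ by $(*)$; choosing such $p$-th roots compatibly with the relation that cuts out $B_{0}$ produces a $\Phi_{B_{0}}$-preimage of $(z_{n})_{n}$, so $\Phi_{B_{0}}$ is onto.

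\smallskip
\emph{The hard part will be exactly that compatible choice of $p$-th roots in $(1)\Rightarrow(2)$.} If $C=A_{0}/(p)$ is reduced the roots are unique and their coherence is forced by $z_{n-1}=T^{(p-1)/p^{n}}z_{n}$; in general---particularly when $T$ is a zerodivisor in $C$, so that powers of $T$ cannot be cancelled---the coherent choice must be arranged by a descending recursion on $n$ backed by $(*)$, and pinning this down is where the genuine work lies. If it turns out unwieldy, the clean fallback is to discard the explicit model and appeal to the theory of perfection of almost algebras (cf.\ Gabber--Ramero): a Frobenius-surjective almost algebra over this basic setup is the almost-ification of an honest semiperfect ring. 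Apart from this, both directions are essentially formal once one has noticed that $(T)^{\frac{1}{p^{\infty}}}$ is generated by $p$-th powers.
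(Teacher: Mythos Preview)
Your treatment of $(2)\Rightarrow(1)$ is correct and more explicit than the paper's ``easy to check''. Your observation $(*)$ is also correct and is in fact the heart of the matter.

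The gap is in $(1)\Rightarrow(2)$. Taking $B_0=(C^a)_*$ gives the almost isomorphism for free, but you then have to prove that Frobenius on $(C^a)_*$ is genuinely surjective, and this is the coherent-$p$-th-root problem you flag. Concretely, you need the inverse limit of the sets $S_m=\{w\in C:w^p=z_{m-1}\}$ (transition maps $w\mapsto T^{(p-1)/p^{m+1}}w$) to be nonempty; each $S_m$ is nonempty by $(*)$, but inverse limits of nonempty sets along arbitrary maps can be empty, and nothing here gives you the Mittag--Leffler or finiteness input that would force nonemptiness. When $C$ is not reduced and $T$ acts as a zerodivisor you cannot cancel and the recursion you sketch does not obviously terminate. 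Your fallback to a general Gabber--Ramero principle is not a specific citation and would need to be justified.

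The paper avoids this entirely by choosing $B_0$ on the other side: take the inverse perfection $A_0^\flat=\varprojlim_{x\mapsto x^p}A_0/(p)$ (with the $\mathbb{Z}[T^{1/p^\infty}]$-structure sending $T$ to $(g,g^{1/p},g^{1/p^2},\ldots)$) and let $B_0$ be $A_0^\flat$, so that $B_0/(p)=A_0^\flat/\ker\Phi_{A_0}=\operatorname{Im}\bigl(A_0^\flat\to A_0/(p)\bigr)$. This is a quotient of a perfect ring, hence semiperfect for free; the only thing to check is that the inclusion $\operatorname{Im}\Phi_{A_0}\hookrightarrow A_0/(p)$ has cokernel killed by $(T)^{1/p^\infty}$, i.e.\ that $T^{1/p^m}c\in\bigcap_{r\geq 1}\operatorname{Im}\Phi^r$ for all $c$ and $m$. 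But that is exactly your $(*)$. So you already proved the substantive step; you just attached it to the wrong candidate $B_0$. Swapping $(C^a)_*$ for the image of the inverse perfection closes the argument immediately.
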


\begin{proof}
First we assume $(1)$. We denote by $g^\frac{1}{p^n}$ the image of $T^\frac{1}{p^n}$ in $A_0/(p)$ for every $n\geq 0$. Consider the inverse perfection $A_0^\flat:=\varprojlim_{x\mapsto x^p}A_0/(p)$, together with a ring map $\mathbb{Z}[T^\frac{1}{p^\infty}]\to A_0^\flat$ that assigns $(g, g^\frac{1}{p}, g^\frac{1}{p^2}, \ldots)\in A_0^\flat$ to $T$. Let $\Phi_{A_0}: A_0^\flat\to A_0/(p)$ be the projection map defined by the rule $(a_0, a_1, a_2, \ldots)\mapsto a_0$. Then the induced map $A_0^\flat/\textnormal{Ker}(\Phi_{A_0})\hookrightarrow A_0/(p)$ is a $(T)^{\frac{1}{p^\infty}}$-almost isomorphism and $A_0^\flat/\textnormal{Ker}(\Phi_{A_0})$ is semiperfect. Hence the implication $(1) \Rightarrow (2)$ follows. The converse is easy to check. 
\end{proof}

For a surjective ring map $A_0\twoheadrightarrow B_0$ with $B_0$ semiperfect, clearly the semiperfectness does not lift to $A_0$ in general. On the other hand, in the situations we deal with later, the following assertion holds. 

\begin{lemma}
\label{lem21219}
Let $A_0$ be a $\mathbb{Z}[T^\frac{1}{p^\infty}]$-algebra with a nonzero divisor $\varpi$ such that $p\in\varpi^pA_0$. Assume that $\varpi$ admits a $p$-th root $\varpi^\frac{1}{p}\in A_0$. Then the following assertions hold.  
\begin{enumerate}
\item
$A_0/(\varpi)$ is $(T)^{\frac{1}{p^\infty}}$-almost semiperfect if and only if $A_0/(\varpi^p)$ is $(T)^{\frac{1}{p^\infty}}$-almost semiperfect. 
\item
Equip $A_0$ with the $\varpi$-adic topology and assume further that $(p)\subset A_0$ is closed and $A_0$ is complete and separated. 
Then $A_0/(\varpi^p)$ is $(T)^{\frac{1}{p^\infty}}$-almost semiperfect if and only if 
$A_0/(p)$ is $(T)^{\frac{1}{p^\infty}}$-almost semiperfect. 
\end{enumerate}
\end{lemma}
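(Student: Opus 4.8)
I would treat the three conditions uniformly after recording that $p\in\varpi^pA_0$ forces $(\varpi,p)=(\varpi)$ and $(\varpi^p,p)=(\varpi^p)$, so that each condition is literally a statement about the Frobenius endomorphism of $A_0/(\varpi)$, $A_0/(\varpi^p)$ or $A_0/(p)$. In every equivalence the implication ``from the bigger quotient ring to the smaller one'' is formal: if one ideal contains another, the corresponding quotient ring is a quotient of the other compatibly with Frobenius, and the cokernel of Frobenius (viewed as a module map to the Frobenius twist) of a quotient ring is a quotient of the cokernel of Frobenius, so $(T)^{1/p^\infty}$-almost surjectivity descends. This settles ``$A_0/(\varpi^p)$ almost semiperfect $\Rightarrow$ $A_0/(\varpi)$ almost semiperfect'' in $(1)$ and ``$A_0/(p)$ almost semiperfect $\Rightarrow$ $A_0/(\varpi^p)$ almost semiperfect'' in $(2)$.

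For the converse in $(1)$ the point is that the Frobenius $F$ on $A_0/(\varpi^p)$ factors as $A_0/(\varpi^p)\twoheadrightarrow A_0/(\varpi)\xrightarrow{\ \bar F\ }A_0/(\varpi^p)$, where $\bar F(x\bmod\varpi):=x^p\bmod\varpi^p$ is well defined since $x\equiv x'\pmod\varpi$ gives $x^p\equiv x'^p\pmod{\varpi^p}$; hence the image of $F$ equals the image of $\bar F$, and composing $\bar F$ with the projection back to $A_0/(\varpi)$ returns the Frobenius of $A_0/(\varpi)$. Consequently the cokernel of $\bar F$ surjects onto the cokernel of Frobenius of $A_0/(\varpi)$ (killed by $I_0$ by hypothesis) with kernel a quotient of $\varpi\cdot A_0/(\varpi^p)$, so it is enough to show that $\varpi\cdot A_0/(\varpi^p)$ is $I_0$-almost contained in the image of $\bar F$. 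For this, given $a\in A_0$, apply almost surjectivity of the Frobenius of $A_0/(\varpi)$ to $a$, then to the resulting remainder, and so on; at each stage use $\varpi^{j}x^p=((\varpi^{1/p})^{j}x)^p$ (which needs only $\varpi^{1/p}\in A_0$) and $(u+v)^p\equiv u^p+v^p\pmod{\varpi^pA_0}$ (which holds because $p\in\varpi^pA_0$). After $p-1$ steps the remainder has been pushed into $\varpi^pA_0$, so after multiplying by a product $T^c$ of $p-1$ generators of $I_0$ one has $T^c\,\varpi a\equiv(\text{a }p\text{-th power})\pmod{\varpi^p}$; since $I_0=I_0^{p-1}$ this gives the desired almost containment, and then $I_0=I_0^2$ yields that $\bar F$, hence $F$, is $I_0$-almost surjective.

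For the remaining (and hardest) implication, $A_0/(\varpi^p)$ almost semiperfect $\Rightarrow$ $A_0/(p)$ almost semiperfect, the same cokernel comparison applied to $S:=A_0/(p)\twoheadrightarrow A_0/(\varpi^p)=S/\varpi^pS$ reduces the problem to showing that the submodule $C$ of the Frobenius-cokernel of $S$ generated by the class of $\varpi^pS$ is $I_0$-almost zero. Now $S$ has characteristic $p$, so its Frobenius is additive, $\varpi^{1/p}\in S$, and --- crucially --- $S$ is $\varpi$-adically complete and separated because $(p)$ is closed and $A_0$ is $\varpi$-adically complete and separated. Running the same kind of iteration (apply almost surjectivity of Frobenius modulo $\varpi^p$ to successive remainders, absorbing each $\varpi^{mp}$ into a $p$-th power via $\varpi^{1/p}$) one finds that $I_0C$ lies in the image of $\varpi^{kp}S$ in the cokernel for every $k$; a limiting argument in the complete ring $S$ then extracts an honest $p$-th root and forces $I_0C=0$.

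The heart of the difficulty --- and the only place where completeness and the hypothesis $\varpi^{1/p}\in A_0$ really enter --- is this last limiting step in $(2)$: each invocation of ``almost'' surjectivity contributes a factor from $I_0$ to the approximate $p$-th root, and these factors accumulate, so the successive approximations do not obviously converge. To get around this one uses that, $I_0$ being idempotent, it suffices to test $I_0$-almost surjectivity of the Frobenius of $S$ against a single cofinal chain of exponents $T^{e}$ with $e\to 0$ rather than against every $T^{1/p^n}$, and one chooses the generators used in the iteration so that their exponents telescope into such a chain; equivalently, one shows that the set of $p$-th powers of $S$ is closed in the $\varpi$-adic topology. Part $(1)$ needs no completeness precisely because there one only has to bridge the bounded gap between $\varpi$ and $\varpi^p$, so the iteration terminates after $p-1$ steps.
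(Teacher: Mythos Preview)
Your outline for part $(1)$ is correct and is essentially the paper's argument, just repackaged through the cokernel exact sequence rather than computed directly. The paper makes the same iteration (apply almost surjectivity mod $\varpi$, absorb $\varpi^j$ into a $p$-th power via $\varpi^{1/p}$, use $(u+v)^p\equiv u^p+v^p\pmod{\varpi^p}$) and stops after $p-1$ steps; your use of $I_0=I_0^{p-1}$ to assemble the accumulated $T$-factors into an arbitrary generator of $I_0$ is fine once one notes that any $1/p^m\in\mathbb Z[1/p]_{>0}$ can be written as a sum of $p-1$ positive elements of $\mathbb Z[1/p]$, and that each $T^{E_{j+1}}$ is itself a $p$-th power in $A_0$.

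For part $(2)$ there is a genuine gap. You correctly isolate the difficulty: each invocation of almost surjectivity costs a factor $T^{e_i}$, and in an infinite iteration these must not accumulate past $T^{1/p^n}$. But you do not say \emph{which} exponents to use, and your fallback claim ``equivalently, one shows that the set of $p$-th powers of $S$ is closed in the $\varpi$-adic topology'' is not justified and is not how one closes the argument. Knowing that $y_m^p\to z$ does not by itself produce a Cauchy sequence of $p$-th roots: from $(y_m-y_{m'})^p\in\varpi^NS$ one cannot conclude $y_m-y_{m'}\in\varpi^{\lceil N/p\rceil}S$ without extra hypotheses on $S$. The paper does \emph{not} prove closedness of the set of $p$-th powers; instead it builds the approximate $p$-th roots as partial sums of a manifestly convergent series. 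Concretely, with $a':=g^{1/p^n}a$ one applies almost surjectivity at step $i$ with the factor $g^{(p-1)/p^{n+i+1}}$ and then multiplies by $g^{1/p^{n+i+1}}$, obtaining the recursion
\[
g^{1/p^{n+i}}b_{i-1}\;=\;g^{1/p^{n+i+1}}a_i^p+\varpi\,g^{1/p^{n+i+1}}b_i,
\]
so that $a'\equiv\sum_{i\ge 0}(g^{1/p^{n+i+2}}a_i\,\varpi^{i/p})^p\pmod{p}$ and the partial $p$-th roots $\sum_{i\le m}g^{1/p^{n+i+2}}a_i\,\varpi^{i/p}$ converge in the $\varpi$-adically complete ring $S=A_0/(p)$; closedness of $(p)$ then lets one pass to the limit. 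The point is that the geometric choice $e_i=(p-1)/p^{n+i+1}$ makes $\sum_i e_i=1/p^n$ exactly, so the accumulated $T$-cost never exceeds the target. Your phrase ``telescope into such a chain'' is the right intuition, but without this explicit choice (or an equivalent one) the limiting step does not go through.
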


\begin{proof}
If {$A_0/(\varpi^p)$ (resp.\ $A_0/(p)$) is $(T)^{\frac{1}{p^\infty}}$-almost semiperfect}, then clearly so is $A_0/(\varpi)$ (resp.\ $A_0/(\varpi^p)$). 
So it suffices to prove the inverse implications. Fix an arbitrary integer $n>0$, and put $g^\frac{1}{p^{k}}:=T^\frac{1}{p^{k}}\cdot 1\in A_0$ for every $k\geq 0$. Assume that $A_0/(\varpi)$ is $(T)^{\frac{1}{p^\infty}}$-almost semiperfect. Pick an element $a\in A_0$ and put $a':=g^\frac{1}{p^n}a$. Then by assumption, there exist some $a_0,b_0 \in A_0$ such that $g^\frac{p-1}{p^{n+1}}a= a_0^p+\varpi b_0$. 
Multiplying both sides by $g^\frac{1}{p^{n+1}}$, we obtain $a'=g^\frac{1}{p^{n+1}}a_0^p+\varpi (g^\frac{1}{p^{n+1}}b_0)$. Similarly, we can find some $a_1, b_1\in A_0$ such that $g^\frac{1}{p^{n+1}}b_0=g^\frac{1}{p^{n+2}}a_1^p+\varpi(g^\frac{1}{p^{n+2}}b_1)$. This procedure yields the following assertion: if a system of elements $a_0,\ldots, a_m\in A_0$ satisfies $a'\equiv\sum^m_{i=0}g^\frac{1}{p^{n+i+1}}a_i^{p}\varpi^{i}\mod (g^\frac{1}{p^{n+m+1}}\varpi^{m+1})$, then there exists some 
$a_{m+1}\in A_0$ for which $a'\equiv\sum^{m+1}_{i=0}g^\frac{1}{p^{n+i+1}}a_i^{p}\varpi^i\mod  ( g^\frac{1}{p^{n+m+2}}\varpi^{m+2})$. Hence by axiom of choice, we obtain a sequence $\{a_m\}_{m\geq 0}$ in $A_0$ such that $a'\equiv\sum^m_{i=0}g^\frac{1}{p^{n+i+1}}a_i^{p}\varpi^{i}\mod (g^\frac{1}{p^{n+m+1}}\varpi^{m+1})$ for every $m\geq 0$. In particular, we have
$$
a'\equiv\sum^{p-1}_{i=0}g^\frac{1}{p^{n+i+1}}a_i^{p}\varpi^i\equiv\biggl(\sum^{p-1}_{i=0}g^\frac{1}{p^{n+i+2}}a_i\varpi^\frac{i}{p}\biggr)^p\mod (\varpi^{p}),  
$$
which yields $(1)$. To prove $(2)$, we equip $A_0$ with the $\varpi$-adic topology, and assume further that $(p)\subset A_0$ is closed and $A_0$ is complete and separated. Set $b_m:=\sum^m_{i=0}g^\frac{1}{p^{n+i+2}}a_i\varpi^\frac{i}{p}$ $(m\geq0)$ and $b:=\lim_{m\to\infty}b_m\in A_0$. 
Then $\sum^m_{i=0}g^\frac{1}{n+i+1}a_i^p\varpi^i-b_m^p\in (p)$ for every $m$. 
Hence it follows that 
$$
a'-b^p=\lim_{m\to\infty}\sum^{m}_{i=0}g^\frac{1}{n+i+1}a_i^p\varpi^i-\lim_{m\to \infty}b_m^p
=\lim_{m\to\infty}\Big(\sum^{m}_{i=0}g^\frac{1}{n+i+1}a_i^p\varpi^i-b_m^p\Big)\in (p),
$$
because $(p)\subset A_0$ is a closed ideal and so $(2)$ follows. 
\end{proof}

\begin{corollary}
\label{gAl-pigAl}
Let $A_0$ be a ring with a nonzero divisor $\varpi$ such that $p\in \varpi^pA_0$ and $A_0$ is integrally closed in $A_0[\frac{1}{\varpi}]$ and let $g\in A_0$ be an element. Assume that $A_0$ admits compatible systems of $p$-power roots 
$\varpi^\frac{1}{p^n}, g^\frac{1}{p^n}  \in A_0$. Then the following conditions are equivalent. 
\begin{itemize}
\item[$(a)$]
The Frobenius endomorphism on $A_0/(\varpi^p)$ is $(g)^\frac{1}{p^\infty}$-almost surjective. 
\item[$(b)$]
The Frobenius endomorphism on $A_0/(\varpi^p)$ is $(\varpi g)^\frac{1}{p^\infty}$-almost surjective. 
\end{itemize}
\end{corollary}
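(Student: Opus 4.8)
The plan is to split the equivalence into its trivial half and its substantial half. The implication $(a)\Rightarrow(b)$ is immediate: for every $n\geq0$ one has $(\varpi g)^{1/p^n}=\varpi^{1/p^n}g^{1/p^n}\in g^{1/p^n}A_0$, so that $(\varpi g)^{1/p^\infty}\subseteq(g)^{1/p^\infty}$, and a cokernel annihilated by the larger of two ideals is annihilated by the smaller one. All the content is therefore in $(b)\Rightarrow(a)$.

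For that direction I would first record the structural reductions. Since $p\in\varpi^pA_0$, the ring $R:=A_0/(\varpi^p)$ has characteristic $p$; hence its Frobenius is additive and $R^p:=\{x^p:x\in R\}$ is a subring. A routine unwinding of the definition of almost surjectivity (using that $(g)^{1/p^\infty}$ and $(\varpi g)^{1/p^\infty}$ are ideals whose generators admit $p$-th roots within the ideal) shows that $(a)$ is equivalent to the inclusion $(g)^{1/p^\infty}\subseteq R^p$ of the image of the ideal into the subring of $p$-th powers, and likewise $(b)$ to $(\varpi g)^{1/p^\infty}\subseteq R^p$. Next, since $A_0$ is integrally closed in $A_0[\frac{1}{\varpi}]$ and carries compatible $p$-power roots of $\varpi$, the computation of Example \ref{BasicSetupEx} (with $t_n:=\varpi^{1/p^n}$) gives $\bigcup_{m\geq0}\varpi^{1/p^m}A_0=\sqrt{\varpi A_0}$; passing to $R$ this says that $\bigcup_m\varpi^{1/p^m}R$ is exactly the nilradical $\operatorname{nil}(R)$ and that $(\varpi^{1/p^m})^{p^{m+1}}=0$ in $R$ for every $m$. (Alternatively, one may first invoke Lemma \ref{lem21219}(1) for the two $\mathbb{Z}[T^{1/p^\infty}]$-algebra structures $T\mapsto g$ and $T\mapsto\varpi g$ to replace $\varpi^p$ by $\varpi$ throughout; the same ingredients remain available.)

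Granting $(b)$, i.e.\ $(\varpi g)^{1/p^\infty}\subseteq R^p$, an elementary manipulation — writing $\varpi^{1/p^{m'}}=(\varpi^{1/p^m})^{p^{m-m'}}$ for $m'\le m$, writing $g^{1/p^n}=(g^{1/p^m})^{p^{m-n}}$ for $n\le m$, and absorbing the surplus $\varpi$- and $g$-roots into a single multiplier — shows that $\varpi^{1/p^m}\cdot g^{1/p^n}a\in R^p$ for \emph{all} $m,n\geq0$ and all $a\in R$; equivalently, $\operatorname{nil}(R)\cdot(g^{1/p^n})\subseteq R^p$ for every $n$. The decisive step is then to remove this last $\varpi$-root, that is, to deduce $(g^{1/p^n})\subseteq R^p$. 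I would do this by a telescoping/descent argument in the style of the proof of Lemma \ref{lem21219}: fixing $n$ and $a$, one peels off $p$-th powers from $g^{1/p^n}a$ one at a time, each step leaving a residual term multiplied by a strictly deeper root $\varpi^{1/p^k}$ (the discarded errors lying in $\varpi^pR=0$); the additivity of Frobenius in characteristic $p$ lets the partial $p$-th powers be recombined, and the nilpotence bound $(\varpi^{1/p^k})^{p^{k+1}}=0$ forces the residual term to vanish after finitely many steps, exhibiting $g^{1/p^n}a\in R^p$. The part I expect to be the main obstacle is organizing this telescope so that the accumulated powers of the $\varpi$-roots genuinely reach the nilpotence threshold — and it is exactly here that the hypothesis that $A_0$ be integrally closed in $A_0[\frac{1}{\varpi}]$ is needed, through the identity $\bigcup_m\varpi^{1/p^m}A_0=\sqrt{\varpi A_0}$.
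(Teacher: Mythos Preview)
Your reduction of $(a)\Rightarrow(b)$ and the reformulation of almost surjectivity in terms of the subring $R^p$ are fine, but the heart of the matter --- the ``telescope'' in $R=A_0/(\varpi^p)$ that is supposed to remove the factor $\varpi^{1/p^m}$ --- is not actually carried out, and I do not see how to make it work with the ingredients you list. Knowing that $\varpi^{1/p^m}\alpha\in R^p$ for all $m$ does not, by any evident inductive step, produce a decomposition $\alpha=(\text{$p$-th power})+\varpi^{1/p^{k}}(\text{residual})$ in $R$; the model you invoke (Lemma~\ref{lem21219}) keeps the almost-ideal fixed and varies the modulus, whereas here you must vary the almost-ideal, and the two situations are not parallel. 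Crucially, your only use of integral closedness is through the identity $\bigcup_m\varpi^{1/p^m}A_0=\sqrt{\varpi A_0}$, which is too weak: once you pass to $R$ you have lost the ability to divide, and that identity gives no mechanism for extracting $p$-th roots.

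The paper's argument avoids the quotient entirely at the key step. Given $a\in A_0$ and $n>0$, assumption $(b)$ yields $\varpi^{1/p^n}g^{1/p^n}a=b^p+\varpi^p c$ in $A_0$. Dividing by $\varpi^{1/p^n}$ in $A_0[\frac{1}{\varpi}]$ gives
\[
g^{1/p^n}a=\bigl(\varpi^{-1/p^{n+1}}b\bigr)^p+\varpi\bigl(\varpi^{p-1-1/p^n}c\bigr),
\]
where the second term visibly lies in $\varpi A_0$. The first term has $p$-th power in $A_0$, so integral closedness of $A_0$ in $A_0[\frac{1}{\varpi}]$ forces $\varpi^{-1/p^{n+1}}b\in A_0$. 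Thus Frobenius on $A_0/(\varpi)$ is $(g)^{1/p^\infty}$-almost surjective in one stroke, and Lemma~\ref{lem21219}(1) upgrades this to $A_0/(\varpi^p)$. This is where the full strength of integral closedness is used --- to pull a $p$-th root from $A_0[\frac{1}{\varpi}]$ back into $A_0$ --- and it replaces your entire telescope with a single division.
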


\begin{proof}
$(a)\Rightarrow (b)$ is clear. To show the converse, we assume $(b)$. Fix an arbitrary integer $n>0$, and put $\varpi^{-\frac{1}{p^k}}:=(\varpi^\frac{1}{p^k})^{-1}$ for every $k\geq 1$. Pick $a\in A_0$. Then there exist elements $b, c\in A_0$ such that 
$\varpi^\frac{1}{p^n}g^\frac{1}{p^n}a=b^p+\varpi^pc$ and therefore, $g^\frac{1}{p^n}a=(\varpi^{-\frac{1}{p^{n+1}}}b)^p+\varpi(\varpi^{-\frac{1}{p^n}}\varpi^{p-1}c)$ and $\varpi^{-\frac{1}{p^n}}\varpi^{p-1}c$ belongs to $A_0$. Moreover, $\varpi^{-\frac{1}{p^{n+1}}}b$ belongs to $A_0$, because 
$(\varpi^{-\frac{1}{p^{n+1}}}b)^p \in A_0$ and $A_0$ is integrally closed in $A_0[\frac{1}{\varpi}]$. Thus we see that the Frobenius endomorphism on $A_0/(\varpi)$ is $(g)^\frac{1}{p^\infty}$-almost surjective and Lemma \ref{lem21219} implies $(a)$, as wanted. 
\end{proof}

If a Tate ring $A$ is given, one can consider a subring $A^{+}\subset A^{\circ}$ that is open and integrally closed in $A$. Clearly, $A^{\circ}$ is the biggest ring having such properties. As one can observe in Example \ref{int.neq.aint}, $A^+$ and $A^\circ$ are essentially different in general.\footnote{In \cite{Sch12}, the distinction between $A^+$ and $A^\circ$ does not cause any serious issue. Lemma \ref{lem2231930} is one of the reasons.} On the other hand, they have a common feature on semiperfectness in certain situations arising from the theory of perfectoid algebras. 

\begin{lemma}
\label{lem2231930}
Let $A$ be a Tate ring. Let $A^+$ be a $\mathbb{Z}[T^\frac{1}{p^\infty}]$-algebra that is an open and integrally closed subring of $A$ contained in $A^{\circ}$. Assume that $A$ has a pseudouniformizer $\varpi$ that satisfies 
$p\in\varpi^{p}A^{+}$ and admits a $p$-th root $\varpi^{\frac{1}{p}}\in A^{+}$. 
Then the following conditions are equivalent. 
\begin{itemize}
\item[$(a)$]
$A^\circ/(\varpi^p)$ is $(T)^{\frac{1}{p^\infty}}$-almost semiperfect.

\item[$(b)$]
$A^+/(\varpi^p)$ is $(T)^{\frac{1}{p^\infty}}$-almost semiperfect. 

\end{itemize}
\end{lemma}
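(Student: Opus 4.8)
The plan is to deduce both implications from Lemma \ref{lem21219}(1), after first pinning down how $A^+$ sits inside $A^\circ$. First I would record that, since $A^+$ is open in $A^\circ$ and $A^\circ$ is open in $A$ (it contains a ring of definition), $A^+$ is an open subring of $A$; hence for $x\in A^{\circ\circ}$ we have $x^n\to 0$, so $x^n\in A^+$ for some $n\geq 1$, so $x$ is integral over $A^+$ (a root of $X^n-x^n$), and since $x\in A^\circ$ and $A^+$ is integrally closed in $A^\circ$ we get $x\in A^+$. Thus $A^{\circ\circ}\subseteq A^+$; in particular $t,\varpi\in A^+$, and as $A^{\circ\circ}$ is an ideal of $A^\circ$ we obtain $\varpi A^\circ\subseteq A^{\circ\circ}\subseteq A^+$ (and $\varpi^kA^\circ\subseteq A^+$ for every $k\geq 1$). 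Since $p\in\varpi^pA^+\subseteq\varpi^pA^\circ\subseteq\varpi A^\circ$, the phrase ``$A^\circ/(\varpi^p)$ is $(T)^{\frac{1}{p^\infty}}$-almost semiperfect'' simply means that the Frobenius on $A^\circ/\varpi^pA^\circ$ is $(T)^{\frac{1}{p^\infty}}$-almost surjective, and likewise for $A^\circ/\varpi A^\circ$ and for $A^+$. As $\varpi=t^p$ has the $p$-th root $t$ in both $A^+$ and $A^\circ$, Lemma \ref{lem21219}(1) applies to each ring and shows that $(a)$ is equivalent to the almost surjectivity of Frobenius on $A^\circ/\varpi A^\circ$ and $(b)$ to that on $A^+/\varpi A^+$; moreover, for $R=A^+$ or $A^\circ$, almost surjectivity of Frobenius on $R/\varpi^{p-1}R$ implies it on $R/\varpi R$ (reduce along $R/\varpi^{p-1}R\twoheadrightarrow R/\varpi R$).

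For the implication $(b)\Rightarrow(a)$: given $a\in A^\circ$ and $n\geq1$, the element $\varpi a$ lies in $A^+$, so by $(b)$ there are $b,c\in A^+$ with $T^{\frac{1}{p^n}}\varpi a-b^p=\varpi^pc$. Then $b^p=\varpi\bigl(T^{\frac{1}{p^n}}a-\varpi^{p-1}c\bigr)\in\varpi A^\circ$, hence $(t^{-1}b)^p=T^{\frac{1}{p^n}}a-\varpi^{p-1}c\in A^\circ$; since $A^\circ$ is integrally closed in $A$ this forces $t^{-1}b\in A^\circ$, so $T^{\frac{1}{p^n}}a\equiv(t^{-1}b)^p\pmod{\varpi^{p-1}A^\circ}$. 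Thus Frobenius on $A^\circ/\varpi^{p-1}A^\circ$, hence on $A^\circ/\varpi A^\circ$, is $(T)^{\frac{1}{p^\infty}}$-almost surjective, and $(a)$ follows by Lemma \ref{lem21219}(1).

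For $(a)\Rightarrow(b)$: given $a\in A^+$ and $n\geq1$, by $(a)$ there are $b\in A^\circ$ and $c\in A^\circ$ with $T^{\frac{1}{p^n}}a-b^p=\varpi^pc$. Since $T^{\frac{1}{p^n}}a\in A^+$ and $\varpi^pc\in\varpi^pA^\circ\subseteq A^+$, we get $b^p\in A^+$; as $b\in A^\circ$ and $A^+$ is integrally closed in $A^\circ$, this forces $b\in A^+$. Writing $\varpi^pc=\varpi^{p-1}(\varpi c)$ with $\varpi c\in A^+$ gives $T^{\frac{1}{p^n}}a\equiv b^p\pmod{\varpi^{p-1}A^+}$, so Frobenius on $A^+/\varpi^{p-1}A^+$, hence on $A^+/\varpi A^+$, is $(T)^{\frac{1}{p^\infty}}$-almost surjective, and $(b)$ follows by Lemma \ref{lem21219}(1).

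The main obstacle is not the algebra — once the setup is in place, each direction is a one-line ``divide by $\varpi$'' computation — but the bookkeeping in the first paragraph: extracting the inclusions $\varpi A^\circ\subseteq A^+\subseteq A^\circ$ together with $t\in A^+$ from the hypotheses ``open'' and ``integrally closed'', and recognizing that dividing through by $\varpi$ costs a factor of $\varpi$, so one cannot argue directly modulo $\varpi^p$ and must route through the equivalence between ``mod $\varpi$'' and ``mod $\varpi^p$'' provided by Lemma \ref{lem21219}(1).
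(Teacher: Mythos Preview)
Your proof is correct and follows essentially the same route as the paper's: both directions reduce, via Lemma~\ref{lem21219}(1), the question modulo $\varpi^p$ to the question modulo $\varpi$, and the passage between $A^+$ and $A^\circ$ is handled by the same ``multiply by $\varpi$, then divide by $t$'' trick together with the integral closedness of $A^+$ in $A^\circ$ (for $(a)\Rightarrow(b)$) and of $A^\circ$ in $A$ (for $(b)\Rightarrow(a)$). Your opening paragraph, establishing $A^{\circ\circ}\subseteq A^+$ and hence $\varpi A^\circ\subseteq A^+$, makes explicit what the paper uses tacitly (e.g.\ when it asserts ``$\varpi d\in A^+$'' and ``$\varpi^{p-1}c\in A^+$'' without comment); your detour through $\varpi^{p-1}$ before reaching $\varpi$ is harmless but unnecessary, since $\varpi^p c=\varpi(\varpi^{p-1}c)\in\varpi A^+$ directly.
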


\begin{proof}
Fix an arbitrary integer $n>0$ and put $g^\frac{1}{p^{n}}:=T^\frac{1}{p^{n}}\cdot 1\in A^+$. 
Assume that $A^\circ/(\varpi^p)$ is $(T)^{\frac{1}{p^\infty}}$-almost semiperfect. 
Pick $a\in A^+$. Then $g^\frac{1}{p^n}a=b^p+\varpi^pc$ for some $b, c\in A^\circ$. Since $\varpi^{p-1} c\in A^+$ by Lemma \ref{lemma10092}(2), we find that $b^p\in A^+$ which gives $b\in A^+$. Hence $A^+/(\varpi)$ is $(T)^{\frac{1}{p^\infty}}$-almost semiperfect, which implies that $A^+/(\varpi^p)$ is also $(T)^{\frac{1}{p^\infty}}$-almost semiperfect in view of Lemma \ref{lem21219}. Hence $(a)\Rightarrow (b)$ holds. 
To show the converse, assume that $A^+/(\varpi^p)$ is $(T)^\frac{1}{p^\infty}$-almost semiperfect. Pick $d\in A^\circ$. Then $\varpi d \in A^+$ and thus $g^\frac{1}{p^n}\varpi d=e^p+\varpi^p f$ for some $e, f\in A^+$. 
Then $g^\frac{1}{p^n}d=(\frac{e}{\varpi^{1/p}})^p+\varpi^{p-1}f$. Since $A^\circ$ is integrally closed in $A$, we obtain $\frac{e}{\varpi^{1/p}}\in A^\circ$. Hence $A^\circ/(\varpi)$ is $(T)^\frac{1}{p^\infty}$-almost semiperfect. Therefore, $A^\circ/(\varpi^p)$ is semiperfect in view of Lemma \ref{lem21219}, as required.  
\end{proof}

\begin{proposition}
Let $A_0$ be a $p$-torsion free semiperfect ring and assume that a finite group $G$ acts on $A_0$ as ring automorphisms with $|G|$ invertible on $A_0$. Then the ring of invariants $A_0^G$ is also semiperfect.
\end{proposition}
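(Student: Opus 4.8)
The plan is to reduce the statement to a fact about the $\mathbb{F}_p$-algebra $A_0/(p)$ and then exploit the compatibility, in characteristic $p$, between the Reynolds (averaging) operator and the Frobenius. First I would introduce the Reynolds operator $\rho_G\colon A_0\to A_0^G$, $\rho_G(a)=|G|^{-1}\sum_{g\in G}g(a)$, which is well defined because $|G|$ is a unit; it is $A_0^G$-linear and restricts to the identity on $A_0^G$. One may assume $p\nmid|G|$, for otherwise $p$ would itself be a unit of $A_0$ (hence of $A_0^G$, since $p^{-1}\in A_0$ is $G$-fixed), so $A_0/(p)=0=A_0^G/(p)$ and there is nothing to prove.

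Next I would identify $A_0^G/(pA_0^G)$ with $(A_0/(p))^G$ as rings. The $p$-torsion-freeness of $A_0$ yields $A_0^G\cap pA_0=pA_0^G$ (if $py\in A_0^G$ then $p(y-g(y))=0$ for all $g$, so $y\in A_0^G$); hence the natural map $A_0^G/(pA_0^G)\to A_0/(p)$ is injective with image contained in $(A_0/(p))^G$, and reducing $\rho_G$ modulo $p$ shows this image is exactly $(A_0/(p))^G$. Thus it suffices to prove that $(A_0/(p))^G$ has surjective Frobenius.

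Write $\bar A:=A_0/(p)$, an $\mathbb{F}_p$-algebra with surjective Frobenius carrying a $G$-action for which $|G|$ is a unit. Given $\bar a\in\bar A^G$, semiperfectness of $A_0$ provides $\bar c\in\bar A$ with $\bar c^{p}=\bar a$. Setting $\bar b:=|G|^{-1}\sum_{g}g(\bar c)=\rho_G(\bar c)\in\bar A^G$ and using the identity $(\sum_g x_g)^p=\sum_g x_g^p$ valid in any commutative ring of characteristic $p$, together with $g(\bar c)^p=g(\bar c^{p})=g(\bar a)=\bar a$ and the fact that $|G|^{-1}$ lies in the prime subfield $\mathbb{F}_p$ (so $(|G|^{-1})^p=|G|^{-1}$), one computes
\[
\bar b^{p}=(|G|^{-1})^p\Bigl(\sum_{g}g(\bar c)\Bigr)^{p}=|G|^{-1}\sum_{g}g(\bar c)^{p}=|G|^{-1}\cdot|G|\,\bar a=\bar a .
\]
Hence Frobenius on $\bar A^G$ is surjective, which completes the argument.

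The only nontrivial point is the observation that, after reduction mod $p$, the non-multiplicative operator $\rho_G$ nevertheless commutes with $p$-th powers up to a scalar that equals $1$ by Fermat's little theorem (equivalently, because $|G|^{-1}\in\mathbb{F}_p$ is its own $p$-th power): the mixed terms in the expansion of $(\sum_g g(\bar c))^p$ vanish precisely because $p$ divides the relevant multinomial coefficients. The identification $A_0^G/(pA_0^G)\cong(A_0/(p))^G$, where the $p$-torsion-freeness hypothesis enters, is routine but should be stated carefully.
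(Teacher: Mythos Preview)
Your proof is correct. Both your argument and the paper's rest on the same two ingredients: the Reynolds operator $\rho_G$ (available because $|G|$ is a unit) and the identification $A_0^G/(pA_0^G)\cong (A_0/(p))^G$ coming from $p$-torsion-freeness. The routes diverge at the final step. The paper passes through the inverse perfection $A_0^\flat:=\varprojlim_{x\mapsto x^p}A_0/(p)$: semiperfectness gives a surjection $A_0^\flat\twoheadrightarrow A_0/(p)$, the Reynolds operator on $A_0^\flat$ makes $(A_0^\flat)^G\to (A_0/(p))^G$ surjective, and since $(A_0^\flat)^G$ is perfect one concludes. You instead lift a single $p$-th root $\bar c$ of $\bar a\in\bar A^G$ and observe directly that $\rho_G(\bar c)$ is again a $p$-th root, because in characteristic $p$ the Frobenius is additive and fixes the scalar $|G|^{-1}\in\mathbb{F}_p$. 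Your route is more elementary---it avoids introducing $A_0^\flat$ and makes the compatibility between Frobenius and averaging completely explicit---while the paper's formulation is slightly more conceptual and fits the tilt/perfection language used elsewhere in the article. Neither approach requires anything the other does not already implicitly use.
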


\begin{proof}
Pick an element $x \in A_0^G$. Since $A_0$ is semiperfect, one can find elements $a,b \in A_0$ such that $a^p=x+pb$. As $|G|$ is invertible on $A_0$, the Reynolds operator: $\sigma(a):=\frac{1}{|G|} \sum_{g \in G} g(a)$ gives a well-defined element of $A_0^G$. We compute
$$
\sigma(a)^p=\Big(\frac{1}{|G|} \sum_{g \in G} g(a)\Big)^p=\Big(\frac{1}{|G|^p}\Big)
\Big(\sum_{g \in G} g(a^p)+pf\Big)~\mbox{for some}~f \in A_0.
$$
After plugging the equality $a^p=x+pb$ into the above formula, we deduce that
$$
\Big(\frac{1}{|G|^p}\Big)
\Big(\sum_{g \in G} g(a^p)+pf\Big)=\Big(\frac{1}{|G|^p}\Big)
\Big(\sum_{g \in G}(g(x)+pg(b))+pf\Big)
$$
$$
=\Big(\frac{1}{|G|^p}\Big)
(\sum_{g \in G}x)+p\Big(\frac{1}{|G|^p}\Big)
\Big(\sum_{g \in G}(g(b))+f\Big)=\frac{x}{|G|^{p-1}}+pg~\mbox{for some}~g \in A_0.
$$
We write this as
\begin{equation}
\label{Reynoldsoperator}
|G|^{p-1}\sigma(a)^p=x+pg|G|^{p-1}.
\end{equation}
For simplicity, let $m:=|G|^{p-1}$. Then considering the image of $m$ in the finite field $\mathbb{Z}/(p) \hookrightarrow A_0/(p)$, one can find integers $k,h \in \mathbb{Z}$ such that $k^p=m+ph$. After plugging this into $(\ref{Reynoldsoperator})$, we get
$$
(k\sigma(a))^p=x+pt~\mbox{for some}~t \in A_0.
$$
The Reynolds operator implies that $A_0^G \to A_0$ splits as a sequence of $A_0^G$-modules, we have $pA_0^G=A_0^G \cap pA_0$. The fact $pt=(k\sigma(a))^p-x \in A_0^G$ implies that $t \in A_0^G$. So we conclude that the image of $k\sigma(a)$ in $A_0^G/(p)$ maps to $\overline{x} \in A_0^G$ under the Frobenius map, as desired.
\end{proof}

\subsection{Witt-perfect and perfectoid algebras}

We now recall Fontaine's perfectoid rings; see \cite{Fo13} for reference.

\begin{definition}[Fontaine]
\label{Fontainedef}
We say that a Banach ring $R$ is \emph{perfectoid}, if $R$ is uniform and $R$ has a pseudouniformizer $\varpi$ 
such that $p\in \varpi^pR^\circ$ and $R^\circ/(\varpi^p)$ is semiperfect. We call such $\varpi\in R$ a \emph{perfectoid pseudouniformizer} of $R$. 
\end{definition}

Notice that for a Banach ring $R$, it only depends on the topological ring structure whether $R$ is perfectoid or not.

\begin{definition}
Let $R$ be a perfectoid Banach ring. 
\label{ScholzePerfectoid}
\begin{enumerate}
\item
A \emph{perfectoid field} is a perfectoid Banach ring $K$ that is a field and whose topology can be defined by a multiplicative norm on $K$. 
\item
A \emph{perfectoid $R$-algebra} is a Banach $R$-algebra that is perfectoid.  
\end{enumerate}
\end{definition}

Notice that our definition of a perfectoid field and a perfectoid $K$-algebra (where $K$ is a perfectoid field) coincides with Scholze's original one. To see this, it suffices to check the following.

\begin{lemma}
Let $K$ be a field equipped with a norm $||\cdot||$. 
\begin{enumerate}
\item
The following conditions are equivalent. 
\begin{itemize}
\item[$(a)$]
$K$ is a perfectoid field. 
\item[$(b)$]
$K$ is complete, $K^\circ$ is a non-Noetherian valuation ring of rank $1$, $||\cdot||$ is equivalent to an absolute value associated to $K^\circ$, $||p||<1$ and $K^\circ/(p)$ is semiperfect.
\end{itemize}
\item
Suppose that $K$ is a perfectoid field, and let $R$ be a Banach $K$-algebra. Then the following conditions are equivalent. 
\begin{itemize}
\item[$(a)$]
$R$ is a perfectoid $K$-algebra. 
\item[$(b)$]
$R$ is uniform and for every $\varpi\in K$ such that $||p||\leq||\varpi||< 1$, $R^\circ/(\varpi)$ is semiperfect. 
\end{itemize}
\end{enumerate}
\end{lemma}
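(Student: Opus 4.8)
My plan is to move freely among the three descriptions of the topology of $K$ available here — the given norm $||\cdot||$, a multiplicative absolute value with unit ball $K^\circ$, and the $t$-adic topology of $K^\circ$ — while, on the semiperfectness side, exploiting that a perfectoid field has $p$-divisible value group, so that $p$, a pseudouniformizer $\varpi$, and $\varpi^p$ generate nested ideals whose quotients can be compared; Lemma \ref{lem21219} will bridge a modulus and its $p$-th power. For $(1)$, $(a)\Rightarrow(b)$: if $K$ is a perfectoid field it is complete by definition, and since its topology is defined by a multiplicative norm and $K$ is uniform, $||\cdot||$ is equivalent to its spectral seminorm $||\cdot||_\textnormal{sp}$ (Lemma \ref{preunipm}); because $K$ is a field, $K^\circ$ — the powerbounded elements, which form the unit ball of $||\cdot||_\textnormal{sp}$ — is a valuation ring and $||\cdot||_\textnormal{sp}$ is the absolute value associated to it (Lemma \ref{valuation-norm}), so $||\cdot||$ is equivalent to an absolute value associated to $K^\circ$. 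Writing $v$ for the valuation and $\Gamma$ for its value group, a topologically nilpotent unit forces $\Gamma\neq 0$, and $\Gamma\hookrightarrow\mathbb{R}_{>0}$ gives rank one; writing $p=\varpi^p a$ with $a\in K^\circ$ gives $||p||<1$; and $K^\circ/(p)$, being a quotient of the semiperfect ring $K^\circ/(\varpi^p)$, is semiperfect. It then remains to show $K^\circ$ is not Noetherian, for which I would show $\Gamma$ is $p$-divisible: if $x\in K^\circ$ has $0\le v(x)<v(\varpi^p)$, surjectivity of Frobenius on $K^\circ/(\varpi^p)$ gives $x\equiv y^p\pmod{\varpi^p K^\circ}$ with necessarily $v(x)=v(y^p)\in p\Gamma$ (in particular $v(\varpi)\in p\Gamma$), and translating an arbitrary element of $\Gamma$ by a multiple of $v(\varpi)$ into the interval $[0,v(\varpi^p))$ propagates this to all of $\Gamma$; since $\mathbb{Z}$ is not $p$-divisible, $\Gamma$ is non-discrete, so the rank-one valuation ring $K^\circ$ is not a DVR, hence not Noetherian.

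For the converse $(b)\Rightarrow(a)$ in $(1)$ and for $(b)\Rightarrow(a)$ in $(2)$ I would argue directly. In $(1)$: let $|\cdot|$ be an absolute value associated to $K^\circ$ to which $||\cdot||$ is equivalent; it defines the topology of $K$, so the multiplicativity clause of Definition \ref{ScholzePerfectoid} holds, $K^\circ=\{|f|\le 1\}$ is bounded, and by Lemma \ref{lem1301423} $K$ is a preuniform Tate ring with ring of definition $K^\circ$, which, being complete and separated, is uniform; as $K^\circ$ is a non-Noetherian rank-one valuation ring, $\Gamma$ is non-discrete, so one can choose $\varpi\in K^\times$ with $0<v(\varpi)$ and $pv(\varpi)\le v(p)$, a topologically nilpotent unit with $p\in\varpi^pK^\circ$ and $K^\circ/(\varpi^p)$ a quotient of the semiperfect ring $K^\circ/(p)$; hence $K$ satisfies Definition \ref{Fontainedef} and is a perfectoid field. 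In $(2)$: choose likewise $\varpi_0\in K^{\circ\circ}$ with $pv(\varpi_0)\le v(p)$, so that $p\in\varpi_0^pK^\circ\subset\varpi_0^pR^\circ$ and $||p||\le||\varpi_0^p||<1$; the hypothesis then gives $R^\circ/(\varpi_0^p)$ semiperfect, so $\varpi_0$ is a perfectoid pseudouniformizer, and $R$, being uniform and a Banach $K$-algebra, is a perfectoid $K$-algebra.

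The substantive direction is $(a)\Rightarrow(b)$ in $(2)$, which amounts to independence of the perfectoid condition from the chosen pseudouniformizer. Starting from a perfectoid pseudouniformizer $\varpi_0\in R$, I would first replace it by one possessing a $p$-th root in $R^\circ$: semiperfectness of $R^\circ/(\varpi_0^p)$ gives $\varpi_0=x^p+\varpi_0^pw$ with $x,w\in R^\circ$, whence $x^p=\varpi_0(1-\varpi_0^{p-1}w)=\varpi_0 u$ with $u$ a unit of $R^\circ$ (since $R^\circ$ is complete and $\varpi_0^{p-1}w$ is topologically nilpotent), and $\varpi_0':=\varpi_0 u=x^p$ is again a perfectoid pseudouniformizer because $((\varpi_0')^p)=(\varpi_0^p)$. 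Then Lemma \ref{lem21219}$(1)$ applied to $R^\circ$ (with the trivial $\mathbb{Z}[T^{1/p^\infty}]$-structure, under which $(T)^{1/p^\infty}$-almost semiperfectness is plain semiperfectness) yields $R^\circ/(\varpi_0')$ semiperfect, and Lemma \ref{lem21219}$(2)$ — whose topological hypotheses hold because $R^\circ$ is $\varpi_0'$-adically complete and separated and $(p)=pR^\circ$ is closed in $R^\circ$ (being the image of the closed subgroup $R^\circ$ under the homeomorphism $y\mapsto py$, as $p$ is a unit of $R$) — yields $R^\circ/(p)$ semiperfect. Finally, any $\varpi\in K$ with $||p||\le||\varpi||<1$ satisfies $p/\varpi\in K^\circ\subset R^\circ$, so $(p)\subset(\varpi)$ in $R^\circ$ and $R^\circ/(\varpi)$ is a quotient of $R^\circ/(p)$, hence semiperfect; this is $(b)$.

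I expect the change-of-pseudouniformizer step in $(2)$, $(a)\Rightarrow(b)$ — namely the ``unit twist'' that produces a $p$-th root, together with the verification of the topological hypotheses needed to apply Lemma \ref{lem21219} — to be the only genuine difficulty; everything else is bookkeeping with nested ideals and quotients, plus the standard ultrametric computation giving $p$-divisibility of the value group in part $(1)$.
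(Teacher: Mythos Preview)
Your approach is essentially the paper's—reduce everything to Lemma~\ref{lem21219} and shuttle semiperfectness between moduli—and your treatment of part~$(2)$, $(a)\Rightarrow(b)$ is correct and in fact more explicit than the paper's: you carry out the unit twist to manufacture a $p$-th root of the pseudouniformizer and you verify the topological hypotheses of Lemma~\ref{lem21219}(2), whereas the paper simply writes ``in view of Lemma~\ref{lem21219}''. Your self-contained arguments for non-Noetherianness (via $p$-divisibility of the value group) and for $(b)\Rightarrow(a)$ in part~$(1)$ replace the paper's citations to \cite{M17} and \cite{Sch12} and are fine.

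There is, however, a genuine slip in part~$(1)$, $(a)\Rightarrow(b)$. You write that ``$K^\circ/(p)$, being a quotient of the semiperfect ring $K^\circ/(\varpi^p)$, is semiperfect'', but the inclusion of ideals runs the other way: from $p\in\varpi^pK^\circ$ you get $(p)\subset(\varpi^p)$, so $K^\circ/(\varpi^p)$ is a quotient of $K^\circ/(p)$, not conversely. A quotient argument therefore cannot lift semiperfectness from $K^\circ/(\varpi^p)$ to $K^\circ/(p)$; in characteristic~$p$ this would be the claim that surjectivity of Frobenius on $K^\circ/(\varpi^p)$ forces surjectivity on all of $K^\circ$, which is false without further input. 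The repair is exactly the machinery you already deploy in part~$(2)$: replace $\varpi$ by a unit multiple admitting a $p$-th root (your unit twist works verbatim in $K^\circ$, which is $\varpi$-adically complete and separated), and then invoke Lemma~\ref{lem21219}(2), using that $(p)\subset K^\circ$ is closed (since $p$ is either zero or a topologically nilpotent unit). This is precisely what the paper does at this step.
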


\begin{proof}
$(1)$: Let us assume $(b)$ first. Then by \cite[Lemma 3.2]{Sch12}, we have some $\varpi\in K^\circ$ for which $p\in\varpi^pK^\circ$. 
Hence it is easily seen that $(a)$ holds. Next we assume $(a)$ conversely. Then $K$ is complete and $K^\circ$ is a valuation ring of rank $1$ by assumption. 
Now $||\cdot||\sim ||\cdot||_\textnormal{sp}$ by Lemma \ref{preunipm}(2), and $||\cdot||_\textnormal{sp}$ coincides with an absolute value associated to $K^\circ$ by Lemma \ref{valuation-norm}. Moreover, $||\cdot||_\textnormal{sp}$ is not discretely valued in view of \cite[Lemma 1.1(iii)]{M17}. The remaining part follows from Lemma \ref{lem21219}, because $(p)\subset K^\circ$ is closed with respect to the topology defined by $||\cdot||_\textnormal{sp}$. 

$(2)$: $(b)\Rightarrow (a)$ is clear. Now we deduce $(b)$ from $(a)$. Since $K$ is a perfectoid field, $p\in K$ is a topologically  nilpotent unit or equal to zero. The same assertion also holds for $p\in R$, as $K\to R$ is continuous. Hence $(p)\subset R^\circ$ is closed with respect to the induced topology from $R$. Thus, $(a)$ implies that $R^\circ/(p)$ is semiperfect in view of Lemma \ref{lem21219}. 
Therefore $(a)\Rightarrow (b)$ holds. 
\end{proof}

Any perfectoid pseudouniformizer can be replaced so that the following statement holds.

\begin{lemma}
\label{1151321}
Let $A$ be a perfectoid Banach ring and let $A^+$ be an open and integrally closed subring of $A$ contained in $A^\circ$. Then the following assertions hold. 
\begin{enumerate}
\item
There exists some $\varpi'\in A^+$ such that $p\in \varpi'^pA^+$ and $A^+/(\varpi'^p)$ is semiperfect. 
\item
Assume further that $p\in A$ is a topologically nilpotent unit. 
Then $A^+/(p)$ is semiperfect, and there exists some $t\in A^+$ such that $p=t^pu$ for some unit $u\in A^+$. Moreover, there exists some $s\in A^+$ such that $s^p\equiv p\mod p^2A^+$.  
\end{enumerate}
\end{lemma}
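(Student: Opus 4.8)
\textbf{Proof plan for Lemma \ref{1151321}.}
The plan is to extract the two assertions from the definition of a perfectoid Banach ring together with the fact that $A^+$ is open and integrally closed in $A$, so that $A^+$ is sandwiched between a ring of definition and $A^\circ$. For part $(1)$, I would start from the perfectoid pseudouniformizer $\varpi$ of $A$: by Definition \ref{Fontainedef} we have $p\in\varpi^pA^\circ$ and $A^\circ/(\varpi^p)$ is semiperfect. Since $A^+$ is open, $\varpi^N\in A^+$ for some $N$; replacing $\varpi$ by $\varpi^N$ (still a topologically nilpotent unit) we may assume $\varpi\in A^+$. Now set $\varpi':=\varpi$ and check $p\in\varpi'^pA^+$: write $p=\varpi'^p c$ with $c\in A^\circ$; since $A^\circ$ is integral over any ring of definition and $A^+$ is integrally closed, it suffices to see $c\in A^+$, which follows because $\varpi'^{p-1}c\in A^+$ forces $c$ to be integral over $A^+$ — actually the cleaner route is to apply Lemma \ref{lem2231930} directly: $A^\circ/(\varpi^p)$ being $(T)^{\frac1{p^\infty}}$-almost semiperfect (here with trivial $T$-structure, i.e.\ honestly semiperfect) is equivalent to $A^+/(\varpi^p)$ being semiperfect, provided $p\in\varpi^pA^+$. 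So the real content of $(1)$ is arranging $p\in\varpi'^pA^+$ after possibly raising $\varpi$ to a power and using integral closedness of $A^+$ in $A$.

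For part $(2)$, assume in addition that $p$ is a topologically nilpotent unit of $A$. Then $p\in A^{\circ\circ}$, hence $p\in A^+$, and $\{p^nA^+\}$ is cofinal with $\{\varpi^{pn}A^+\}$ in the induced topology, so $A^+$ is $p$-adically complete and separated and $(p)\subset A^+$ is closed. Apply Lemma \ref{lem21219}(2) with $\varpi$ the pseudouniformizer from $(1)$ (after replacing it so $\varpi^{1/p}\in A^+$, using semiperfectness to extract roots as in Lemma \ref{zarsemiperf}, or simply replacing $\varpi$ by a suitable $p$-power root which exists because $A^+/(\varpi^p)$ — equivalently $A^+/(\varpi)$ — is semiperfect): this upgrades ``$A^+/(\varpi^p)$ semiperfect'' to ``$A^+/(p)$ semiperfect''. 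For the existence of $t$ with $p=t^pu$, $u\in(A^+)^\times$: by semiperfectness of $A^+/(p)$ pick $t\in A^+$ with $t^p\equiv p\bmod pA^+$, say $t^p=p(1+pw)$; since $A^+$ is $p$-adically complete, $1+pw\in(A^+)^\times$ — but we want the unit to absorb cleanly, so instead argue as in the proof of Lemma \ref{zarsemiperf}: use $p\in\varpi^pA^+$ to see $A^+$ is $\varpi$-adically Zariskian (as $p$, hence $\varpi^p$, lies in every maximal ideal because $A^+\subset A^\circ$ and topologically nilpotents are in the Jacobson radical of $A^\circ$), then $t^p=p(1+\text{(element of }\varpi^{p-1}A^+))$ with the parenthetical factor a unit. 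Finally, for $s$ with $s^p\equiv p\bmod p^2A^+$: apply semiperfectness of $A^+/(p^2)$ — which follows from semiperfectness of $A^+/(p)$ together with $p$-adic completeness via Lemma \ref{lem21219}(1) applied with $\varpi$ replaced by $p$ (note $p^{1/p}$ need not exist in $A^+$, so instead use the same iterative root-extraction argument lifting $\bmod\ p$ to $\bmod\ p^2$ as in the proof of Lemma \ref{lem21219}, or directly: $A^+/(p)$ semiperfect and $(p^2)\supset(p)^2$... ) — giving $s\in A^+$ with $s^p\equiv p\bmod p^2A^+$.

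I expect the main obstacle to be the bookkeeping around which pseudouniformizer to use and making sure all the root-extraction and ``Zariskian'' hypotheses of Lemma \ref{lem21219}, Lemma \ref{zarsemiperf} and Lemma \ref{lem2231930} are genuinely satisfied in the ``$+$'' setting: in particular verifying that $A^+$ (not just $A^\circ$) is $\varpi$-adically complete and separated and that $(p)$ is closed in it, and that replacing $\varpi$ by a $p$-power root keeps it inside $A^+$. Once $p$ is assumed to be a topologically nilpotent unit these are all routine consequences of $A^+$ being open in $A$ and integrally closed, but they must be stated. The statement about $s^p\equiv p\bmod p^2A^+$ is the most delicate point and I would prove it by the same inductive ``$\sum g^{1/p^{k}}a_i^p\varpi^i$'' telescoping argument used in the proof of Lemma \ref{lem21219}, carried out modulo successive powers of $p$ rather than $\varpi$, using $p$-adic completeness of $A^+$ to pass to the limit.
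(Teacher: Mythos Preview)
Your plan for part~(1) has a genuine gap. Replacing $\varpi$ by a power $\varpi^N$ goes in the wrong direction: it makes the condition $p\in(\varpi^N)^pA^+=\varpi^{Np}A^+$ \emph{stronger}, not weaker. And the argument ``$\varpi^{p-1}c\in A^+$ forces $c$ to be integral over $A^+$'' is simply false in general --- knowing that a multiple of $c$ by a topologically nilpotent element lands in $A^+$ gives no integrality relation for $c$ itself (indeed $\varpi^{p-1}x\in A^+$ for \emph{every} $x\in A^\circ$, yet $A^\circ\neq A^+$ in general). You also cannot invoke Lemma~\ref{lem2231930} to bypass this, since that lemma already \emph{assumes} $p\in\varpi^pA^+$.

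The paper's move is the opposite of raising to a power: it extracts a $p$-th root. Using Lemma~\ref{zarsemiperf} (applied in $A^\circ$, which is $\varpi$-adically Zariskian and has $A^\circ/(\varpi^p)$ semiperfect), one finds $\varpi_1\in A^\circ$ with $\varpi=\varpi_1^pu_1$ for a unit $u_1\in A^\circ$. Then
\[
p\in\varpi^pA^\circ=\varpi_1^p\cdot\varpi^{p-1}A^\circ\subset\varpi_1^p A^+,
\]
the last inclusion because $\varpi^{p-1}A^\circ\subset A^{\circ\circ}\subset A^+$ (topologically nilpotent elements are integral over any open subring). So the slack created by passing to a root is exactly what absorbs the difference between $A^\circ$ and $A^+$. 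With $\varpi':=\varpi_1$ one then gets $A^+/(\varpi'^p)$ semiperfect via Lemma~\ref{lem2231930}.

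For part~(2), your outline for $A^+/(p)$ semiperfect via Lemma~\ref{lem21219}(2) is correct. But your construction of $t$ (lifting $p$ to a $p$-th power modulo $pA^+$) does not produce a unit factor in general; the paper instead writes $p=\varpi_1^pa$ with $a\in A^+$, lifts $a\equiv b^p\bmod p$, and gets $p(1-\varpi_1^pc)=(\varpi_1 b)^p$ with $1-\varpi_1^pc$ a unit since $\varpi_1^pc$ is topologically nilpotent. Finally, your plan for $s$ via ``semiperfectness of $A^+/(p^2)$'' and a telescoping argument is unnecessary: once $p=t^pu$ with $u\in(A^+)^\times$, just write $u=d^p+pe$ (semiperfectness of $A^+/(p)$), so $p=(td)^p+pt^pe$; since $t^p=pu^{-1}\in pA^+$, this gives $(td)^p\equiv p\bmod p^2A^+$ directly.
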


\begin{proof}
$(1)$: Let $\varpi\in A^\circ$ be a perfectoid pseudouniformizer of $A$. Then by Lemma \ref{zarsemiperf}, we find some $\varpi_1\in A^\circ$ such that $\varpi=\varpi_1^p u_1$ for some unit $u_1 \in A^\circ$ (notice that $\varpi_1$ is a topologically nilpotent unit in $A$). Hence $p\in\varpi_1^p\varpi^{p-1}A^\circ$ and since $\varpi^{p-1}A^\circ \subset A^+$, we have $p\in \varpi_1^pA^+$. Moreover, since $A^\circ/(\varpi_1^p)$ is semiperfect, $A^+/(\varpi_1^p)$ is also semiperfect by Lemma \ref{lem2231930}. So letting $\varpi':=\varpi_1$ completes $(1)$. 

$(2)$: Next assume further that $p\in A$ is a topologically nilpotent unit. Then, since $(p)\subset A^+$ is closed with respect to the $\varpi_1$-adic topology, $A^+/(p)$ is semiperfect in view of Lemma \ref{lem21219}(2). 
To prove the existence of $t\in A^+$ and a unit $u\in A^+$ such that $p=t^pu$, we take $a\in A^+$ for which $p=\varpi_1^pa$. Then we have $b, c\in A^+$ that satisfy $a=b^p+pc$ and so $p(1-\varpi_1^pc)=(\varpi_1b)^p$. Thus, since $u:=1-\varpi_1^pc\in A^+$ is a unit, it suffices to take $t=\varpi_1b$. Finally, let us prove the existence of $s\in A^+$ as in the assertion. Take $d, e\in A^+$ such that 
$u=d^p+pe$. Then $p=t^p(d^p+pe)$ and therefore, $p=(td)^p+pt^pe$. Here notice that $t^p=pu^{-1}\in pA^+$. Thus we have $(td)^p \equiv p \pmod{p^2A^+}$, as wanted. 
\end{proof}

\begin{example}
We exhibit a specific example of perfectoid ring in the sense of Fontaine, but does not fit into the original definition of perfectoid algebras by Scholze. Fix a prime number $p>0$. For an integer $n>0$, let $R_n:=\mathbb{Z}_p[[T]][p^{\frac{1}{p^n}}, T^\frac{1}{p^n}]$, denote by $R_n[(\frac{p}{T})^\frac{1}{p^n}]$ the $R_n$-subalgebra $R_n[(T^\frac{1}{p^n})^{-1}p^\frac{1}{p^n}]$ of $R_n[({T^\frac{1}{p^n}})^{-1}]=R_n[\frac{1}{T}]$ and let $R_\infty[(\frac{p}{T})^\frac{1}{p^\infty}]$ be the ring $\bigcup_{n>0}R_n[(\frac{p}{T})^\frac{1}{p^n}]$. Since $R_n$ is a regular local ring with a regular system of parameters $p^\frac{1}{p^n}, T^\frac{1}{p^n}$, one can easily show that $R_n[(\frac{p}{T})^\frac{1}{p^n}]$ is completely integrally closed in $R_n[\frac{1}{T}]$. Hence $R_\infty[(\frac{p}{T})^\frac{1}{p^\infty}]$ is completely integrally closed in $R_\infty[(\frac{p}{T})^\frac{1}{p^\infty}][\frac{1}{T}]$. Denote by $\widehat{R_\infty}\langle(\frac{p}{T})^\frac{1}{p^\infty}\rangle$ the $T$-adic completion of $R_\infty[(\frac{p}{T})^\frac{1}{p^\infty}]$, and let $A$ be a Banach ring associated to $\big(\widehat{R_\infty}\langle(\frac{p}{T})^\frac{1}{p^\infty}\rangle, (T)\big)$. Then $A$ is uniform and $A^\circ=\widehat{R_\infty}\langle(\frac{p}{T})^\frac{1}{p^\infty}\rangle$ by Lemma \ref{Bhlem}. 
Hence $A$ is a perfectoid ring in the sense of Fontaine by putting $\varpi=T^{\frac{1}{p}}$. Indeed, we have $p \in \varpi^p A^\circ$. In this example, let us point out that if $x \in A^\circ$ satisfies $\varpi^px=p$, then $x$ is not a unit.
\end{example}

Fix a prime number $p>0$ and let us recall the definition of \textit{Witt-perfect rings} due to Davis and Kedlaya as in \cite{DK14} and \cite{DK15}.

\begin{definition}[Witt-perfect ring]
Let $p>0$ be a prime number. Then we say that a ring $A_0$ is \textit{Witt-perfect}, if the Witt-Frobenius map $ \mathbf{F}:\mathbf{W}_{p^n}(A_0) \to \mathbf{W}_{p^{n-1}}(A_0)$ is surjective for all $n \ge 2$. 
\end{definition}

For the rest of the paper, we often assume that $A_0$ is $p$-torsion free. We describe the relationship between Witt-perfect and perfectoid algebras in Proposition \ref{decompperf} and Proposition \ref{perfalgdecomp}. These results say that Witt-perfect rings are viewed as ``decompletions'' of perfectoid algebras under a kind of preuniformity assumptions (cf.\ Definition \ref{defpair}). Our proof is based on the following criterion for being Witt-perfect.

\begin{lemma}[{\cite[Theorem 3.2]{DK14}}]
\label{PerfWitt}
For a prime number $p>0$, assume that $A_0$ is a $p$-torsion free ring. Then the following statements are equivalent.
\begin{enumerate}
\item
$A_0$ is a Witt-perfect ring.

\item
The Frobenius endomorphism on $A_0/(p)$ is surjective and for every $a \in A_0$, one can find $b \in A_0$ such that $b^p \equiv pa \pmod{p^2 A_0}$. 
\end{enumerate}
\end{lemma}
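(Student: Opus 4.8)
The plan is to work entirely on the level of Witt/ghost polynomials. Write $w_i(x_0,\dots,x_i)=x_0^{p^i}+px_1^{p^{i-1}}+\cdots+p^ix_i$ for the ghost components, and recall that the Witt--Frobenius $\mathbf F\colon\mathbf W_{p^n}(A_0)\to\mathbf W_{p^{n-1}}(A_0)$ is the unique ring map characterized by $w_i\circ\mathbf F=w_{i+1}$; since $A_0$ is $p$-torsion free, the ghost map is injective and every such identity can be checked on ghosts. I will freely use the standard structural relations: compatibility with truncation $R\circ\mathbf F=\mathbf F\circ R$, the identity $\mathbf F\circ\mathbf V=p\cdot\mathrm{id}$ (which on the ``top'' Verschiebung $\mathbf V\colon A_0\hookrightarrow\mathbf W_{p^n}(A_0)$, $a\mapsto(0,\dots,0,a)$, gives $\mathbf F\circ\mathbf V=\mathbf V\circ(p\cdot)$), and the short exact sequence of abelian groups $0\to A_0\xrightarrow{\mathbf V}\mathbf W_{p^n}(A_0)\xrightarrow{R}\mathbf W_{p^{n-1}}(A_0)\to0$. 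First I would observe that there is no loss in treating ``all $n$'' uniformly: if $\mathbf F$ is surjective at one level, then composing it with a power of the (surjective) map $R$ and using $R\mathbf F=\mathbf F R$ shows $\mathbf F$ is surjective at all smaller levels; write $\mathbf F_n$ for the map at level $n$.

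For $(1)\Rightarrow(2)$: the first-coordinate map $\mathbf W_{p^n}(A_0)\to A_0\to A_0/pA_0$ is a ring homomorphism ($x_0=w_0$), and $w_0\circ\mathbf F=w_1$ gives $\mathbf F(x)_0=x_0^p+px_1\equiv x_0^p\pmod p$, so $\mathbf F$ covers the Frobenius of $A_0/pA_0$, forcing the latter to be surjective. For the second assertion, I would apply surjectivity of $\mathbf F_n$ (for $n$ large) to the target $\mathbf V(pa)=(pa,0,\dots,0)\in\mathbf W_{p^{n-1}}(A_0)$: unwinding the first ghost component of a preimage $(x_0,\dots)$ yields $x_0^p=pa-px_1$, hence an element $b:=x_0$ with $b^p\equiv pa$ modulo an error $px_1$ in which $x_1^p\in pA_0$; the vanishing of the further components forces $x_1$ to be itself of this ``preimage'' type, so feeding it back through the next available level (which exists because $\mathbf F$ is surjective there too) replaces the error by a better one, and iterating $n$ times and letting $n\to\infty$ sharpens the congruence to $b^p\equiv pa\pmod{p^2}$.

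For $(2)\Rightarrow(1)$: I would induct on $n$, the cases $n\le2$ being exactly ``Frobenius mod $p$ is surjective''. Given a target $y\in\mathbf W_{p^{n-1}}(A_0)$, use the inductive hypothesis to pick a preimage $x'\in\mathbf W_{p^{n-2}}(A_0)$ of $R(y)$ under $\mathbf F$, lift it to $x''$ along $R$, and compare: $R(\mathbf F(x''))=\mathbf F(R(x''))=\mathbf F(x')=R(y)$, so $\mathbf F(x'')-y=\mathbf V(a)$ for a unique $a\in A_0$, and $x''+\mathbf V(z)$ is a genuine preimage of $y$ exactly when $pz=a$, i.e.\ exactly when $a$ can be chosen in $pA_0$. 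The ambiguity of $a\bmod p$ as $x'$ ranges over its coset modulo $\ker\mathbf F_{n-2}$ is controlled, via the snake lemma applied to $\mathbf F$ as a morphism of the two short exact sequences above (whose outer vertical map is multiplication by $p$ on $A_0$, because $\mathbf F\mathbf V=p$, and whose kernel vanishes by $p$-torsion freeness), by a connecting homomorphism $\partial\colon\ker\mathbf F_{n-2}\to A_0/pA_0$; computing $\partial$ explicitly with the Witt polynomials identifies its surjectivity with the solvability of $b^p\equiv p(\,\cdot\,)\pmod{p^2}$, i.e.\ with hypothesis $(2)$, which propagates surjectivity from level $n-1$ to level $n$.

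The main obstacle throughout is the non-reducedness of $A_0/pA_0$: congruences such as $x_1^p\in pA_0$ cannot be descended to $x_1\in pA_0$, so the naive strategy of checking surjectivity on ghost components modulo powers of $p$ fails — the relevant subquotients are not $\mathbf F$-stable — and one is forced to argue with honest elements of $A_0$ and to track all of the Witt-vector carry terms. The real content, in both directions, is verifying that the single hypothesis $b^p\equiv pa\pmod{p^2}$ is precisely strong enough to absorb these carries across consecutive truncation layers; a secondary (but genuine) point is that matching the quantifier ``for all $n$'' in the definition forces the inductive/limiting argument above rather than a single finite computation at $n=3$.
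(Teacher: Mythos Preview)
The paper does not give its own proof of this lemma: it is simply quoted as \cite[Theorem 3.2]{DK14}, so there is no ``paper's proof'' to compare against. I will therefore just comment on your sketch on its own terms.

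Your snake-lemma framework for $(2)\Rightarrow(1)$ is the right shape, and the diagram
\[
\begin{CD}
0 @>>> A_0 @>{\mathbf V}>> \mathbf W_{p^n}(A_0) @>{R}>> \mathbf W_{p^{n-1}}(A_0) @>>> 0\\
@. @VV{p}V @VV{\mathbf F_n}V @VV{\mathbf F_{n-1}}V @.\\
0 @>>> A_0 @>{\mathbf V}>> \mathbf W_{p^{n-1}}(A_0) @>{R}>> \mathbf W_{p^{n-2}}(A_0) @>>> 0
\end{CD}
\]
really does reduce surjectivity of $\mathbf F_n$ to surjectivity of $\partial_n:\ker\mathbf F_{n-1}\to A_0/pA_0$. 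But the sentence ``computing $\partial$ explicitly identifies its surjectivity with solvability of $b^p\equiv p(\cdot)\pmod{p^2}$'' hides real work that you have not done. At $n=3$ one computes $\partial\bigl((x_0,x_1)\bigr)\equiv x_1^p\pmod p$ with $x_1=-x_0^p/p$, and condition (2) (together with Frobenius surjectivity) does force this to be surjective; for $n>3$, however, $\ker\mathbf F_{n-1}$ is more constrained and your ``same computation'' no longer applies verbatim. You need an additional argument propagating surjectivity of $\partial_n$ up the tower, and the naive attempt via $R:\ker\mathbf F_{n-1}\to\ker\mathbf F_{n-2}$ fails because that map is not surjective (its cokernel is exactly $A_0/pA_0$ by the level-$(n-1)$ snake sequence).

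For $(1)\Rightarrow(2)$ there is a genuine gap. You write ``iterating $n$ times and letting $n\to\infty$ sharpens the congruence to $b^p\equiv pa\pmod{p^2}$''. But $A_0$ is not assumed $p$-adically complete, so no limiting process is available; you must produce $b$ from a single finite-level computation. Targeting $(pa,0,\dots,0)$ (which is the Teichm\"uller element $[pa]$, not $\mathbf V(pa)$ --- your notation slipped here) gives $x_0^p=pa-px_1$, and one then needs $x_1\in pA_0$, whereas unwinding the higher ghost components only yields $x_1^p\in pA_0$; since $A_0/pA_0$ need not be reduced (as you yourself flag), this does not suffice. A correct finite-level extraction of the condition $b^p\equiv pa\pmod{p^2}$ from surjectivity of $\mathbf F_3$ requires a more careful choice of target or of the preimage within its $\ker\mathbf F$-coset, and this is precisely the content you have deferred.
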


\begin{proposition}
\label{decompperf}
Let $A_0$ be a $p$-torsion free ring. Denote by $\widehat{A_0}$ the $p$-adic completion of $A_0$. 
\begin{enumerate}
\item
The following conditions are equivalent. 
\begin{itemize}
\item[$(a)$]{$A_0$ is Witt-perfect and integrally closed (resp.\ completely integrally closed) in $A_0[\frac{1}{p}]$. }
\item[$(b)$]{For any Banach ring $R$ associated to $(\widehat{A_0}, (p))$ 
(cf.\ Definition \ref{defbanach}(5)), $R$ is perfectoid in the sense of Fontaine and $\widehat{A_0}$ is open and integrally closed in $R$ (resp.\ $R^\circ=\widehat{A_0}$).}
\end{itemize}
\item
Assume further that $A_0$ is a $p$-adically separated valuation ring. Then the following conditions are equivalent. 
\begin{itemize}
\item[$(a)$]{$A_0$ is Witt-perfect and of rank $1$. }
\item[$(b)$]{For any Banach ring $K$ associated to $(\widehat{A_0}, (p))$, $K$ is a perfectoid field and $K^\circ=\widehat{A_0}$.}
\end{itemize}
\end{enumerate}
\end{proposition}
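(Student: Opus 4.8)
The plan is to push everything through the $p$-adic completion and then match the two conditions directly. Since $A_0/(p)=\widehat{A_0}/(p\widehat{A_0})$, $A_0/(p^2)=\widehat{A_0}/(p^2\widehat{A_0})$, and $\widehat{A_0}$ is $p$-torsion free by Lemma \ref{Beauville-Laszlo}, the Davis--Kedlaya criterion (Lemma \ref{PerfWitt}) shows that $A_0$ is Witt-perfect if and only if $\widehat{A_0}$ is; and Corollary \ref{Bhlem} shows that $A_0$ is integrally closed (resp.\ completely integrally closed) in $A_0[\frac1p]$ if and only if $\widehat{A_0}$ is so in $\widehat{A_0}[\frac1p]$. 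Hence I may assume $A_0$ is $p$-adically complete and separated, and it suffices to prove: $A_0$ is Witt-perfect and integrally closed (resp.\ completely integrally closed) in $R:=A_0[\frac1p]$ if and only if $R$ is a perfectoid Banach ring and $A_0$ is open and integrally closed in $R$ (resp.\ $R^\circ=A_0$). Note $R$ is complete and separated by construction, and $p$ is a topologically nilpotent unit in $R$.

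For the forward implication I first check that $R$ is preuniform. In the completely integrally closed case $R^\circ=A_0$ by Lemma \ref{lemma10092}(1), so $R^\circ$ is bounded; in the integrally closed case, for $x\in R^\circ$ choose $c$ with $p^cx^n\in A_0$ for all $n\geq 0$ (powerboundedness), so $(px)^n\in A_0$ for $n\geq c$, whence $px$ is integral over $A_0$ and therefore lies in $A_0$, giving $R^\circ\subset p^{-1}A_0$. Thus $R$ is uniform. Next, by Lemma \ref{PerfWitt} the Frobenius on $A_0/(p)$ is surjective and there is $b_1\in A_0$ with $b_1^p\equiv p\pmod{p^2A_0}$; using $1+pA_0\subset A_0^\times$ (as $A_0$ is $p$-adically complete) and then surjectivity of Frobenius once more, I obtain $t\in A_0$ with $t^{p^2}=pv$ for a unit $v\in A_0^\times$. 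Then $\varpi:=t^p$ is a topologically nilpotent unit of $R$ with $\varpi^p=t^{p^2}=pv$, so $p\in\varpi^pA_0\subset\varpi^pR^\circ$ and $\varpi^pR^\circ=pR^\circ$. Since $A_0/(\varpi^p)=A_0/(p)$ is semiperfect, Lemma \ref{lem2231930}, applied with $A=R$ and $A^+=A_0$ carrying its trivial $\mathbb{Z}[T^{\frac{1}{p^\infty}}]$-algebra structure, yields that $R^\circ/(\varpi^p)=R^\circ/pR^\circ$ is semiperfect. Hence $R$ is perfectoid, and $A_0$ is open and integrally closed in $R$ by hypothesis (and equals $R^\circ$ in the completely integrally closed case, by Lemma \ref{lemma10092}(1)). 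The delicate point here is that one must take the pseudouniformizer so that its $p^2$-th power (not merely its $p$-th power) is a unit multiple of $p$: this is exactly what makes Lemma \ref{lem2231930} applicable with $A^+=A_0$, and is what lets semiperfectness of $A_0/(p)$ propagate from the ``$+$'' ring to $R^\circ$ in the merely integrally closed case.

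For the converse, $A_0=\widehat{A_0}$ is $p$-torsion free. From $R$ perfectoid with $A_0$ an open integrally closed subring, Lemma \ref{1151321} gives that $A_0/(p)$ is semiperfect and that there is $s\in A_0$ with $s^p\equiv p\pmod{p^2A_0}$. Given $a\in A_0$, pick $a_1\in A_0$ with $a_1^p\equiv a\pmod{pA_0}$ and set $b:=sa_1$; then $b^p=s^pa_1^p\equiv s^pa\equiv pa\pmod{p^2A_0}$, so by Lemma \ref{PerfWitt} $A_0$ is Witt-perfect. Moreover $A_0$ is integrally closed in $R=A_0[\frac1p]$ by hypothesis; in the completely integrally closed case $R^\circ=A_0$ is completely integrally closed in $R$ by Lemma \ref{lemma10092}(1) (since $R$ is uniform, $R^\circ$ is bounded). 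This completes the proof of part (1).

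Part (2) is deduced from part (1). As $A_0$ is a $p$-adically separated valuation ring, $A_0[\frac1p]=\Frac(A_0)$, and a valuation ring has rank $1$ exactly when it is completely integrally closed in its fraction field and not a field; so, for a valuation ring $A_0$, ``Witt-perfect and of rank $1$'' is precisely the completely-integrally-closed hypothesis of part (1). For $(a)\Rightarrow(b)$: part (1) gives $R$ perfectoid with $R^\circ=\widehat{A_0}$; since $A_0$ has rank $1$, its completion $\widehat{A_0}$ is again a rank $1$ valuation ring, so $R=\widehat{A_0}[\frac1p]=\Frac(\widehat{A_0})$ is a field, and by Lemma \ref{valuation-norm} together with preuniformity (Lemma \ref{preunipm}) the topology of $R$ is defined by the multiplicative norm $||\cdot||_\textnormal{sp}$; hence $R$ is a perfectoid field with $R^\circ=\widehat{A_0}$. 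For $(b)\Rightarrow(a)$: part (1) gives $A_0$ Witt-perfect and completely integrally closed in $\Frac(A_0)$, and the characterization of perfectoid fields above shows $\widehat{A_0}=K^\circ$ is a rank $1$ valuation ring; since $A_0=A_0[\frac1p]\cap\widehat{A_0}$ inside $\widehat{A_0}[\frac1p]$ by the cartesian square of Lemma \ref{Beauville-Laszlo}, $A_0$ is the valuation ring of $A_0[\frac1p]$ for the restricted valuation, whose value group embeds into an archimedean group and is nontrivial (as $p$ is a nonunit), so $A_0$ has rank $1$. The bookkeeping to watch in this part is the distinction between the fraction field $A_0[\frac1p]=\Frac(A_0)$ and the larger field $\widehat{A_0}[\frac1p]=\Frac(\widehat{A_0})$ when comparing the ranks of $A_0$, $\widehat{A_0}$ and $K$.
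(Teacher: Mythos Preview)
Your proof is correct and follows essentially the same route as the paper: you pass to the $p$-adic completion via Corollary \ref{Bhlem} and Lemma \ref{PerfWitt}, produce a pseudouniformizer $\varpi=t^p$ with $\varpi^p$ a unit times $p$ in order to invoke Lemma \ref{lem2231930} and transfer semiperfectness between $A_0/(p)$ and $R^\circ/(p)$, and handle the converse through Lemma \ref{1151321}. The only presentational differences are that you reduce to the complete case at the outset and make explicit both the bound $R^\circ\subset p^{-1}A_0$ (which the paper leaves implicit in Lemma \ref{lemma10092}(2)) and the verification of condition (2) in Lemma \ref{PerfWitt} via $b=sa_1$; the paper's argument is the same in substance.
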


\begin{proof}
$(1)$: Let $R$ be a Banach ring associated to $(\widehat{A_0}, (p))$. First we assume that $A_0$ is Witt-perfect and integrally closed in $A_0[\frac{1}{p}]$. Then $\widehat{A_0}$ is integrally closed in $R$ by Corollary \ref{Bhlem}, $\widehat{A_0}/(p)$ is semiperfect, and there is some $\varpi\in\widehat{A_0}$ such that $\varpi^p\equiv p\mod p^2\widehat{A_0}$: In particular, $p=\varpi^pu$ holds for some unit $u \in \widehat{A_0}$, because $\widehat{A_0}$ is $p$-adically Zariskian. Thus we also have some $t\in \widehat{A_0}$ for which $t^p\equiv \varpi \pmod{p\widehat{A_0}}$ (and so $\varpi=t^pu'$ for some unit $u'\in \widehat{A_0}$). 
Hence by Lemma \ref{lem2231930}, $R^\circ/(p)$ is semiperfect. Therefore, $R$ is perfectoid. If further $A_0$ is completely integrally closed in $A_0[\frac{1}{p}]$, then $\widehat{A_0}$ is completely integrally closed in $R$ by Corollary \ref{Bhlem}, and so $R^\circ=\widehat{A_0}$. Consequently we obtain the implication $(a)\Rightarrow (b)$. 
Conversely, we then assume the condition $(b)$ (i.e.\ $R$ is perfectoid and $\widehat{A_0}$ is open and integrally closed in $R$). 
Then $A_0$ is integrally closed in $A$ by Corollary \ref{Bhlem}. Moreover, $\widehat{A_0}/(p) \cong A_0/(p)$ is semiperfect and there is some $t\in \widehat{A_0}$ for which $t^p \equiv p \pmod{p^2\widehat{A_0}}$ by Lemma \ref{1151321}(2). Hence $A_0$ is Witt-perfect. If further $R^\circ=\widehat{A_0}$, then $A_0$ is completely integrally closed in $A_0[\frac{1}{p}]$ by Corollary \ref{Bhlem}. 
Consequently we find that $(b)$ implies $(a)$, as required. 

$(2)$: First we assume $(a)$. Then $\widehat{A_0}$ is a valuation ring of rank $1$ with the fraction field $\widehat{A_0}[\frac{1}{p}]$. 
Thus, for a norm $||\cdot||$ on $\widehat{A_0}[\frac{1}{p}]$ associated with $(\widehat{A_0}, (p))$, the spectral norm $||\cdot||_\textnormal{sp}$ is multiplicative by Lemma \ref{valuation-norm}. We equip $\widehat{A_0}[\frac{1}{p}]$ with the norm $||\cdot||_\textnormal{sp}$. 
Then by the assertion (1), we find that $\widehat{A_0}[\frac{1}{p}]$ is perfectoid (and so it is a perfectoid field) and $(\widehat{A_0}[\frac{1}{p}])^\circ=\widehat{A_0}$. Hence $(b)$ follows. 
Next we assume $(b)$, conversely. Then in view of (1), $A_0$ is Witt-perfect and completely integrally closed in $A_0[p^{-1}]$. Therefore, the value group is of rank $1$. Hence $(a)$ follows.
\end{proof}

\begin{corollary}
\label{DecompTopNil}
Let $A_0$ be a $p$-torsion free Witt-perfect ring that is integrally closed in $A_0[\frac{1}{p}]$. 
Denote by $\widehat{A_0}$ the $p$-adic completion of $A_0$. Denote by $I$ and $I'$ the ideals $\sqrt{(p)}\subset A_0$ and $\sqrt{(p)}\subset \widehat{A_0}$, respectively. 
Then one has $I'=I\widehat{A_0}$ and $I=I^2$. 
\end{corollary}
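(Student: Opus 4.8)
The plan is to prove the two equalities separately: $I_0=I_0^2$ by a direct manipulation of the Witt-perfect condition, and then $I_0'=I_0\widehat{A_0}$ by a $p$-adic approximation argument.

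First I would establish the key special case $p\in I_0^2$. Since $A_0$ is $p$-torsion free and Witt-perfect, Lemma \ref{PerfWitt} (applied with $a=1$) yields $b\in A_0$ with $b^p=p+p^2d$ for some $d\in A_0$. Then $b^p\in(p)$, so $b\in\sqrt{(p)}=I_0$, whence $b^p\in I_0^p\subseteq I_0^2$ (here $p\geq 2$); and $p^2\in I_0^2$ because $p\in I_0$. Hence $p=b^p-p^2d\in I_0^2$. Next, for the inclusion $I_0\subseteq I_0^2$, take $x\in I_0$, say $x^{p^m}\in(p)$. By Lemma \ref{PerfWitt} the Frobenius on $A_0/(p)$ is surjective, so choose $y,c\in A_0$ with $y^p=x+pc$. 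Expanding $y^{p^{m+1}}=(x+pc)^{p^m}$ binomially, the top term is $x^{p^m}\in(p)$ and every other term is a multiple of $pc$, so $y^{p^{m+1}}\in(p)$ and $y\in\sqrt{(p)}=I_0$. Then $x=y^p-pc$ with $y^p\in I_0^p\subseteq I_0^2$ and $pc\in I_0^2$ (since $p\in I_0^2$), so $x\in I_0^2$. As $I_0^2\subseteq I_0$ trivially, this gives $I_0=I_0^2$.

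Next I would turn to $I_0'=I_0\widehat{A_0}$. The inclusion $I_0\widehat{A_0}\subseteq I_0'$ is immediate, since $a\in I_0$ forces $a^n\in pA_0\subseteq p\widehat{A_0}$ for some $n$, so the image of $a$ lies in $\sqrt{(p)}=I_0'$. For the reverse inclusion I would use the standard properties of the $p$-adic completion (cf.\ \cite{FK18}), namely $\widehat{A_0}=A_0+p\widehat{A_0}$ and $\widehat{A_0}/p\widehat{A_0}\cong A_0/pA_0$ via the natural map; the latter says the preimage of $p\widehat{A_0}$ under $A_0\to\widehat{A_0}$ is exactly $pA_0$. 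Given $x\in I_0'$ with $x^k\in p\widehat{A_0}$, write $x=a+p\beta$ with $a$ the image of some $a_0\in A_0$ and $\beta\in\widehat{A_0}$; then $x^k\equiv a^k\pmod{p\widehat{A_0}}$, so $a^k\in p\widehat{A_0}$, hence $a_0^k\in pA_0$ and $a_0\in\sqrt{(p)}=I_0$. Therefore $x=a+p\beta\in I_0\widehat{A_0}+p\widehat{A_0}=I_0\widehat{A_0}$, using $p\in I_0$. Combining the two inclusions gives $I_0'=I_0\widehat{A_0}$.

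The only nontrivial point is the sub-claim $p\in I_0^2$: semiperfectness of $A_0/(p)$ alone would not give it, and one genuinely needs the second clause of the Witt-perfect criterion in Lemma \ref{PerfWitt}, i.e.\ the approximate $p$-th root of $p$. The rest is formal; in particular no flatness or $p$-adic separatedness of $A_0$ is invoked, so the argument of the second step works even when $A_0\to\widehat{A_0}$ fails to be injective. (Alternatively, one could derive $I_0'=(I_0')^2$ from Proposition \ref{decompperf} and Example \ref{BasicSetupEx} applied to $\widehat{A_0}$, constructing the needed $p$-power-root sequence via Lemma \ref{1151321}(2) and Lemma \ref{zarsemiperf}, but the direct route above is shorter and yields $I_0=I_0^2$ without descending flatness along $A_0\to\widehat{A_0}$.)
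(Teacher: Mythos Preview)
Your proof is correct and takes a genuinely different route from the paper's. The paper works in $\widehat{A_0}$ first: invoking Proposition \ref{decompperf}(1) and Lemma \ref{preunipm}(2), it passes to a powermultiplicative norm on the associated Banach ring, identifies $I'_0=R^{\circ\circ}$ as the ideal generated by a sequence $\{\varpi_n\}$ of approximate $p$-power roots of $p$, reads off $I'_0=I_0\widehat{A_0}$ and $(I'_0)^2=I'_0$, and only then descends $I_0\widehat{A_0}=I_0^2\widehat{A_0}$ to $I_0=I_0^2$ by a $p$-adic approximation (using $p^2\widehat{A_0}\cap A_0=p^2A_0$). You reverse the order and avoid the Banach machinery entirely: you prove $I_0=I_0^2$ directly in $A_0$ from Lemma \ref{PerfWitt} alone (the key step being $p\in I_0^2$ via the approximate $p$-th root of $p$), and then obtain $I'_0=I_0\widehat{A_0}$ from the elementary isomorphism $\widehat{A_0}/p\widehat{A_0}\cong A_0/pA_0$. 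A pleasant by-product is that your argument never uses the hypothesis that $A_0$ is integrally closed in $A_0[\frac{1}{p}]$, so it in fact establishes a slightly stronger statement; the paper's route, by contrast, makes the structural identification $I'_0=\bigcup_n\varpi_n\widehat{A_0}$ explicit, which is convenient for later use.
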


\begin{proof}
By Lemma \ref{PerfWitt}, there is a sequence $\{\varpi_n\}_{n\geq 0}$ in $A_0$ such that $\varpi_0=p$, $\varpi_{1}^p\equiv \varpi_0\mod (p^2)$, and $\varpi^p_{n+1}\equiv \varpi_n\mod (p)$ for every $n\geq 0$. By induction on $n$, we have $\varpi^p_{n+1}=\varpi_nu_n$ for some unit $u_n\in \widehat{A_0}$, because $I'$ is contained in the Jacobson radical of $\widehat{A_0}$. Let $R$ be a Banach ring associated to $(\widehat{A_0}, (p))$. Then $R^{\circ\circ}=I'$. Moreover, by Lemma \ref{preunipm}(2) and Lemma \ref{decompperf}(1), we may assume that the norm on $R$ is powermultiplicative. Hence $I'$ is generated by $\{\varpi_n\}_{n\geq 0}$. Therefore, one has $I'=I\widehat{A_0}$ and $I'={I'}^2$. In particular, $I\widehat{A_0}=I^2\widehat{A_0}$. Next pick an element $x\in I$. Then by the equality stated just now, we have $x=\sum_{i=1}^r y_i\alpha_i$ for some $y_i\in I^2$ and $\alpha_i\in \widehat{A_0}$ $(i=1,\ldots, r)$. Take $a_i\in A_0$ for which $a_i\equiv \alpha_i \pmod{p^2 \widehat{A_0}}$ $(i=1,\ldots,r)$. Then we have $x-\sum^r_{i=1}y_ia_i\in p^2\widehat{A_0}\cap A_0=p^2A_0$. Therefore, $x\in I^2$, as wanted. 
\end{proof}

\subsection{Almost Witt-perfect and almost perfectoid algebras}

Let us recall Andr\'e's almost perfectoid algebras (cf.\ Definition 3.5.2 and Proposition 3.5.4 in \cite{An1}).

\begin{definition}[Almost perfectoid $K\langle T^\frac{1}{p^\infty}\rangle$-algebra]
\label{defalmostperf}
Let $K$ be a perfectoid field and let $R$ be a uniform Banach $K\langle T^\frac{1}{p^\infty}\rangle$-algebra. Let $\fm:=(T)^\frac{1}{p^\infty}K^{\circ\circ}K^\circ\langle T^\frac{1}{p^\infty}\rangle$ be an ideal of $K^\circ\langle T^\frac{1}{p^\infty}\rangle$. We say that $R$ is an \emph{almost perfectoid $K\langle T^\frac{1}{p^\infty}\rangle$-algebra}, if the Frobenius endomorphism on $R^\circ/(p)$ is $\fm$-almost surjective. 
\end{definition}

We then introduce the following class of rings to establish a variant of Lemma \ref{decompperf} fitting for almost mathematics.

\begin{definition}[Almost Witt-perfect ring]
\label{AlmostWittPerf}
Let $A_0$ be a $p$-torsion free ring with an element $g \in A_0$ admitting a compatible system of $p$-power roots $g^{\frac{1}{p^n}} \in A_0$. Then we say that $A_0$ is \textit{$(g)^{\frac{1}{p^\infty}}$-almost Witt-perfect}, if the following conditions are satisfied. 
\begin{enumerate}
\item
The Frobenius endomorphism on $A_0/(p)$ is $(g)^{\frac{1}{p^\infty}}$-almost surjective.

\item
For every $a\in A_0$ and every $n>0$, there is an element $b \in A_0$ such that $b^p \equiv pg^\frac{1}{p^n}a \pmod{p^2A_0}$. 
\end{enumerate}
\end{definition}

\begin{proposition}
\label{perfalgdecomp}
Let $V$ be a $p$-adically separated $p$-torsion free valuation domain and let $A_0$ be a $p$-torsion free $V[T^\frac{1}{p^\infty}]$-algebra. 
Put $g^\frac{1}{p^n}:=T^\frac{1}{p^n}\cdot 1\in A_0$ for every $n\geq 0$ and denote by $\widehat{V}$ and $\widehat{A_0}$ the $p$-adic completions of $V$ and $A_0$, respectively. Then the following conditions are equivalent. 

\begin{itemize}
\item[$(a)$]
$V$ is a Witt-perfect valuation domain of rank $1$ and $A_0$ is  $(g)^\frac{1}{p^\infty}$-almost Witt-perfect and integrally closed (resp.\ completely integrally closed) in $A_0[\frac{1}{p}]$. 
\item[$(b)$]
There exist a perfectoid field $K$ and an almost perfectoid $K\langle T^\frac{1}{p^\infty}\rangle$-algebra $R$ with the following properties: 
\begin{itemize}
\item[$\bullet$]
$K$ is a Banach ring associated to $(\widehat{V}, (p))$, the norm on $K$ is multiplicative and $K^\circ=\widehat{V}$;
\item[$\bullet$]
$R$ is a Banach ring associated to $(\widehat{A_0}, (p))$, and $\widehat{A_0}$ is open and integrally closed in $R$ (resp.\ $R^\circ=\widehat{A_0}$);
\item[$\bullet$]
the bounded homomorphism $K\langle T^\frac{1}{p^\infty}\rangle\to R$ is induced by the ring map $V[T^\frac{1}{p^\infty}]\to A_0$.  
\end{itemize}
\end{itemize}
\end{proposition}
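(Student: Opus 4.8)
The plan is to establish the equivalence as the ``almost'' analogue of Proposition \ref{decompperf}, by transferring every statement about $A_0$ and $V$ to their $p$-adic completions. The reduction rests on two mechanisms: the Beauville--Laszlo square (Lemma \ref{Beauville-Laszlo}) together with Corollary \ref{Bhlem}, which moves (complete) integral closedness between $A_0\subset A_0[\frac1p]$ and $\widehat{A_0}\subset\widehat{A_0}[\frac1p]$; and the canonical isomorphisms $\widehat{A_0}/p^n\widehat{A_0}\cong A_0/p^nA_0$ (carrying $g^{1/p^n}$ to $g^{1/p^n}$), which move the Frobenius-type conditions of Definition \ref{AlmostWittPerf}. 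The valuation-ring factor is settled once and for all by Proposition \ref{decompperf}(2): $V$ is a Witt-perfect valuation domain of rank $1$ if and only if every Banach ring $K$ associated to $(\widehat V,(p))$ is a perfectoid field with $K^\circ=\widehat V$, and replacing the norm by its spectral seminorm (Lemma \ref{valuation-norm}, valid since $\widehat V$ is a $p$-adically separated valuation ring with fraction field $\widehat V[\frac1p]$) makes it multiplicative. So in both directions one may take $K$ in this normalized form, and only the factor concerning $A_0$ requires work.

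For $(a)\Rightarrow(b)$, fix a real number $c>1$ and let $R$ be the Banach ring associated to $(\widehat{A_0},(p))$ via $||\cdot||_{\widehat{A_0},(p),c}$; Lemma \ref{BanachAlg} (applied to $\widehat V\to\widehat{A_0}$) makes the induced map $K\to R$ bounded, and it extends to a bounded homomorphism $K\langle T^{1/p^\infty}\rangle\to R$ since $T^{1/p^n}$ maps to $g^{1/p^n}\in\widehat{A_0}$, the unit ball of $R$. As $A_0$ is integrally closed (resp.\ completely integrally closed) in $A_0[\frac1p]$, Corollary \ref{Bhlem} gives $(\widehat{A_0})^+_R=\widehat{A_0}$ (resp.\ $R^\circ=\widehat{A_0}$); in the first case $pR^\circ\subset(\widehat{A_0})^+_R=\widehat{A_0}$ by Lemma \ref{lemma10092}(2), so $R^\circ$ is bounded and $R$ is uniform. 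Finally, condition (1) of Definition \ref{AlmostWittPerf} says $A_0/(p)\cong\widehat{A_0}/(p)$ is $(g)^{1/p^\infty}$-almost semiperfect; Lemma \ref{lem2231930} (with $A^+=\widehat{A_0}$ and $A^\circ=R^\circ$) propagates this to $R^\circ/(p)$, and since $\fm R^\circ$ lies in the ideal generated by the $g^{1/p^n}$, a fortiori the Frobenius on $R^\circ/(p)$ is $\fm$-almost surjective. Hence $R$ is almost perfectoid.

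For $(b)\Rightarrow(a)$ one runs this in reverse. From $K^\circ=\widehat V$ a perfectoid field and Proposition \ref{decompperf}(2), $V$ is a Witt-perfect valuation domain of rank $1$; from $\widehat{A_0}$ integrally closed (resp.\ equal to $R^\circ$) in $R$ and Corollary \ref{Bhlem}, $A_0$ is integrally closed (resp.\ completely integrally closed) in $A_0[\frac1p]$. For condition (1): by hypothesis the Frobenius on $R^\circ/(p)$ is $\fm$-almost surjective; using the Witt-perfect structure of $\widehat V=K^\circ$ one identifies $\fm R^\circ$ with $(\varpi g)^{1/p^\infty}$ for a pseudouniformizer $\varpi\in\widehat V$ with $\varpi^p\in p\,\widehat V^{\times}$ admitting a compatible system of $p$-power roots (Corollary \ref{DecompTopNil}, Lemma \ref{zarsemiperf}); since $R^\circ$ is $p$-torsion free and integrally closed in $R^\circ[\frac1\varpi]=R$, Corollary \ref{gAl-pigAl} then yields that the Frobenius on $R^\circ/(p)=R^\circ/(\varpi^p)$ is $(g)^{1/p^\infty}$-almost surjective, and Lemma \ref{lem2231930} carries this back to $\widehat{A_0}/(p)\cong A_0/(p)$. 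For condition (2): $V$ Witt-perfect and $p$-adically complete provides, via Lemma \ref{PerfWitt} and $\widehat{A_0}/p^2\cong A_0/p^2$, an element $\varpi_0\in A_0$ with $\varpi_0^p\equiv p\pmod{p^2A_0}$; then for any $a\in A_0$ and $n>0$, condition (1) furnishes $c\in A_0$ with $g^{1/p^n}a\equiv c^p\pmod{pA_0}$, whence $pg^{1/p^n}a\equiv pc^p\equiv\varpi_0^pc^p=(\varpi_0c)^p\pmod{p^2A_0}$, so $b:=\varpi_0c$ works. This closes the loop.

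The step I expect to be the main obstacle is the translation, in $(b)\Rightarrow(a)$, of ``$\fm$-almost semiperfect over $K^\circ\langle T^{1/p^\infty}\rangle$'' into ``$(g)^{1/p^\infty}$-almost semiperfect over $\widehat{A_0}$'', which splits into two gaps. First, $\fm=T^{1/p^\infty}K^{\circ\circ}K^\circ\langle T^{1/p^\infty}\rangle$ carries not only the $T$-adic (i.e.\ $g$-adic) almost structure but the whole topologically nilpotent ideal $K^{\circ\circ}=\sqrt{p\,\widehat V}$; absorbing the latter is precisely the content of Corollary \ref{gAl-pigAl}, and making it applicable requires pinning down $\fm R^\circ=(\varpi g)^{1/p^\infty}$ with $\varpi$ admitting a \emph{genuine} compatible system of $p$-power roots in $\widehat V$ --- a point leaning on the Witt-perfect structure of $K^\circ$ and on the preparatory Lemmas \ref{zarsemiperf} and \ref{1151321}. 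Second, when $A_0$ is only integrally closed one must work with $R^\circ\supsetneq\widehat{A_0}$, and Lemma \ref{lem2231930} is what shuttles (almost) semiperfectness across this inclusion in both directions; in the completely integrally closed case $R^\circ=\widehat{A_0}$ and this gap disappears. By contrast, the norm bookkeeping making $K$, $K\langle T^{1/p^\infty}\rangle$ and $R$ mutually compatible, the uniformity of $R$, and the descent/ascent of (complete) integral closedness are routine given Proposition \ref{decompperf}, Lemma \ref{BanachAlg} and Corollary \ref{Bhlem}.
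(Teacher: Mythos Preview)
Your proposal is correct and follows essentially the same route as the paper: both directions hinge on Proposition \ref{decompperf}(2) for the valuation-ring factor, Corollary \ref{Bhlem} for transferring (complete) integral closedness, Lemma \ref{lem2231930} for shuttling almost semiperfectness between $\widehat{A_0}$ and $R^\circ$, and Corollary \ref{gAl-pigAl} for absorbing the $\varpi$-part of $\fm$ in the $(b)\Rightarrow(a)$ direction.

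Two cosmetic differences are worth noting. First, the paper equips $R$ with the \emph{spectral} norm (so that $K\to R$ and hence $K\langle T^{1/p^\infty}\rangle\to R$ are bounded on the nose), whereas you take the raw norm $||\cdot||_{\widehat{A_0},(p),c}$; since you want $K$ with its multiplicative (spectral) norm, the map $K\to R$ is then only bounded up to a constant, so strictly speaking you should also pass to the spectral norm on $R$ as in the last clause of Lemma \ref{BanachAlg}. This is exactly the ``routine norm bookkeeping'' you flag. Second, for condition (2) of Definition \ref{AlmostWittPerf} in $(b)\Rightarrow(a)$, you argue directly for arbitrary $a$ by producing $\varpi_0\in V$ with $\varpi_0^p\equiv p\pmod{p^2}$ (from the Witt-perfectness of $V$, not of $\widehat V$---your parenthetical ``$p$-adically complete'' is a slip) and combining it with condition (1); the paper instead reduces to $a=1$ and works inside $\widehat{A_0}$ with a perfectoid pseudouniformizer of $K$. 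Your route is slightly cleaner. One small citation fix: the existence of a $\varpi\in K^\circ$ with a \emph{genuine} compatible system of $p$-power roots, needed for Corollary \ref{gAl-pigAl}, comes from Lemma \ref{1151321} together with \cite[Lemma 3.9]{BMS17}, not from Corollary \ref{DecompTopNil} or Lemma \ref{zarsemiperf} (which only give $p$-th roots up to units).
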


\begin{proof}
We first prove $(a) \Rightarrow (b)$. We denote by $||\cdot||_1$ (resp.\ $||\cdot||_2$) the norm $||\cdot||_{\widehat{V}, (p), p}$ on $\widehat{V}[\frac{1}{p}]$ (resp.\ the norm $||\cdot||_{\widehat{A_0}, (p), p}$ on $\widehat{A_0}[\frac{1}{p}]$), and let $||\cdot||_{1, \textnormal{sp}}$ (resp.\ $||\cdot||_{2, \textnormal{sp}}$) be the associated spectral seminorm. Notice that $||\cdot||_{1, \textnormal{sp}}$ (resp.\ $||\cdot||_{2, \textnormal{sp}}$) is equivalent to $||\cdot||_{\widehat{V}, (p), p}$ (resp.\ $||\cdot||_{\widehat{A_0}, (p), p}$) by Corollary \ref{Bhlem} and Lemma \ref{preunipm}(2). We then equip $\widehat{V}[\frac{1}{p}]$ with $||\cdot||_{1,\textnormal{sp}}$ 
(resp.\ $\widehat{A_0}[\frac{1}{p}]$ with $||\cdot||_{2, \textnormal{sp}}$), and denote by $K$ (resp.\ $R$) the resulting Banach ring. 
Then the ring map $V[T^\frac{1}{p^\infty}]\to {A_0}$ induces a bounded map $K\langle T^\frac{1}{p^\infty}\rangle\to R$. 
Moreover, $K$ is a perfectoid field with $K^\circ=\widehat{V}$ by Proposition \ref{decompperf}(2), and $\widehat{A_0}$ forms an open and integrally closed subring of $R$ by Corollary \ref{Bhlem}. Let $\varpi\in K^\circ$ be a perfectoid pseudouniformizer that satisfies $p=\varpi^pu$ for some unit $u\in K^\circ$ and admits a compatible system of $p$-power roots (such an element $\varpi$ exists by Lemma \ref{1151321} and \cite[Lemma 3.9]{BMS17}). Then, since the Frobenius endomorphism on $\widehat{A_0}/(p)$ is $(g)^\frac{1}{p^\infty}$-almost surjective, the Frobenius endomorphism on $R^\circ/(p)$ is $(\varpi g)^\frac{1}{p^\infty}$-almost surjective by Corollary \ref{Bhlem}, Corollary \ref{gAl-pigAl} and Lemma \ref{lem2231930}. So we conclude that $R$ is an almost perfectoid $K\langle T^\frac{1}{p^\infty}\rangle$-algebra. The remaining part follows from Corollary \ref{Bhlem} and Lemma \ref{lemma10092}.

Next we prove $(b)\Rightarrow (a)$. Assume $(b)$. In view of Corollary \ref{Bhlem} and Proposition \ref{decompperf}(2), it suffices to show that $A_0$ is $(g)^\frac{1}{p^\infty}$-almost Witt-perfect. Let $\varpi\in K^\circ$ be a perfectoid pseudouniformizer with the property mentioned above. Since $K^\circ=\widehat{V}$ and the map $K\to R$ carries $\widehat{V}$ into $\widehat{A_0}$, there is some unit $u\in \widehat{A_0}$ such that $p=\varpi^p u$. Moreover, the Frobenius endomorphism on $R^\circ/(p)$ is $(\varpi g)^\frac{1}{p^\infty}$-almost surjective by assumption. 
Hence by Corollary \ref{gAl-pigAl} and Lemma \ref{lem2231930}, the Frobenius endomorphism on $\widehat{A_0}/(p) \cong A_0/(p)$ is $(g)^\frac{1}{p^\infty}$-almost surjective. Thus it is enough to check the condition (2) in Definition \ref{AlmostWittPerf} for $a=1$. Fix an integer $n>0$. Then we have $g^\frac{1}{p^n}p=\varpi^p(g^\frac{1}{p^n}u)$, and there exist some $b, c\in \widehat{A_0}$ such that $g^\frac{1}{p^n}u=b^p+pc$. Thus we have $g^\frac{1}{p^n}p=(\varpi b)^p+\varpi^ppc=(\varpi b)^p+p^2u^{-1}c$, which yields $(\varpi b)^p \equiv pg^\frac{1}{p^n} \pmod{p^2\widehat{A_0}}$. Since $\widehat{A_0}/(p^2) \cong A_0/(p^2)$, the assertion follows.
\end{proof}

\section{Finite \'etale extensions}
\subsection{Finite \'etale extensions and completeness}

Let $A_{0}$ be a ring, let $I_{0}\subset A_{0}$ be an ideal, and let $M_{0}$ be an $A_{0}$-module. Assume that $A_{0}$ is $I_{0}$-adically complete and separated. Then one may ask the question: Is $M_{0}$ also $I_{0}$-adically complete and separated? As is well known, if $A_{0}$ is Noetherian and $M_{0}$ is a finitely generated $A_0$-module, then the question is affirmative; see \cite[Theorem 8.7]{Ma86}. However, in the absence of Noetherian property, this question is subtle and often require a careful argument (or even counterexamples exist). Now we consider the following case: there exists a ring extension $A_{0}\subset A$ such that $M_{0}$ is an $A_{0}$-submodule of some finite projective $A$-algebra $B$. Then in some situations, the completeness is ensured by a condition on the trace map (even if $M_{0}$ or $I_0$ is not finitely generated). A detailed account of the trace map for finite projective ring extensions is found in \cite{Fo17}.

\begin{proposition}
\label{prop513}
Let $A$ be a ring and let $B$ be a finite \'etale $A$-algebra. Let $A_{0}\subset A$ be a subring with an ideal $I_{0}\subset A_{0}$. Let $B_{0}$ be an $A_{0}$-subalgebra of $B$ such that $B=\bigcup_{n\geq 1}(B_{0}:_{B}I_{0}^{n})$. Assume that there exists an integer $c>0$ such that 
$\textnormal{Tr}_{B/A}(tm)\in A_{0}$ for every $t\in I_0^c$ and every $m\in B_{0}$. 

\begin{enumerate}
\item
If $A_{0}$ is $I_{0}$-adically separated, then so is $B_{0}$. 

\item
If $A_{0}$ is $I_{0}$-adically complete, then so is $B_{0}$. 
\end{enumerate}
\end{proposition}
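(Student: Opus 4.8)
The plan is to use the trace pairing to sandwich $B_0$ between two copies of a finite free $A_0$-module, with $I_0$-power denominators that are uniformly bounded (which is where the hypotheses $B=\bigcup_{n}(B_0:I_0^n)$ and the trace condition are used), and then transport $I_0$-adic separatedness and completeness through this sandwich.

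First, since $B$ is finite \'etale over $A$, it is finite projective and its trace form is perfect (a standard characterization of finite \'etaleness); the dual basis lemma together with the induced isomorphism $B\xrightarrow{\ \sim\ }\Hom_A(B,A)$, $y\mapsto\Tr_{B/A}(y\,\cdot\,)$, then furnishes elements $x_1,\dots,x_n,y_1,\dots,y_n\in B$ with $x=\sum_{i=1}^{n}\Tr_{B/A}(y_ix)\,x_i$ for every $x\in B$. Using $B=\bigcup_{n}(B_0:I_0^n)$ I would fix an integer $N>0$ with $I_0^Nx_i\subseteq B_0$ and $I_0^Ny_i\subseteq B_0$ for all $i$, and introduce the $A_0$-linear maps $\Phi\colon B_0\to A^{n}$, $b\mapsto(\Tr_{B/A}(y_ib))_{i}$, and $\Psi\colon A^{n}\to B$, $(a_i)_{i}\mapsto\sum_{i}a_ix_i$, so that $\Psi\circ\Phi$ is the inclusion $B_0\hookrightarrow B$. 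The two facts that drive the argument are: (a) $\Phi(I_0^{N+c}B_0)\subseteq A_0^{n}$, hence $\Phi(I_0^{m}B_0)\subseteq I_0^{m-N-c}A_0^{n}$ for $m\ge N+c$ by $A_0$-linearity; and (b) $\Psi(I_0^{m}A_0^{n})\subseteq I_0^{m-N}B_0\subseteq B_0$ for $m\ge N$. Both are verified by factoring $I_0^{N+c}=I_0^N\cdot I_0^c$ (resp.\ $I_0^{m}=I_0^{m-N}\cdot I_0^N$), pushing the $I_0^N$-factor onto a $y_i$ (resp.\ $x_i$) so that it lands in $B_0$, using that $B_0$ is a \emph{subalgebra} to keep the ensuing products inside $B_0$, and finally invoking $\Tr_{B/A}(I_0^cB_0)\subseteq A_0$ in case (a).

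Granting (a) and (b), part (1) is immediate: if $b\in\bigcap_{m}I_0^mB_0$ then $\Phi(b)\in\bigcap_{k\ge 0}I_0^kA_0^{n}$, which vanishes because $A_0$, and hence $A_0^{n}$, is $I_0$-adically separated, so $b=\Psi(\Phi(b))=0$. For part (2), I would take a compatible system in $\varprojlim_{m}B_0/I_0^mB_0$, lift it to a sequence $(b_m)$ in $B_0$ with $b_{m+1}-b_m\in I_0^mB_0$, and push it through $\Phi$: for $m\ge N+c$ the elements $c_m:=\Phi(b_m-b_{N+c})$ lie in $A_0^{n}$ by (a), they form a Cauchy sequence by the upgraded (a), and so they converge to some $c\in A_0^{n}$ with $c-c_m\in I_0^{m-N-c}A_0^{n}$. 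Writing $\Psi(c)=\Psi(c-c_m)+\Psi(c_m)$, the first summand lies in $B_0$ as soon as $m\ge 2N+c$ by (b), while $\Psi(c_m)=b_m-b_{N+c}\in B_0$; hence $b:=\Psi(c)+b_{N+c}$ lies in $B_0$, and $b-b_m=\Psi(c-c_m)\in I_0^{m-2N-c}B_0$ exhibits $b$ as a representative of the given compatible system. Injectivity of $B_0\to\varprojlim_{m}B_0/I_0^mB_0$ is exactly the separatedness from part (1) (completeness of $A_0$ includes separatedness), so $B_0$ is $I_0$-adically complete.

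I expect the mathematical content to be light but the bookkeeping to be the delicate part: one must track the successive index shifts $m\rightsquigarrow m-N-c\rightsquigarrow m-2N-c$, and, above all, confirm at the end that the reconstructed element $\Psi(c)+b_{N+c}$ truly lies in $B_0$ rather than merely in $B$. That last point is the only place that genuinely needs both the denominator bound from $B=\bigcup_{n}(B_0:I_0^n)$ and the fact that $B_0$ is closed under multiplication; everything else is a formal consequence of the trace-pairing sandwich.
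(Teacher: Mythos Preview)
Your proposal is correct and follows essentially the same approach as the paper: both proofs exploit the nondegenerate trace pairing of a finite \'etale extension to realize $B_0$ as a retract of a free module (your $\Phi,\Psi$ are exactly the paper's $s,\pi$), then push the $I_0$-adic control through these maps. The only cosmetic differences are that the paper handles part~(1) via nondegeneracy of $b\mapsto\Tr_{B/A}(b\,\cdot\,)$ directly (showing $\Tr_{B/A}(mx)\in\bigcap_n I_0^nA_0$ for every $x\in B$) rather than through the dual-basis composite, and it phrases part~(2) in terms of Cauchy sequences rather than compatible systems in the inverse limit; your unified $\Phi,\Psi$ setup and explicit index shifts $m\rightsquigarrow m-N-c\rightsquigarrow m-2N-c$ make the bookkeeping more transparent but do not change the argument.
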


\begin{proof}
First note that since $B$ is finite \'etale over $A$, the $A$-homomorphism
\begin{eqnarray}\label{lem1223}
B\to \textnormal{Hom}_{A}(B, A),\ b\mapsto\textnormal{Tr}_{B/A}(b \cdot\ )
\end{eqnarray}
is an isomorphism by \cite[Corollary 4.6.8]{Fo17}. 

Let us prove $(1)$. Pick $m\in \bigcap_{n=0}^{\infty}I_{0}^{n}B_0$. Since (\ref{lem1223}) is injective, it suffices to show that $\textnormal{Tr}_{B/A}(mx)=0$ for an arbitrary element $x\in B$. Take $l>0$ for which $I^{l}_{0}x\subset B_{0}$. Then for every $n>0$, we have $m \in I_0^{n+c+l}B_0$ and thus, there exist $t_{n, i}\in I^{n}_{0}$, $u_{n,i}\in I^{c+l}_{0}$, and $m_{n,i}\in B_{0}$ ($i=1,\ldots,r$) such that $m=\sum^{r}_{i=1}t_{n, i}u_{n,i}m_{n, i}$. Hence
\begin{eqnarray}
\textnormal{Tr}_{B/A}(mx)=\sum^{r}_{i=1}t_{n,i}\textnormal{Tr}_{B/A}(u_{n,i}xm_{n,i})
\in I^{n}_{0}A_0\ \nonumber
\end{eqnarray}
for every $n>0$. Thus, since $A_{0}$ is $I_{0}$-adically separated, we have $\textnormal{Tr}_{B/A}(mx)=0$, as desired. 

Next we prove $(2)$. Since $B$ is a finite projective $A$-module, there exist $A$-homomorphisms $s: B\to A^{\oplus d}$ and $\pi: A^{\oplus d}\to B$ such that $\pi\circ s$ is the identity.  Let us equip $A$ (resp.\  $A^{\oplus d}$, resp.\ $B$) with the topology such that $\{I_0^nA_0\}_{n\geq 1}$ (resp.\ $\{I_{0}^{n}A_{0}^{\oplus d}\}_{n\geq 1}$, resp.\ $\{I_0^nB_0\}_{n\geq 1}$) forms a system of fundamental open neighborhoods of $0$. We consider these topologies in what follows. Since (\ref{lem1223}) is surjective, there exist $b_{1},\ldots, b_{d}\in B$ such that $s(x)=(\textnormal{Tr}_{B/A}(b_{1}x),\ldots,\textnormal{Tr}_{B/A}(b_{d}x))$ for every $x\in B$. Let $\{m_{n}\}_{n\geq 1}$ be a Cauchy sequence in $B_{0}$ with respect to the $I_0$-adic topology. Then for each $i=1,\ldots,d$, $\{\textnormal{Tr}_{B/A}(b_{i}m_{n})\}_{n\geq 1}$ forms a Cauchy sequence in $A$. Hence by assumption, each $\{\textnormal{Tr}_{B/A}(b_{i}m_{n})\}_{n\geq 1}$ converges to some $a_{i}\in A$. Then $\{s(m_{n})\}_{n\geq 1}$ converges to $(a_{1},\ldots, a_{d})\in A^{\oplus d}$. Thus, $\{m_{n}\}_{n\geq 1}=\{\pi(s(m_{n}))\}_{n\geq 1}$ converges to $\pi((a_{1},\ldots, a_{d})) \in B$. Since $\pi((a_{1},\ldots, a_{d}))-m_n\in B_0$ for $n \gg 0$, we have $\pi((a_{1},\ldots, a_{d})) \in B_0$. Hence the assertion follows. 
\end{proof}

\begin{proposition}
\label{prop1224}
Let $A_0$ be a ring with a nonzero divisor $t$ and put $A=A_0[\frac{1}{t}]$. Let $B$ be a finite \'etale $A$-algebra. Let $B_{0} \subset B$ be an $A_{0}$-subalgebra for which $B=B_{0}[\frac{1}{t}]$. Assume that there exists some $l>0$ such that $\textnormal{Tr}_{B/A}(t^lb)\in A_{0}$ for every $b\in B_0$. Denote by $\widehat{A_0}$  and $\widehat{B_0}$  the $t$-adic completions of $A_0$ and $B_0$, respectively. Then the natural $A_0$-algebra homomorphism $B_{0}\otimes_{A_{0}}\widehat{A_{0}} \to \widehat{B_0}$ induces an isomorphism:
\begin{eqnarray}
\label{eq1226}
(B_{0}\otimes_{A_{0}}\widehat{A_{0}})/(0)^{t-\rm{sat}} \xrightarrow{\cong} \widehat{B_0} 
\end{eqnarray}
(where $(0)^{t\textnormal{-sat}}$ denotes the $(t)$-saturation of the ideal $(0)\subset B_{0}\otimes_{A_{0}}\widehat{A_{0}}$). 
In particular, the natural $A$-algebra homomorphism 
$$
(B_{0}\otimes_{A_{0}}\widehat{A_{0}})[\frac{1}{t}] \to(\widehat{B_{0}})[\frac{1}{t}]
$$
is an isomorphism. 
\end{proposition}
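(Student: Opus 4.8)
The plan is to apply Proposition~\ref{prop513} after base change to the $t$-adic completion of $A_0$, and then to recognize the resulting complete algebra as $\widehat{B_0}$ via the universal property of completion.

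\emph{Setting up.} Put $C:=B_{0}\otimes_{A_{0}}\widehat{A_{0}}$, $A':=\widehat{A_{0}}[\frac1t]$ and $B':=B\otimes_{A}A'$. Since $t$ is a nonzero divisor of $A_{0}$ and $B_{0}\subset B=B_{0}[\frac1t]$, both $A_{0}$ and $B_{0}$ are $t$-torsion free; hence $\widehat{A_{0}}$ and $\widehat{B_{0}}$ are $t$-torsion free (Lemma~\ref{Beauville-Laszlo}, Remark~\ref{rmk1226}), $\widehat{A_{0}}$ is $t$-adically complete and separated, and so is $\widehat{B_{0}}$. As inverting $t$ commutes with tensor products, $C[\frac1t]=B_{0}[\frac1t]\otimes_{A_{0}[\frac1t]}\widehat{A_{0}}[\frac1t]=B'$, which is finite \'etale over $A'$. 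Let $D_{0}$ be the image of $C$ in $B'=C[\frac1t]$. The kernel of $C\to C[\frac1t]$ is exactly the $t$-torsion submodule, so $D_{0}=(B_{0}\otimes_{A_{0}}\widehat{A_{0}})/(0)^{t\textnormal{-sat}}$ as $\widehat{A_{0}}$-algebras, and $D_{0}$ is a $t$-torsion free $\widehat{A_{0}}$-subalgebra of $B'$ with $D_{0}[\frac1t]=B'$, i.e.\ $B'=\bigcup_{n\geq1}(D_{0}:(t^{n}))$.

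\emph{Applying Proposition~\ref{prop513}.} Trace forms of finite projective algebras are compatible with base change (see \cite{Fo17}), so the restriction of $\textnormal{Tr}_{B'/A'}$ to the image of $B$ in $B'$ is the scalar extension of $\textnormal{Tr}_{B/A}$. As an $\widehat{A_{0}}$-module, $D_{0}$ is generated by the images of the elements $b\otimes1$ with $b\in B_{0}$; for such elements $\textnormal{Tr}_{B'/A'}(t^{l}(b\otimes1))$ is the image in $A'$ of $\textnormal{Tr}_{B/A}(t^{l}b)\in A_{0}$, hence lies in $\widehat{A_{0}}\subset A'$. By $\widehat{A_{0}}$-linearity it follows that $\textnormal{Tr}_{B'/A'}(t^{l}d)\in\widehat{A_{0}}$ for every $d\in D_{0}$. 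Thus the hypotheses of Proposition~\ref{prop513} are satisfied for the subring $\widehat{A_{0}}\subset A'$ with ideal $t\widehat{A_{0}}$, the finite \'etale $A'$-algebra $B'$, the subalgebra $D_{0}$, and $c=l$. Since $\widehat{A_{0}}$ is $t$-adically complete and separated, parts $(1)$ and $(2)$ of that proposition show that $D_{0}$ is $t$-adically complete and separated.

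\emph{Identifying $D_{0}$ with $\widehat{B_{0}}$.} The $A_{0}$-algebra homomorphism $\iota\colon B_{0}\to D_{0}$, $b\mapsto\overline{b\otimes1}$, carries $t^{n}B_{0}$ into $t^{n}D_{0}$, hence is continuous; since $D_{0}$ is $t$-adically complete and separated, $\iota$ extends uniquely to an $\widehat{A_{0}}$-algebra homomorphism $\phi\colon\widehat{B_{0}}\to D_{0}$ (\cite[Proposition 7.1.9 in Chapter 0]{FK18}). Conversely, the natural homomorphism $C=B_{0}\otimes_{A_{0}}\widehat{A_{0}}\to\widehat{B_{0}}$ annihilates the $t$-torsion (as $\widehat{B_{0}}$ is $t$-torsion free), so it factors through a homomorphism $\psi\colon D_{0}\to\widehat{B_{0}}$, which is exactly the map \eqref{eq1226} induced on $(B_{0}\otimes_{A_{0}}\widehat{A_{0}})/(0)^{t\textnormal{-sat}}$. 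Now $\psi\circ\phi$ and $\operatorname{id}_{\widehat{B_{0}}}$ are continuous and agree on the dense image of $B_{0}$ in the separated ring $\widehat{B_{0}}$, hence coincide; and $\phi\circ\psi$ fixes each generator $\overline{b\otimes a}$ of $D_{0}$, since $\phi(\psi(\overline{b\otimes a}))=\phi(a\cdot\overline{b})=a\cdot\iota(b)=\overline{b\otimes a}$ by the $\widehat{A_{0}}$-linearity of $\phi$. Therefore $\psi$ is an isomorphism, which is the first assertion; inverting $t$ and using $C[\frac1t]=D_{0}[\frac1t]$ gives the isomorphism $(B_{0}\otimes_{A_{0}}\widehat{A_{0}})[\frac1t]\xrightarrow{\ \cong\ }(\widehat{B_{0}})[\frac1t]$.

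\emph{Main obstacle.} The only delicate point is the middle step: one must verify that the hypotheses of Proposition~\ref{prop513} genuinely survive base change along $A_{0}\to\widehat{A_{0}}$ --- namely the base-change compatibility of the trace, that $\widehat{A_{0}}$ is $t$-adically complete and separated, and that $D_{0}$ really is a subalgebra of a finite \'etale $A'$-algebra so that the proposition is applicable at all. Once this is in place, the identification with $\widehat{B_{0}}$ is essentially formal.
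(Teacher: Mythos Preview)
Your proof is correct and follows essentially the same approach as the paper: both identify $(B_{0}\otimes_{A_{0}}\widehat{A_{0}})/(0)^{t\textnormal{-sat}}$ with the image $D_{0}$ (the paper calls it $C_{0}$) inside $B\otimes_{A}A'$, verify the trace condition after base change, and apply Proposition~\ref{prop513} to conclude that $D_{0}$ is $t$-adically complete and separated. The only cosmetic difference is in the final identification: the paper observes directly that $\widehat{B_{0}\otimes_{A_{0}}\widehat{A_{0}}}\cong\widehat{B_{0}}$ (since the reductions modulo $t^{n}$ agree) and factors this isomorphism through $D_{0}$, whereas you build mutually inverse maps $\phi,\psi$ by hand via the universal property; both arguments are equivalent and equally short.
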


\begin{proof}
Since $\widehat{B_0}$ is $t$-torsion free, the map $\varphi: B_{0}\otimes_{A_{0}}\widehat{A_{0}} \to \widehat{B_0}$ induces a commutative diagram: 
\[\xymatrix{
B_0\otimes_{A_0}\widehat{A_0}\ar[rr]^\pi\ar[rrd]_{\varphi}&&(B_{0}\otimes_{A_{0}}\widehat{A_{0}})/(0)^{t-\rm{sat}}\ar[d]^{\widetilde{\varphi}}\\
&&\widehat{B_0}
}\]
where $\pi$ is the canonical projection map. We prove that $\widetilde{\varphi}$ is an isomorphism. 
First we show that $(B_{0}\otimes_{A_{0}}\widehat{A_{0}})/(0)^{t-\rm{sat}}$ is $t$-adically complete and separated. 
Let us apply Proposition \ref{prop513} by setting $I_0=(t)$. Put $A':=(\widehat{A_{0}})[\frac{1}{t}]$. 
Notice that $(B_{0}\otimes_{A_{0}}\widehat{A_{0}})/(0)^{t-\rm{sat}}$ is isomorphic to the $\widehat{A_{0}}$-subalgebra $C_0\subset (B_{0}\otimes_{A_{0}}\widehat{A_{0}})[\frac{1}{t}]$ that is the image of $B_{0}\otimes_{A_{0}}\widehat{A_{0}}\to(B_{0}\otimes_{A_{0}}\widehat{A_{0}})[\frac{1}{t}]$, and $(B_{0}\otimes_{A_{0}}\widehat{A_{0}})[\frac{1}{t}]$ is identified with the finite \'etale $A'$-algebra $B\otimes_{A}A'$. Since we have
$$
\textnormal{Tr}_{B\otimes_{A}A'/A'}(b\otimes_{A} 1_{A'})=\textnormal{Tr}_{B/A}(b)\otimes_{A} 1_{A'}~(\forall b\in B),
$$
it follows that every element $c\in C_{0}$ satisfies $\textnormal{Tr}_{B\otimes_{A}A'/A'}(t^lc)\in \widehat{A_{0}}$. Hence $C_{0}$ is $t$-adically complete and separated by Proposition \ref{prop513}, and therefore so is $(B_{0}\otimes_{A_{0}}\widehat{A_{0}})/(0)^{t-\rm{sat}}$, as wanted. 
Now note that the $t$-adic completion 
$\widehat{B_{0}\otimes_{A_{0}}\widehat{A_{0}}}$ of $B_{0}\otimes_{A_{0}}\widehat{A_{0}}$ is naturally isomorphic to $\widehat{B_0}$. 
Then by the universality of completion \cite[Proposition 7.1.9 in Chapter 0]{FK18}, the isomorphism $\widehat{B_{0}\otimes_{A_{0}}\widehat{A_{0}}} \xrightarrow{\cong} \widehat{B_0}$ factors as
$$
\widehat{B_{0}\otimes_{A_{0}}\widehat{A_{0}}} \xrightarrow{\widehat{\pi}} (B_{0}\otimes_{A_{0}}\widehat{A_{0}})/(0)^{t-\rm{sat}} \xrightarrow{\widetilde{\varphi}}  \widehat{B_0},
$$
where the map $\widehat{\pi}$ is surjective, because $\pi$ is so. Since $\widetilde{\varphi}\circ \widehat{\pi}$ is an isomorphism, it follows that $\widehat{\pi}$ is an isomorphism. 
Thus, $\widetilde{\varphi}$ is also an isomorphism. Finally, It readily follows that $(B_{0}\otimes_{A_{0}}\widehat{A_{0}})[\frac{1}{t}] \to(\widehat{B_{0}})[\frac{1}{t}]$ is an isomorphism.
\end{proof}

As the condition on the trace map in Proposition \ref{prop1224} is subtle, we will unravel some verifiable hypotheses on ring maps with a desired trace map in the next subsection.

\subsection{Studies on preuniform pairs and the condition $(*)$}
\label{StudPreUniPair}
Here we establish several basic properties of preuniform pairs (cf.\ Definition \ref{defpair}). We especially investigate the following condition.

\begin{definition}\normalfont\label{CondQuad*}
Let $f_0: (A_0, I_0)\to (B_0, J_0)$ be a morphism of pairs. Then we say that $f_0$ satisfies ``the condition $(*)$", if $I_0=tA_0$ and $J_0=tB_0$ for some $t\in A_0$ and $f_0$ satisfies the following axioms: 
\begin{itemize}
\item[(a)]
$(A_0, I_0)$ is preuniform.

\item[(b)]
The ring map $A_0[\frac{1}{t}]\to B_0[\frac{1}{t}]$ induced by $f_0$ is finite \'etale. 

\item[(c)]
$B_0$ is $t$-torsion free, and $B_0\subset (A_0)^*_{B_0[\frac{1}{t}]}$. 
\end{itemize}
\end{definition}

\begin{example}
Let $V$ be a valuation ring with a non-zero element $t\in V$ for which $V$ is $t$-adically separated. 
Set $K:=\textnormal{Frac}(V)$. 
Then, as we observed in Example \ref{EgPreUni}(1), the pair $(V, (t))$ is preuniform and $K=V[\frac{1}{t}]$. 
Let $L$ be a finite separable extension of $K$, and  $W$ the integral closure of $V$ in $L$. 
Then any $x\in L$ is integral over $K$, and hence satisfies $t^{l}x\in W$ for some $l>0$ because $K=V[\frac{1}{t}]$. Therefore, $L=W[\frac{1}{t}]$. Hence the inclusion map $V\hookrightarrow W$ becomes finite \'etale after inverting $t$, and we have $W=V^{+}_{W[\frac{1}{t}]}\subset V^{*}_{W[\frac{1}{t}]}$. 
Thus the morphism $(V, (t))\to (W, (t))$ satisfies $(*)$.  
\end{example}

A morphism $(A_0, (t))\to (B_0, (t))$ satisfying $(*)$ has the following good properties. 
In particular, the condition imposed on the trace map of Proposition \ref{prop1224} may be realized by it.

\begin{proposition}
\label{cor1416}
Let $f_0: (A_0, (t))\to (B_0, (t))$ be a morphism of pairs that satisfies the condition $(*)$. 
Put $A:=A_0[\frac{1}{t}]$ and $B:=B_0[\frac{1}{t}]$. 
Then the following assertions hold. 

\begin{enumerate}
\item
There exists an integer $c>0$ such that $\textnormal{Tr}_{B/A}(t^cB_0) \subset A_{0}$.

\item
There exist an integer $l>0$, a finite free $A_0$-module $F$, and $A_0$-homomorphisms $B_0\to F\to B_0$ whose composition is  multiplication by $t^l$. In particular, $t^lB_0$ is contained in a finitely generated $A_0$-submodule of $B_0$. 

\item
One has $(A_0)^*_B=(B_0)^*_B$. 

\item
The pair $(B_0, (t))$ is preuniform. 
\end{enumerate}
\end{proposition}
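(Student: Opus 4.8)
The plan is to establish (1), deduce (2) from (1), then (3) from (2), and finally (4) from (3). Throughout, $A:=A_0[\frac{1}{t}]$, $B:=B_0[\frac{1}{t}]$, and $B_0$ is $t$-torsion free with $t\in A_0$ a nonzerodivisor by axioms (a), (c). Two inputs are used repeatedly. First, since $B$ is finite \'etale over $A$ the trace pairing $B\times B\to A$, $(b,b')\mapsto\textnormal{Tr}_{B/A}(bb')$, is perfect, so $B\xrightarrow{\cong}\textnormal{Hom}_A(B,A)$ (cf.\ \cite[Corollary 4.6.8]{Fo17}); moreover $B$ is finite projective over $A$, so there is a splitting $B\xrightarrow{s}A^{\oplus d}\xrightarrow{\pi}B$ with $\pi\circ s=\textnormal{id}_B$, and writing $s=(s_1,\dots,s_d)$ the perfectness of the pairing gives $w_1,\dots,w_d\in B$ with $s_j(b)=\textnormal{Tr}_{B/A}(w_jb)$ for all $b$. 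Second, if $b\in B$ is integral over a subring $R_0\subseteq A$ then $\textnormal{Tr}_{B/A}(b)$ is integral over $R_0$ (the characteristic polynomial of multiplication by $b$ on the finite projective $A$-module $B$ has all coefficients integral over $R_0$); this will be combined with axiom (a), which provides $c_0>0$ with $t^{c_0}(A_0)^+_A\subseteq A_0$.

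\emph{Assertions (1) and (2).} By axiom (c), $B_0\subseteq(A_0)^*_B$, so Lemma \ref{AlmIntMod-Fin} (applicable since $B$ is module-finite over $A$) gives $tB_0\subseteq(A_0)^+_B$. Hence for $b\in B_0$ the element $tb$ is integral over $A_0$, so $t\cdot\textnormal{Tr}_{B/A}(b)=\textnormal{Tr}_{B/A}(tb)\in(A_0)^+_A$ and therefore $\textnormal{Tr}_{B/A}(t^{c_0+1}b)=t^{c_0}\cdot t\cdot\textnormal{Tr}_{B/A}(b)\in A_0$; this is (1) with $c:=c_0+1$. For (2), fix $M>0$ with $t^Mw_j\in B_0$ and $t^M\pi(e_i)\in B_0$ for all $i,j$ (where $e_1,\dots,e_d$ is the standard basis of $A^{\oplus d}$). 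Since $B_0$ is a ring, $(t^Mw_j)b\in B_0$ for $b\in B_0$, so by (1) we get $t^Ms_j(b)=\textnormal{Tr}_{B/A}((t^Mw_j)b)\in\textnormal{Tr}_{B/A}(B_0)\subseteq t^{-c}A_0$, whence $t^{M+c}s(B_0)\subseteq A_0^{\oplus d}$; also $t^M\pi(A_0^{\oplus d})\subseteq B_0$. Thus, with $F:=A_0^{\oplus d}$, the $A_0$-linear maps $\sigma\colon B_0\to F$, $\sigma(b)=t^{M+c}s(b)$, and $\tau\colon F\to B_0$, $\tau(v)=t^M\pi(v)$, are well defined, satisfy $\tau\circ\sigma=t^{2M+c}\cdot\textnormal{id}_{B_0}$, and $t^{2M+c}B_0=\tau(\sigma(B_0))\subseteq\tau(F)$ with $\tau(F)$ a finitely generated $A_0$-submodule of $B_0$; this proves (2) with $l:=2M+c$.

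\emph{Assertions (3) and (4).} The inclusion $(A_0)^*_B\subseteq(B_0)^*_B$ is immediate from $A_0\subseteq B_0$. Conversely, let $x\in(B_0)^*_B$; then all $x^n$ lie in a finitely generated $B_0$-submodule $N_0$ of $B$, so clearing denominators in finitely many generators of $N_0$ gives $k>0$ with $t^kx^n\in B_0$ for all $n\ge0$, and then (2) gives $t^{k+l}x^n\in M_0$ for a fixed finitely generated $A_0$-submodule $M_0\subseteq B_0$ and all $n\ge0$. As $t$ is a unit in $B$, all $x^n$ then lie in $t^{-(k+l)}M_0$, a finitely generated $A_0$-submodule of $B$; hence $x\in(A_0)^*_B$, proving (3). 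For (4), by Lemma \ref{lem1301423} it suffices to show that the Tate ring $B$ (with ring of definition $B_0$ and pseudouniformizer $t$) is preuniform, i.e.\ that $B^\circ$ is bounded; by Lemma \ref{lemma10092}(1) and (3), $B^\circ=(B_0)^*_B=(A_0)^*_B$. Given $x\in(A_0)^*_B$, the ring $(A_0)^*_B$ contains $(t^Mw_j)x$ since $t^Mw_j\in B_0\subseteq(A_0)^*_B$, so $t^{M+1}w_jx\in(A_0)^+_B$ by Lemma \ref{AlmIntMod-Fin}, hence its trace is integral over $A_0$ and $t^{c_0+M+1}s_j(x)=t^{c_0}\cdot\textnormal{Tr}_{B/A}(t^{M+1}w_jx)\in A_0$ for every $j$. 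Therefore $t^{c_0+M+1}x=\pi(t^{c_0+M+1}s(x))\in\pi(A_0^{\oplus d})$, so $t^{c_0+2M+1}x\in t^M\pi(A_0^{\oplus d})\subseteq B_0$. Thus $t^{c_0+2M+1}(A_0)^*_B\subseteq B_0$, so $(A_0)^*_B$ is bounded, proving (4).

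The main obstacle is (1): one must produce a \emph{single} power of $t$ that carries the whole image $\textnormal{Tr}_{B/A}(B_0)$ into $A_0$, with no finiteness of $B_0$ over $A_0$ available a priori. The leverage is that finite \'etaleness makes the trace form a perfect pairing, which, via Lemma \ref{AlmIntMod-Fin} and the integrality of traces of integral elements, reduces the question to the defect of the integral closure of $A_0$ in $A$ — and that defect is precisely what preuniformity of $(A_0,(t))$ bounds by a power of $t$. Once (1) is in hand, assertions (2)--(4) are careful bookkeeping with powers of $t$ through the splitting $\pi\circ s=\textnormal{id}_B$ and the trace-form isomorphism.
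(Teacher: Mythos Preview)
Your proof is correct and follows essentially the same route as the paper: for (1) you use Lemma~\ref{AlmIntMod-Fin} to get $tB_0\subset (A_0)^+_B$, then integrality of traces of integral elements (the paper isolates this as Lemma~\ref{DK26}) plus preuniformity of $(A_0,(t))$; for (2) and (3) your arguments are the paper's verbatim. The one noteworthy difference is (4): the paper invokes Lemma~\ref{IntClos*Stable} to note that $(A^+,(t))\to(B^+,(t))$ again satisfies $(*)$ and then reapplies (2) to that auxiliary morphism, whereas you reuse the explicit splitting data $w_j,\pi$ already fixed in (2) to bound $(A_0)^*_B$ directly---this is slightly more self-contained and avoids the detour through $A^+,B^+$, at the cost of redoing a small computation.
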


To prove this, we need the following lemma.

\begin{lemma}
\label{DK26}
Let $A_0$ be a ring with a nonzero divisor $t$, and put $A=A_0[\frac{1}{t}]$. Let $B$ be a finite \'etale $A$-algebra. 
Assume that $A_0$ is integrally closed in $A$. Then for every integral element $b\in B$ over $A$, $\textnormal{Tr}_{B/A}(b)$ belongs to $A_0$. 
\end{lemma}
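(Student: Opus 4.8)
The plan is to reduce the statement to the classical fact that the trace of an element integral over $A_{0}$ is again integral over $A_{0}$, and then to conclude from the hypothesis that $A_{0}$ is integrally closed in $A$. (I read ``integral over $A$'' in the statement as ``integral over $A_{0}$'', which must be the intended meaning, since every element of the module-finite algebra $B$ is automatically integral over $A$.) Since $\operatorname{Tr}_{B/A}(b)\in A$ and $A_{0}$ is integrally closed in $A$, it suffices to show that $\operatorname{Tr}_{B/A}(b)$ is integral over $A_{0}$ whenever $b$ is. To establish this I would pass to a finite \'etale base change over which $B$ becomes a product of copies of the base ring, so that the trace is visibly a sum of conjugates of $b$.

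First I would reduce to the case that $B$ has constant rank $n$ over $A$: any idempotent $e$ of $A$ satisfies the monic relation $X^{2}-X$ over $A_{0}$, hence lies in $A_{0}$, so the idempotent decomposition of $A$ associated to the locally constant rank of $B$ is compatible with $A_{0}$ and with $t$; on the piece of rank $0$ one has $B=0$, so $b$ and $\operatorname{Tr}_{B/A}(b)$ vanish there. Assuming $n\ge 1$, by the structure theory of finite \'etale algebras there is a faithfully flat finite \'etale $A$-algebra $A'$ together with an isomorphism of $A'$-algebras $B\otimes_{A}A'\cong (A')^{\oplus n}$; concretely one builds $A'$ by repeatedly splitting off the diagonal factor, first writing $B\otimes_{A}B\cong B\times C_{1}$ with $C_{1}$ finite \'etale of rank $n-1$ over $B$, then doing the same to $C_{1}$ over $B$, and so on. Let $A'_{0}\subset A'$ be the integral closure of $A_{0}$ in $A'$. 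Then $A'_{0}$ is integrally closed in $A'$; one has $A'=A'_{0}[\tfrac1t]$, because $A'$ is module-finite over $A=A_{0}[\tfrac1t]$ and clearing denominators in a monic equation shows that a suitable $t$-power multiple of any element of $A'$ is integral over $A_{0}$; and $A_{0}\hookrightarrow A'_{0}$ (here I use that $A\to A'$ is injective, being faithfully flat), with $A'_{0}$ integral over $A_{0}$.

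Next I would carry out the computation over $A'$. Under $B\otimes_{A}A'\cong (A')^{\oplus n}$, the element $b\otimes 1$ corresponds to a tuple $(\beta_{1},\dots,\beta_{n})$ with $\beta_{i}=\rho_{i}(b)$, where $\rho_{i}\colon B\to A'$ is the $i$-th projection; each $\rho_{i}$ is an $A$-algebra homomorphism, so it carries $A_{0}$ into the image of $A_{0}$ in $A'$, which lies in $A'_{0}$. Applying $\rho_{i}$ to a monic equation for $b$ over $A_{0}$ produces a monic equation for $\beta_{i}$ over $A'_{0}$, so each $\beta_{i}$, and hence $\sum_{i}\beta_{i}$, is integral over $A'_{0}$, and therefore over $A_{0}$. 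Because the trace is compatible with base change for finite projective algebras (as already used above in the proof of Proposition \ref{prop1224}) and the trace on $(A')^{\oplus n}$ over $A'$ sends a tuple to the sum of its entries, we get $\operatorname{Tr}_{B/A}(b)\otimes 1=\sum_{i}\beta_{i}$ in $A'$. Finally I would descend: choosing a monic $q\in A_{0}[X]$ with $q\bigl(\sum_{i}\beta_{i}\bigr)=0$, the element $q\bigl(\operatorname{Tr}_{B/A}(b)\bigr)\in A$ maps to $0$ in $A'$, hence is $0$ by injectivity of $A\to A'$; so $\operatorname{Tr}_{B/A}(b)$ is integral over $A_{0}$, and thus lies in $A_{0}$ since $A_{0}$ is integrally closed in $A$.

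The only genuinely nontrivial part is setting up the splitting cover $A'$ and its subring $A'_{0}$ with all the required compatibilities---faithful flatness of $A\to A'$, the identity $A'=A'_{0}[\tfrac1t]$, and integrality of $A'_{0}$ over $A_{0}$---and this is exactly where the finite \'etaleness of $B/A$ and the integral closedness of $A_{0}$ in $A$ enter; the remainder of the argument is formal.
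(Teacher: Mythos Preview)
Your argument is correct, and your reading of the hypothesis as ``integral over $A_0$'' (rather than over $A$) is the intended one: every element of a module-finite $A$-algebra is integral over $A$, and the way the lemma is invoked in the proof of Proposition \ref{cor1416} confirms this. The paper itself does not supply a proof; it simply points to \cite[Lemma 2.6]{DK15} (stated there for $t=p$) and observes that the same argument works in general. Your route via a finite \'etale splitting cover $A'$, with $A'_0$ the integral closure of $A_0$ in $A'$, is the standard way to establish this type of result and is in all likelihood the argument in \cite{DK15}; in any event your write-up is self-contained and complete, including the reduction to constant rank via idempotents of $A$ lying in $A_0$.
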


\begin{proof}[Proof of Lemma \ref{DK26}]
A proof of the lemma in the special case that $t$ is a prime number is given in \cite[Lemma 2.6]{DK15}, but the same proof is valid for the general case. 
\end{proof}

Here notice the following fact.

\begin{lemma}
\label{IntClos*Stable}
Keep the notation as in Proposition \ref{cor1416}. 
Put $A^+:=(A_0)^+_A$ and $B^+:=(A_0)^+_B$. 
Let $f: A\to B$ be the ring map induced by $f_0$, and let $f^+: A^+\to B^+$ be the ring map such that $f|_{A^+}$ factors through $f^+$. 
Then $B=B^+[\frac{1}{t}]$, and the morphism $f^+: (A^+, (t))\to (B^+, (t))$ satisfies the condition $(*)$. 
\end{lemma}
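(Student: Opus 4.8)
The plan is to establish first the auxiliary identity $B=B^+[\frac{1}{t}]$ and then to verify the three axioms (a), (b), (c) of Definition \ref{CondQuad*} for $f^+$ one at a time. The key input is that $tB_0\subseteq B^+$. Indeed, $B_0$ is $t$-torsion free and $B=B_0[\frac{1}{t}]$ is finite \'etale, hence module-finite, over $A=A_0[\frac{1}{t}]$, so Lemma \ref{AlmIntMod-Fin} yields $t(A_0)^*_B\subseteq (A_0)^+_B=B^+$; and axiom (c) of $(*)$ for $f_0$ says precisely that $B_0\subseteq (A_0)^*_B$. Localizing the chain $tB_0\subseteq B^+\subseteq B=B_0[\frac{1}{t}]$ at $t$ then gives $B^+[\frac{1}{t}]=B$. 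In particular $B^+$ is $t$-torsion free, being a subring of $B$ in which $t$ is invertible; this is half of axiom (c) for $f^+$.

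Next I would identify the two localizations occurring in axioms (a) and (b). On the source side, preuniformity of $(A_0,(t))$ (axiom (a) for $f_0$) provides $c>0$ with $t^cA^+\subseteq A_0$, so from $A_0\subseteq A^+\subseteq A=A_0[\frac{1}{t}]$ one gets $A^+[\frac{1}{t}]=A$. Under the identifications $A^+[\frac{1}{t}]=A$ and $B^+[\frac{1}{t}]=B$, the ring map induced by $f^+$ on these localizations is just $f\colon A\to B$, which is finite \'etale; this is axiom (b) for $f^+$. For axiom (a) for $f^+$, I would invoke transitivity of integrality: $A^+=(A_0)^+_A$ is integrally closed in $A=A^+[\frac{1}{t}]$, hence $(A^+)^+_{A^+[\frac{1}{t}]}=A^+$, so the preuniformity inequality for $(A^+,(t))$ holds with constant $1$ (note $t\in A_0\subseteq A^+$).

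It then remains to check the second half of axiom (c) for $f^+$, namely $B^+\subseteq (A^+)^*_{B^+[\frac{1}{t}]}$; but $B^+[\frac{1}{t}]=B$, and every element of $B^+=(A_0)^+_B$ is integral, a fortiori almost integral, over $f_0(A_0)\subseteq f^+(A^+)$, so this is immediate. Finally $f^+$ is a morphism of pairs since $f^+(tA^+)\subseteq tB^+$, whence $tA^+\subseteq (f^+)^{-1}(tB^+)$. Assembling these facts shows that $f^+\colon (A^+,(t))\to (B^+,(t))$ satisfies $(*)$. The only step where the structure of $f_0$ is genuinely exploited is the identity $B=B^+[\frac{1}{t}]$ (through Lemma \ref{AlmIntMod-Fin} together with axiom (c)); everything else is routine bookkeeping with localizations and the transitivity of (almost) integral closure, so I expect that identity to be the real point of the argument.
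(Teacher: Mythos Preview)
Your proof is correct, and the verification of axioms (a), (b), (c) for $f^+$ matches what the paper (implicitly) does. The one place where your route differs from the paper is the identity $B=B^+[\frac{1}{t}]$: you deduce it from $tB_0\subseteq B^+$ via Lemma~\ref{AlmIntMod-Fin} together with axiom (c) for $f_0$, whereas the paper simply remarks that $B$ is integral over $A$ (being finite \'etale, hence module-finite) and $A=A_0[\frac{1}{t}]$. From those two facts alone one gets directly that every $b\in B$ satisfies $t^m b\in (A_0)^+_B=B^+$ for some $m$, so $B=B^+[\frac{1}{t}]$. In particular the paper's argument does not use axiom (c) of $(*)$ for $f_0$ at all, nor the subring $B_0$, so your closing remark that ``the structure of $f_0$ is genuinely exploited'' at this step overstates what is actually needed. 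Your approach is fine but slightly less economical; the paper's one-line observation shows the lemma really is a formal consequence of finiteness of $B/A$ and nothing more.
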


\begin{proof}[Proof of Lemma \ref{IntClos*Stable}]
It is clear because $B$ is integral over $A$ and $A=A_0[\frac{1}{t}]$. 
\end{proof}

Now let us start to prove Proposition \ref{cor1416}.

\begin{proof}[Proof of Proposition \ref{cor1416}]
We define the morphism $f^+: (A^+, (t))\to (B^+, (t))$ as in Lemma \ref{IntClos*Stable}.   
Then by Lemma \ref{DK26}, we have $\textnormal{Tr}_{B/A}(B^+)\subset A^+$. 
Thus, since $(A_0, (t))$ is preuniform, there exists some $c_1>0$ such that $\textnormal{Tr}_{B/A}(t^{c_1}B^+)\subset A_0$. Meanwhile, we have $tB_0\subset t(A_0)^*_B\subset B^+$ by Lemma \ref{AlmIntMod-Fin}. 
Thus it holds that $\textnormal{Tr}_{B/A}(t^{c_1+1}B_0)\subset A_0$, which yields the assertion $(1)$.

Let us prove $(2)$. Since $B$ is finite projective over $A$, we have $A$-homomorphisms $\pi: A^{\oplus d}\to B$ and $s: B\to A^{\oplus d}$ such that $\pi\circ s$ is the identity. Now since (\ref{lem1223}) is surjective, there exist $b_{1},\ldots, b_{d}\in B$ such that $s(x)=(\textnormal{Tr}_{B/A}(b_{1}x),\ldots,\textnormal{Tr}_{B/A}(b_{d}x))$ for every $x\in B$. Hence by (1), there exists $l_1>0$ such that $(t^{l_1}s)|_{B_0}$ factors through an $A_0$-homomorphism $s_{l_1}: B_0\to A_0^{\oplus d}$. Now there also exists an integer $l_2>0$ such that $(t^{l_2}\pi)|_{A_0^{\oplus d}}$ factors through an $A_0$-homomorphism $\pi_{l_2}: A_0^{\oplus d}\to B_0$. Then $\pi_{l_2}\circ s_{l_1}$ is multiplication by $t^{l_1+l_2}$, which yields the claim. 

Next we prove $(3)$. The containment $(A_0)^*_B\subset (B_0)^*_B$ is easy to see. Let us show the reverse inclusion. Pick an element $b\in B$ and assume that $b$ is almost integral over $B_0$. Then, since $B=B_0[\frac{1}{t}]$, there exists some $m>0$ such that $t^m(\sum^\infty_{n=0}B_0\cdot b^n)\subset B_0$. 
Hence by the assertion $(2)$, $t^{l+m}(\sum^\infty_{n=0}B_0\cdot b^n)$ is contained in a finitely generated $A_0$-submodule of $B_0$. Therefore, $\sum^\infty_{n=0}A_0\cdot b^n$ is contained in a finitely generated $A_0$-submodule of $B$. Hence $b\in B$ is almost integral over $A_0$, as desired.

Finally we prove $(4)$. By the assertion $(3)$, we have $(B_0)^+_B\subset (B_0)^*_B\subset (A_0)^*_B$. Hence in view Lemma \ref{AlmIntMod-Fin}, we have $t(B_0)^+_B\subset t(A_0)^*_B\subset (A_0)^+_B$. On the other hand, by Lemma \ref{IntClos*Stable} and the assertion $(2)$, there exists some $l'>0$ such that $t^{l'}(A_0)^+_B$ is contained in a finitely generated $(A_0)^+_A$-submodule $N_0$ of $(A_0)^+_B$. Moreover, since $(A_0, (t))$ is preuniform and $B=B_0[\frac{1}{t}]$, 
we have $t^{c'}N_0\subset B_0$ for some $c'>0$. Thus, $t^{c'+l'+1}(B_0)^+_B$ is contained in $B_0$. This yields the assertion. 
\end{proof}

\begin{corollary}
\label{cor1547205}
Let $A$ be a preuniform Tate ring. Let $f: A\to B$ be a finite \'etale ring map. Equip $B$ with the canonical structure as a Tate ring (cf.\ Lemma \ref{tatefinex}). Then the following assertions hold. 

\begin{enumerate}
\item
$B$ is also preuniform. In particular, for any ring of definition $A_0$ of $A$, $(A_0)^+_B$ and $(A_0)^*_B$ are rings of definition of $B$.  
\item
For any topologically nilpotent unit $t\in A$, the morphism $(A^\circ, (t))\to (B^\circ, (t))$ satisfies the condition $(*)$. 
\item
$B^\circ=(A^\circ)^*_B$. 
\item
If $f$ is injective, then $f^{-1}(B^\circ)=A^\circ$. 
\item
If $A$ is uniform, then so is $B$. 
\end{enumerate}
\end{corollary}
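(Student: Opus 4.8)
The strategy is to apply Proposition \ref{cor1416} and Proposition \ref{prop513} to the morphism of pairs $f_0\colon (A^\circ,(t))\to (B^\circ,(t))$; so I would prove assertion (2) first, the delicate point being to control $B^\circ$ \emph{before} knowing that $B$ is preuniform, and then read off (1), (3), (4), (5) in turn.

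Since $A$ is a preuniform Tate ring, $A^\circ$ is an open bounded subring, hence a ring of definition, and $A=A^\circ[\frac{1}{t}]$ for any topologically nilpotent unit $t\in A$ (such $t$ exist). For (2): axiom (a) of $(*)$, namely preuniformity of $(A^\circ,(t))$, is Lemma \ref{lem1301423}. Let $B_0\subset B$ be a module-finite (hence integral) $A^\circ$-subalgebra with $t\in B_0$ and $B=B_0[\frac{1}{t}]$, as provided by Lemma \ref{tatefinex}; then $B_0$ is open and bounded, so $B_0\subseteq B^\circ$, and by Lemma \ref{lemma10092}(1) $B^\circ$ is the complete integral closure of $B_0$ in $B$. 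Because $B_0$ is integral (even module-finite) over $A^\circ$, every element almost integral over $B_0$ is almost integral over $A^\circ$, whence $B_0\subseteq B^\circ\subseteq (A^\circ)^*_B\subseteq B$. Squeezing gives $B^\circ[\frac{1}{t}]=B$, so the induced map $A^\circ[\frac{1}{t}]\to B^\circ[\frac{1}{t}]$ is $f$, which is finite \'etale (axiom (b)); and $B^\circ$ is $t$-torsion free with $B^\circ\subseteq (A^\circ)^*_B$ (axiom (c)). Thus $f_0$ satisfies $(*)$, proving (2).

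For (1): Proposition \ref{cor1416}(2) applied to $f_0$ gives $l>0$ such that $t^lB^\circ$ is contained in a finitely generated $A^\circ$-submodule $M_0$ of $B^\circ$; writing a finite set of $A^\circ$-generators of $M_0$ as $b_i/t^{k_i}$ with $b_i\in B_0$ yields $t^{k}M_0\subseteq B_0$ with $k=\max_i k_i$, so $M_0$, and therefore $B^\circ$, is bounded in $B$. Hence $B$ is preuniform. For the ``in particular'': given any ring of definition $A_0$ of $A$, the corresponding module-finite $A_0$-subalgebra of $B$ from Lemma \ref{tatefinex} lies in $(A_0)^+_B$, which is thus open; and $(A_0)^+_B\subseteq (A_0)^*_B\subseteq (A^\circ)^*_B=B^\circ$ (using (3) below) is bounded; so $(A_0)^+_B$ and $(A_0)^*_B$, being open bounded subrings, are rings of definition. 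For (3): Proposition \ref{cor1416}(3) gives $(A^\circ)^*_B=(B^\circ)^*_B$, and since $B^\circ$ is now a bounded ring of definition of $B$, Lemma \ref{lemma10092}(1) shows it is completely integrally closed in $B$, i.e.\ $(B^\circ)^*_B=B^\circ$; hence $(A^\circ)^*_B=B^\circ$.

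For (4): $A^\circ\subseteq f^{-1}(B^\circ)$ holds because $f$ is continuous and carries $A^\circ$ into $B_0$, so powerbounded elements go to powerbounded elements. Conversely, suppose $f$ is injective and $f(a)\in B^\circ$; identifying $A$ with its image and using (3), $a$ is almost integral over $A^\circ$ in $B$, so $t^j a^n\in B_0$ for some fixed $j$ and all $n$, whence $t^j a^n\in B_0\cap A$; but $B_0\cap A$ consists of elements of $A$ integral over $A^\circ$, so $B_0\cap A\subseteq A^\circ$, and therefore $a\in A^\circ$. Finally (5): if $A$ is uniform then $A^\circ$ is $t$-adically complete and separated (Lemma \ref{lem1301423}); by Proposition \ref{cor1416}(1) there is $c>0$ with $\textnormal{Tr}_{B/A}(t^cB^\circ)\subseteq A^\circ$, so Proposition \ref{prop513} (with $A_0=A^\circ$, $I_0=(t)$, $B_0=B^\circ$, and $B=\bigcup_{n\geq 1}(B^\circ\colon t^n)$) shows $B^\circ$ is $t$-adically complete and separated; as $B^\circ$ is a ring of definition of $B$ by (1), $B$ is complete and separated, hence uniform (Lemma \ref{lem1301423}). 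The main obstacle is the bootstrapping in (2) and (1): one must identify $B^\circ$ — via its description as the complete integral closure of a module-finite $A^\circ$-subalgebra — and bound it using Proposition \ref{cor1416}(2) before preuniformity of $B$ is in hand; once $(*)$ is verified, Propositions \ref{cor1416} and \ref{prop513} make everything else routine.
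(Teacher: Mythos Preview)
Your proof is correct and follows essentially the same route as the paper: verify $(*)$ for $(A^\circ,(t))\to(B^\circ,(t))$ via Lemma~\ref{tatefinex} and Lemma~\ref{lemma10092}, then read off (1), (3), (5) from Proposition~\ref{cor1416} and Proposition~\ref{prop513}, and handle (4) by pulling almost integrality back through $B_0\cap A\subset A^\circ$. The paper compresses (1)--(3) into a one-line appeal to Lemma~\ref{AlmIntMod-Fin}, Lemma~\ref{tatefinex}, and Proposition~\ref{cor1416}; your version makes the bootstrapping step (showing $B^\circ\subset (A^\circ)^*_B$ before preuniformity of $B$ is known) explicit, which is a genuine clarification of what the paper leaves implicit.
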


\begin{proof}
The assertions $(1)$, $(2)$ and $(3)$ are immediate consequences of Lemma \ref{AlmIntMod-Fin}. Lemma \ref{tatefinex} and Proposition \ref{cor1416}. The assertion (5) follows from (1) and Proposition \ref{prop513}. Let us prove (4). Assume that $f$ is injective. Since the containment $A^\circ\subset f^{-1}(B^\circ)$ is clear, it suffices to show the reverse inclusion. Pick $a\in A$ for which $f(a)\in B^\circ$. 
Then, since $B^\circ=(A^\circ)^*_B$ by the assertion (3), there is some $l>0$ such that $t^la^n\in A^\circ$ for every $n>0$. 
Hence we have $a\in (A^\circ)^*_A=A^\circ$, as wanted. 
\end{proof}

In the next theorem, we show that the condition $(*)$ is stable under completion. This property is quite important for our study (cf.\ Corollary \ref{IsomCompPowerBd} and Theorem \ref{lemalprjct}). The theorem itself can be interpreted as a practical form of Proposition \ref{prop1224}.

\begin{theorem}
\label{maincor1}
Keep the notation as in Proposition \ref{cor1416}. Denote by $\widehat{A_0}$ and $\widehat{B_0}$ the $t$-adic completions of $A_0$ and $B_0$, respectively. Let $\widehat{f_0}: (\widehat{A_0}, (t))\to (\widehat{B_0}, (t))$ be the morphism of pairs induced by $f_0$. Put $A':=\widehat{A_0}[\frac{1}{t}]$ and $B':=\widehat{B_0}[\frac{1}{t}]$. Then the following assertions hold. 

\begin{enumerate}
\item
The natural $A'$-algebra homomorphism $B\otimes_AA' \to B'$ is an isomorphism. 

\item
$\widehat{f_0}: (\widehat{A_0}, (t))\to (\widehat{B_0}, (t))$ also satisfies the condition $(*)$. 
\end{enumerate}
\end{theorem}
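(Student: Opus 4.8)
The plan is to read off assertion (1) directly from Proposition~\ref{prop1224}, and then to verify the three axioms (a)--(c) of Definition~\ref{CondQuad*} for $\widehat{f_0}$ one at a time, feeding in assertion (1) together with the structural facts about $f_0$ collected in Proposition~\ref{cor1416}. (The ``morphism of pairs'' part is automatic, since $\widehat{f_0}$ carries $t\widehat{A_0}$ into $t\widehat{B_0}$.)

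For (1): Proposition~\ref{cor1416}(1) supplies an integer $c>0$ with $\textnormal{Tr}_{B/A}(t^cB_0)\subset A_0$, which is exactly the hypothesis on the trace map needed to apply Proposition~\ref{prop1224}. That proposition gives an isomorphism $(B_0\otimes_{A_0}\widehat{A_0})[\frac1t]\xrightarrow{\ \cong\ }(\widehat{B_0})[\frac1t]$. Since $(B_0\otimes_{A_0}\widehat{A_0})[\frac1t]$ is canonically $B_0[\frac1t]\otimes_{A_0[\frac1t]}\widehat{A_0}[\frac1t]=B\otimes_AA'$ and $(\widehat{B_0})[\frac1t]=B'$, this is precisely the isomorphism $B\otimes_AA'\xrightarrow{\cong}B'$ asserted in (1). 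The only thing to be careful about here is bookkeeping those two identifications.

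For (2) I would check the axioms of $(*)$ for $\widehat{f_0}$. Axiom (a): preuniformity of $(A_0,(t))$ means $t^{c'}(A_0)^+_A\subset A_0$ for some $c'\ge 0$; the ``$+$'' case of Proposition~\ref{CICprop}(1)(a) then yields $t^{c'}(\widehat{A_0})^+_{A'}\subset\widehat{A_0}$, and since $t$ remains a nonzero divisor in $\widehat{A_0}$ by Lemma~\ref{Beauville-Laszlo}, the pair $(\widehat{A_0},(t))$ is preuniform. Axiom (b): by (1) we have $B'\cong B\otimes_AA'$ as an $A'$-algebra, and finite \'etaleness is stable under base change, so $A'\to B'$ is finite \'etale. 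Axiom (c) has two parts: $\widehat{B_0}$ is $t$-torsion free because $B_0$ is (again Lemma~\ref{Beauville-Laszlo}), so what remains --- and this is the step I expect to be the main obstacle --- is the inclusion $\widehat{B_0}\subset(\widehat{A_0})^*_{B'}$.

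Proving $\widehat{B_0}\subset(\widehat{A_0})^*_{B'}$ amounts to promoting ``$B_0$ consists of elements almost integral over $A_0$'' across completion, uniformly in the Cauchy sequences representing elements of $\widehat{B_0}$; the right tool is the ``almost split'' datum of Proposition~\ref{cor1416}(2). That gives $A_0$-linear maps $B_0\xrightarrow{\sigma}F\xrightarrow{\tau}B_0$ with $F$ finite free and $\tau\circ\sigma=t^l$. Completing $t$-adically turns $F$ into a finite free $\widehat{A_0}$-module $\widehat F$ and produces $\widehat{B_0}\xrightarrow{\widehat\sigma}\widehat F\xrightarrow{\widehat\tau}\widehat{B_0}$ with $\widehat\tau\circ\widehat\sigma=t^l$, hence $t^l\widehat{B_0}\subset\widehat\tau(\widehat F)$. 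Mapping to $B'=\widehat{B_0}[\frac1t]$ (into which $\widehat{B_0}$ injects by $t$-torsion freeness), we get $\widehat{B_0}\subset t^{-l}\widehat\tau(\widehat F)$, a \emph{single} finitely generated $\widehat{A_0}$-submodule of $B'$. Since $\widehat{B_0}$ is a subring of $B'$, every power of a given $\beta\in\widehat{B_0}$ lies in this finitely generated module, so $\beta$ is almost integral over $\widehat{A_0}$; thus $\widehat{B_0}\subset(\widehat{A_0})^*_{B'}$, finishing axiom (c) and the theorem.
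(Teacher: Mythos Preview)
Your proof is correct and matches the paper's argument almost step for step: part (1) via Proposition~\ref{cor1416}(1) and Proposition~\ref{prop1224}, axiom (a) via Proposition~\ref{CICprop}, axiom (b) via base change of finite \'etale along (1), and axiom (c) via the factorization $B_0\to F\to B_0$ from Proposition~\ref{cor1416}(2). The only cosmetic difference is that the paper passes through the finitely generated submodule $N_0=\tau(F)\subset B_0$ and invokes the topological Nakayama lemma to see that $\widehat{N_0}$ is finitely generated over $\widehat{A_0}$, whereas you complete the finite free module $F$ itself (so $\widehat F\cong\widehat{A_0}^{\oplus d}$ is visibly finite free) and take its image --- a slight streamlining of the same idea.
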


\begin{proof}
$(1)$ is a consequence of Proposition \ref{prop1224} and Proposition \ref{cor1416}(1). 
Let us prove $(2)$. By Proposition \ref{CICprop}, the morphism $\widehat{f_0}: (\widehat{A_0}, (t))\to (\widehat{B_0}, (t))$ satisfies (a) in Definition \ref{CondQuad*}. 
Moreover, as a corollary of the assertion $(1)$, one finds that $\widehat{f_0}$ also satisfies (b). Hence the remaining part is to show that $\widehat{B_0}$ is contained in $(\widehat{A_0})^*_{B'}$. 
To carry out this, we take a finitely generated $A_0$-submodule $N_0\subset B_0$ such that $t^lB_0\subset N_0$ for some $l>0$ by applying Proposition \ref{cor1416}(2). Denote by $\widehat{N_0}$ the $t$-adic completion of $N_0$. 
Then by Lemma \ref{CompCokerKilled}, $\widehat{N_0}$ is viewed as an $\widehat{A_0}$-submodule of $\widehat{B_0}$ such that 
\begin{equation}
\label{inclusion1}
t^l\widehat{B_0}\subset \widehat{N_0}.
\end{equation} 
By applying the topological Nakayama's lemma \cite[Theorem 8.4]{Ma86}, one finds that 
\begin{equation}
\label{inclusion2}
\widehat{N_0}~\mbox{is a finitely generated}~\widehat{A_0}\mbox{-module},
\end{equation}
because $\widehat{N_0}/t\widehat{N_0} \cong {N_0}/t{N_0}$ is finitely generated over $\widehat{A_0}/t\widehat{A_0} \cong A_0/tA_0$. Combining $(\ref{inclusion1})$ and $(\ref{inclusion2})$ together, we conclude that $\widehat{B_0}$ is contained in a finitely generated $\widehat{A_0}$-submodule of $B'$. In particular it holds that $\widehat{B_0}\subset (\widehat{A_0})^*_{B'}$, as wanted. 
\end{proof}

\begin{corollary}
\label{IsomCompPowerBd}
Let $(A_0, (t))$ be a preuniform pair with the associated Tate ring $A$. Let $\widehat{A_0}$ be the $t$-adic completion of $A_0$ and let $\mathcal{A}$ be the Tate ring associated to $(\widehat{A_0}, (t))$. Suppose that $B$ is a finite \'etale $A$-algebra and denote by $\mathcal{B}$ the finite \'etale $\mathcal{A}$-algebra $B\otimes_A\mathcal{A}$. Equip $B$ and $\mathcal{B}$ with the canonical structure as a Tate ring (cf.\ Lemma \ref{tatefinex}) respectively. Then the following assertions hold. 
\begin{enumerate}
\item
Let $\widehat{B^\circ}$ be the $t$-adic completion of $B^\circ$. 
Then the natural ring map $\varphi: \mathcal{B}\rightarrow \widehat{B^\circ}[\frac{1}{t}]$ is an isomorphism which induces an isomorphism 
$\mathcal{B}^\circ\xrightarrow{\cong}\widehat{B^\circ}$. 
\item
For the natural map $\psi: B\to \mathcal{B}$, it holds that $\psi^{-1}(\mathcal{B}^\circ)=B^\circ$. 
\end{enumerate}
\end{corollary}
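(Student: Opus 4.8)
The plan is to put ourselves in the situation of Theorem~\ref{maincor1} by taking $B^\circ$ itself as a ring of definition of $B$, and then to read off the power-bounded subrings on both sides from Lemma~\ref{lemma10092}(1) and Proposition~\ref{CICprop}.

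First I would record the set-up. Since $A$ is preuniform, Lemma~\ref{lem1301423} shows $(A_0,(t))$ is a preuniform pair and Corollary~\ref{cor1547205}(1) shows $B$ is preuniform; hence $B^\circ$ is an open, bounded subring of $B$, that is, a ring of definition, with $B=B^\circ[\frac{1}{t}]$, and by Corollary~\ref{cor1547205}(2) the morphism $(A^\circ,(t))\to (B^\circ,(t))$ satisfies $(*)$. Moreover, preuniformity of $(A_0,(t))$ gives $c\ge 0$ with $t^cA^\circ\subseteq A_0$, so by Lemma~\ref{CompCokerKilled} the map $\widehat{A_0}\to\widehat{A^\circ}$ is injective with cokernel killed by $t^c$; in particular $\widehat{A^\circ}[\frac{1}{t}]=\widehat{A_0}[\frac{1}{t}]=\mathcal{A}$ as Tate rings, and so $B\otimes_A\widehat{A^\circ}[\frac{1}{t}]=B\otimes_A\mathcal{A}=\mathcal{B}$.

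Next I would apply Theorem~\ref{maincor1} to the morphism $(A^\circ,(t))\to (B^\circ,(t))$. Part~(1) says the natural ring map $\varphi\colon\mathcal{B}=B\otimes_A\mathcal{A}\to \widehat{B^\circ}[\frac{1}{t}]$ is an isomorphism, and part~(2) says $(\widehat{A^\circ},(t))\to(\widehat{B^\circ},(t))$ again satisfies $(*)$, so that $\widehat{B^\circ}$ is $t$-torsion free, open and bounded in $\widehat{B^\circ}[\frac{1}{t}]$, hence a ring of definition. Comparing $\widehat{B^\circ}$ with the ring of definition of $\mathcal{B}$ furnished by Lemma~\ref{tatefinex} (they are cofinal with a common finitely generated $\widehat{A_0}$-submodule of $\mathcal{B}$, by Lemma~\ref{topfinmod} together with the finite generation established inside the proof of Theorem~\ref{maincor1}(2)) shows that $\varphi$ is a homeomorphism for the canonical Tate topology on $\mathcal{B}$, whence $\varphi(\mathcal{B}^\circ)=(\widehat{B^\circ}[\frac{1}{t}])^\circ$. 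By Lemma~\ref{lemma10092}(1) this last ring is the complete integral closure of the ring of definition $\widehat{B^\circ}$ in $\widehat{B^\circ}[\frac{1}{t}]$, so it remains to see that $\widehat{B^\circ}$ is completely integrally closed there. Since $B^\circ$ is bounded, it is completely integrally closed in $B$ (Lemma~\ref{lemma10092}(1)), i.e.\ $(B^\circ)^*_B=B^\circ$; applying Proposition~\ref{CICprop}(1) with base ring $B^\circ$ and $c=0$ then gives $(\widehat{B^\circ})^*_{\widehat{B^\circ}[\frac{1}{t}]}\subseteq\widehat{B^\circ}$, hence equality. This proves~(1).

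For~(2), the composite $B\xrightarrow{\psi}\mathcal{B}\xrightarrow{\varphi}\widehat{B^\circ}[\frac{1}{t}]$ is the map $B=B^\circ[\frac{1}{t}]\to\widehat{B^\circ}[\frac{1}{t}]$ induced by the completion $B^\circ\to\widehat{B^\circ}$, and under $\varphi$ the subring $\mathcal{B}^\circ$ corresponds to $\widehat{B^\circ}$. The Beauville--Laszlo lemma~\ref{Beauville-Laszlo} applied to $(B^\circ,t)$ gives $B^\circ=B^\circ[\frac{1}{t}]\times_{\widehat{B^\circ}[\frac{1}{t}]}\widehat{B^\circ}$, so any $b\in B$ whose image in $\widehat{B^\circ}[\frac{1}{t}]$ lies in $\widehat{B^\circ}$ already lies in $B^\circ$; together with the obvious inclusion $\psi(B^\circ)\subseteq\mathcal{B}^\circ$ this yields $\psi^{-1}(\mathcal{B}^\circ)=B^\circ$. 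I expect the main obstacle to be the bookkeeping in the middle step: upgrading the bare ring isomorphism of Theorem~\ref{maincor1} to a homeomorphism of Tate rings so that power-bounded elements are preserved, and then identifying those elements as exactly $\widehat{B^\circ}$ rather than some larger ring of definition --- which is precisely where Lemma~\ref{lemma10092}(1) must be paired with the fact (Proposition~\ref{CICprop}) that $t$-adic completion preserves complete integral closedness.
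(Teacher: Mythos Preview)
Your proof is correct and follows essentially the same skeleton as the paper: reduce to the morphism $(A^\circ,(t))\to(B^\circ,(t))$ satisfying $(*)$ via Corollary~\ref{cor1547205}(2), apply Theorem~\ref{maincor1} to get that $\varphi$ is an isomorphism and that $(\widehat{A^\circ},(t))\to(\widehat{B^\circ},(t))$ again satisfies $(*)$, and finish part~(2) with Beauville--Laszlo exactly as you do.

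The one place where your route diverges is the identification $\varphi(\mathcal{B}^\circ)=\widehat{B^\circ}$. You promote $\varphi$ to a homeomorphism of Tate rings and then invoke Lemma~\ref{lemma10092}(1) and Proposition~\ref{CICprop} to see that $\widehat{B^\circ}$ is completely integrally closed in $\widehat{B^\circ}[\frac{1}{t}]$. The paper instead avoids the topological bookkeeping entirely: it uses Corollary~\ref{cor1547205}(3) to write $\mathcal{B}^\circ=(\mathcal{A}^\circ)^*_{\mathcal{B}}$ and Proposition~\ref{cor1416}(3) together with Corollary~\ref{Bhlem} to write $\widehat{B^\circ}=(\widehat{B^\circ})^*_{B'}=(\widehat{A^\circ})^*_{B'}$; since $\varphi$ is an $\mathcal{A}$-algebra isomorphism and $\mathcal{A}^\circ\cong\widehat{A^\circ}$, it automatically matches these complete integral closures. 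In other words, the ``main obstacle'' you anticipate (upgrading $\varphi$ to a homeomorphism) is not actually needed---the paper's purely algebraic identification via $(\widehat{A^\circ})^*$ gives the result without ever comparing topologies, which is a bit cleaner.
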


\begin{proof}
We first prove $(1)$. Let $\widehat{A^\circ}$ be the $t$-adic completion of $A^\circ$ and put $A':=\widehat{A^\circ}[\frac{1}{t}]$ and $B':=\widehat{B^\circ}[\frac{1}{t}]$. Then the natural ring map $\mathcal{A}\to A'$ is an isomorphism by Lemma \ref{CompCokerKilled}, and it induces $\widehat{A^\circ}\xrightarrow{\cong} \mathcal{A}^\circ$ by Corollary \ref{Bhlem}. 
Since the morphism $(A^\circ, (t))\to (B^\circ, (t))$ satisfies the condition $(*)$ by Corollary \ref{cor1547205}(2), $\varphi$ is an isomorphism 
by Theorem \ref{maincor1}(1). 
Thus, $\varphi$ induces an isomorphism of $\widehat{A^\circ}$-algebras $(\mathcal{A}^\circ)^*_\mathcal{B}\xrightarrow{\cong}(\widehat{A^\circ})^*_{B'}$. Therefore it suffices to show that 
\begin{equation}\label{equality36}
\mathcal{B}^\circ=(\mathcal{A}^\circ)^*_\mathcal{B}~\mbox{and}~ 
\widehat{B^\circ}=(\widehat{B^\circ})^{*}_{B'}=(\widehat{A^\circ})^*_{B'}\ . 
\end{equation}
By Theorem \ref{maincor1}(2), the morphism $(\widehat{A^\circ}, (t))\to (\widehat{B^\circ}, (t))$ also satisfies the condition $(*)$. 
Hence (\ref{equality36}) follows from Proposition \ref{cor1416}(3) and Corollary \ref{cor1547205}(3), as wanted. 

Let us prove the assertion $(2)$. Let $B'_0$ denote the subring $\psi^{-1}(\mathcal{B^\circ})$ of $B$. By Corollary \ref{cor1547205}(3), we have $B^\circ=(A^\circ)^*_B$ and $\mathcal{B^\circ}=(\mathcal{A}^\circ)^*_\mathcal{B}=(\widehat{A^\circ})^*_\mathcal{B}$. On the other hand, we have 
$$
\psi((A^\circ)^*_B)\subset (A^\circ)^*_\mathcal{B}\subset (\widehat{A^\circ})^*_\mathcal{B}.
$$ 
Thus it follows that $B^\circ\subset B'_0$. We then prove the reverse inclusion. By the assertion $(1)$, we have the commutative diagram: 
\begin{equation}\nonumber
\begin{CD}\label
{BLdiagram}
\mathcal{B}^\circ@>\cong>> \widehat{B^\circ} \\
@VVV @VVV \\
\mathcal{B} @>\cong>\varphi>\ \widehat{B^\circ}[\frac{1}{t}]
\end{CD}
\end{equation}
where the vertical arrows denote the inclusion maps. Hence the map $(\varphi\circ\psi)|_{B'_0}$ factors through $\widehat{B^\circ}$. 
Thus by Lemma \ref{Beauville-Laszlo}, we have $B'_0\subset B^\circ$, as wanted. 
\end{proof}

Finally, we remark the following result. Compare it with Lemma \ref{CompCokerKilled}.

\begin{lemma}
Let $(A_0, (t))$ be a preuniform pair and let $A_0 \hookrightarrow B_0$ be an integral ring extension such that $B_0$ is $t$-torsion free. Denote by $\widehat{A_0}$ and $\widehat{B_0}$ the $t$-adic completions of $A_0$ and $B_0$, respectively. Then $\widehat{A_0} \to \widehat{B_0}$ is an injective ring map between $t$-torsion free rings. 
\end{lemma}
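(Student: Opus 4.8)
The plan is to deduce everything from an Artin--Rees type inclusion forced by preuniformity, followed by a routine inverse-limit computation. First I would put $A:=A_0[\frac{1}{t}]$ and $B:=B_0[\frac{1}{t}]$. Since $B_0$ is $t$-torsion free, the natural map $B_0\to B$ is injective, and moreover $\widehat{B_0}$ is again $t$-torsion free (as recalled in Remark \ref{rmk1226}); this already gives the ``$t$-torsion free'' half of the statement, so the real content is the injectivity of $\widehat{A_0}\to\widehat{B_0}$. I would also note that localizing the injection $A_0\hookrightarrow B_0$ gives $A\hookrightarrow B$, so that an element of $B_0$ which happens to lie in $A$ may unambiguously be regarded as an element of $A$.

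The key step is the claim that, choosing $c>0$ with $t^c(A_0)^+_{A}\subset A_0$ (such $c$ exists because $(A_0,(t))$ is preuniform, Definition \ref{defpair}), one has $t^nB_0\cap A_0\subset t^{n-c}A_0$ for all $n\ge c$. Indeed, if $a\in A_0$ and $a=t^nb$ with $b\in B_0$, then inside $B$ we may write $b=a/t^n$; since $a\in A_0$ and $t$ is a unit of $A$, this element lies in $A$, and as $b\in B_0$ it is integral over $A_0$, so $b\in(A_0)^+_{A}$. Hence $t^cb\in A_0$, and therefore $a=t^{n-c}(t^cb)\in t^{n-c}A_0$.

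Granting the claim, let $\alpha\in\ker(\widehat{A_0}\to\widehat{B_0})$ be represented by lifts $a_N\in A_0$ of its components, so that $a_{N+1}-a_N\in t^NA_0$ and $a_N\in t^NB_0$ for every $N$ (the last condition expressing that the image of $\alpha$ in $\widehat{B_0}$ vanishes). Fixing $m\ge 1$, for every $N\ge m+c$ the claim yields $a_N\in A_0\cap t^NB_0\subset t^{N-c}A_0\subset t^mA_0$, and since $a_m\equiv a_N\pmod{t^mA_0}$ this forces $a_m\in t^mA_0$. As $m$ is arbitrary and $\widehat{A_0}$ is $t$-adically separated, $\alpha=0$, which proves injectivity.

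I do not anticipate a serious obstacle: the one idea that makes the argument work is that a fraction $a/t^n$ with numerator $a\in A_0$ automatically lies in $A=A_0[\frac{1}{t}]$, so that the integrality of such an element over $A_0$ — which is where $t$-torsion-freeness of $B_0$ and integrality of $A_0\hookrightarrow B_0$ enter — can be fed into preuniformity to bound the denominators. The only remaining points needing a little care are the identification of elements of $B_0$ with elements of $A$ (which requires $A\hookrightarrow B$, hence $B_0$ being $t$-torsion free) and the $t$-adic separatedness of the completion $\widehat{A_0}$.
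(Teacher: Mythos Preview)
Your proof is correct and follows essentially the same approach as the paper's: both establish the Artin--Rees type inclusion $A_0\cap t^{n+c}B_0\subset t^nA_0$ by writing $x=t^{n+c}b$, observing that $b=x/t^{n+c}$ lies in $B_0\cap A=(A_0)^+_A$ by integrality, and then applying preuniformity to get $t^cb\in A_0$. Your version is slightly more detailed in spelling out the inverse-limit argument and in justifying the identification $A\hookrightarrow B$, but the substance is identical.
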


\begin{proof}
The $t$-torsion freeness of $\widehat{A_0}$ and $\widehat{B_0}$ follows from Lemma \ref{Beauville-Laszlo}. Let us prove the injectivity of the homomorphism $\widehat{A_0} \to \widehat{B_0}$. Put $A=A_0[\frac{1}{t}]$ and choose an integer $c>0$ for which $t^c(A_0)^+_A\subset A_0$. Then it suffices to prove that $A_0 \cap t^{n+c}B_0\subset t^nA_0$ for every $n>0$. Let $x \in A_0 \cap t^{n+c}B_0$. Then one can write $x=t^{n+c} b$ for $b \in B_0$ and hence
$$
b=\frac{x}{t^{n+c}} \in B_0 \cap A.
$$
As $B_0$ is integral over $A_0$, we have $t^cb\in A_0$. Hence we have $x=t^n(t^cb) \in t^nA_0$, as required. 
\end{proof}

\section{The almost purity theorem for Witt-perfect rings}

\subsection{Almost \'etale ring maps}
First we review the definition of almost \'etale ring maps and the statement of the almost purity theorem, due to Davis and Kedlaya. For the results concerning classical \'etale ring maps, the recent book \cite{Fo17} is a good reference. The large part of almost ring theory is concerned around almost counterparts of certain finiteness conditions, so let us recall some definitions. The original definition of almost finitely generated (presented) modules is quite intricate; see \cite[Definition 2.3.8]{GR03}. However, the following characterization is more straightforward and found in \cite[Proposition 2.3.10]{GR03}.

\begin{definition}
Let $(R,I)$ be a basic setup, let $A$ be an $R$-algebra and let $M$ be an $A$-module. 
\begin{enumerate}
\item
$M$ is said to be \textit{$I$-almost finitely generated}, if for every finitely generated subideal $I_0 \subset I$, there exists a finitely generated $A$-submodule $N \subset M$ such that $I_0 M \subset N$.

\item
$M$ is said to be \textit{$I$-almost finitely presented}, if for every finitely generated subideal $I_0 \subset I$, there is a complex of $A$-modules: $A^{\oplus m} \xrightarrow{\psi} A^{\oplus n} \xrightarrow{\phi} M$ such that $I_0 \coker(\phi)=0$ and $I_0 \ker(\phi) \subset \im(\psi)$.

\end{enumerate}
\end{definition}

As our main concern is around \'etale ring maps and their almost variants, we refer the reader to \cite[Definition 2.4.4]{GR03} for the definitions of $I$-almost flatness and $I$-almost projectivity. We refer the reader to \cite[Definition 3.1.1]{GR03} for the following definitions.

\begin{definition}
Fix a basic setup $(R,I)$ and let $A \to B$ be an $R$-algebra homomorphism.
\begin{enumerate}
\item
$A \to B$ is called \textit{$I$-almost weakly unramified}, if the diagonal map $B\otimes_{A} B \to B$ is $I$-almost flat.

\item
$A\to B$ is called \textit{$I$-almost unramified}, if the diagonal map $B \otimes_{A} B \to B$ is $I$-almost projective.

\item
$A \to B$ is called \textit{$I$-almost weakly \'etale}, if $A \to B$ is $I$-almost flat and $I$-almost weakly unramified.

\item
$A \to B$ is called \textit{$I$-almost \'etale}, if $A\to B$ is $I$-almost flat and $I$-almost unramified.

\item
$A\to B$ is called \textit{$I$-almost finite \'etale}, if it is $I$-almost \'etale and $B$ is an $I$-almost finitely presented $A$-module.
\end{enumerate}
\end{definition}

So far, we have been using the symbol $(A_0,I_0)$ to emphasize that $A_0$ is a ring of definition of some Tate ring and $I_0$ is its ideal of definition.\footnote{The exceptional places are Lemma \ref{CompCokerKilled}, Proposition \ref{prop513}, and the introductory remark to it. We should remark that no finiteness conditions are not imposed on $I_0$ in Proposition \ref{prop513}.} One should keep in mind that $(R,I)$ is a basic setup; $R$ is a ring and $I$ is its idempotent ideal. Before going further, we clarify the relationship between ``almost integrality" and ``$I$-almost integrality" in a special case.

\begin{lemma}
\label{lemma117}
Let $A_0$ be a ring with a sequence of nonzero divisors $\{t_n\}_{n\geq 0}$ such that for every $n\geq 0$ we have $t_{n+1}^{k_n}=t_nu_n$ for some integer $k_n \ge 2$ and some unit $u_n\in A_0^\times$. Put $A:=A_0[\frac{1}{t_0}]$. Then $a\in (A_0)^*_A$ implies that $t_na\in (A_0)^+_A$ for every $n\geq 0$. Moreover, the converse holds true if $(A_0, (t_0))$ is preuniform. 
\end{lemma}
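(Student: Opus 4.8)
The plan is to derive both implications from two elementary ingredients. First, by an immediate induction on $n$, the hypotheses $t_{i+1}^{k_i}=t_iu_i$ yield for every $n\geq 0$ an equation $t_n^{K_n}=t_0v_n$ in $A_0$, where $K_n:=k_0k_1\cdots k_{n-1}$ (with $K_0:=1$) and $v_n\in A_0^\times$. Second, I would recall the criterion used in the proof of Lemma \ref{lemma10092}(1): for $a\in A$ one has $a\in (A_0)^*_A$ if and only if there is an integer $c\geq 1$ with $t_0^ca^m\in A_0$ for every $m\geq 1$ — the nontrivial direction just clears the common denominator of a finite generating set of a bounding $A_0$-submodule of $A$, while the converse uses that $\sum_{m\geq 0}A_0a^m\subseteq t_0^{-c}A_0$ is a cyclic $A_0$-module.

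Granting these, the forward implication is short. If $a\in (A_0)^*_A$, fix $c\geq 1$ as above; then for each $n$,
\[
(t_na)^{cK_n}\;=\;t_n^{cK_n}a^{cK_n}\;=\;(t_0v_n)^ca^{cK_n}\;=\;v_n^c\,\bigl(t_0^ca^{cK_n}\bigr)\in A_0,
\]
since $cK_n\geq 1$. Hence $t_na$ is a root of the monic polynomial $X^{cK_n}-(t_na)^{cK_n}\in A_0[X]$, i.e.\ $t_na\in (A_0)^+_A$. No preuniformity hypothesis is used here.

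For the converse, assume $(A_0,(t_0))$ is preuniform, so that $t_0^{c_0}(A_0)^+_A\subseteq A_0$ for some $c_0>0$, and assume $t_na\in (A_0)^+_A$ for all $n$. Fix $n$ and an integer $j$ with $0\leq j\leq K_n$. Since $(A_0)^+_A$ is a subring of $A$ containing $t_na$ and $t_n^{K_n-j}\in A_0$,
\[
t_0v_na^j\;=\;t_n^{K_n}a^j\;=\;t_n^{K_n-j}(t_na)^j\in (A_0)^+_A,
\]
so, absorbing the unit $v_n$, $t_0a^j\in (A_0)^+_A$ and therefore $t_0^{c_0+1}a^j=t_0^{c_0}(t_0a^j)\in A_0$. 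Letting $n$ grow, so that $K_n\to\infty$, this holds for every $j\geq 0$; hence $\sum_{j\geq 0}A_0a^j\subseteq t_0^{-(c_0+1)}A_0$, and the criterion above gives $a\in (A_0)^*_A$.

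I expect the converse to be the only real obstacle. The point is that the integrality of a single $t_na$ is far too weak; the argument closes only once one observes that as $n\to\infty$ the exponents $K_n$ exceed any prescribed $j$, and it is exactly at that stage that preuniformity of $(A_0,(t_0))$ is the tool that pushes $t_0a^j$ back inside $A_0$ for all $j$. Apart from that observation, the statement is a repackaging of the boundedness criterion of Lemma \ref{lemma10092} together with the idempotency-type computation already carried out in Example \ref{BasicSetupEx}.
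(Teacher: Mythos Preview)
Your argument is essentially the paper's own proof: the same inductive relation $t_n^{K_n}=t_0v_n$ (the paper writes $c_n$ for your $K_n$), the same boundedness criterion for $(A_0)^*_A$, and the same computations in both directions. Your converse is in fact slightly more explicit than the paper's, which only records $t_0a^{c_n}=(t_na)^{c_n}v_n^{-1}\in (A_0)^+_A$ and then says ``it is easy to check $t_0a^m\in (A_0)^+_A$ for every $m\geq 1$''; you spell out the intermediate exponents $0\le j\le K_n$ via $t_n^{K_n-j}(t_na)^j$.

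One caveat worth flagging (shared verbatim by the paper): both you and the paper invoke ``$K_n\to\infty$'' (resp.\ ``$c_n\to\infty$''), but the hypothesis as stated allows $k_n=1$ for all $n$, in which case $K_n\equiv 1$ and the converse genuinely fails (e.g.\ $A_0=k[x]$, $t_n=x$ for all $n$, $a=x^{-1}$). The intended hypothesis is evidently $k_n\ge 2$, which is the case in every application in the paper (typically $k_n=p$); with that reading your proof is complete and matches the paper's.
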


\begin{proof}
Let us choose an element $a\in A$ and let $n>0$ be an integer. Put $c_n:=\prod_{0\leq i\leq n-1}k_i$. Then by assumption, $t_n^{c_n}=t_0v_n$  for some unit $v_n\in A_0^\times$. Suppose that $a\in (A_0)^*_A$. Then there exists some $d>0$ such that $t_0^{d}a^{m}\in A_{0}$ for every $m\geq 1$. Thus, we have $(t_na)^{c_nd}\in A_0$ and therefore, $t_na\in (A_0)^+_A$. This yields the first assertion. To show the converse, we suppose that $(A_0, (t_0))$ is preuniform and $t_ma \in (A_0)^+_A$ for every $m\geq 1$. Then we have $t_0a^{c_n}=(t_na)^{c_n}v_n^{-1}\in (A_0)^+_A$. Thus, since $c_n\to \infty\ (n\to \infty)$, it is easy to check $t_0a^m\in (A_0)^+_A$ for every $m\geq 1$. Here by assumption, there exists some $e>0$ for which $t_0^e(A_0)_A^+\subset A_0$. Hence we have $a\in (A_0)^*_A$, as wanted. 
\end{proof}

\begin{corollary}
\label{cor128}
Let $(R, I)$ be a basic setup and let $A_0$ be an $R$-algebra. Assume that $I$ admits a sequence of elements $\{t_n\}_{n\geq0}$ such that $A_0$ is $t_0$-torsion free, $IA_0$ is generated by the set of all $t_n\in A_0$ $(n\geq 0)$ and for every $n\geq 0$ we have $t_{n+1}^{k_n}=t_nu_n$ for some integer $k_n \ge 2$ and some unit $u_n\in A_0^\times$. Put $A:=A_0[\frac{1}{t_0}]$. Then the following assertions hold. 
\begin{enumerate}
\item
Let $A'_0\subset A$ be a subring containing $(A_0)^+_A$. If one has $A'_0\subset (A_0)^*_A$, then the inclusion map $(A_0)^+_A\hookrightarrow A'_0$ is an $I$-almost isomorphism (in particular, $(A_0)^+_A\hookrightarrow (A_0)^*_A$ is an $I$-almost isomorphism). 
Moreover, the converse holds true if $(A_0, (t_0))$ is preuniform. 
\item
Equip $A$ with a linear topology so that $A$ is the Tate ring associated to $(A_0, (t_0))$. 
Let ${A}\to {B}$ be a module-finite extension of Tate rings as in Lemma \ref{tatefinex}. 
Then one has $(A_0)^+_{B}\subset {B}^\circ$ and the inclusion map is an $I$-almost isomorphism. 
\end{enumerate}
\end{corollary}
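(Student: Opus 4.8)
The plan is to reduce both assertions to Lemma~\ref{lemma117}. The key preliminary observation is that, since by hypothesis $IA_0$ is the ideal of $A_0$ generated by the sequence $\{t_n\}_{n\ge 0}$, an $A_0$-module $M$ is $I$-almost zero exactly when $t_n M=0$ for every $n\ge 0$. Hence, for an injective $A_0$-linear inclusion of $A_0$-algebras, being an $I$-almost isomorphism is equivalent to every $t_n$ killing the cokernel. Throughout, I will freely use that $(A_0)^+_A\subseteq(A_0)^*_A$ because integral elements are almost integral.

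For part (1): the inclusion $(A_0)^+_A\hookrightarrow A'_0$ is injective, so only the cokernel $A'_0/(A_0)^+_A$ is at issue. If $A'_0\subseteq(A_0)^*_A$, then for any $a\in A'_0$ Lemma~\ref{lemma117} gives $t_n a\in(A_0)^+_A$ for all $n$, so every $t_n$ kills the cokernel and the inclusion is an $I$-almost isomorphism; the parenthetical statement is the case $A'_0=(A_0)^*_A$. For the converse, assuming $(A_0,(t_0))$ preuniform, an $I$-almost isomorphism forces each $t_n$ to kill $A'_0/(A_0)^+_A$, i.e.\ $t_n a\in(A_0)^+_A$ for all $a\in A'_0$ and all $n$; now the converse half of Lemma~\ref{lemma117}---the one place the preuniformity hypothesis is used---yields $a\in(A_0)^*_A$, hence $A'_0\subseteq(A_0)^*_A$.

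For part (2): first I would invoke Lemma~\ref{tatefinex} to choose a ring of definition $B_0$ of $B$ that is an integral $A_0$-subalgebra of $B$ with $t_0\in B_0$ a pseudouniformizer, so that $B=B_0[\frac{1}{t_0}]$ by Lemma~\ref{lem1282}(2). Two identifications then present part (2) as the analogue of part (1) carried out over $B_0$: first, $(A_0)^+_B=(B_0)^+_B$, since $B_0$ is integral over $A_0$ and integrality is transitive; second, $B^\circ=(B_0)^*_B$ with $(B_0)^+_B\subseteq B^\circ$, both from Lemma~\ref{lemma10092}. Together these give $(A_0)^+_B\subseteq B^\circ$ and that the inclusion is injective. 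To see it is an $I$-almost isomorphism, I would apply Lemma~\ref{lemma117} with $B_0$ in place of $A_0$ and the same sequence $\{t_n\}$: for $a\in B^\circ=(B_0)^*_B$ one gets $t_n a\in(B_0)^+_B=(A_0)^+_B$ for every $n$, so each $t_n$, and hence $I$, kills $B^\circ/(A_0)^+_B$.

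The main---though mild---obstacle is checking that the hypotheses of Lemma~\ref{lemma117} genuinely transfer to the pair $(B_0,\{t_n\})$: one must verify that each $t_n\in A_0\subseteq B_0$ is still a nonzero divisor of $B_0$ (clear for $t_0$ as a pseudouniformizer, and then inductively since $t_{n+1}^{k_n}=t_n u_n$ is a nonzero divisor of which $t_{n+1}$ is a factor) and that $u_n\in A_0^\times\subseteq B_0^\times$; one also should record the transitivity identity $(A_0)^+_B=(B_0)^+_B$ carefully. Everything else is bookkeeping with the dictionary ``$I$-almost zero $\iff$ killed by all $t_n$''.
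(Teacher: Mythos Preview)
Your proposal is correct and follows essentially the same route as the paper: for (1) you unpack what the paper states as ``follows from Lemma~\ref{lemma117} immediately'', and for (2) you pick $B_0$ via Lemma~\ref{tatefinex}, record the identifications $(A_0)^+_B=(B_0)^+_B$ and $B^\circ=(B_0)^*_B$, and then reduce to Lemma~\ref{lemma117}---the paper phrases this last step as ``the assertion follows from (1)'', which amounts to exactly your argument once one checks that the hypotheses of the corollary transfer from $A_0$ to $B_0$. Your explicit verification that each $t_n$ remains a nonzero divisor of $B_0$ and that $u_n\in B_0^\times$ is precisely the bookkeeping implicit in the paper's appeal to (1).
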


\begin{proof}
The assertion $(1)$ follows from Lemma \ref{lemma117} immediately. 
To show $(2)$, take a ring of definition ${B}_0\subset {B}$ as in Lemma \ref{tatefinex}. Then ${B}^\circ=({B}_0)^*_{B}$ and $(A_0)^+_{B}=({B}_0)^+_{B}$, because ${B}_0$ is integral over $A_0$. Hence the assertion follows from $(1)$. 
\end{proof}

In the situation of Corollary \ref{cor128}, one can obtain an ``almost" variant of Corollary \ref{Bhlem}.

\begin{lemma}
\label{AlmIntComp}
Keep the notation and the assumption as in Corollary \ref{cor128}. 
Denote by $\widehat{A_0}$ the $t_0$-adic completion of $A_0$ and put $A':=\widehat{A_0}[\frac{1}{t_0}]$. 
Then the following conditions are equivalent. 
\begin{itemize}
\item[(a)]
The inclusion map $A_0\hookrightarrow (A_0)^+_A$ (resp.\ $A_0\hookrightarrow (A_0)^*_A$) is an $I$-almost isomorphism.
\item[(b)]
The inclusion map $\widehat{A_0}\hookrightarrow (\widehat{A_0})^+_{A'}$ (resp.\ $\widehat{A_0}\hookrightarrow (\widehat{A_0})^*_{A'}$) is an $I$-almost isomorphism. 
\end{itemize}
\end{lemma}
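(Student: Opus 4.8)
The plan is to deduce the equivalence from the comparison statements of Proposition \ref{CICprop}. Put $C:=(A_0)^+_A/A_0$ and $\widehat{C}:=(\widehat{A_0})^+_{A'}/\widehat{A_0}$. Both inclusions in question are injective ($t_0$ is a nonzero divisor of $\widehat{A_0}$ by Lemma \ref{Beauville-Laszlo}), so condition (a) says precisely $IC=0$ and condition (b) says precisely $I\widehat{C}=0$; the same holds in the $*$-variant with $(A_0)^*_A$, $(\widehat{A_0})^*_{A'}$. The first thing I would record is that $t_0=t_0\cdot 1_{A_0}$ lies in $IA_0$, so the $R$-action on these cokernels being factored through $A_0$, the vanishing $IC=0$ forces $t_0C=0$, i.e.\ $t_0(A_0)^+_A\subset A_0$; symmetrically $I\widehat{C}=0$ forces $t_0(\widehat{A_0})^+_{A'}\subset\widehat{A_0}$, which in turn yields $t_0(A_0)^+_A\subset A_0$ by Proposition \ref{CICprop}(2). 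Thus, whichever of (a), (b) is assumed, the hypothesis $t_0^{c}(A_0)^+_A\subset A_0$ needed to run Proposition \ref{CICprop} holds with $c=1$.

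Granting $t_0(A_0)^+_A\subset A_0$, the heart of the argument is that $C$ and $\widehat{C}$ are isomorphic as $R$-modules. Since $t_0C=0$, the module $C$ is already $(t_0)$-adically complete and separated, so applying Lemma \ref{CompCokerKilled} to $0\to A_0\to (A_0)^+_A\to C\to 0$ with the finitely generated ideal $(t_0)$ and exponent $m=1$ produces an exact sequence $0\to\widehat{A_0}\to\widehat{(A_0)^+_A}\to C\to 0$. On the other hand, Proposition \ref{CICprop}(1)(b) (applicable exactly because $t_0(A_0)^+_A\subset A_0$) identifies $\widehat{(A_0)^+_A}$ with $(\widehat{A_0})^+_{A'}$ compatibly with the maps out of $\widehat{A_0}$. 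Comparing, $\widehat{C}\cong C$ as $R$-modules; since annihilation by $I$ is invariant under $R$-linear isomorphism, $IC=0$ if and only if $I\widehat{C}=0$, i.e.\ (a) $\Leftrightarrow$ (b). The $*$-version is proven verbatim, replacing $(A_0)^+_A$, $(\widehat{A_0})^+_{A'}$ by $(A_0)^*_A$, $(\widehat{A_0})^*_{A'}$ and invoking the parenthesized ``resp.'' clauses of Proposition \ref{CICprop} (Lemma \ref{CompCokerKilled} itself makes no reference to integral closure).

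I do not anticipate a genuine obstacle; the one point requiring care is the order of operations, since Proposition \ref{CICprop} cannot be cited until the bounded containment $t_0^{c}(A_0)^+_A\subset A_0$ is in hand. That is precisely why the extraction of $c=1$ from each of (a) and (b) separately — directly in case (a), and via Proposition \ref{CICprop}(2) in case (b) — must precede the rest. Once that reduction is made, everything left is the exact-sequence bookkeeping and universal property of completion already packaged inside Lemma \ref{CompCokerKilled} and Proposition \ref{CICprop}.
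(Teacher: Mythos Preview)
Your argument is correct. The paper's own proof is a one-line reduction of a different flavor: since $IA_0$ is generated by the $t_n$, condition (a) unwinds to ``$t_n(A_0)^+_A\subset A_0$ for every $n$'' (and likewise for (b) with $\widehat{A_0}$), and because all the ideals $t_nA_0$ define the same adic topology on $A_0$, one may apply Proposition~\ref{CICprop} with each $t_n$ separately to transfer the containment $t_n(\cdot)\subset(\cdot)$ back and forth between $A_0$ and $\widehat{A_0}$. Your route is more structural: you first squeeze out the single bound $t_0(A_0)^+_A\subset A_0$, then invoke Proposition~\ref{CICprop}(1)(b) to identify $\widehat{(A_0)^+_A}\cong(\widehat{A_0})^+_{A'}$ and conclude that the two cokernels $C$ and $\widehat C$ are literally isomorphic as $R$-modules. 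This buys you a cleaner conceptual statement (same cokernel, hence same annihilator) at the cost of a slightly longer chain of citations; the paper's approach avoids the cokernel comparison entirely but has to observe that the $t_n$-adic and $t_0$-adic completions coincide. One small point of hygiene: when you say Lemma~\ref{CompCokerKilled} ``produces an exact sequence $0\to\widehat{A_0}\to\widehat{(A_0)^+_A}\to C\to 0$,'' you are using the identification $\widehat{L_0}\cong L_0$ established in that lemma's \emph{proof}, not in its statement (which only asserts that the cokernel is killed by $t_0$); this is harmless but worth flagging.
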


\begin{proof}
The proof of the lemma follows immediately from Proposition \ref{CICprop} and Corollary \ref{cor128}, because the sequence of ideals $\{t_nA_0\}_{n\geq 0}$ defines the same adic topology on $A_0$. 
\end{proof}

Moreover, notice the following.

\begin{lemma}[{\cite[Lemma 5.3(i)]{Sch12}}]\label{Sch12Lem5.3(i)}
Keep the notation and the assumption as in Corollary \ref{cor128}. Let $M_0$ be an $I$-almost flat $A_0$-module. Then $(M_0^a)_*$ is $t_0$-torsion free, where
$(M_0^a)_*:=\Hom_R(I, M_0)$ (see \cite[2.2.10]{GR03} for this notation).
\end{lemma}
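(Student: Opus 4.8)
The plan is to deduce the statement from two soft facts: that the functor $(-)_{*}$ is a right adjoint, and that $M_0^{a}$ is a flat $A_0^{a}$-module. Recall that $(-)_{*}$, from the category of $A_0^{a}$-modules to the category of $A_0$-modules, is right adjoint to the almostification functor $(-)^{a}$, hence it is left exact; in particular it sends a monomorphism of $A_0^{a}$-modules to an injective map of $A_0$-modules. Consequently, to prove that multiplication by $t_0$ is injective on $(M_0^{a})_{*}$ it suffices to prove that multiplication by $t_0$ is injective on $M_0^{a}$ itself.

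For the latter I would first note that, since $A_0$ is $t_0$-torsion free, multiplication by $t_0$ on $A_0$ is injective, and since $(-)^{a}$ is exact this gives a monomorphism $t_0\colon A_0^{a}\to A_0^{a}$. Now $M_0$ being $I$-almost flat means precisely that $M_0^{a}$ is a flat $A_0^{a}$-module, so tensoring this monomorphism with $M_0^{a}$ over $A_0^{a}$ again yields a monomorphism; under the canonical identification $M_0^{a}\otimes_{A_0^{a}}A_0^{a}\cong M_0^{a}$ the resulting map is exactly multiplication by $t_0$ on $M_0^{a}$. Applying $(-)_{*}$ and using its left exactness, $t_0\colon (M_0^{a})_{*}\to(M_0^{a})_{*}$ is injective, which is the claim.

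I do not expect a genuine obstacle here; the only care needed is routine bookkeeping in the almost-module formalism of \cite{GR03}: that tensoring $t_0\colon A_0^{a}\to A_0^{a}$ with $M_0^{a}$ really produces multiplication by $t_0$ on $M_0^{a}$, and that a right adjoint preserves monomorphisms. One could instead argue concretely via the identification $(M_0^{a})_{*}\cong\Hom_{A_0}(\widetilde{I},M_0)$ and the fact that $IA_0$ is generated by the $t_n$ with $t_{n+1}^{k_n}=t_nu_n$, tracking which power of which $t_n$ annihilates a given torsion element; but that is messier, and the adjunction argument sidesteps it entirely.
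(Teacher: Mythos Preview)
Your argument is correct. The adjunction argument is clean: $(-)^a$ exact sends the injection $t_0\colon A_0\to A_0$ to a monomorphism, flatness of $M_0^a$ preserves it, and the right adjoint $(-)_*$ preserves it again. No hidden issues.

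The paper takes the other route you sketch at the end: it invokes Scholze's proof verbatim, which works with the concrete description $(M_0^a)_*\cong\Hom_R(I,M_0)$ and checks that an $I$-almost zero element of this $\Hom$ must vanish, using the specific hypothesis that $IA_0$ is generated by the $t_n$ with $t_{n+1}^{k_n}=t_n u_n$. Your approach is more economical and more general---it uses only that $t_0$ is a nonzero divisor on $A_0$ and that $M_0$ is $I$-almost flat, not the particular shape of $I$. The paper's concrete approach, on the other hand, makes explicit why the ambient hypotheses on $I$ and $A_0$ are the ``right'' ones for Scholze's argument to transfer, which is perhaps the point being made there.
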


\begin{proof}
By the assumption on $I$ and $A_0$, any $I$-almost zero element in $\Hom_R(I, M_0)$ must be zero. Thus the argument in the proof of \cite[Lemma 5.3(i)]{Sch12} still works under our setting. 
\end{proof}

Next we prove a key result in this section. This is an important consequence of studies in $\S\ref{StudPreUniPair}$.

\begin{theorem}
\label{lemalprjct}
Let $(R, I)$ be a basic setup and let $f_0: A_0\to B_0$ be an $R$-algebra homomorphism with an element $t\in A_0$. Denote by $\widehat{A_0}$ and $\widehat{B_0}$ the $t$-adic completions of $A_0$ and $B_0$, respectively. Let $\widehat{f_0}: \widehat{A_0}\to \widehat{B_0}$ be the $R$-algebra homomorphism induced by $f_0$. Assume that the morphism of pairs $f_0: (A_0, (t))\to (B_0, (t))$ satisfies the condition $(*)$. Then the following assertions hold. 
\begin{enumerate}
\item
The following conditions are equivalent. 
\begin{itemize}
\item[$(a)$]
$B_0$ is $I$-almost finitely generated and $I$-almost projective over $A_0$. 
\item[$(b)$]
$\widehat{B_0}$ is $I$-almost finitely generated and $I$-almost projective over $\widehat{A_0}$. 
\end{itemize}
\item
The following conditions are equivalent. 

\begin{itemize}
\item[$(a)$]
$f_0: A_0\to B_0$ is $I$-almost finite \'etale. 
\item[$(b)$]
$\widehat{f_0}: \widehat{A_0}\to \widehat{B_0}$ is $I$-almost finite \'etale. 
\end{itemize}
\end{enumerate}
\end{theorem}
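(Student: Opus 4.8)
The plan is to prove part $(1)$ first, and then deduce part $(2)$ from it by applying $(1)$ to a twist of $f_0$ built out of $B_0\otimes_{A_0}B_0$. Write $A:=A_0[\frac{1}{t}]$, $B:=B_0[\frac{1}{t}]$, $A':=\widehat{A_0}[\frac{1}{t}]$, $B':=\widehat{B_0}[\frac{1}{t}]$. By Theorem \ref{maincor1}, $\widehat{f_0}$ again satisfies $(*)$, the map $B\otimes_AA'\to B'$ is an isomorphism, and (this being part of $(*)$, together with Lemma \ref{Beauville-Laszlo}) $A_0$ and $B_0$ are $t$-torsion free, so that $A_0=A\times_{A'}\widehat{A_0}$ and $B_0=B\times_{B'}\widehat{B_0}$ are cartesian. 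The bridge between $B_0$ and $\widehat{B_0}$ is the natural $A_0$-algebra map $\theta\colon B_0\otimes_{A_0}\widehat{A_0}\to\widehat{B_0}$, which by Proposition \ref{prop1224} is surjective with kernel the $t$-power-torsion submodule of $B_0\otimes_{A_0}\widehat{A_0}$. In fact this submodule is killed by $t^l$, where $l$ is the integer of Proposition \ref{cor1416}(2) providing $A_0$-linear maps $B_0\xrightarrow{\alpha}F\xrightarrow{\beta}B_0$ with $\beta\alpha=t^l\cdot\operatorname{id}$ and $F$ finite free over $A_0$: applying $-\otimes_{A_0}\widehat{A_0}$ and using that $F\otimes_{A_0}\widehat{A_0}$ is $t$-torsion free shows any $t$-power-torsion element is annihilated by $t^l$. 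Hence every $\widehat{A_0}$-linear map from $B_0\otimes_{A_0}\widehat{A_0}$ to a $t$-torsion-free module factors uniquely through $\theta$.

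\textbf{Part $(1)$, $(a)\Rightarrow(b)$.} I use that an $A_0$-module $M_0$ is $I$-almost finitely generated and $I$-almost projective exactly when for every $\varepsilon\in IA_0$ there is an integer $n$ and $A_0$-linear maps $M_0\xrightarrow{u}A_0^{\oplus n}\xrightarrow{v}M_0$ with $v u=\varepsilon\cdot\operatorname{id}_{M_0}$ (cf.\ \cite[\S2.4]{GR03}). Fix such a pair for $M_0=B_0$ and a given $\varepsilon$, apply $-\otimes_{A_0}\widehat{A_0}$, and push it through $\theta$: since $\widehat{A_0}^{\oplus n}$ is $t$-torsion free, $u\otimes\operatorname{id}$ descends to $\bar u\colon\widehat{B_0}\to\widehat{A_0}^{\oplus n}$, while $\bar v:=\theta\circ(v\otimes\operatorname{id})$ gives $\widehat{A_0}^{\oplus n}\to\widehat{B_0}$, and $\bar v\bar u=\varepsilon\cdot\operatorname{id}_{\widehat{B_0}}$ (check after precomposing with the surjection $\theta$). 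Thus $\widehat{B_0}$ is $I$-almost finitely generated projective over $\widehat{A_0}$.

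\textbf{Part $(1)$, $(b)\Rightarrow(a)$.} This is the heart of the matter. Since $\widehat{A_0}$ is $t$-adically complete, $A_0/tA_0\cong\widehat{A_0}/t\widehat{A_0}$ and $B_0/tB_0\cong\widehat{B_0}/t\widehat{B_0}$, so $(b)$ forces $B_0/tB_0$ to be $I$-almost finitely generated projective over $A_0/tA_0$; on the other hand $B=B_0[\frac{1}{t}]$ is finite \'etale, hence finitely generated projective, over $A=A_0[\frac{1}{t}]$. I glue these two pieces of data along the cartesian square $B_0=B\times_{B'}\widehat{B_0}$ via a Beauville--Laszlo argument (\cite{BL95}, \cite[Tag 0BNR]{Stacks}): for a $t$-torsion-free $A_0$-module, being $I$-almost finitely generated projective over $A_0$ is equivalent to being finitely generated projective over $A_0[\frac{1}{t}]$ together with being $I$-almost finitely generated projective after $t$-adic completion, these being compatible over $A'$. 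Concretely one treats one $\varepsilon$ at a time, uses the $t^l$-sandwich of the first paragraph to bound $t$-denominators uniformly, feeds the finite-free $\varepsilon$-approximations of $\widehat{B_0}$ from $(b)$ (together with a genuine finite-free splitting of $B$ over $A$) into the honest Beauville--Laszlo equivalence for finitely presented $t$-torsion-free modules, and descends the resulting retraction to a pair $B_0\to A_0^{\oplus n}\to B_0$ with composite $I$-almost $\varepsilon\cdot\operatorname{id}$. \textbf{Making this descent precise is the main obstacle}; everything else is bookkeeping with the results of $\S\ref{StudPreUniPair}$.

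\textbf{Part $(2)$.} Recall that $I$-almost finite \'etale means $I$-almost flat, $I$-almost unramified (the multiplication $B_0\otimes_{A_0}B_0\to B_0$ is $I$-almost projective) and $I$-almost finitely presented. Given part $(1)$, the flatness and finite-presentation parts transfer, since an $I$-almost finitely generated, $I$-almost projective module is $I$-almost flat and $I$-almost finitely presented (\cite[\S2.3, \S2.4]{GR03}). For the unramified part, set $C_0:=\operatorname{Im}\bigl(B_0\otimes_{A_0}B_0\to B\otimes_AB\bigr)$, i.e.\ $B_0\otimes_{A_0}B_0$ modulo its $t$-power torsion; then $C_0$ is $t$-torsion free, $C_0[\frac{1}{t}]=B\otimes_AB$ is finite \'etale over $A$, and $C_0\subset(A_0)^*_{B\otimes_AB}$ since products of almost integral elements are almost integral, so $(A_0,(t))\to(C_0,(t))$ satisfies $(*)$ and, by Proposition \ref{cor1416}(4), $(C_0,(t))$ is preuniform. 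The morphism $(C_0,(t))\to(B_0,(t))$ then also satisfies $(*)$ (its generic fibre $B\otimes_AB\to B$ is the projection onto the clopen diagonal factor of the \'etale algebra, hence finite \'etale, and $B_0\subset(C_0)^*_B$), so part $(1)$ applies to it. Because $B_0$ is $t$-torsion free, the diagonal $B_0\otimes_{A_0}B_0\to B_0$ factors through $C_0\to B_0$, and ``$A_0\to B_0$ is $I$-almost unramified'' becomes ``$B_0$ is $I$-almost projective over $C_0$''; the same holds for $\widehat{f_0}$ with $\widehat{C_0}$, which by Theorem \ref{maincor1} and Lemma \ref{CompCokerKilled} is $\widehat{B_0}\otimes_{\widehat{A_0}}\widehat{B_0}$ modulo $t$-power torsion. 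Hence part $(1)$ applied to $(C_0,(t))\to(B_0,(t))$ gives the equivalence of $I$-almost unramifiedness for $f_0$ and $\widehat{f_0}$, and combined with $(1)$ this yields the equivalence of $(a)$ and $(b)$ in $(2)$.
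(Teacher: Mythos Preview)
Your proposal has a genuine gap in Part $(1)$, $(b)\Rightarrow(a)$: you explicitly flag the Beauville--Laszlo descent step as ``the main obstacle'' and do not carry it out. Vague appeals to gluing $\varepsilon$-approximations via \cite{BL95} do not work here, because the usual Beauville--Laszlo equivalence concerns honest modules, not $I$-almost data indexed by varying $\varepsilon\in I$; one needs a concrete mechanism to descend the $\widehat{A_0}$-linear map $\widehat{B_0}\to\widehat{A_0}^{\oplus d}$ to an $A_0$-linear map $B_0\to A_0^{\oplus d}$, at least modulo $t^n$. The paper resolves this very concretely using the trace pairing: since $B'/A'$ is finite \'etale (Theorem \ref{maincor1}), any $\widehat{A_0}$-linear $\widehat{s}_\varepsilon:\widehat{B_0}\to\widehat{A_0}^{\oplus d}$ extends to $\widehat{s}'_\varepsilon(x)=(\textnormal{Tr}_{B'/A'}(\widehat{b}_ix))_i$ for suitable $\widehat{b}_i\in B'$. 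One then approximates each $\widehat{b}_i$ by $b_i\in B$ (Remark \ref{rmk1226}), uses the compatibility $\textnormal{Tr}_{B'/A'}|_B=\textnormal{Tr}_{B/A}$ coming from $B'\cong B\otimes_AA'$, and invokes Lemma \ref{Beauville-Laszlo} together with Proposition \ref{cor1416}(1) to see that $\textnormal{Tr}_{B/A}(b_im)\in A_0$ for $m\in B_0$. This produces the desired $s_\varepsilon:B_0\to A_0^{\oplus d}$ lifting $\overline{s}_\varepsilon$, and then Lemma \ref{LemProjComp} finishes. This trace-based lifting is the key idea you are missing.

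Your approach to Part $(2)$ is genuinely different from the paper's and interesting in principle, but as written it also has gaps. First, the equivalence ``$B_0$ is $I$-almost projective over $B_0\otimes_{A_0}B_0$'' $\Leftrightarrow$ ``$B_0$ is $I$-almost projective over $C_0$'' is only argued one way; for the converse you would have to lift a $C_0$-linear map $B_0\to C_0^{\oplus n}$ along the surjection $B_0\otimes_{A_0}B_0\twoheadrightarrow C_0$, which is not automatic (the kernel is only $t^l$-torsion, not $I$-almost zero). Second, the identification of $\widehat{C_0}$ with $(\widehat{B_0}\otimes_{\widehat{A_0}}\widehat{B_0})/(t\textnormal{-torsion})$ requires more than an appeal to Theorem \ref{maincor1} and Lemma \ref{CompCokerKilled}: one needs a Mittag--Leffler argument to control the kernel under completion, and then to match the $t$-torsion of $\widehat{B_0}\otimes_{\widehat{A_0}}\widehat{B_0}$ with that arising from completion. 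The paper bypasses all of this by a direct appeal to \cite[Theorem 5.2.12]{GR03}: once $(1)$ is known, $I$-almost unramifiedness for $f_0$ (resp.\ $\widehat{f_0}$) follows from unramifiedness of the generic fibre $A\to B$ (resp.\ $A'\to B'$), $I$-almost unramifiedness modulo $t$ (which is the same for $f_0$ and $\widehat{f_0}$), and $I$-almost finite presentation. This is both shorter and avoids the bookkeeping with $C_0$.

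Your argument for $(1)$, $(a)\Rightarrow(b)$ via the map $\theta$ is correct and in fact slightly slicker than the paper's (which reduces modulo $t^n$ and invokes Lemma \ref{LemProjComp}); both approaches are essentially equivalent.
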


To prove this theorem, we need the following lemma.

\begin{lemma}\label{LemProjComp}
Keep the notation as in Theorem \ref{lemalprjct}. Assume that $f_0: (A_0, (t))\to (B_0, (t))$ satisfies the condition $(*)$. Assume further that for every $\varepsilon\in I$ and for every integer $n>0$ there exist a finite free $A_0$-module $F$ and $A_0$-homomorphisms $B_0 \to F \to B_0$ whose composition is congruent to multiplication by $\varepsilon$ modulo $(t^n)$. Then $B_0$ is $I$-almost finitely generated and $I$-almost projective over $A_0$.
\end{lemma}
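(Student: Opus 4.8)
The plan is to reduce the statement to the production, for each $\varepsilon\in I$, of a factorization of the homothety $\varepsilon\cdot\id_{B_0}$ through a finite free $A_0$-module, and then to construct such a factorization by patching the approximate splitting granted by the extra hypothesis with the \emph{exact} $t^l$-splitting coming from finite \'etaleness. For the reduction I would record the following general principle (see \cite{GR03}): if for every $\varepsilon\in I$ there exist a finite free $A_0$-module $G$ and $A_0$-homomorphisms $B_0\xrightarrow{\Phi}G\xrightarrow{\Psi}B_0$ with $\Psi\circ\Phi=\varepsilon\cdot\id_{B_0}$, then $B_0$ is $I$-almost finitely generated and $I$-almost projective over $A_0$. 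Indeed, $\varepsilon B_0=\Psi(\Phi(B_0))\subset\im\Psi$ and $\im\Psi$ is finitely generated, which yields $I$-almost finite generation; and for any $A_0$-module $N$ the homothety $\varepsilon\colon\Ext^1_{A_0}(B_0,N)\to\Ext^1_{A_0}(B_0,N)$ equals $\Ext^1_{A_0}(\Psi\circ\Phi,N)$ and hence factors through $\Ext^1_{A_0}(G,N)=0$, so $\Ext^1_{A_0}(B_0,N)$ is annihilated by $I$, i.e.\ $I$-almost zero, and $B_0$ is $I$-almost projective.

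For the construction, fix once and for all the data of Proposition \ref{cor1416}(2): an integer $l>0$, a finite free $A_0$-module $F_0$, and $A_0$-homomorphisms $\alpha\colon B_0\to F_0$ and $\beta\colon F_0\to B_0$ with $\beta\circ\alpha=t^l\cdot\id_{B_0}$. Given $\varepsilon\in I$, I would apply the additional hypothesis with $n=l$ to obtain a finite free $A_0$-module $F_\varepsilon$ and $A_0$-homomorphisms $u\colon B_0\to F_\varepsilon$, $v\colon F_\varepsilon\to B_0$ such that the image of $\varepsilon\cdot\id_{B_0}-v\circ u$ is contained in $t^lB_0$. Since $B_0$ is $t$-torsion free (axiom (c) of the condition $(*)$, cf.\ Definition \ref{CondQuad*}), there is a unique $A_0$-homomorphism $w\colon B_0\to B_0$ with $t^l w=\varepsilon\cdot\id_{B_0}-v\circ u$. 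Put $G:=F_\varepsilon\oplus F_0$ and define $\Phi\colon B_0\to G$ by $\Phi(b)=(u(b),\alpha(b))$ and $\Psi\colon G\to B_0$ by $\Psi(x,y)=v(x)+w(\beta(y))$. Then for every $b\in B_0$,
\[
(\Psi\circ\Phi)(b)=v(u(b))+w(\beta(\alpha(b)))=\bigl(\varepsilon b-t^l w(b)\bigr)+t^l w(b)=\varepsilon b,
\]
so $\Psi\circ\Phi=\varepsilon\cdot\id_{B_0}$, and the reduction step applies to conclude.

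The only substantive point is the patching step: the exact splitting of $B_0$ by $t^l$ supplied by finite \'etaleness (Proposition \ref{cor1416}(2)) is precisely what is needed to absorb the error term $t^l w$ of the approximate $\varepsilon$-splitting, so a single finite exponent $n=l$ suffices and no passage to the $t$-adic completion of $B_0$ is required. The hypotheses of $(*)$ enter only through the invocation of Proposition \ref{cor1416}(2) and through the $t$-torsion freeness of $B_0$, which guarantees that the correction map $w$ is well defined; everything else is formal, so I do not anticipate any further obstacle.
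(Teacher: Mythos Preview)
Your proof is correct and follows essentially the same approach as the paper's. Both arguments invoke Proposition \ref{cor1416}(2) to obtain an exact $t^l$-splitting through a finite free module, apply the hypothesis at level $n=l$ to obtain an approximate $\varepsilon$-splitting, divide the defect by $t^l$ using $t$-torsion freeness, and combine the two into a single map through a direct sum of finite free modules whose composite is exactly $\varepsilon\cdot\id_{B_0}$. The only cosmetic difference is where the correction map is inserted: the paper precomposes it with the $t^l$-section (placing it in the map \emph{into} the free module), whereas you postcompose it with the $t^l$-retraction (placing it in the map \emph{out} of the free module); these are symmetric variants of the same patching trick. You also spell out the reduction principle (factoring $\varepsilon\cdot\id$ through a finite free module implies $I$-almost finitely generated projective), which the paper leaves implicit.
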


\begin{proof}[Proof of Lemma \ref{LemProjComp}]
Fix $\varepsilon\in I$ arbitrarily. 
By Proposition \ref{cor1416}(2), there exist an integer $l>0$, a finite free $A_0$-module $F_1$, and $A_0$-linear maps $\pi_l: F_1\to
B_0$, $s_l: B_0\to F_1$ such that $\pi_l\circ s_l$ is multiplication by $t^l$. On the other hand, by assumption we have $A_0$-linear maps $\pi_{\varepsilon, l}: F_2\to B_0$ and $s_{\varepsilon, l}: B_0\to F_2$ for some finite free $A_0$-module $F_2$, such that $\pi_{\varepsilon, l}\circ s_{\varepsilon, l}$ is congruent to multiplication by $\varepsilon$ modulo $(t^l)$. 
Thus one can define the $A_0$-linear maps 
$$
f_\varepsilon: B_0\to B_0,\ x\mapsto \frac{1}{t^l}\big(\varepsilon x-(\pi_{\varepsilon, l}\circ s_{\varepsilon, l})(x)\big) 
$$
and
$$
s_\varepsilon: B_0\to F_1\oplus F_2,\ x\mapsto \big((s_l\circ f_\varepsilon)(x), s_{\varepsilon, l}(x)\big).
$$ 
Consider the $A_0$-linear map
$$
\pi_\varepsilon: F_1\oplus F_2\to B_0,\ (a_1, a_2)\mapsto \pi_l(a_1)+\pi_{\varepsilon,l}(a_2).
$$
Then $\pi_\varepsilon\circ s_\varepsilon$ is multiplication by $\varepsilon$. Hence the assertion follows. 
\end{proof}

Now let us complete the proof of Theorem \ref{lemalprjct}.

\begin{proof}[Proof of Theorem \ref{lemalprjct}]
Put $A:=A_0[\frac{1}{t}]$, $A':=\widehat{A_0}[\frac{1}{t}]$, $B:=B_0[\frac{1}{t}]$, and $B':=\widehat{B_0}[\frac{1}{t}]$. Notice that the morphism $\widehat{f_0}: (\widehat{A_0}, (t))\to (\widehat{B_0}, (t))$ satisfies the condition $(*)$ by Theorem \ref{maincor1}. In particular, $B'$ is finite \'etale over $A'$. 

$(1)$: We first show $(a)\Rightarrow (b)$. 
Assume that $(a)$ is satisfied. Then for an arbitrary $\varepsilon\in I$, there exist $A_0$-linear maps $\pi_{\varepsilon}: A_0^{\oplus d}\to B_0$ and $s_\varepsilon: B_0\to A_0^{\oplus d}$ such that $\pi_{\varepsilon}\circ s_\varepsilon$ is multiplication by $\varepsilon$. Let $\widehat{\pi}_{\varepsilon}: (\widehat{A_0})^{\oplus d}\to \widehat{B_0}$ and $\widehat{s}_\varepsilon: \widehat{B_0}\to (\widehat{A_0})^{\oplus d}$ be the induced $\widehat{A_0}$-linear maps. Then we have the natural commutative diagram of $A_0$-linear maps: 
$$
\begin{CD}
B_0 @>{s_\varepsilon}>> A_0^{\oplus d} @>{\pi_\varepsilon}>> B_0\\
@VVV @VVV @VVV\\
\widehat{B_0} @>>{\widehat{s_\varepsilon}}> (\widehat{A_0})^{\oplus d} @>>{\widehat{\pi_\varepsilon}}>\ \widehat{B_0}\ ,
\end{CD}
$$
where the vertical maps become isomorphisms after base extension along $A_0\to A_0/t^nA_0$ for an arbitrary $n>0$. 
Hence $\widehat{\pi_{\varepsilon}}\circ \widehat{s_\varepsilon}$ is multiplication by $\varepsilon$ modulo $(t^n)$. 
Thus, we can apply Lemma \ref{LemProjComp} to this situation. Therefore $(b)$ is satisfied. 

Next we show $(b)\Rightarrow (a)$. Assume that $(b)$ is satisfied. Like the proof of the inverse implication, it suffices to construct $A_0$-linear maps $B_0\to A_0^{\oplus d}$ and $A_0^{\oplus d}\to B_0$ whose composition $B_0\to B_0$ is multiplication by $\varepsilon$ modulo $(t^n)$ for an arbitrary $\varepsilon\in I$ and an arbitrary $n>0$. 
By assumption, there exist $\widehat{A_0}$-linear maps $\widehat{\pi}_{\varepsilon}: (\widehat{A_0})^{\oplus d}\to \widehat{B}$ and $\widehat{s}_\varepsilon: \widehat{B_0}\to (\widehat{A_0})^{\oplus d}$ such that $\widehat{\pi}_{\varepsilon}\circ \widehat{s}_\varepsilon$ is multiplication by $\varepsilon$. 
These maps induce $A_0$-linear maps $\overline{\pi}_\varepsilon: A_0^{\oplus d}/(t^n)\to B_0/(t^n)$ and $\overline{s}_\varepsilon: B_0/(t^n)\to A_0^{\oplus d}/(t^n)$ such that $\overline{\pi}_\varepsilon\circ \overline{s}_\varepsilon$ is multiplication by $\varepsilon$. Since there exists an $A_0$-linear map $\pi_\varepsilon: A_0^{\oplus d}\to B_0$ that is a lift of $\overline{\pi}_\varepsilon$, we are reduced to constructing an $A_0$-linear map $B_0\to A_0^{\oplus d}$ that is a lift of $\overline{s}_\varepsilon$. 
By inverting $t$, $\widehat{s}_\varepsilon$ is extended to a $A'$-linear map $\widehat{s}'_\varepsilon: B'\to A'^{\oplus d}$. 
Since $B'$ is finite \'etale over $A'$, there exist $\widehat{b}_1,\ldots,\widehat{b}_d\in B'$ such that $\widehat{s}'_\varepsilon(x)=(\textnormal{Tr}_{B'/A'}(\widehat{b}_{1}x),\ldots,\textnormal{Tr}_{B'/A'}(\widehat{b}_{d}x))$ for every $x\in B'$ 
(and thus, for each $i=1,\ldots, d$, $\textnormal{Tr}_{B'/A'}(\widehat{b}_{i}x)\in \widehat{A_0}$ if $x\in \widehat{B_0}$).  
For an integer $k>0$, we take $b_1,\ldots, b_d\in B$ such that $\widehat{b}_i\equiv b_i\mod t^{2k}\widehat{B_0}$ for each $i$ (cf.\ Remark \ref{rmk1226}). 
Pick $m\in B_0$. Then 
$$
\textnormal{Tr}_{B'/A'}(b_im)=\textnormal{Tr}_{B'/A'}(\widehat{b}_im)+t^k(\textnormal{Tr}_{B'/A'}(t^kx_i))\ \ \ (i=1,\ldots, d)
$$ 
for some $x_i\in \widehat{B_0}$. Now by Proposition \ref{cor1416}(1), we have $\textnormal{Tr}_{B'/A'}(t^{l}\widehat{B_0})\subset\widehat{A_0}$ for some $l>0$. Thus we may assume that $\textnormal{Tr}_{B'/A'}(t^kx_i)\in \widehat{A_0}$ $(i=1,\ldots, d)$ by increasing $k$ if necessary. Hence we have

\begin{equation}\label{eqtrpl}
\textnormal{Tr}_{B'/A'}(b_im)\equiv \textnormal{Tr}_{B'/A'}(\widehat{b_i}m)\mod t^k\widehat{A_0}\ \ \ (i=1,\ldots, d).
\end{equation}
In particular, $\textnormal{Tr}_{B'/A'}(b_im)\in \widehat{A_0}$. 
Meanwhile, since $B\otimes_AA'\cong B'$ by Theorem \ref{maincor1}, the diagram of $B$-linear maps 
$$
\begin{CD}
B @>\textnormal{Tr}_{B/A}>> A \\
@VVV @VVV \\
B' @>>\textnormal{Tr}_{B'/A'}> A'
\end{CD}
$$
commutes. Hence $\textnormal{Tr}_{B/A}(b_im)\in A_0$ in view of Lemma \ref{Beauville-Laszlo}. Therefore, one can define the $A_0$-linear map 
$$
s_\varepsilon: B_0\to A_0^{\oplus d},\ m\mapsto \big(\textnormal{Tr}_{B/A}(b_1m),\ldots, \textnormal{Tr}_{B/A}(b_dm)\big),
$$
which is a lift of $\overline{s}_\varepsilon$ in view of (\ref{eqtrpl}). 

$(2)$: First we assume that $(a)$ is satisfied. Then $\widehat{B_0}$ is an $I$-almost finitely generated projective $\widehat{A_0}$-module in view of the assertion (1). Hence by \cite[Remark 2.4.12 and Proposition 2.4.18]{GR03}, it follows that $\widehat{B_0}$ is an  $I$-almost flat and $I$-almost finitely presented $\widehat{A_0}$-module. On the other hand, since $A' \to B'$ is unramified by Theorem \ref{maincor1} and $\widehat{A_0}/(t) \to \widehat{B_0}/(t)$ is $I$-almost unramified by \cite[Lemma 3.1.2]{GR03}, it follows from \cite[Theorem 5.2.12]{GR03}, together with the fact that $\widehat{B_0}$ is an $I$-almost finitely presented $\widehat{A_0}$-module, that $\widehat{A_0}\to \widehat{B_0}$ is $I$-almost unramified. Thus we find that $(b)$ is satisfied. The inverse implication $(b)\Rightarrow (a)$ can be shown similarly. 
\end{proof}

\subsection{Proof of the almost purity theorem}

Now we are ready to prove the almost purity theorem by Davis and Kedlaya. In this subsection, we fix a prime number $p>0$, and for any ring $R$, we denote by $\widehat{R}$ the $p$-adic completion of $R$. Moreover for a ring map $f: R\to S$, we denote by $\widehat{f}$ the ring map $\widehat{R}\to \widehat{S}$ induced by $f$.

\begin{theorem}[Almost purity]
\label{almostpurity}
Let $A_0$ be a $p$-torsion free Witt-perfect ring and let $f_0: A_0\to B_0$ be a ring map. Put $I:=\sqrt{pA_0}$. Assume that the morphism $f_0: (A_0, (p))\to (B_0, (p))$ satisfies the condition $(*)$, $A_0$ is integrally closed in $A_0[\frac{1}{p}]$, $(A_0)^+_{B_0[\frac{1}{p}]}\subset B_0$, and $(A_0, I)$ is a basic setup\footnote{For example, this assumption is realized if $A_0$ admits $\{p^\frac{1}{p^n}\}_{n\geq 0}$, $A_0$ is $p$-adically Zariskian, or $A_0$ is an algebra over a $p$-torsion free Witt-perfect valuation domain of rank $1$ (cf.\ Example \ref{BasicSetupEx} and Lemma \ref{zarsemiperf}). }. Then the following assertions hold.
\begin{enumerate}
\item
$B_0$ is also Witt-perfect.
\item
$f_0: A_0\to B_0$ is $I$-almost finite \'etale.
\end{enumerate}
\end{theorem}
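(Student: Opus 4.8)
The strategy is to climb up to the $p$-adically complete (perfectoid) world, invoke the Kedlaya--Liu almost purity theorem there, and then descend using the decompletion results of \S4. Write $A:=A_0[\frac{1}{p}]$, $B:=B_0[\frac{1}{p}]$, $A':=\widehat{A_0}[\frac{1}{p}]$, $B':=\widehat{B_0}[\frac{1}{p}]$, and let $\widehat{f_0}\colon\widehat{A_0}\to\widehat{B_0}$ be the induced map. Since $f_0\colon(A_0,(p))\to(B_0,(p))$ satisfies $(*)$, Theorem \ref{maincor1} guarantees that $\widehat{f_0}\colon(\widehat{A_0},(p))\to(\widehat{B_0},(p))$ again satisfies $(*)$ and that $B\otimes_A A'\xrightarrow{\ \cong\ }B'$; in particular $A'\to B'$ is finite \'etale. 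Because $A_0$ is Witt-perfect and integrally closed in $A$, Proposition \ref{decompperf}(1) shows that the Banach ring $R$ with underlying ring $A'$ and the spectral (hence powermultiplicative, by Corollary \ref{Bhlem} and Lemma \ref{preunipm}(2)) norm associated to $(\widehat{A_0},(p))$ is perfectoid in the sense of Fontaine, with $\widehat{A_0}$ an open and integrally closed subring.

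Next I would equip $B'$ with the Banach structure for which $R\to S$ --- with $S$ the Banach ring on $B'$ --- is bounded; by Corollary \ref{cor1547205}(5), $S$ is again uniform, and it is finite \'etale over $R$. The almost purity theorem of Kedlaya--Liu \cite{KL15} then applies to give that $S$ is perfectoid and that the morphism of power-bounded subrings $R^\circ\to S^\circ$ is almost finite \'etale for the standard perfectoid almost structure.

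Now the descent. First one verifies that $\widehat{B_0}$ is an open and integrally closed subring of $B'$: openness is automatic, and integral closedness is obtained, via Corollary \ref{Bhlem}, from the fact that $B_0$ is integrally closed in $B$ --- this is where the hypothesis $(A_0)^+_{B}\subseteq B_0$ is used, in combination with $(*)(c)$ and Proposition \ref{cor1416}(3)$,$(4), which place $B_0$ in the chain $(A_0)^+_B\subseteq B_0\subseteq(A_0)^*_B=(B_0)^*_B$ with $pB_0\subseteq(A_0)^+_B$. Granting this, Proposition \ref{decompperf}(1) applied to $B_0$ (together with Lemma \ref{1151321}(2)) shows that $\widehat{B_0}$, hence $B_0$, is Witt-perfect, proving assertion (1). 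For assertion (2): by Corollary \ref{DecompTopNil} the pair $(\widehat{A_0},I\widehat{A_0})$ is again a basic setup with $I\widehat{A_0}=\sqrt{p\widehat{A_0}}$, and by Corollary \ref{cor128}(1) (using that $\widehat{A_0}$ and $\widehat{B_0}$ are integrally closed in $A'$ and $B'$) the inclusions $\widehat{A_0}\hookrightarrow R^\circ$ and $\widehat{B_0}\hookrightarrow S^\circ$ are $I$-almost isomorphisms compatible with the ring maps, the relevant almost structure coinciding with the perfectoid one. Hence the almost finite \'etaleness of $R^\circ\to S^\circ$ from the previous step transfers to $\widehat{f_0}\colon\widehat{A_0}\to\widehat{B_0}$, and Theorem \ref{lemalprjct}(2) descends this to the conclusion that $f_0\colon A_0\to B_0$ is $I$-almost finite \'etale.

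The conceptual skeleton is routine once the \S4 machinery is granted; the delicate points, and where I expect the real work to be, are the two bookkeeping verifications in the descent step: (i) that $\widehat{B_0}$ is genuinely open and integrally closed in $B'$ (equivalently, that $B_0$ is integrally closed in $B_0[\frac{1}{p}]$), which is precisely what the hypothesis $(A_0)^+_{B_0[\frac{1}{p}]}\subseteq B_0$ together with condition $(*)$ is designed to force; and (ii) that the $\sqrt{p}$-almost structure attached to the basic setup $(A_0,I)$ matches the perfectoid almost structure on $R^\circ$, $S^\circ$, so that the Kedlaya--Liu output --- phrased for $R^\circ\to S^\circ$ --- can legitimately be moved onto $\widehat{A_0}\to\widehat{B_0}$, and thence, via Theorem \ref{lemalprjct}, onto $A_0\to B_0$.
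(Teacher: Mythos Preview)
Your strategy---pass to $p$-adic completions, invoke Kedlaya--Liu, and descend via Theorem~\ref{lemalprjct}---is exactly the paper's, and your handling of assertion~(2) via Theorem~\ref{maincor1} and Theorem~\ref{lemalprjct}(2) is correct. The gap is the assertion that $B_0$ is integrally closed in $B=B_0[\tfrac{1}{p}]$. The chain $(A_0)^+_B\subseteq B_0\subseteq(A_0)^*_B=(B_0)^*_B$ together with $pB_0\subseteq(A_0)^+_B$ yields only $p(B_0)^+_B\subseteq(A_0)^+_B\subseteq B_0$, i.e.\ preuniformity of $(B_0,(p))$; it does \emph{not} force $(B_0)^+_B=B_0$, because an element integral over $B_0$ need not be integral over $A_0$. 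You use this unproven integral closedness twice---once (via Corollary~\ref{Bhlem}) to feed into Proposition~\ref{decompperf}(1) for Witt-perfectness, and once to invoke Corollary~\ref{cor128}(1) for the almost isomorphism $\widehat{B_0}\hookrightarrow S^\circ$---so the gap propagates through both conclusions.

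The paper never claims $B_0$ is integrally closed. Instead it first records $p(B_0)^*_B=p(A_0)^*_B\subset(A_0)^+_B\subset B_0$ (Lemma~\ref{AlmIntMod-Fin}, Proposition~\ref{cor1416}(3), and the hypothesis $(A_0)^+_B\subset B_0$) and transports this through Proposition~\ref{CICprop} to the sandwich $p\mathcal{B}^\circ\subset\widehat{B_0}\subset\mathcal{B}^\circ$. From that sandwich one sees that the Tate topology on $\mathcal{B}$ agrees with the one induced by $(\widehat{B_0},(p))$; then Corollary~\ref{cor128} gives the $I$-almost isomorphisms $\widehat{A_0}\hookrightarrow\mathcal{A}^\circ$ and $\widehat{B_0}\hookrightarrow\mathcal{B}^\circ$ (for the second, the point is that $I\mathcal{B}^\circ\subset(\widehat{A_0})^+_{\mathcal{B}}\subset\widehat{B_0}$, the last inclusion coming from $(A_0)^+_B\subset B_0$ after completion). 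Witt-perfectness of $B_0$ is then obtained from $\mathcal{B}$ being perfectoid via Lemma~\ref{lem2231930}, not via Proposition~\ref{decompperf}(1). Your argument can be repaired along exactly these lines, but not by asserting that $B_0$ is integrally closed.
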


\begin{proof}
Let $\mathcal{A}$ and  $\mathcal{B}$ be Banach rings associated to complete Tate rings $(\widehat{A_0}, (p))$ and $(\widehat{B_0}, (p))$ respectively such that the map $\mathcal{A}\to \mathcal{B}$ induced by $\widehat{f_0}$ is bounded. Then by Proposition \ref{decompperf} and Corollary 
\ref{DecompTopNil}, $\mathcal{A}$ is perfectoid and $\mathcal{A}^{\circ\circ}=I\widehat{A_0}$. Moreover, since $(\widehat{A_0}, (p))\to(\widehat{B_0}, (p))$ satisfies the condition $(*)$ by Theorem \ref{maincor1}, $\mathcal{A}\to \mathcal{B}$ is finite \'etale, $\widehat{B_0}\subset (\widehat{A_0})^*_\mathcal{B}$, and $(\widehat{A_0})^*_\mathcal{B}=(\widehat{B_0})^*_\mathcal{B}$ by Proposition \ref{cor1416}(3). 
Meanwhile, we also have $p(B_0)^*_B=p(A_0)^*_B\subset (A_0)^+_B\subset B_0$ by Lemma \ref{AlmIntMod-Fin} and Proposition \ref{cor1416}(3), which implies that $p(\widehat{B_0})^*_\mathcal{B}\subset \widehat{B_0}$ by Proposition \ref{CICprop}. 
Thus we find that 
$$
p\widehat{B_0}\subset p(\widehat{A_0})^*_\mathcal{B}=p(\widehat{B_0})^*_\mathcal{B}\subset \widehat{B_0}. 
$$ 
Hence, the topology on $\mathcal{B}$ coincides with the canonical topology on 
$\widehat{B_0}[\frac{1}{p}]$ as a finitely generated $\mathcal{A}$-module by Corollary \ref{cor1547205}(1). Thus, in view of Corollary \ref{cor128}, the maps $\widehat{A_0}\hookrightarrow \mathcal{A}^\circ$ and $\widehat{B_0}\hookrightarrow \mathcal{B}^\circ$ are $I$-almost isomorphisms. Moreover, by almost purity theorem by Kedlaya-Liu \cite[Theorem 3.6.21 and 5.5.9]{KL15}, it follows that $\mathcal{A}^\circ \to \mathcal{B}^\circ$ is $I$-almost finite \'etale (and therefore so is $\widehat{f_0}$) and $\mathcal{B}$ is perfectoid. 
In particular, $B_0$ is Witt-perfect by Lemma \ref{lem2231930}. 
Now the assertion $(2)$ follows from Theorem \ref{lemalprjct}. 
\end{proof}

\begin{remark}
Resume the notation of Theorem \ref{almostpurity}. Here is a way to check almost flatness for the extension $A_0 \to B_0$. Suppose that $A_0[\frac{1}{p}] \to B_0[\frac{1}{p}]$ is flat and $A_0/(p) \to B_0/(p)$ is $I$-almost flat. Then since $p$ is a nonzero divisor on both $A_0$ and $B_0$, a simple discussion using the short exact sequence: $0 \to A_0 \xrightarrow{p} A_0 \to A_0/(p) \to 0$ shows that $\Tor^{A_0}_i(B_0,A_0/(p))=0$ for $i>0$. By applying \cite[Lemma 5.2.1]{GR03} (see also \cite[Lemma 1.2.5]{F92} for the absolute version), one sees that $B_0$ is an $I$-almost flat $A_0$-module.
\end{remark}

In \cite{NS19}, we apply Theorem \ref{almostpurity} to construct big (almost) Cohen-Macaulay algebras with distinguished properties. 
Here we illustrate a key step to it in a simple situation. 

\begin{example}
Let $R$ be a Noetherian complete local domain of mixed characteristic $p>0$ with perfect residue field $k$. Let $p, x_{2},\ldots, x_{d}$ be a system of parameters in $R$. 
Then there exists a module-finite extension $\iota: W(k)[[x_{2},\ldots, x_{d}]]\hookrightarrow R$ by Cohen's structure theorem. 
Since the induced field extension $\textnormal{Frac}(W(k)[[x_{2},\ldots, x_{d}]])\hookrightarrow \textnormal{Frac}(R)$ is finite separable, there exists an element $g\in W(k)[[x_{2},\ldots, x_{d}]]\setminus (p)$ such that 
$\iota$ becomes finite \'etale after inverting $pg$. Here, we assume that $g=1$ for simplicity (\cite{NS19} deals with the general cases). Set  
$$
A_{0}:=\bigcup_{n>0}W(k)[[x_{2},\ldots, x_{d}]][p^{\frac{1}{p^n}}, x_{2}^\frac{1}{p^n},\ldots, x_{d}^\frac{1}{p^n}]. 
$$ 
Then $A_{0}$ is a Witt-perfect ring. Moreover, by base change, we obtain a finite \'etale ring map 
$$
f: A_{0}[\frac{1}{p}]\to (A_{0}\otimes_{W(k)[[x_{2},\ldots, x_{d}]]}R)[\frac{1}{p}]. 
$$
Let $B_{0}$ be the integral closure of $A_{0}$ in $(A_{0}\otimes_{W(k)[[x_{2},\ldots, x_{d}]]}R)[\frac{1}{p}]$. Then by Theorem \ref{almostpurity}, the induced map $A_{0}\to B_{0}$ is $(p)^{\frac{1}{p^{\infty}}}$-almost finite \'etale, and $B_{0}$ is Witt-perfect. In particular, $A_{0} \to B_{0}$ is $(p)^{\frac{1}{p^{\infty}}}$-almost flat. Hence $B_{0}$ is an almost Cohen-Macaulay $R$-algebra with respect to $(p, x_{2},\ldots. x_{d})$ in the sense of \cite[Definition 5.4]{Sh18}. Notice that the map $R\to B_{0}$ is integral. 
\end{example}

Let $(R, I)$ be a basic setup and let $A$ be an $R$-algebra. 
Let us denote by ${\bf{Alg}}^a(A)$ the quotient category of the category of $A$-algebras ${\bf{Alg}}(A)$ by the Serre subcategory of objects in ${\bf{Alg}}(A)$ that are $I$-almost zero. Let $(~)_*:{\bf{Alg}}^a(A) \to {\bf{Alg}}(A)$ be the \textit{functor of almost elements} (see \cite[2.2.10]{GR03}). We define ${\bf{F.Et}}(A)$ to be the category of finite \'etale $A$-algebras. We also define ${\bf{F.Et}}^a(A)$ to be the full subcategory of ${\bf{Alg}}^a(A)$ that consist of $I$-almost finite \'etale $A$-algebras.

\begin{corollary}
Let $A_0$ be a $p$-torsion free algebra over a $p$-torsion free Witt-perfect valuation domain $V$ of rank $1$. Assume that $A_0$ is Witt-perfect and integrally closed in $A_0[\frac{1}{p}]$. Put $I:=\sqrt{pA_0}$. Consider the basic setup $(A_0, I)$ and the diagram of functors: 
\begin{equation}\label{BLFEtPerf}
\begin{CD}
{\bf{F.Et}}^a(A_0)@>\Phi_1>> {\bf{F.Et}}^a(\widehat{A_0}) \\
@V\Phi_2VV @VV\Phi_4V \\
{\bf{F.Et}}(A_0[\frac{1}{p}]) @>>\Phi_3>\ {\bf{F.Et}}(\widehat{A_0}[\frac{1}{p}])
\end{CD}
\end{equation}
where $\Phi_1$, $\Phi_2$, $\Phi_3$ and $\Phi_4$ are given by the association $B_0\mapsto B_0\otimes_{A_0^a}(\widehat{A_0})^a$, $B_0\mapsto (B_0)_*[\frac{1}{p}]$, $B\mapsto B\otimes_{A_0[\frac{1}{p}]}\widehat{A_0}[\frac{1}{p}]$, and $B'_0\mapsto (B'_0)_*[\frac{1}{p}]$, respectively (cf.\ \cite[Lemma 3.1.2(i)]{GR03}). Then the following assertions hold. 
\begin{enumerate}
\item
$\Phi_2$ and $\Phi_4$ yield equivalences of categories. Moreover, 
for an $I$-almost finite \'etale $A_0$-algebra $C_0$, the natural $A_0$-algebra homomorphism $C_0\otimes_{A_0}\widehat{A_0}\to \widehat{C_0}$ is an $I$-almost isomorphism. 
\item
Assume that $A_0$ is henselian along the ideal $(p)$. Then $\Phi_1$ and $\Phi_3$ yield equivalences of categories. 
\end{enumerate}
\end{corollary}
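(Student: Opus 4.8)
The plan is to read the square (\ref{BLFEtPerf}) as a Beauville--Laszlo square for finite \'etale algebras: the vertical (``generic fibre'') functors will be handled by almost purity, and the horizontal (``completion'') functors by faithfully flat descent together with the theory of henselian pairs. Throughout I use that $I[\frac1p]=A_0[\frac1p]$, so that over $A_0[\frac1p]$ the word ``$I$-almost'' just means ``honest''; and that the hypotheses ``$A_0$ lies over a rank-$1$ Witt-perfect valuation domain'' and ``$(A_0,I)$ is a basic setup'' put us in the situation of Example \ref{BasicSetupEx} (via Lemma \ref{zarsemiperf}), so that --- by Corollary \ref{cor128} and, for the completion, Corollary \ref{DecompTopNil} --- both $IA_0$ and $I\widehat{A_0}$ are generated by a compatible sequence $\{t_n\}$ with $t_{n+1}^{p}=t_nu_n$, $u_n$ a unit, and $(A_0,(p))$ is preuniform with constant $c=0$.

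First I would show $\Phi_4$ is an equivalence. Let $\mathcal A$ be a Banach ring associated to $(\widehat{A_0},(p))$; by Proposition \ref{decompperf} it is perfectoid with $\widehat{A_0}=\mathcal A^+$ open and integrally closed, and by Corollary \ref{cor128}(1) the inclusion $\widehat{A_0}=(\widehat{A_0})^+_{\mathcal A}\hookrightarrow (\widehat{A_0})^*_{\mathcal A}=\mathcal A^\circ$ is $I$-almost isomorphic, so ${\bf{F.Et}}^a(\widehat{A_0})\simeq{\bf{F.Et}}^a(\mathcal A^\circ)$. The Kedlaya--Liu almost purity theorem \cite[Theorem 3.6.21 and 5.5.9]{KL15}, in the form used in the proof of Theorem \ref{almostpurity}, identifies ${\bf{F.Et}}^a(\mathcal A^\circ)$ with ${\bf{F.Et}}(\mathcal A)={\bf{F.Et}}(\widehat{A_0}[\frac1p])$ via the generic-fibre functor; this is $\Phi_4$. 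Next, for the ``moreover'' clause of (1): given $C_0$ with $C_0^a$ almost finite \'etale over $A_0$, I may assume $C_0=(C_0^a)_*$ is $p$-torsion free, and then $(A_0,(p))\to(C_0,(p))$ satisfies $(*)$ --- preuniformity is clear, $A_0[\frac1p]\to C_0[\frac1p]$ is finite \'etale, and each element of the almost finitely generated $A_0$-module $C_0$ is almost integral over $A_0$ (clear a single $p$ from an almost-generation inclusion). Hence Proposition \ref{cor1416}(1) supplies the trace hypothesis of Proposition \ref{prop1224}, which (together with Theorem \ref{maincor1}) gives an isomorphism $(C_0\otimes_{A_0}\widehat{A_0})/(0)^{p\textnormal{-sat}}\xrightarrow{\ \cong\ }\widehat{C_0}$; and since $C_0\otimes_{A_0}\widehat{A_0}$ is $I$-almost flat over $\widehat{A_0}$, its submodule $(0)^{p\textnormal{-sat}}$ of $p$-power-torsion elements is $I$-almost zero by Lemma \ref{Sch12Lem5.3(i)}. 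Thus $C_0\otimes_{A_0}\widehat{A_0}\to\widehat{C_0}$ is $I$-almost isomorphic, i.e.\ $\Phi_1(C_0^a)\cong\widehat{C_0}^a$.

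With these in place, $\Phi_2$ is an equivalence. For essential surjectivity, given $B\in{\bf{F.Et}}(A_0[\frac1p])$ put $B_0:=(A_0)^+_B$; then $(A_0,(p))\to(B_0,(p))$ satisfies $(*)$, $A_0$ is integrally closed in $A_0[\frac1p]$, $(A_0)^+_{B_0[\frac1p]}=B_0$ (as $B_0[\frac1p]=B$, $B$ being module-finite over $A_0[\frac1p]$), and $(A_0,I)$ is a basic setup, so Theorem \ref{almostpurity} shows $A_0\to B_0$ is $I$-almost finite \'etale with $\Phi_2(B_0^a)\cong B$. For full faithfulness it is enough to check that the canonical model $(A_0)^+_{B_0[\frac1p]}$ is $I$-almost isomorphic to $B_0$ for every almost finite \'etale $B_0$, so that $\Psi_2\colon B\mapsto((A_0)^+_B)^a$ is a quasi-inverse: both models have generic fibre $B_0[\frac1p]$, and by the ``moreover'' clause their $p$-adic completions are almost finite \'etale $\widehat{A_0}$-algebras with the same generic fibre $B_0[\frac1p]\otimes_{A_0[\frac1p]}\widehat{A_0}[\frac1p]$, hence canonically almost isomorphic because $\Phi_4$ is an equivalence; so the two models carry isomorphic descent data along the faithfully flat map $A_0\to A_0[\frac1p]\times\widehat{A_0}$ of Lemma \ref{Beauville-Laszlo}, and flat descent in the almost category makes them almost isomorphic. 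Finally, for (2): by Lemma \ref{Beauville-Laszlo} the square of rings is cartesian with $A_0\to A_0[\frac1p]\times\widehat{A_0}$ faithfully flat, so finite \'etale algebras glue, ${\bf{F.Et}}(A_0)\xrightarrow{\ \sim\ }{\bf{F.Et}}(A_0[\frac1p])\times_{{\bf{F.Et}}(\widehat{A_0}[\frac1p])}{\bf{F.Et}}(\widehat{A_0})$; when $(A_0,(p))$ is henselian, so is $(\widehat{A_0},p\widehat{A_0})$ with the same residue ring $A_0/(p)$, whence base change gives an equivalence ${\bf{F.Et}}(A_0)\xrightarrow{\ \sim\ }{\bf{F.Et}}(\widehat{A_0})$, and feeding this into the fibre-product description forces the complementary projection $\Phi_3$ to be an equivalence; then $\Phi_1$ is an equivalence because (\ref{BLFEtPerf}) commutes up to natural isomorphism and $\Phi_2,\Phi_3,\Phi_4$ are equivalences.

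The step I expect to be the main obstacle is the full faithfulness of $\Phi_2$, equivalently the uniqueness up to $I$-almost isomorphism of the almost finite \'etale integral model of a finite \'etale algebra over $A_0[\frac1p]$: since $A_0$ is neither complete nor Noetherian this cannot be quoted off the shelf, and making the descent argument precise --- verifying that the comparison isomorphisms over $A_0[\frac1p]$ and over $\widehat{A_0}$ are compatible over $\widehat{A_0}[\frac1p]$, and that faithfully flat descent of algebras is available in the almost category --- together with the routine but unavoidable bookkeeping that condition $(*)$, preuniformity, and the root sequence $\{t_n\}$ are genuinely at hand for every algebra appearing in the diagram, is where the work lies.
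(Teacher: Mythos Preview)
Your overall architecture --- prove $\Phi_4$ via almost purity for perfectoids, then transfer to $\Phi_2$, then use commutativity for $\Phi_1$ --- matches the paper's. Your argument for the ``moreover'' clause via Proposition~\ref{prop1224} (killing the $p$-power torsion in $C_0\otimes_{A_0}\widehat{A_0}$ by almost flatness) is different from the paper's but valid; the paper instead observes that $\Phi_4$ is fully faithful and that $(C_0\otimes_{A_0}\widehat{A_0})[\frac1p]\to\widehat{C_0}[\frac1p]$ is an isomorphism by Theorem~\ref{maincor1}, then pulls back. Both work.

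There are, however, two genuine gaps.

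\emph{Full faithfulness of $\Phi_2$.} Your descent argument rests on the claim that $A_0\to A_0[\frac1p]\times\widehat{A_0}$ is faithfully flat. In the non-Noetherian setting this is generally false: $A_0\to\widehat{A_0}$ need not be flat. Beauville--Laszlo gives descent for $p$-regular modules without flatness, but you then need an almost version of that descent, together with compatibility of the gluing isomorphisms, which you have not supplied. The paper avoids all this: it shows directly that $C_0\hookrightarrow (C_0)^*_C$ is $I$-almost an isomorphism by first checking it after completion (using $\Phi_4$ and the identity $(\widehat{A_0})^*_{\Phi_4(B'_0)}=(B'_0)_*$) and then invoking Lemma~\ref{AlmIntComp}, which transfers exactly this kind of statement between $A_0$ and $\widehat{A_0}$. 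This is both shorter and avoids any descent machinery.

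\emph{Part (2).} Your categorical step ``the top arrow in the Beauville--Laszlo square is an equivalence, hence so is the bottom arrow $\Phi_3$'' is not valid in general: a cartesian square of categories with one projection an equivalence does not force the parallel arrow to be one (take $\mathcal C$ a point mapping to an arbitrary $\mathcal D$). Concretely, essential surjectivity of $\Phi_3$ asks you to descend a finite \'etale $\widehat{A_0}[\frac1p]$-algebra $\mathcal B'$ to $A_0[\frac1p]$; your fibre product would require first lifting $\mathcal B'$ to an \emph{honest} finite \'etale $\widehat{A_0}$-algebra, and almost purity only gives an almost lift. The paper instead cites \cite[Proposition~5.4.53]{GR03}, which is precisely the statement that $\Phi_3$ is an equivalence when $(A_0,(p))$ is henselian; then $\Phi_1$ follows from commutativity, as you say.
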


\begin{proof}
We can equip $\widehat{V}[\frac{1}{p}]$ and $\widehat{A_0}[\frac{1}{p}]$ with norms so that $\widehat{V}[\frac{1}{p}]$ is a perfectoid field and $\widehat{A_0}[\frac{1}{p}]$ is a perfectoid $\widehat{V}[\frac{1}{p}]$-algebra by Proposotion \ref{perfalgdecomp}. 
Thus by the almost purity theorem for perfectoid $\widehat{V}[\frac{1}{p}]$-algebras \cite[Theorem 4.17, Theorem 5.2, Proposition 5.22, and Theorem 7.9]{Sch12} and Proposition \ref{cor1416}(3), $\Phi_4$ admits a quasi-inverse $\Psi_4: {\bf{F.Et}}(\widehat{A_0}[\frac{1}{p}])\to {\bf{F.Et}}^a(\widehat{A_0})$ given by the association $B'\mapsto (\widehat{A_0})^{*a}_{B'}$. 
More precisely, in view of \cite[Lemma 5.6]{Sch12}, for any $B'_0\in \textnormal{ob}({\bf{F.Et}}^a(\widehat{A_0}))$ we have 
\begin{equation}\label{CatEqPerfTateInt}
(\widehat{A_0})^*_{\Phi_4(B'_0)}=(B'_0)_*.
\end{equation}
On the other hand, by Theorem \ref{almostpurity}, we can also define a functor $\Psi_2: {\bf{F.Et}}(A_0[\frac{1}{p}])\to {\bf{F.Et}}^a(A_0)$ given by the association $B\mapsto (A_0)^{*a}_B$. 
Let us show that $\Psi_2$ gives a quasi-inverse of $\Phi_2$. 
The nontrivial part is to prove the existence of a functorial isomorphism $B_0\cong (\Psi_2\circ\Phi_2)(B_0)$ for $B_0\in\textnormal{ob}({\bf{F.Et}}^a(A_0))$. 
We let $C_0$ be the $I$-almost finite \'etale $A_0$-algebra $(B_0)_*$. 
Then $C_0$ is $p$-torsion free by Lemma \ref{Sch12Lem5.3(i)}, and thus $\widehat{A_0}\to \widehat{C_0}$ is also $I$-almost finite \'etale by Theorem \ref{lemalprjct}. 
Put $C:=C_0[\frac{1}{p}]$ and $C':=\widehat{C_0}[\frac{1}{p}]$. 
Since $C'=\Phi_4((\widehat{C_0})^a)$, we have 
$(\widehat{C_0})^*_{C'}=(\widehat{A_0})^*_{C'}=((\widehat{C_0})^a)_*$ by Proposition \ref{cor1416}(3) and (\ref{CatEqPerfTateInt}). 
Therefore the inclusion map $\widehat{C_0}\hookrightarrow (\widehat{C_0})^*_{C'}$ is an $I$-almost isomorphism. Hence by Lemma \ref{AlmIntComp}, $C_0\hookrightarrow (C_0)^*_C$ is also an $I$-almost isomorphism. Applying the functor of almostification $(\ \cdot\ )^a$, we obtain an isomorphism $C^a_0 \cong (\Psi_2\circ\Phi_2)(B_0)$, which yields the desired isomorphism. Next we apply the functor $\Phi_4\circ (\ \cdot\ )^a$ to the natural map $\varphi: C_0\otimes_{A_0}\widehat{A_0}\to \widehat{C_0}$. 
Then we obtain the natural map $(C_0\otimes_{A_0}\widehat{A_0})[\frac{1}{p}]\to\widehat{C_0}[\frac{1}{p}]$, which is an isomorphism by Theorem \ref{maincor1}. Thus, since $\Phi_4$ is fully faithful, we find that $\varphi$ is an $I$-almost isomorphism. Hence the assertion $(1)$ follows. If further $A_0$ is henselian along the ideal $(p)$, then $\Phi_3$ yields an equivalence of categories due to \cite[Proposition 5.4.53]{GR03}, and therefore $(1)$ implies that $\Phi_1$ yields an equivalence of categories (because (\ref{BLFEtPerf}) is essentially commutative). It completes the proof. 
\end{proof}

\section{Appendix: A historical remark on the (almost) purity theorem}

In this appendix, we will provide some background history around the almost purity theorem as well as its classical version, which is known as the \textit{Purity over Noetherian local rings}. Let us begin with the definition of purity for schemes.

\begin{definition}[Purity]
Let $X$ be a scheme together with an open subset $U \subset X$. Let ${\bf{Et}}(Y)$ denote the category of \'etale finite $Y$-schemes. If the restriction functor:
$$
{\bf{F.Et}}(X) \to {\bf{F.Et}}(U);~ Y \mapsto Y':=Y \times_X U
$$
is an equivalence of categories, then we say that $(X,U)$ is a \textit{pure pair}.
\end{definition}

To make this definition work, one is requited to put more conditions, such as normality. Let us recall the following classical result due to Grothendieck.

\begin{theorem}[Grothendieck]
Assume that $(R,\fm)$ is a Noetherian local ring and let $X:=\Spec(R)$ and $U:=\Spec(R) \setminus \{\fm\}$. Then the following assertions hold:
\begin{enumerate}
\item
If $R$ is a regular local ring with $\dim R \ge 2$, then $(X,U)$ is pure.

\item
If $R$ is a complete intersection with $\dim R \ge 3$, then $(X,U)$ is pure.
\end{enumerate}
\end{theorem}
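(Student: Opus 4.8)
The plan is to check that the restriction functor ${\bf{F.Et}}(X)\to{\bf{F.Et}}(U)$ is an equivalence by treating full faithfulness and essential surjectivity separately; this is the classical Zariski--Nagata--Grothendieck local purity theorem, so I would only recall the skeleton of the argument, the complete treatment being in Grothendieck's \textit{SGA 2}. First, for full faithfulness: a finite \'etale $R$-algebra is finite locally free, hence finite free since $R$ is local, and in both cases $R$ is Cohen--Macaulay with $\depth_{\fm}R=\dim R\geq 2$, so $H^{0}_{\fm}(R)=H^{1}_{\fm}(R)=0$ and the restriction $\Gamma(X,\mathcal F)\to\Gamma(U,\mathcal F)$ is bijective for every finite free $\mathcal O_{X}$-module $\mathcal F$. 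Applying this to the quasi-coherent sheaf attached to a finite \'etale $R$-algebra $B'$ turns $\Hom_{X}(Y,Y')\to\Hom_{U}(Y_{U},Y'_{U})$ into a bijection for all $Y,Y'\in{\bf{F.Et}}(X)$, which settles full faithfulness in (1) and (2) simultaneously.

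For essential surjectivity in case (1) I would proceed by normalization plus purity of the branch locus. Given $V\to U$ finite \'etale, $\Gamma(V,\mathcal O_{V})$ is a finite product of normal domains (as $U$ is regular), and I would set $Y=\Spec C$ with $C$ the integral closure of $R$ in $\Gamma(V,\mathcal O_{V})$. After reducing to the case that $R$ is excellent --- a technical but legitimate step, via faithfully flat descent along $\widehat U\to U$ together with an approximation argument to descend finite \'etale algebras from $\widehat R$ to $R$ --- the morphism $Y\to X$ is finite, $Y$ is normal, and $Y\to X$ is \'etale over $U$. Since $X\setminus U=\{\fm\}$ has codimension $\dim R\geq 2$ in the \emph{regular} scheme $X$, the branch locus of $Y\to X$ is a closed subset of $X$ contained in $\{\fm\}$, hence of codimension $\geq 2$; the Zariski--Nagata purity of the branch locus forces it to be empty, so $Y\to X$ is finite \'etale and $Y_{U}\cong V$ by the full faithfulness already established. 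This gives (1).

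For essential surjectivity in case (2) I would pass to the completion (same caveat) and use that a complete local complete intersection can be written as $R=P/(f_{1},\dots,f_{c})$ with $(P,\fn)$ complete regular local and $f_{1},\dots,f_{c}$ a regular sequence, so $\dim P=\dim R+c$. Setting $R_{j}:=P/(f_{1},\dots,f_{j})$ and $U_{j}:=\Spec R_{j}\setminus\{\fm_{j}\}$, each $R_{j}$ is a complete local complete intersection with $\dim R_{j}=\dim R+c-j\geq 3$, hence CM with $\depth_{\fm_{j}}R_{j}\geq 3$, and $R_{0}=P$ is regular, so $(\Spec R_{0},U_{0})$ is pure by case (1). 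For the step from $j$ to $j+1$ I would combine two inputs: $R_{j}$ and $R_{j+1}$ are Henselian (being complete) with the same residue field $k$, so finite \'etale algebras over either are detected on $k$, giving ${\bf{F.Et}}(\Spec R_{j})\simeq{\bf{F.Et}}(k)\simeq{\bf{F.Et}}(\Spec R_{j+1})$; and Grothendieck's local Lefschetz theorem for finite \'etale covers, applicable because $\depth_{\fm_{j}}R_{j}\geq 3$ and $f_{j+1}$ is a nonzerodivisor on $R_{j}$, gives an equivalence ${\bf{F.Et}}(U_{j})\to{\bf{F.Et}}(U_{j+1})$. Feeding these into the inductive hypothesis that ${\bf{F.Et}}(\Spec R_{j})\to{\bf{F.Et}}(U_{j})$ is an equivalence yields the same for $j+1$; taking $j=c$ gives (2).

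The main obstacle is that the proof rests entirely on the two deep geometric theorems it invokes --- the Zariski--Nagata purity of the branch locus for (1) and Grothendieck's local Lefschetz theorem for (2) --- which I would not reprove here. The one numerical point I would emphasize is why the hypothesis rises from $\dim\geq 2$ to $\dim\geq 3$: each hypersurface cut in the induction for (2) costs a unit of depth, and the Lefschetz step must keep $\depth\geq 3$ all the way along the chain $R_{0},\dots,R_{c}$, which pins $\dim R=\dim R_{c}$ to be at least $3$. The only other delicate point is the reduction to the excellent (equivalently, complete) case, which I would dispatch with faithfully flat descent together with an Elkik--Artin type approximation and otherwise relegate to \textit{SGA 2}.
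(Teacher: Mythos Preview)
The paper does not supply an argument here: it simply cites Tags 0BMA and 0BPD of the Stacks Project, and the theorem sits in a historical appendix where the authors say explicitly that no new results are proved. So there is no detailed proof in the paper to compare yours against.

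That said, there is a structural problem with your treatment of (1). You obtain essential surjectivity in the regular case by normalizing and then invoking Zariski--Nagata purity of the branch locus to exclude ramification over $\{\fm\}$. But in the paper's own logical order (and in SGA~2, and in the Stacks Project) this runs backwards: the local purity statement for regular rings of dimension $\geq 2$ is established first, and purity of the branch locus is derived \emph{from} it --- the paper says exactly this in the sentence immediately following the theorem, and Tag 0BMB deduces the branch-locus theorem from Tag 0BMA. So as written your argument for (1) is circular. The standard direct proofs of (1) go instead via reduction to the complete case followed by an argument intrinsic to regular local rings (a Lefschetz/formal-geometry argument, or in dimension $2$ the UFD property), without appealing to the global branch-locus theorem.

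Your outline for (2), on the other hand --- induction along a regular sequence presenting the complete intersection, using Henselianity to identify the categories ${\bf{F.Et}}(\Spec R_j)$ and the local Lefschetz theorem to identify the categories ${\bf{F.Et}}(U_j)$ --- is the SGA~2/Stacks approach and is fine as a sketch.
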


\begin{proof}
The proof of the first statement is in \cite[Tag 0BMA]{Stacks}, while
the second statement is in \cite[Tag 0BPD]{Stacks}.
\end{proof}

These results are of use to the proof of the so-called \textit{Zariski-Nagata's purity theorem} whose proof is found in \cite[Tag 0BMB]{Stacks}.

\begin{theorem}[Purity of branch locus]
Assume that $f:X \to S$ is a finite dominant morphism of Noetherian integral schemes such that $S$ is regular and $X$ is normal. Then the ramification locus of $f$ is of pure codimension one.
\end{theorem}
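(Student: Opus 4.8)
The plan is to prove that the ramification locus $Z\subseteq X$ (the closed set of points at which $f$ fails to be \'etale) has every irreducible component of codimension exactly one. The half "every component has codimension $\geq 1$" reduces to the statement $Z\neq X$: at the generic point $\eta$ of $X$ the map is $\Spec K(X)\to\Spec K(S)$, which is \'etale precisely when $K(X)/K(S)$ is separable — a hypothesis one must include (without it the ramification locus is all of $X$) — so by openness of the \'etale locus $\eta\notin Z$, and an irreducible closed subset of $X$ having a codimension-$0$ component would be $X$ itself.

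The substance is ruling out components of codimension $\geq 2$, and I would argue by induction on $d=\dim S$. For $d\leq 1$ the assertion is immediate. For the inductive step let $Z_0$ be a component of $Z$ with generic point $x$, and set $s=f(x)$; since $f$ is finite dominant and $S$ is regular (hence catenary, with going-down for $\mathcal{O}_{S,s}\hookrightarrow\mathcal{O}_X(f^{-1}\Spec\mathcal{O}_{S,s})$), one gets $\dim\mathcal{O}_{X,x}=\dim\mathcal{O}_{S,s}=:c$. If $c<d$, localizing $S$ at $s$ and applying the inductive hypothesis to $f^{-1}(\Spec\mathcal{O}_{S,s})\to\Spec\mathcal{O}_{S,s}$ forces $c=1$. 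It therefore remains to exclude $c=d\geq 2$; in that case $s$ is a closed point, $f^{-1}(s)$ is finite, so $Z_0=\overline{\{x\}}=\{x\}$ and, since $Z_0$ is a component, $x$ is an isolated point of $Z$.

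For this last case I would pass to a local, henselian situation: put $A:=\mathcal{O}_{S,s}^{h}$ (regular local, henselian, of dimension $d\geq 2$) and let $B$ be the local factor at $x$ of the base change of $\mathcal{O}_X(f^{-1}\Spec\mathcal{O}_{S,s})$ along $\mathcal{O}_{S,s}\to A$; then $B$ is a normal henselian local domain, finite over $A$, with $\Frac(B)/\Frac(A)$ finite separable, and the ramification locus of $\Spec B\to\Spec A$ is exactly the closed point. Because $B$ is local, $\Spec B\setminus\{\mathfrak{m}_B\}$ is precisely $f^{-1}(U)$ for $U:=\Spec A\setminus\{\mathfrak{m}_A\}$, hence is finite \'etale over $U$. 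Now I invoke Grothendieck's purity theorem from the preceding section, part $(1)$, with $R=A$ regular local of dimension $\geq 2$: the pair $(\Spec A,U)$ is pure, so this finite \'etale $U$-scheme is the restriction of a finite \'etale $A$-algebra $B'$. Then $B'$ is regular (finite \'etale over the regular ring $A$), hence normal, finite over $A$, with the same generic fibre as $B$, so $B'$ is a normal domain with $\Frac(B')=\Frac(B)$, and $B$ and $B'$ agree at every height-one prime (all of which lie in the punctured spectrum). By Serre's normality criterion, $B=\bigcap_{\Ht\mathfrak{p}=1}B_{\mathfrak{p}}$ and likewise for $B'$, so $B=B'$; thus $A\to B$ is finite \'etale, contradicting ramification at $\mathfrak{m}_B$.

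The main obstacle is exactly this reduction of the codimension-$\geq 2$ case to a regular local base of dimension $\geq 2$ with the ramification locus concentrated at the closed point, and in particular making the punctured-spectrum map \emph{finite} so that Grothendieck purity applies: the naive local ring $\mathcal{O}_{X,x}$ is not finite over $\mathcal{O}_{S,s}$, which is why one henselizes — so that the semilocal base change splits off a genuinely local factor. One must also check that henselization preserves regularity of $A$ and normality and integrality of $B$, and keep track (via Serre's criterion) that agreement in codimension one upgrades to equality; these are standard but are where the bookkeeping lies. Everything else — closedness of $Z$, the dimension equality $\dim\mathcal{O}_{X,x}=\dim\mathcal{O}_{S,s}$, and "finite \'etale over regular is regular" — is routine.
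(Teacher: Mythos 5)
Your proof is correct and follows essentially the route the paper intends: the paper gives no argument of its own here (it defers to the Stacks Project, Tag 0BMB), and that proof, like yours, reduces to a henselian local situation, notes the induced cover of the punctured spectrum is finite \'etale, invokes the Grothendieck purity theorem stated just before (regular local of dimension $\geq 2$), and identifies the extended finite \'etale algebra with $B$ via normality/agreement in codimension one. Two small points, which you essentially anticipate: generic separability of $K(X)/K(S)$ does need to be added to the loosely phrased statement, and the induction should be run on $\dim\mathcal{O}_{S,s}$ (or the argument made directly local at the generic point of a component, where only $\dim\mathcal{O}_{S,s}\geq 2$ matters) rather than on $\dim S$, since a Noetherian scheme can have infinite dimension.
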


The almost analogue of pure pair has been suggested in Gabber-Ramero's treatise \cite[Definition 14.4.1]{GR18}, in which case the almost purity theorem takes the form of $X=\Spec(R)$ and $U=\Spec (R[\frac{1}{p}])$ for a certain big ring $R$. To the best of authors' knowledge, the first appearance of the almost purity theorem is Tate's work on $p$-divisible groups over local fields \cite{Tate67} and its higher-dimensional analog was studied by Faltings \cite{Fa88}, who also outlined ideas of almost ring theory. Faltings gave another proof in \cite{Fa02}, where he made an effective use of the Frobenius action on certain local cohomology modules combined with his normalized length. We refer the reader to Olsson's notes \cite{Ol09}. An ultimate version of the almost purity was proved by Scholze in \cite{Sch12}, using adic spaces. We remark that Faltings started with an essentially smooth algebra over a discrete valuation ring and then constructed a huge ring extension on which the Frobenius map has good behavior. His proof required complicated ramification theory. On the other hand, Scholze started with a ring that is already big enough so that he avoided ramification theory.

\begin{acknowledgement}
The authors are grateful to Professor K. Fujiwara for encouragement and comments on this paper. Our gratitude also goes to some anonymous referee for reading the paper thoroughly and providing many constructive comments that has led to improve the presentation of the present article. Kazuma Shimomoto was partially supported by JSPS Grant-in-Aid for Scientific Research(C) 18K03257.
\end{acknowledgement}


\begin{thebibliography}{99}

\bibitem{An1}
Y. Andr\'e, \emph{Le lemme d'Abhyankar perfectoide}, Publ. Math. I.H.E.S. \textbf{127} (2018), 1--70. 


\bibitem{BL95}
A. Beauville and Y. Laszlo, \emph{Un lemme de descente}, C.R. Acad. Sc. Paris \textbf{320} (1995), 335--340.

\bibitem{Bh17}
B. Bhatt, \emph{Lecture notes for a class on perfectoid spaces}, http://www-personal.umich.edu/~bhattb/teaching/mat679w17/lectures.pdf.


\bibitem{BMS17}
B. Bhatt, M. Morrow and P. Scholze, \emph{Integral p-adic Hodge theory}, I.H.E.S. \textbf{128} (2018), 219--397.


\bibitem{BGR84}
S. Bosch, U. Guntzer, and R. Remmert, \emph{Non-Archimedean Analysis}, Grundlehren der Math. Wiss. \textbf{261}, Springer-Verlag, Berlin, 1984.


\bibitem{Bour71}
N. Bourbaki, \emph{\'El\'ements de Math\'ematique. Topologie g\'en\'erale. Chapitres 1 \`a 4}, Hermann, Paris 1971.


\bibitem{Bour98}
N. Bourbaki, \emph{\'El\'ements de Math\'ematique. Alg\`ebre Commutative. Chapitres 1 \`a 9}, Hermann, Paris  1961.


\bibitem{DK14}
C. Davis and K. S. Kedlaya, \emph{On the Witt vector Frobenius}, 
Proc. Amer. Math. Soc. \textbf{142} (2014), 2211--2226.


\bibitem{DK15}
C. Davis and K. S. Kedlaya, \emph{Almost purity and overconvergent Witt vectors},
J. Algebra \textbf{422} (2015), 373--412.


\bibitem{Fa88}
G. Faltings, \emph{p-adic Hodge theory},
J. Amer. Math. Soc.  \textbf{1}  (1988), 255--299.


\bibitem{Fa02}
G. Faltings, \emph{Almost \'etale extensions},
Cohomologies $p$-adiques et applications arithm\'etiques, II. Ast\'erisque \textbf{279} (2002), 185--270. 


\bibitem{Fo13}
J.-M. Fontaine, \emph{Perfecto\"ides, presque puret\'e et monodromie-poids (d'apr\`es Peter Scholze)}, S\'eminaire Bourbaki, volume 2011/2012, Ast\'erisque \textbf{352} (2013).


\bibitem{Fo17}
T. J. Ford, \emph{Separable algebras}, Graduate Studies in Mathematics {\bf183}, American Math. Society.


\bibitem{F92}
K. Fujiwara, \emph{Theory of tubular neighborhood in \'etale topology}, 
Duke Math. J. \textbf{80} (1995), 15--57.


\bibitem{FK18}
K. Fujiwara and F. Kato, \emph{Foundations of Rigid Geometry I}, EMS Monogr. Math. (2018).


\bibitem{GR03}
O. Gabber and L. Ramero, \emph{Almost ring theory}, Lecture Notes in Mathematics \textbf{1800}, Springer.


\bibitem{GR18}
O. Gabber and L. Ramero, \emph{Foundations for almost ring theory}, 
arXiv:math/0409584.


\bibitem{He69}
W. Heinzer, \emph{Some remarks on complete integral closure}, J. Austral. Math. Soc. \textbf{9} (1969), 310--314.


\bibitem{Hu93}
R. Huber, \emph{Continuous valuations}, Math. Z. {\bf212} (1993), 455--478.


\bibitem{KL15}
K. S. Kedlaya and R. Liu, \emph{Relative p-adic Hodge theory, I: Foundations}, Ast\'erisque  \textbf{371}  (2015).


\bibitem{Ma86}
H. Matsumura, \emph{Commutative ring theory}, Cambridge
Studies in Advanced Mathematics $\bf{8}$, Cambridge University Press, Cambridge (1986).


\bibitem{M17}
M. Morrow, \emph{Foundations of perfectoid spaces}, http://www.math.harvard.edu/~lurie/ffcurve/Lecture6-8-Perfectoid.pdf.


\bibitem{NS19}
K. Nakazato and K. Shimomoto,
\emph{A variant of perfectoid Abhyankar's lemma and almost Cohen-Macaulay algebras}, in preparation.


\bibitem{Ol09}
M. Olsson, \emph{On Faltings' method of almost \'etale extensions},
Algebraic geometry--Seattle 2005, Part 2, 811--936, Proc. Sympos. Pure Math. \textbf{80}, Part 2, Amer. Math. Soc., Providence, RI, 2009.


\bibitem{Sch12}
P. Scholze,  \emph{Perfectoid spaces}, Publ. Math. de l'IH\'ES \textbf{116} (2012), 245--313.


\bibitem{Sh15}
K. Shimomoto, \emph{On the Witt vectors of perfect rings in positive characteristic}, Communications in Algebra \textbf{43} (2015), 5328--5342.


\bibitem{Sh18}
K. Shimomoto, \emph{Integral perfectoid big Cohen-Macaulay algebras via Andr\'e's theorem}, Math. Ann. {\bf372} (2018), 1167--1188. 


\bibitem{Stacks}
The Stacks Project Authors, http://stacks.math.columbia.edu/browse.


\bibitem{T18}
H. Tanaka, \emph{Zariskian adic spaces}, Kodai Math. J. {\bf41} (2018), 652--695. 


\bibitem{Tate67}
J. Tate, \emph{p-divisible groups}, Proceedings of a Conference on Local Fields
(1967), 158--183. 
\end{thebibliography}
\end{document}